\title{Asymptotic Expansion of passage probability for finite range random walk on Free Groups}
\author{Chevalier Guillaume}
\begin{document}
\newtheorem{Thm}{Theorem}[section]
\newtheorem{Prop}[Thm]{Proposition} 
\newtheorem{Lem}[Thm]{Lemma} 
\newtheorem{Cor}[Thm]{Corollary} 
\newtheorem{Clm}[Thm]{Claim} 
\newtheorem{prop}[Thm]{Property} 
\newtheorem*{Thm*}{Theorem} 
\newtheorem*{Cor*}{Corollary} 
\newtheorem*{Prop*}{Proposition}
\theoremstyle{definition} 
\newtheorem{Def}[Thm]{Definition} 
\theoremstyle{remark}
\newtheorem{Rem}[Thm]{Remark} 
\newtheorem{Notation}[Thm]{Notation}
\newtheorem{Ex}[Thm]{Example}  
\newtheorem*{Rem*}{Remark}
\newtheorem*{Ex*}{Example}
\numberwithin{equation}{section}

\newcounter{nmbrexercise}
\newenvironment{Exercise}[1][ ] %environment name
{% begin code
  \par\vspace{\baselineskip}%
 {\refstepcounter{nmbrexercise} \color{PineGreen} \noindent \textbf{Exercise~\thenmbrexercise :} \textit{#1}}%
  \par\vspace{\baselineskip}%
}%
{\vspace{\baselineskip}}% end code 

\maketitle

\begin{abstract}
\begin{center}
     \og
 Given a finite-range random walk on a finitely generated free group , what is the asymptotic behaviour, as the number of steps goes to infinity, of the sequence of probabilities that the random walk is at a given element of the group?
 \fg{}
\end{center}
In this article, we answer this question by providing an asymptotic expansion to any order for the sequence of probabilities that a finite-range random walk on an infinite tree with bounded valence is at a given vertex (Theorem \ref{Asymptotics_of_probability_Green_General}) We will work under the assumption that each vertex of the tree has valence greater than or equal to three, and we will assume that the tree is endowed with a cofinite action of an automorphism group preserving the step distribution. The answer to the above question will appear as an immediate Corollary of the previously mentioned Theorem.

This article is part of a triptych with \cite{RTaub} and \cite{LCurve}. It relies on Tauberian results from \cite{RTaub}, used throughout the text to derive asymptotic expansions. The careful study of the objects introduced in this text is done in \cite{LCurve}.

\end{abstract}
\begin{center}
	\textbf{\Large Introduction}
\end{center}

\begin{center}
     \og
Given a finite-range random walk on a finitely generated free group , what is the asymptotic behaviour, as the number of steps goes to infinity, of the sequence of probabilities that the random walk is at a given element of the group?
 \fg{}
\end{center}

\vspace{\baselineskip}

 This question was addressed in 1971 by Peter Gerl \cite{Gerl_1971}, where he found in the case of nearest-neighbour random walk in the free group $\mathbb{F}_2$ on two generators (Introduced in example \ref{Example_intro_2}), a simple equivalent for the sequence of probabilities $\left(\mathbb{P}^x(Z_n = y)\right)_n$, that the random walk $(Z_n)_n$ based at some element $x$ the group $\mathbb{F}_2$, is at a given element $y$ of the group, at time $n\in\mathbb{N}$:
 \begin{equation*}
     \mathbb{P}^x(Z_{2n+r} = y) \sim_\infty CR^{-2n}\frac{1}{n^{3/2}},
 \end{equation*}
 with $C > 0$, $R > 1$, and $r \in \mathbb{Z}/2\mathbb{Z}$ such that $r = d(x, y) \,[2]$. Moreover, $\mathbb{P}^x(Z_{2n+1 - r} = y) \equiv 0$. In 1986, this result was extended by Peter Gerl and Wolfgang Woess \cite{Gerl-Woess_1986} to any nearest-neighbour random walk on a free group $\mathbb{F}_{q+1}$ on $(q+1)$ generators, for $q \geq 2$. Then, in 1993, Steven P. Lalley \cite{Lalley_1993} further extended this result to any finite range irreducible random walk in a free group, under the additional assumption that the random walk is aperiodic ($\gcd\{n : \mathbb{P}^x(Z_n=x) > 0\} = 1$), thus obtaining the asymptotic equivalent:
 \begin{equation*}
     \mathbb{P}^x(Z_{n} = y) \sim_\infty CR^{-n}\frac{1}{n^{3/2}},
 \end{equation*}
 with $C > 0$ and $R > 1$. See also \cite{Lalley_2001} and \cite{Nagnibeda-Woess_2002} for a slightly different presentation.

 In these three papers, the authors used \textbf{Darboux's Method} to obtain an asymptotic expansion of the above form. This method focuses on finding asymptotics of coefficients of a power series expansions of a functions $g$ with a finite number of singularities at its radius of convergence. It consists in finding a function $f$ whose power series expansion is known, such that the difference function $g-f$ is smoother than $g$ in a neighbourhood of the singularities (for example, of class $\mathcal{C}^k$ for some non-negative integer $k$). Then, by computing an estimate of the coefficients in the power series expansion of this latter difference, we obtain that the coefficients of Green's functions asymptotically behave as the coefficients of the power series expansion of $f$ (See \cite{Flajolet-Sedgewick_2009} Theorem VI.14). Thanks to the Newton-Puiseux theorem, this method is quite effective when dealing with algebraic functions $g$, which are functions satisfying an equation of the form $P(g(z),z)=0$, where $P$ is a fixed complex polynomial and $z$ is any complex number of small enough modulus.
 
 The functions $g$ considered by the authors were the generating functions of the sequence of probabilities $\left(\mathbb{P}^x(Z_n = y)\right)_n$, called \textbf{Green's functions}, that we usually denote by $G_z(x,y)=\sum_n \mathbb{P}^x(Z_n = y) z^n $. These function for irreducible finite range random walk in free groups, as you may have guessed, are algebraic functions. Actually Steven P.Lalley even proved:
 
\begin{Prop*}[Proposition 3.1 and Corollary 3.2 in \cite{Lalley_1993}, See also \cite{Kazuhiko_1984}]
Let $\mathbb{F}$ be a finitely generated free group and let $\mu\in\operatorname{Prob}(\mathbb{F})$ be a finitely supported probability measure on $\mathbb{F}$ whose support generates the group as a semi-group. Let $R>1$ be the radius of convergence of the Green's functions $G_z(x,y)$ (The radius $R$ does not depend on the pair $x,y\in\mathbb{F}$), and denote by $\mathbb{D}(0,R)=\{z:|z-0|<R\}$ the disk of radius $R$, centered at $0$ in $\mathbb{C}$. 

Then there exists an affine algebraic curve $\mathcal{C}=\{(z,J)\in\mathbb{C}\times \mathbb{C}^N: J=z\psi(J)\}$, with $N$ a positive integer and $\psi:\mathbb{C}^N\to\mathbb{C}^N$ a polynomial map with non-negative coefficients, and a parametrization $u:\mathbb{D}(0,R)\to\mathcal{C}$ such that $u(0)=0$ and for any two elements $x,y$ in $\mathbb{F}$, there exists a rational function $g_{x,y}\in\mathbb{C}(\mathcal{C})$ such that for any complex number in the disk $\mathbb{D}(0,R)$, we have
        $$G_z(x,y)=g_{x,y}(u(z)).$$
   Moreover the function field generated by the Green's function $(G_z(x,y))_{x,y\in\mathbb{F}}$ over $\mathbb{C}$ is a finite extension of $\mathbb{C}(z)$.
\end{Prop*}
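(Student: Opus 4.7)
The plan is to exploit the tree structure of the Cayley graph $T$ of $\mathbb{F}$ (with respect to a free generating set) via a finite system of first-passage generating functions. Let $K$ denote the maximal word-length of an element in the support of $\mu$. Any path in $T$ from $x$ to $y$ must traverse the unique geodesic between them, and can be decomposed according to the first times it visits certain ``gates'' along that geodesic. Because $\mu$ has range $K$, each such crossing event involves only finitely many ``configurations'' (positions of the walker at and near the gate), and translation invariance under $\mathbb{F}$ reduces these configurations to a finite list.

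The first step is to introduce generating functions $J_\alpha(z)$ indexed by a finite set of configuration types $\alpha$. Concretely, $\alpha$ would record a generator $a$ (a direction in the tree) together with the displacement of the walker relative to the boundary of the subtree entered through $a$. Next, by conditioning on the first step of such a first-passage excursion and concatenating independent sub-excursions into subtrees branching off the geodesic, one derives a system of equations $J_\alpha = z\,\psi_\alpha(J)$, where $\psi_\alpha$ is a polynomial with non-negative coefficients: each monomial corresponds to a valid combinatorial decomposition weighted by a product of $\mu$-probabilities. This defines the curve $\mathcal{C}=\{(z,J):J=z\psi(J)\}$. The implicit function theorem at $(0,0)$ yields a unique analytic solution $u:\mathbb{D}(0,\rho)\to\mathcal{C}$ with $u(0)=0$; extending $u$ to its maximal disk of convergence identifies the radius $R$.

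Finally one expresses the Green's functions in terms of the $J_\alpha$. Writing a walk from $x$ to $y$ as a first passage followed by an arbitrary excursion at $y$ gives $G_z(x,y)=F_z(x,y)G_z(y,y)$, and decomposing the geodesic from $x$ to $y$ into elementary crossings reduces $F_z(x,y)$ to a finite product of $J_\alpha$'s. Similarly $G_z(y,y)=(1-L_z(y))^{-1}$ where the first-return function $L_z(y)$ is a polynomial in the $J_\alpha$'s. Thus $G_z(x,y)=g_{x,y}(u(z))$ for an explicit rational $g_{x,y}$ on $\mathcal{C}$. For the finite extension claim, the projection $\mathcal{C}\to\mathbb{C}$, $(z,J)\mapsto z$ is a finite morphism, since for generic $z$ the system $J=z\psi(J)$ has only finitely many solutions (by Bezout applied to the polynomial system, once one knows the solution is isolated); hence $\mathbb{C}(\mathcal{C})$ is a finite algebraic extension of $\mathbb{C}(z)$, and the Green's functions generate a subfield of it.

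The main obstacle will be the second step: a careful definition of the configuration types $\alpha$, and a combinatorial verification that the resulting system has the exact non-negative polynomial shape $J=z\psi(J)$. The nearest-neighbour case $K=1$ is clean because each crossing involves a single edge and a single type per generator; the finite-range case requires delicate bookkeeping of how excursions decompose when the walker can jump over intermediate vertices and enter/exit a subtree through different points within a ball of radius $K$. One also has to check that the constant term of $\psi$ vanishes (so that $u(0)=0$), which boils down to the fact that every first-passage excursion has at least one step.
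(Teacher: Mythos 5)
Your plan is essentially the paper's (and Lalley's) own route: a finite family of ball-crossing generating functions (the paper's restricted Green's functions $G_z(a,b;\mathcal{B}_k(y)^\complement)$ indexed by the finite set $\Gamma\backslash\Xi$), a first-step plus geodesic-crossing decomposition yielding the system $J=z\psi(J)$ with $\psi$ polynomial with non-negative coefficients and no constant term, the implicit function theorem at the origin giving the parametrization $u(z)=(z,v_z)$, and then $G_z(x,y)=F_z(x,y)G_z(y,y)$ expressed rationally in $(z,v_z)$, with finiteness of the projection $(z,J)\mapsto z$ giving the finite field extension. One small correction to your third step: in the finite-range case $F_z(x,y)$ is not a finite product of the $J_\alpha$'s and the first-return function is not a polynomial in them; the approach to $y$ inside the ball $\mathcal{B}_k(y)$ is handled by additionally inverting the finite linear system $F_z(\,\cdot\,,y)=z\bar p+zM_{v_z}F_z(\,\cdot\,,y)$ over $\mathcal{B}_k(y)\setminus\{y\}$, which makes these functions rational (not polynomial) in $(z,v_z)$ — exactly the bookkeeping you flag as the main obstacle, and still sufficient for the rationality of $g_{x,y}$.
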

We will call the above curve $\mathcal{C}$\footnote{To be more rigorous we should consider the Lalley's curve to be the irreducible variety of $\mathcal{C}$, which indeed is a curve (See \cite{Lalley_1993} Proposition 3.1). Besides this irreducible component coincide with $\mathcal{C}$ on the part that is of interest to us.}, the \textit{Lalley's curve}. A study of the singularities of this curve as well as a study of the rational functions $g_{x,y}$, enables to improve the previously stated asymptotics of probability:
\begin{Thm}\label{Thm_RW_on_FG}
Let $\mathbb{F}$ be a finitely generated free group, and let $\mu\in \operatorname{Prob}(\mathbb{F})$ be a finitely supported probability measure on $\mathbb{F}$ such that its support $\operatorname{Supp}(\mu)$ generates $\mathbb{F}$ as a semi-group. Then for any element $x$ of $\mathbb{F}$, there exist real constants $C>0, (c_l)_{l\geq 1}$ and $r\in\mathbb{Z}/d\mathbb{Z}$ depending on $x$ such that for any non-negative integer $K\in\mathbb{N}$, we have the asymptotic expansion, as $n$ goes to $\infty$:
$$\mathbb{\mu}^{\ast (dn+r)}(x)=C R^{-dn}\frac{1}{n^{3/2}}\left(1+\sum_{l=1}^{K-1}\frac{c_l}{n^l} +O \left(\frac{1}{n^K}\right)\right),$$
where $d=\gcd\{n : \mu^{\ast{n}}(e)>0\}$ is the return period of the random walk, with $e\in\mathbb{F}$ the neutral element of the group $\mathbb{F}$, and where $R>1$ is the radius of convergence of the power series $\sum_{n\geq 0} \mu^{\ast n}(e)z^n$.
\end{Thm}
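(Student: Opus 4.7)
The plan is to deduce Theorem \ref{Thm_RW_on_FG} as an essentially immediate corollary of the paper's main tree result, Theorem \ref{Asymptotics_of_probability_Green_General}. The key is to realize the group random walk as a tree random walk whose step distribution is preserved by a cofinite automorphism action, so that the hypotheses of that theorem are satisfied in the free-group setting.

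First I would identify the relevant tree. For a finitely generated free group $\mathbb{F}$ of rank $q+1$, the Cayley graph of $\mathbb{F}$ with respect to a free symmetric generating set is a regular tree $T$ of valence $2(q+1)$. As soon as the rank is at least $2$ -- the rank-one case being $\mathbb{Z}$, whose asymptotics are classical and of a different order and fall outside the $n^{-3/2}$ regime -- this valence is at least $4$, hence $\geq 3$, and trivially bounded. The left-regular action of $\mathbb{F}$ on its own Cayley tree is free and transitive, so in particular cofinite, and by construction it preserves the step distribution obtained by left convolution with $\mu$.

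Next I would translate the probabilistic content. By definition of convolution, $\mu^{\ast(dn+r)}(x)=\mathbb{P}^e(Z_{dn+r}=x)$, where $(Z_n)_n$ is the random walk on $T$ issued from the vertex $e$. Since $\operatorname{Supp}(\mu)$ is finite, each element of the support corresponds to a path of bounded length in $T$, so the induced walk on the tree has finite range. The period $d=\gcd\{n:\mu^{\ast n}(e)>0\}$ of the group random walk coincides with the period of the tree random walk at the base vertex, and the radius $R$ is exactly the radius of convergence of the return generating series $\sum_n \mathbb{P}^e(Z_n=e)z^n$ appearing in Theorem \ref{Asymptotics_of_probability_Green_General}.

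Finally I would apply Theorem \ref{Asymptotics_of_probability_Green_General} at the vertex $x\in T$, reading off the asymptotic expansion of $\mathbb{P}^e(Z_{dn+r}=x)$ in the stated form $CR^{-dn}n^{-3/2}\bigl(1+\sum_{l=1}^{K-1}c_l n^{-l}+O(n^{-K})\bigr)$. There is essentially no obstacle at this stage: the real difficulty -- the Puiseux analysis of the Green's function near its singularities on Lalley's curve, and the Tauberian extraction of coefficients from that local expansion -- is entirely absorbed into the proof of the tree theorem, itself built upon the Tauberian toolkit of \cite{RTaub} and the detailed curve analysis of \cite{LCurve}. The only verification needed here is the hypothesis check above.
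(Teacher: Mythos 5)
Your proposal matches the paper's intended route exactly: the paper sets up the free-group case in Example \ref{Ex_ifrrw} -- Cayley tree, left-regular $\Gamma=\mathbb{F}$ action that is transitive (hence cofinite) and preserves the step kernel -- and then obtains Theorem \ref{Thm_RW_on_FG} as an immediate corollary of Theorem \ref{Asymptotics_of_probability_Green_General} by applying it at the base vertex $e$ with target $x$. Your caveat about rank one is also a correct reading of the hypotheses, since the Cayley tree of $\mathbb{Z}$ has valence $2$ and the local-limit order there is $n^{-1/2}$, so the statement tacitly requires rank at least $2$.
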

This Theorem will appear as a Corollary of a Theorem concerning finite range random walks in infinite trees of bounded valence (Theorem \ref{Asymptotics_of_probability_Green_General} stated in page \pageref{Pour page ref}).

In the case where the support of $\mu$ contains a free system of generators and the neutral element (implying $d=1$) this result is a consequence of the work of Steven P. Lalley \cite{Lalley_1993} and of Theorem $VII.6$ in Subsection $VII.6.3$ of \cite{Flajolet-Sedgewick_2009}. Using different methods, S. Gouëzel, in \cite{Gouezel_2014}, proved the first-order asymptotic above in the case where the measure is symmetric, finitely supported, aperiodic ($d = 1$), and generates a Gromov-hyperbolic group as a semigroup (see also \cite{Gouezel-Lalley_2013}). Moreover, once strong uniform Ancona inequalities have been established for finite-range random walks on free groups —something that follows almost immediately from Lemma 2.7 in \cite{Gouezel_2014}— The method of S. Gouëzel allows to prove the same first-order asymptotic under the only assumption that the finitely supported measure is aperiodic ($d = 1$) and generates the free-group as a semigroup.

\vspace{\baselineskip}

The method we use to prove this theorem is found to be essentially equivalent to the one described in Subsection $VII.6.3$ of \cite{Flajolet-Sedgewick_2009}, regarding the family of generating functions satisfying a polynomial system of equations with non-negative coefficients. Especially Theorem $VII.6$ which compiles and mixes methods due to Michael Drmota, S. P. Lalley, and Alan R. Woods, see \cite{Drmota_1997}, \cite{Lalley_1993}, \cite{Lalley_2001} and \cite{Woods_1997}. 

%Later, in \cite{Lalley_2001},S. P. Lalley succeeded in removing this last assumption while retaining the aperiodicity assumption ($d=1$), in his study of random walks on regular languages.  Tatiana Nagnibeda and Wolfgang Woess also proved the theorem under the aperiodicity assumptions in \cite{Nagnibeda-Woess_2002}. We propose an alternative proof.

The methods developed in this article, while essentially equivalent to those in the aforementioned texts, differ in the point of view adopted and with the definition given to the Lalley's curve. This different perspective and the preciseness of the definitions allows us to improve the previously known results and adapt it to the case of \textbf{periodic random walks} ($d>1$).

The key ingredient is the study the \textbf{dependency digraph}, associated with the system of polynomial equations defining the Lalley's curve. Which is, using the notation of the above theorem, the simple directed graph with set of vertices $\{1,...N\}$ and with an edge from a vertex $i$ to another vertex $j$, if the polynomial function $J\mapsto \psi_j(J)$ of the $j$-th coordinate of $\psi$, depends on the coordinate $J_i$. We prove that this graph has one \textbf{strong connected component}, which is a connected component of the digraph with the property that, for any vertex in the digraph there exists a path from this vertex to a vertex in the strong connected component. Alan R. Woods in \cite{Woods_1997} encountered the same structure while computing the limit distribution of the ways of colouring the root of a finite simple descending rooted tree, following some types of coloring rules, as the number of vertices goes to infinity.

This article is the central piece of a triptych with \cite{RTaub} and \cite{LCurve}. It contains the demonstration of the Theorem \ref{Asymptotics_of_probability_Green_General}, which generalizes Theorem \ref{Thm_RW_on_FG}, above. Its proof relies on Tauberian results exposed in \cite{RTaub}, and on a careful study of the Lalley's curve $\mathcal{C}$ and analytic properties of the functions $g_{x,y}$ introduced in Corollary \ref{Cor_algbraicity_of_Greens_function}.

\vspace{\baselineskip}

We will work under the following assumptions - that we will carefully define in the next section and will be regularly recalled along the text:

		Let $X=(X_0,X_1)$ be a tree of bounded valence with set of vertices $X_0$, and set of edges $X_1$. Let $\Gamma<\operatorname{Aut}(X)$ be a group of automorphisms of $X$, that acts cofinitely on $X_0$ (i.e the set of orbits $\Gamma\backslash X_0$ of $X_0$ under the action of $\Gamma$ is finite). Let $p:X_0\times X_0 \to [0,1]$ be a transition kernel that makes the Markov chain $(X_0,p)$ irreducible. 
	We will assume that there exists a non-negative integer $k$, such that for any vertices $x,y\in X_0$, if $d(x,y)>k$ then $p(x,y)=0$, such $p$ is said to have \textit{finite range}. Besides we assume that the transition kernel $p$ is $\Gamma$-invariant, that is to say, for any vertices $x,y$ of $X$, and any automorphism $g\in\Gamma$, we have: 
		$$ p(gx,gy)=p(x,y).$$
Lastly we suppose that each vertex of the tree $X$ has at least $3$ neighbours\footnote{This assumption is only used to prove Proposition \ref{LCurve-Prop_V_infini_is_absorbent} in \cite{LCurve} which is a key result in this triptych. Actually, it can be relaxed to weaker assumptions (for we only need a "visibility property", see Lemma \ref{LCurve-Lem_of_visible_edges} in \cite{LCurve}) but this may increase the technicality of the proof}.
		
		Under the above assumptions, we prove (Theorem \ref{Asymptotics_of_probability_Green_General}) that for any pair of vertices $(x,y)$ in the tree, there exist constants $C>0,\, (c_l)_{l\geq 1}$ such that we have the asymptotic expansion\footnote{This is the Poincaré notation for asymptotic expansion, see Notation \ref{Poincare_notation}.} 
		 \begin{equation}\label{Intro_eq_asymptotic}
		        p^{(dn+r(x,y))}(x,y) \sim_\infty C R^{-dn}\frac{1}{n^{3/2}}\left( 1 + \sum_{l=1}^\infty\frac{c_l}{n^{l}}\right),
        \end{equation}
		and $p^{(dn+t)}(x,y)=0$, if $t\neq r(x,y)\,[d]$, where $r:X_0\times X_0\to\mathbb{Z}/d\mathbb{Z}$ is the periodicity cocycle (Definition \ref{DEF:PERIODICITY_COCYCLE}).

	\begin{Ex}[	Irreducible Random Walk with finite support on a free group ]\label{Ex_ifrrw}\index{Random walk on free group}
	Let $\mathbb{F}$ be a finitely generated free group, and let $S$ be a finite free set of generators of $\mathbb{F}$, so that the Cayley graph of $\mathbb{F}$ associated with $S$ is a tree. We denote by $X$ the Cayley graph $Cay(\mathbb{F},S)$.
	Let $\mu$ be a probability measure on $\mathbb{F}$, whose support is finite and generates $F$ as a semi-group (we do not assume that the support of $\mu$ is a free set of generators).
	
	We consider the transition kernel $p:\mathbb{F}\times \mathbb{F}\to [0,1]$, associated with $\mu$, defined by $p(x,y)=\mu(x^{-1}y)$, for any $(x,y)\in \mathbb{F}\times \mathbb{F}$. Since $\operatorname{Supp}(\mu)$ generates $\mathbb{F}$ as semi-group, it can be shown in an elementary way that the Markov chain $(\mathbb{F},p)$ is irreducible. 
	
	 Lastly, if we pose $\Gamma=\mathbb{F}$ that acts by left multiplication on $\mathbb{F}$, then $\Gamma$ identifies as a group of automorphisms of $X$, that preserves the transition kernel $p$.
	Remark that $\mathbb{F}$ acts transitively on the vertices of $X$. 
	
	We can define the associated random walk $(Z_n)_n$, based at $x\in\mathbb{F}$ as the product:
	$$Z_n=x \xi_1\cdots \xi_n,$$
	where $(\xi_i)_{i\geq 1}$ is a family of independent and identically distributed random variables on $\mathbb{F}$, of law $\mu$.
	\end{Ex}
	
The Typical example that should be kept in mind is the following one
\begin{Ex}[Random walk on the free product $\left(\mathbb{Z}/2\mathbb{Z}\right)^{\ast 3}$]\label{Typical_Example}	
	Let $F:=\left(\mathbb{Z}/2\mathbb{Z}\right)^{\ast 3}$ be the free product of $3$ copies of $\mathbb{Z}/2\mathbb{Z}$, and $S=\{a,b,c\}$ be the natural generating set of $F$ i.e. consisting of the $3$ generators $a$, $b$ and $c$ respectively of the $3$ copies of $\mathbb{Z}/2\mathbb{Z}$. Let $\mu$ be a probability measure on $F$ with support equal to $S$. The Cayley graph associated with the pair $(F,S)$ is a regular tree of valence $3$. It is denoted by $\mathcal{T}=(X_0,X_1)$, and we denote by $p$ the transition kernel on $X_0$ associated with the random walk on $F$, whose step distribution is $\mu$. The group $\Gamma=F$ acts transitively on $X_0$ and the transition kernel $p:X_0\times X_0\to [0,1]$ is $\Gamma$-invariant. In the case where $\mu$ is the uniform distribution over $S$, Harry Kesten \cite{Kesten_1959b} gave a simple formula for the Green's functions $G_z(x,y)$.
\end{Ex}
\begin{figure}[h]
\centering\includegraphics[scale=0.2]{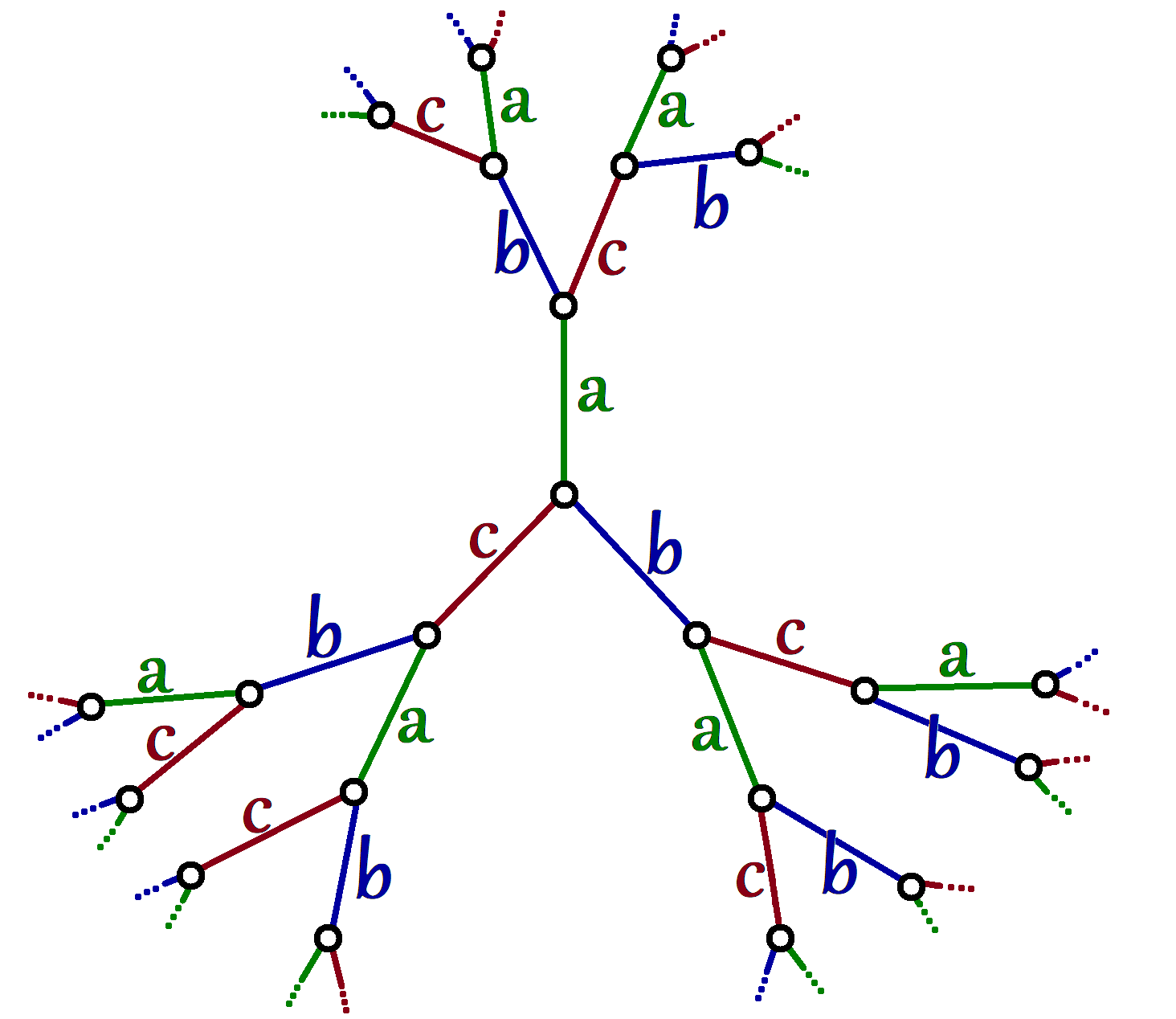}
		\caption{ The Cayley graph of the free product $\left(\mathbb{Z}/2\mathbb{Z}\right)^{\star 3}$ with three generators $a,b,c$.}\label{Figure_Free_labelled_tree_1}
\end{figure}

\section{Preliminaries: Definitions and examples}

	\begin{Def}\textit{Tree}\label{Def_tree}\index{Tree}
	
	A \textit{tree} is the data given by a pair $X=(X_0,X_1)$ where $X_0$ is a set called the \textit{set of vertices} of $X$ and $X_1$ is a subset of $X_0\times X_0\setminus \{(x,x):x\in X_0\}$ called the \textit{set of edges} of $X$ such that
	\begin{enumerate}[label=\roman*)]
	\item For all $(x,y)\in X_1$, $(y,x)$ is also in $X_1$ ($X_1$ is \textit{symmetric}).
	\item For any pair of vertices $(x,y)\in X_0\times X_0$, there exists a finite sequence of vertices $(x_0,...,x_n)$ such that $x_0=x$, $x_n=y$ and for any integer $i\in\{1,...,n\}$, $(x_{i-1},x_i)$ is an edge of $X$ (\textit{Connectedness}).
	\item There does not exist a finite sequence of distinct vertices $(x_0,...,x_n)$ in $X_0$ such that $n\geq 2$, for all integer $i\in\{1,...,n\}, (x_{i-1},x_i)\in X_1$ and $(x_n,x_0)$ is also in $X_1$ (\textit{Acyclicity}).
	\end{enumerate}
	
	The tree will be said to be an \textit{infinite tree} if its set of vertices $X_0$ is infinite.
	
	For $x$ a vertex of the tree $X$, we will say that $y$ is a \textit{neighbour} of $x$ if $(x,y)\in X_1$ is an edge of $X$. We will denote by $\mathcal{N}_x$ the set of neighbours of $x$ in $X$, so that $\{x\}\times \mathcal{N}_x$ is a subset of $X_1$.
	
For $x$ a vertex of $X$, we will name \textit{valence}\index{Valence} of the vertex $x$ the number of its neighbours $\operatorname{Card}(\mathcal{N}_x)$. And we will say that the tree has \textit{bounded valence}\index{Bounded valence} if the valence of its vertices is majored, that is $\max_{x\in X_0} \operatorname{Card}(\mathcal{N}_x)<\infty$.

\end{Def}
\begin{figure}	
\centering
\includegraphics[scale=0.3]{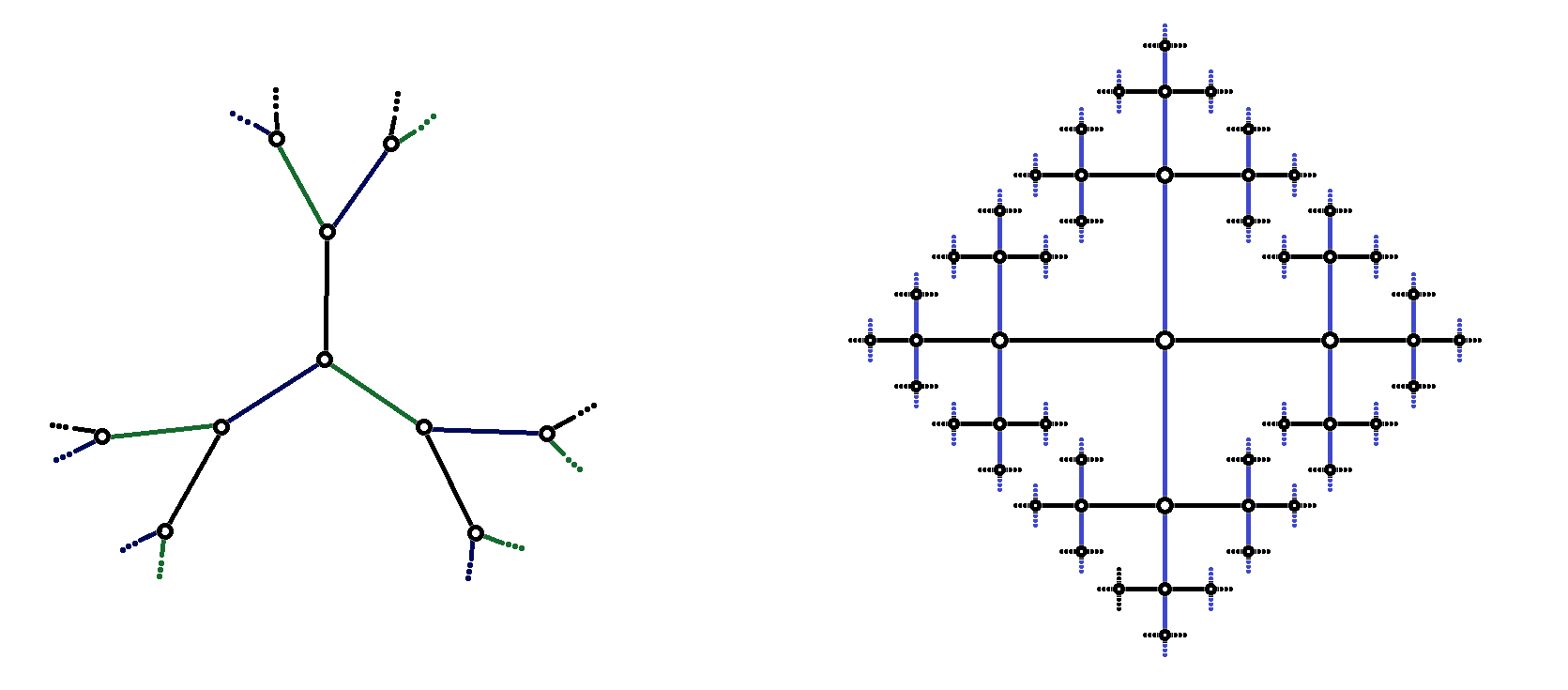}
		\caption{ The regular trees of valence $3$ and $4$.}\label{The_regular_trees}
\end{figure}
We give three examples of infinite trees with bounded valence.
\begin{Ex}[The regular tree $\mathcal{T}_{q+1}$ of valence $q+1$, with $q\in\mathbb{N}\setminus\{0\}$ (see figure \ref{The_regular_trees}).]\index{Regular tree}
Let $q\geq 1$ be an integer. The \textit{regular tree of valence }$q+1$ is a tree such that all its vertices have valence $q+1$. A model of the regular tree is given by the pair $(X_0,X_1)$, where $X_0$ is the set of finite words without double letters over the alphabet $S:=\{0,...,q\}$, denoted $\Sigma_S^{\tikzmarknode[strike out,draw]{1}{2}}:=\{(s_1,...,s_n): \forall i, s_i\in S, s_{i}\neq s_{i+1}\}$; and such that two words $(s_1,...,s_n),(s_1',...,s_m')\in\Sigma_S^{\tikzmarknode[strike out,draw]{1}{2}}$ are neighbours if, and only if $|n-m|=1$ and for any $i\leq \min(n,m)$, we have $s_i= s_i'$.

That is $X_0=\Sigma_S^{\tikzmarknode[strike out,draw]{1}{2}}$ and $$X_1=\{\left((s_1,...,s_n),(s_1',...,s_m')\right): |n-m|=1, \forall i\leq\min(n,m), s_i=s_i' \}.$$- 
\end{Ex}
\begin{Ex}[The Cayley graph of a group with respect to a free set of generators]
If $G$ is a finitely generated group and $S$ is a finite symmetric  $(S=S^{-1})$ set of generators, then one can define  \textit{the Cayley Graph} of $G$ with respect to $S$, as the graph whose set of vertices is $X_0=G$, and whose set of edges is $X_1:=\{(g,h)\in G\times G: \exists s\in S, gs=h\}$ (See figure \ref{Figure_Free_labelled_tree_1}).

When $G$ is a free group that is freely generated by $S$, then the Cayley graph $Cay(G,S)=(X_0,X_1)$ is a tree (\cite{Serre_1977} Proposition I.15).
\end{Ex}

\vspace{0.5\baselineskip}
\begin{Ex}[The $(n,m)$-regular tree]
Let $n,m\geq 2$ be two integers. The \textit{$(n,m)$-regular tree} is a tree such that its set of vertices can be divided into two disjoint sets $U$ and $V$ such that any vertex in $U$ possesses exactly $n$ neighbours, and all its neighbours are in $V$; and any vertex in $V$ possesses exactly $m$ neighbours, and all its neighbours are in $U$. 

Denote by $\Sigma_{\{1,...,m\}}$ the set of finite words over the alphabet $\{1,...,m\}$. The $(n,m)$-regular tree can be modelised by the pair $(X_0,X_1)$ where $X_0$ is the subset of $\Sigma_{\{0,...,n\}}^{\tikzmarknode[strike out,draw]{1}{2}}\times\Sigma_{\{1,...,m\}}$ given by 
$$ X_0=\left\lbrace 
(a_1,...,a_k),(b_1,...,b_l) :l=k \text{ or } l=k+1 
\right\rbrace;$$
and two vertices $((a_1,...,a_k),(b_1,...,b_l))$ and $((a_1',...,a_k'),(b_1',...,b_l'))$ are neighbours if, and only if
$|k-k'|+|l-l'|=1$, for all $i\leq\min(k,k')$, $a_i=a_i'$ and for all $j\leq\min(l,l')$, $b_j=b_j'$. See figure \ref{Fig:(n,m)regular_tree}
\end{Ex}

\begin{figure}		
        \centering
        \includegraphics[scale=0.2]{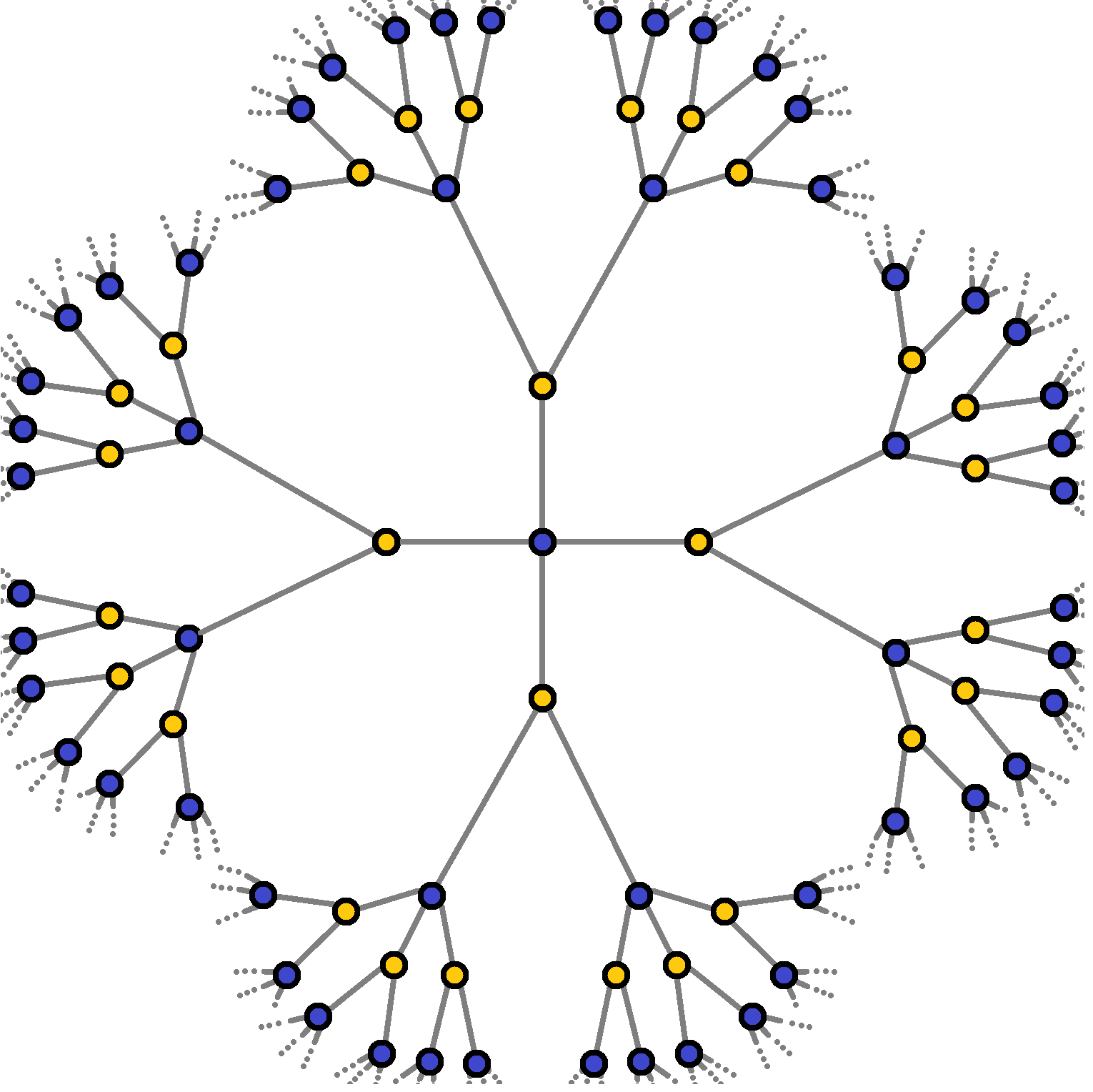}
		\caption{ The $(2,3)$-regular tree.}
\label{Fig:(n,m)regular_tree}
\end{figure}
\begin{Def}\textit{Geodesic segment}

Let $X=(X_0,X_1)$ be a tree, with set of vertices $X_0$, and set of edges $X_1$. A \textit{geodesic segment} in a tree is a finite sequence of \underline{distinct} vertices $(x_0,...,x_n)$ of $X$, such that for any integer $i\in\{1,...,n\}$, the vertex $x_{i-1}$ is a neighbour of $x_i$. By definition of a tree (Definition \ref{Def_tree}), given any two vertices $x,y\in X_0$ of the tree there is only one geodesic segment $(x_0,...,x_n)$ such that $x_0=x$ and $x_n=y$. We denote this geodesic segment by $[x,y]$. The integer $n$ will be denoted $d(x,y)=n$ and is called the \textit{combinatorial distance of $X$}\index{Combinatorial distance} between $x$ and $y$.
\end{Def}

In this text we will consider random walks on infinite tree with bounded valence. That is we will consider the movement of a particle initially set at a given vertex called \textit{based point} of the random walk, which jumps randomly from vertex to vertex following a prescribed law.

To properly define such a random walk, we will consider transition kernels and borrow vocabulary from discrete Markov chain theory\footnote{See \cite{Woess_2000} Chapter I, for more details on this topic}.

\begin{Def}\textit{Transition Kernel and Discrete Markov Chain on tree}\label{DEF:TRANSITION_KERNEL_DSCT_MKV_CHN}

Let $X_0$ be a set.	A function $p:X_0\times X_0\to [0,1]$ is called \textit{transition kernel}\index{Transition kernel} over $X_0$ if it satisfies:
	\begin{equation}
		\forall x \in X_0,\,\sum_{y\in X} p(x,y)=1.
	\end{equation}
	
	Given a transition kernel $p$ over $X_0$, for any non-negative integer $n\geq 0$, we can define another transition kernel $p^{(n)}:X_0\times X_0\to[0,1]$, that associates to any pair of vertices $(x,y)\in X_0\times X_0$, the value:
	\begin{equation}
	    p^{(n)}(x,y):=\sum_{\substack{\omega_0,...,\omega_n\in X_0 \\ \omega_0=x, \, \omega_n=y }} p(\omega_0,\omega_1)\cdots p(\omega_{n-1},\omega_n).
	\end{equation}

The pair $(X_0,p)$ is called a \textit{discrete Markov chain}\index{Markov Chain (discrete)} with \textit{state space} $X_0$. It will be said to be \textit{irreducible}\index{Irreducible (Markov chain)}, if for any pair $(x,y)\in X_0\times X_0$, there exists a non-negative integer $n$ such that $p^{(n)}(x,y)>0$.

	For an irreducible discrete Markov chain $(X_0,p)$ we define its \textit{period} $d$ as the greatest common divisor 
	\begin{equation}\index{Period}
	d:=\gcd\left\lbrace n : p^{(n)}(x,x)>0 \right\rbrace.
	\end{equation}
	If $d=1$, we say that the Markov chain $(X_0,p)$ is \textit{aperiodic}. This definition does not depend on any choice of $x$ as proven in Proposition \ref{period_is_well_defined} in the Appendix.
\end{Def}

For any pair of vertices $(x,y)\in X_0 \times X_0$, the value $p(x,y)$ is called \textit{probability of transition} from vertex $x$ to vertex $y$. Informally, if a particle is at vertex $x$ at a given time, then $p(x,y)$ gives the probability that the same particle is at vertex $y$ at the next time. The value $p^{(n)}(x,y)$ corresponds to the probability that a particle initially set in $x$, is in $y$ after exactly $n$ transitions.

Given a Markov chain $(X_0,p)$, we can associate a random walk $(Z_n)_{n\in\mathbb{N}}$. That is, a sequence of random variables with values in the set $X_0$, whose law is induced by the transition probabilities $(p(x,y))_{x,y\in X_0}$, via the set of relations - known as Markov relations,
\begin{equation}\label{Markov_relationship}\index{Markov relationship}\index{Markov relation}
\mathbb{P}(Z_{n+1}=y\mid Z_n=x)=p(x,y), \text{ when }\mathbb{P}(Z_n=x)\neq 0.
\end{equation}

To be more precise, for any Markov chain $(X_0,p)$, we provide $X_0$ with the discrete $\sigma$-algebra $\Lambda$ and with a probability measure $\theta$ on $(X_0,\Lambda)$. There exists a probability measure $\mathbb{P}^\theta$ on $X_0^\mathbb{N}$ provided with the product $\sigma$-algebra $\Lambda^{\otimes\mathbb{N}}$ such that for any integer $n\in\mathbb{N}$, if $Z_n:X_0^{\mathbb{N}}\to X_0$ denotes the map $(x_k)_{k\in\mathbb{N}}\mapsto x_n$ giving the $n$-th element of the sequence, then
\begin{gather}\label{equationpourlesmarchesaleatoires}
	\forall (x,y)\in X_0\times X_0,\, \mathbb{P}^\theta(Z_{n+1}=y \mid Z_n=x)=p(x,y),\\
	\text{when }\mathbb{P}^{\theta}(Z_n=x)>0, \text{ and }\nonumber\\
	\forall x \in X_0, \, \mathbb{P}^\theta(Z_{0}=x )=\theta(x).\nonumber
\end{gather}

In general we will take $\theta$ to be equal to $\delta_x$ the Dirac\index{Dirac measure} measure\footnote{$\delta_x:X_0\to\{0,1\}$ defined by $\delta_x(y)=0$ if $x\neq y$ and $\delta_x(x)=1$.} with support $\{x\}$. We will denote by $\mathbb{P}^x$ the associated probability measure on $X_0^\mathbb{N}$ instead of $\mathbb{P}^{\delta_x}$ and we will also say that the random walk $(Z_n)_n$ satisfying (\ref{equationpourlesmarchesaleatoires}) for $\theta=\delta_x$ is \textit{based at $x$} . We verify without difficulties that the probability $\mathbb{P}^x(Z_n=y)$ equals $p^{(n)}(x,y)$.

We give two examples of such random walk.
%\begin{Ex}[Nearest-Neighbour/Simple Random Walk on Tree]\label{Example_intro_1} 
%Let $X=(X_0,X_1)$ be a tree with set of vertices $X_0$, and set of edges $X_1$. Let $p:X_0\times X_0\to [0,1]$  be a transition kernel over $X_0$ with support $Supp(p)=\{(x,y): p(x,y)>0\}$ equal to the set of edges $X_1$. A particle following the associated random walk will jumpr from a vertex $x$ to one of its neighbours following the probability distribution $y\mapsto p(x,y)$ on the set $\mathcal{N}_x$ of neighbours of $x$. Such random walk is called a Nearest-neighbour random walk. It is irreducible of period $2$.
%
%It will be more specifically called \textit{Simple Random Walk} if for any vertex $x\in X_0$ the probability distribution $y\mapsto p(x,y)$ is the uniform probability over $\mathcal{N}_x$, that is:
%$$\forall (x,y)\in X_0\times X_0, p(x,y)=\frac{1}{\operatorname{Card}(\mathcal{N}_x)}\mathbb{1}_{X_1}(x,y).$$
%\end{Ex}
\begin{Ex}[Nearest-neighbour/Simple Random Walk on regular Tree]\label{Example_intro_1} 
Let $\mathcal{T}=(X_0,X_1)$ be a regular tree of valence $q+1\geq 2$ with set of vertices $X_0$, and set of edges $X_1$. Let $p:X_0\times X_0\to [0,1]$  be a transition kernel over $X_0$ with support $Supp(p)=\{(x,y): p(x,y)>0\}$ equal to the set of edges $X_1$.

A particle following the associated random walk will jump from a vertex $x$ to one of its neighbours, following the probability distribution $y\mapsto p(x,y)$ on the set $\mathcal{N}_x$ of neighbours of $x$. Such random walk is called a \textit{Nearest-neighbour random walk}\index{Random walk! Nearest-neighbour}. It is irreducible of period $2$.

It will be more specifically called \textit{Simple Random Walk}\index{Random walk! Simple}, if for any vertex $x\in X_0$ the probability distribution $y\mapsto p(x,y)$ is the uniform probability over $\mathcal{N}_x$, that is:
$$\forall (x,y)\in X_0\times X_0,\, p(x,y)=\frac{1}{q+1}\mathds{1}_{X_1}(x,y).$$
\end{Ex}
\begin{Ex}[Finite Range Random Walk on Free Group]\label{Example_intro_2} 
Let $G$ be a group that is freely generated by a finite symmetric set $S$. Let $\mu$ be a finitely supported probability measure over $G$ with support $Supp(\mu)=\{ x\in G :  \mu(x)>0 \}$ generating $G$ (not necessarily freely) as a semi-group.
Setting $p_\mu:G\times G\to[0,1]$ to be the transition kernel over $G$, $(x,y)\mapsto \mu(x^{-1} y)$, we get a discrete Markov chain $(G,p_\mu)$ that is irreducible.

This gives what we call a \textit{finite range random walk} on the tree $Cay(G,S)=(X_0,X_1)$. If $\mu$ has support $Supp(\mu)=S$, then we recognize a nearest-neighbour random walk on $G$ (See example \ref{Example_intro_1}). If furthermore $\mu$ is the uniform distribution over $S$ then we get the simple random walk on the $\operatorname{Card}(S)$-regular tree $Cay(G,S)$.
\end{Ex}
In the two previous examples the transition kernels $p$ satisfy two properties. The first one is that they have finite range:

We say that a transition kernel $p:X_0\times X_0\to[0,1]$ has \textit{finite range}\index{Finite range} if there exists a non-negative integer $k\geq 0$ such that 
\begin{equation}\label{Def_Finite_range_equation}
    \forall x,y\in X_0,\, ( d(x,y)> k \implies p(x,y)=0).
\end{equation}
Informally, the particle following a random walk $(Z_n)_n$ associated with $(X_0,p)$ cannot jump to vertices that are at combinatorial distance more than $k$ from its current position.

The second property satisfied is that they are invariant under a transitive action of a subgroup $\Gamma$ of the group of automorphisms the tree, defined below. I.e for any automorphism $g\in \Gamma $ and any pair of vertices $(x,y)\in X_0\times X_0$, we have
\begin{equation}
	p(gx,gy)=p(x,y).
\end{equation}

\begin{Def}\textit{Automorphism of a Tree}\index{Automorphism of a tree}

Let $X=(X_0,X_1)$ be a tree. An \textit{automorphism} of $X$ is a bijective map $g:X_0\to X_0$ that preserves edges, i.e
$$\forall (x,y)\in X_1,  (gx,gy)\in X_1.$$

We will denote by $\operatorname{Aut}(X)$ the set of all automorphism of $X$. This set endowed with the map composition is a group.

Let $\Gamma<\operatorname{Aut}(X)$ be a subgroup of $\operatorname{Aut}(X)$. We will say that $\Gamma$ \textit{acts cofinitely} on the set of vertices $X_0$, if it possesses only a finite number of orbits:
$$\operatorname{Card}(\Gamma\backslash X_0)<\infty.$$
\end{Def}
Note: In this text we will always consider \underline{closed} groups $\Gamma$ of automorphism, under the topology of the pointwise convergence. This is a technical assumption used in Lemma \ref{LCurve-Lem_of_visible_edges}) in \cite{LCurve}.
\begin{Rem}
Jacques Tits proved in \cite{Haefliger_1970} (p.188-211, Proposition 3.2 ) that there exists three types of automorphism in a tree:
\begin{Thm}[Tits 1970]
Let $X=(X_0,X_1)$ be a tree, and $g$ be an automorphism of the tree. Denote by $l:=\min_{x\in X_0}d(x,gx)$ the minimal distance $d(x,gx)$, where $x$ runs over $X_0$. Then one of the three following conditions is satisfied
\begin{enumerate}[label=\roman*)]
\item $g$ fixes at least one vertex of the tree;
\item $g$ permutes two neighbours;
\item There exists a sequence $(x_n)_{n\in\mathbb{Z}}$ of vertices of $X$ such that for every integer $n\in\mathbb{Z}$, $x_n\sim x_{n+1}$, on which $g$ acts by translation, that is for any integer $n$, $g x_n= x_{n+l}$, for some fixed integer $l\in\mathbb{Z}$ depending on $g$.
\end{enumerate}
\end{Thm}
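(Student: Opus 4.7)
The plan is to classify $g$ according to the minimal displacement $l := \min_{x \in X_0} d(x, gx)$ and the local geometry at a vertex $x_0$ that achieves this minimum. If $l = 0$, then $gx_0 = x_0$ and we are in case i); so from now on assume $l \geq 1$ and fix such an $x_0$. Write the unique geodesic segment $[x_0, gx_0] = (v_0, v_1, \ldots, v_l)$, with $v_0 = x_0$ and $v_l = gx_0$. Applying $g$, the image $g[x_0, gx_0] = (gx_0, gv_1, \ldots, g^2 x_0)$ is the unique geodesic segment from $gx_0$ to $g^2 x_0$, also of length $l$.

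The key lemma I would establish is that these two geodesics concatenate into a single geodesic of length $2l$, except in exactly the inversion situation ii). In a tree, the concatenation of two geodesics meeting at a common endpoint fails to be a geodesic precisely when they share their first edge at that endpoint; here this amounts to the condition $gv_1 = v_{l-1}$. If $l \geq 2$, this would imply $d(v_1, gv_1) = d(v_1, v_{l-1}) = l - 2 < l$, contradicting the minimality of $l$, so no backtracking occurs. If $l = 1$, the same relation reads $g^2 x_0 = x_0$, which means $g$ swaps the two neighbours $x_0$ and $gx_0$, giving case ii).

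Once case ii) is ruled out, I would construct the axis by iterating. Define $x_{kl + i} := g^k v_i$ for $k \in \mathbb{Z}$ and $0 \leq i \leq l-1$. A straightforward induction on $|k|$, applying the non-backtracking claim at each translate $g^k x_0$ (which also achieves the minimal displacement, by $g$-equivariance of the displacement function along the orbit), shows that every finite subword of $(x_n)_{n \in \mathbb{Z}}$ is a geodesic of $X$. Consecutive vertices are neighbours by construction, so $(x_n)_{n \in \mathbb{Z}}$ is a bi-infinite geodesic on which $g$ acts by translation with amplitude $l$, which is case iii).

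The main obstacle is the non-backtracking claim itself; everything else is bookkeeping and induction. Its proof is short, but the right observation is the crucial one: backtracking at $gx_0$ would shorten the displacement of the neighbour $v_1$ of $x_0$ by exactly two, so the minimality of $l$ forces either $l \leq 1$ (and thus the inversion case) or no backtracking at all, exactly the dichotomy separating ii) and iii).
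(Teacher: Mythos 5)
The paper does not actually prove this statement: it appears only as a quoted result of Tits, with a citation to \cite{Haefliger_1970} (Proposition 3.2), so there is no internal proof to compare yours with. Your argument is correct and is essentially the classical one (as in Tits' article or Serre's \emph{Trees}): with $x_0$ realizing the minimal displacement $l\geq 1$ and $[x_0,gx_0]=(v_0,\dots,v_l)$, the only way the concatenation $[x_0,gx_0]\ast[gx_0,g^2x_0]$ can fail to be geodesic is the backtracking $gv_1=v_{l-1}$, which for $l\geq 2$ gives $d(v_1,gv_1)=l-2<l$ contradicting minimality, and for $l=1$ reads $g^2x_0=x_0$, i.e.\ exactly the inversion case ii); excluding that, translating the segment by all powers of $g$ (each $g^kx_0$ has the same displacement, and backtracking at the junction $g^{k+1}x_0$ is equivalent, after applying $g^{-k}$, to backtracking at $gx_0$) yields a bi-infinite path without backtracking on which $g$ translates by $l$, which is case iii). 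The one ingredient worth stating explicitly is the tree fact you invoke twice, namely that an edge-path without immediate backtracking in a tree is injective and hence geodesic (this follows from the acyclicity axiom of Definition \ref{Def_tree}); with that spelled out, your induction is complete and the proof is sound.
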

\end{Rem}

\begin{Notation}\index{Asymptotic Expansion (Poincaré)}\label{Poincare_notation}
	We adopt the Poincaré notation for asymptotic expansions:
	
	Given a sequence $(u_n)_n$ that we are studying, $C>0$, $(c_l)_{l\geq 1}$ real constants, $R>0$, and $d$, $M$ two integers, instead of writing:
	
\textit{	 For any non-negative integer $K\geq 0$ and any $n\geq 1$, the sequence $(u_n)_n$ admits the asymptotic expansion}
\begin{equation*}
			u_n = C R^{-dn}\frac{1}{n^{M/2}}\left( 1 + \frac{c_1}{n^{1}}+...+\frac{c_K}{n^{K}}+ O\left(\frac{1}{n^{K+1}}\right)\right),
		\end{equation*}
	we will write for short:
	
	 \textit{$(u_n)_n$ admits the asymptotic expansion}
$$ u_n \sim_\infty C R^{-dn}\frac{1}{n^{M/2}}\left( 1 + \sum_{l=1}^\infty \frac{c_l}{n^{l}}\right).$$

\end{Notation}
\begin{Rem}
	As for Taylor Expansion theorem, we do not ask the series $\sum_{l\geq1} \frac{c_l}{n^{l}}$ to converge.
\end{Rem}

The theorem we prove in this text is the following one, concerning passage probability of irreducible finite range random walks on trees that admits a subgroup $\Gamma < \operatorname{Aut}(X)$, which acts cofinitely on $X$ and preserves the transition kernel:

\begin{Thm*}[Theorem \ref{Asymptotics_of_probability_Green_General}]\label{Pour page ref}

		Let $X=(X_0,X_1)$ be an infinite tree of bounded valence such that any vertex possesses at least three neighbours, let $\Gamma<\operatorname{Aut}(X)$ be a subgroup of automorphisms of the tree such that $\Gamma\backslash X_0$ is finite, let $p:X_0\times X_0\to [0,1]$ be a finite range, $\Gamma$-invariant transition kernel, making $(X_0,p)$ an irreducible Markov chain and let $k\in\mathbb{N}$ be an integer such that 
	$$\forall x,y\in X_0,\, (d(x,y)>k\Rightarrow p(x,y)=0).$$
		Denote by $d$ the period of $(X_0,p)$. And let $R>1$ be the radius of convergence of the Green's function defined below (See Definition \ref{DEF:GREEN_FUNCTION}).
		
		For all $x,y\in X_0$, there exist a non-negative integer $r\in\{0,...,d-1\}$ and constants $C>0,\, c_1, c_2,...$ such that we have the asymptotic expansion,
		$$ p^{(dn+r)}(x,y) \sim_\infty C R^{-dn}\frac{1}{n^{3/2}}\left( 1 + \sum_{l=1}^\infty\frac{c_l}{n^{l}}\right).$$
		And $p^{(dn+t)}(x,y)=0$, if $t\neq r\,[d]$.
\end{Thm*}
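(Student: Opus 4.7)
The plan is to reduce the statement to a problem about the singularities of the Green's function $G_z(x,y)=\sum_{n\geq 0} p^{(n)}(x,y) z^n$, and then apply the Tauberian machinery of \cite{RTaub} to a precisely described singular behaviour at the $d$ points of minimal modulus. First I would recall that the finite-range, $\Gamma$-invariant setting on a tree produces, via the standard first-passage decomposition along the unique geodesic between $x$ and $y$, a finite system of polynomial equations with non-negative coefficients satisfied by a family of auxiliary ``first-passage'' generating functions $(J_i)_{i\in\{1,\dots,N\}}$, of the form $J=z\psi(J)$, with $\psi$ polynomial with non-negative coefficients. This is the Lalley system of Corollary~\ref{Cor_algbraicity_of_Greens_function}, and it yields the Lalley curve $\mathcal C$ together with the parametrization $u:\mathbb{D}(0,R)\to\mathcal C$ and rational functions $g_{x,y}\in\mathbb{C}(\mathcal C)$ such that $G_z(x,y)=g_{x,y}(u(z))$.

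Second I would use the analysis of $\mathcal C$ and of the dependency digraph carried out in \cite{LCurve} (whose key input is that each vertex has valence $\geq 3$, together with the existence of a unique strongly connected component attracting all other components). This analysis should give two essential facts: (a) the parametrization $u$ extends analytically to a neighbourhood of $\overline{\mathbb{D}(0,R)}$ minus exactly $d$ points $R,R\zeta,\dots,R\zeta^{d-1}$ (with $\zeta=e^{2i\pi/d}$), and at each such point $u$ has a square-root type singularity coming from the vanishing of a jacobian determinant on the strongly connected component of the digraph; (b) the rational function $g_{x,y}$ is regular at these singular points. Combined, this yields a singular expansion of the form
$$G_z(x,y)=\sum_{j=0}^{d-1}\Bigl(A_j(x,y)+B_j(x,y)\sqrt{1-(z/R\zeta^j)^{d}}+\cdots\Bigr)$$
near each of the $d$ boundary singularities, where the ``$\cdots$'' is a convergent expansion in integer and half-integer powers of $1-(z/R\zeta^j)^{d}$.

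Third, once this singular form is in hand, the periodicity of the random walk together with the $\Gamma$-invariance forces $p^{(n)}(x,y)=0$ outside the congruence class $n\equiv r(x,y)\,[d]$ (which I would justify by an elementary cocycle argument using the periodicity cocycle of Definition~\ref{DEF:PERIODICITY_COCYCLE}), so only the subsequence $p^{(dn+r)}(x,y)$ is nontrivial. Writing the corresponding subseries as a single generating function in $w=z^{d}$, the $d$ boundary singularities collapse to the single dominant singularity $w=R^{d}$, of square-root type, admitting a full Puiseux expansion
$$\sum_{n\geq 0} p^{(dn+r)}(x,y) w^{n}=\sum_{k\geq 0}\alpha_k\bigl(1-w/R^{d}\bigr)^{k/2}+\text{analytic part}.$$
At this point the Tauberian/transfer theorem proved in \cite{RTaub} applies term by term to each half-integer power $(1-w/R^{d})^{k+1/2}$, producing contributions proportional to $R^{-dn}n^{-k-3/2}$; summing and reorganizing yields the announced expansion with leading order $C R^{-dn} n^{-3/2}$ and full correction series $\sum_{l\geq 1} c_l/n^{l}$.

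The main obstacle is step two: establishing that the boundary singularity of $u$ on $\mathcal C$ is exactly of the square-root branch type and that $g_{x,y}$ is regular there. This is where the hypothesis that each vertex has valence $\geq 3$ enters, through the ``visibility'' Lemma~\ref{LCurve-Lem_of_visible_edges}, and where the structure of the dependency digraph is decisive: one must show that the gradient condition $\det(I-z\,d\psi(J))=0$ is first-order vanishing at the dominant singularity, which reduces, via the strongly connected component, to a Perron-Frobenius argument on a primitive non-negative matrix whose spectral radius crosses $1$ transversally. All subsequent steps are essentially formal once this singular analysis is available.
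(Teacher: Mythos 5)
Your route is essentially the paper's: you build on the Lalley curve $\mathcal{C}$, the parametrization $u(z)=(z,v_z)$, the rational functions $g_{x,y}$ of Corollary \ref{Cor_algbraicity_of_Greens_function}, the structural results of \cite{LCurve} (smoothness of $\mathcal{C}$ near $\overline{\{u(z):z\in\mathbb{D}(0,R)\}}$, degree-two ramification of $\lambda$ at the $d$ points $u(R\zeta^j)$, holomorphic extension of $u$ at every other boundary point), and the Tauberian input of \cite{RTaub}. The only real difference is packaging: the paper feeds these facts into the equivariant black-box Theorem \ref{Thm_Cor_Black_box_general}, where the order-$d$ automorphism $A$ and the equivariance $u(\zeta_d z)=Au(z)$, $g_{x,y}\circ A=\zeta_d^{r}g_{x,y}$ take care of periodicity, while you make the Puiseux expansion explicit and pass to the variable $w=z^d$; combined with the cocycle argument for the vanishing of the off-congruence coefficients (Corollary \ref{Cor_Admissible_Path_Lenght}), that reorganization is legitimate.

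There is, however, one genuine gap: nothing in your sketch guarantees that the leading square-root coefficient is non-zero, i.e.\ that $C>0$. In step two you only claim that $g_{x,y}$ is \emph{regular} at the singular points; regularity of $g_{x,y}$ together with the square-root branching of $u$ yields a Puiseux expansion of $g_{x,y}\circ u$, but the coefficient of $\bigl(1-w/R^d\bigr)^{1/2}$ could a priori vanish, in which case your argument only gives $O\!\left(R^{-dn}n^{-3/2}\right)$ and an expansion starting at a higher power of $1/n$, not the asserted equivalence. What is needed is precisely hypothesis vi) of Theorem \ref{Thm_Cor_Black_box_general}, namely $D_{u(R)}g_{x,y}\neq 0$, and this is the substantive point the paper delegates to Proposition \ref{LCurve-Prop_derivative_of_g} of \cite{LCurve}. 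It is not automatic in this setting: the paper's Theorem \ref{Asymptotics_of_probability_Green_restricted_irreducible} shows that for restricted Green's functions indexed by $\Xi_{<\infty}$ the analogous non-degeneracy fails and the asymptotics are of a completely different type (rational functions with exponentially smaller terms), so the non-vanishing must be proved, not assumed. A minor presentational point besides: the displayed singular form of $G_z(x,y)$ should be stated as $d$ separate local expansions rather than a global sum over $j$, and the term-by-term transfer in $w$ requires knowing $u$ is holomorphic at all remaining boundary points, which your point (a) indeed supplies.
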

\begin{Rem}
The integer $r$ in the above asymptotic expansion is equal to $r(x,y)$ with $r:X_0\times X_0\to \mathbb{Z}/d\mathbb{Z}$ being the periodicity cocycle defined in definition \ref{DEF:PERIODICITY_COCYCLE}.
\end{Rem}

The strategy used in this text follows the same pattern as in most articles on the subject, in big lines: We study the Green's functions $G_z(x,y)$ which are the generating function of the sequence of probabilities $\left(\mathbb{P}^x(Z_n=y)\right)_n$ (See definition \ref{DEF:GREEN_FUNCTION} below and see Appendix \ref{Appendix:Greens_functions} for general properties on Green's functions) then, we analyse the behaviour of the functions near their singularities at their radius of convergence. 

\begin{Def}\textit{Green's functions}\label{DEF:GREEN_FUNCTION}

	Let $(X_0,p)$ be a discrete Markov chain. For a pair of vertices $(x,y)\in X_0 \times X_0$, the Green's function at $(x,y)$, associated with the Markov chain $(X_0,p)$ is the germ of holomorphic function in the neighbourhood of $z=0$ in $\mathbb{C}$, denoted by $z\mapsto G_z(x,y)$, given by the power series:
	\begin{equation}
		G_z(x,y)=\sum_{n\in\mathbb{N}} p^{(n)}(x,y)z^n.
	\end{equation}
	In other words, it is the generating function associated with the sequence of probabilities $\left(p^{(n)}(x,y)\right)_n$. Using the Cauchy-Hadamard formula\footnote{See\cite{Queffelec_2017} formula (19) p.54.} we get that the radius of convergence of the Green's function $z\mapsto G_z(x,y)$ is
\end{Def}
\begin{equation}\label{Cauchy_Hadamard_CV_Green}
    \frac{1}{R}=\limsup_n (p^{(n)}(x,y))^{1/n}. 
\end{equation}
$R$ does not depend on the pair $(x,y)\in X_0\times X_0$, thanks to the irreducibility assumption (See \ref{Prop_Markov_Ired_et_rayon_de_cv} in the Appendix).

\begin{Notation}\label{no_name_label}\index{$\mathbb{D}(0,R)=\{z\in\mathbb{C}: |z|\leq R\}$}
	For $z_0\in\mathbb{C}$ and $r\in ]0,\infty]$, we will denote $\mathbb{D}(z_0,r)$ (respectively $\overline{D}(z_0,r)$) the open (respectively closed) disk of center $z_0$ and radius $r$ in $\mathbb{C}$ that is to say:
	$$\mathbb{D}(z_0,r):\{z\in\mathbb{C}: |z-z_0|<r\},$$
	$$\overline{\mathbb{D}}(z_0,r):\{z\in\mathbb{C}: |z-z_0|\leq r\}.$$
\end{Notation}

\underline{Sketch of the method used to prove Theorem \ref{Asymptotics_of_probability_Green_General}}:

Following the work of Steven P. Lalley for computing Green's function of irreducible finite-range random walks on free groups, we construct an algebraic curve $\mathcal{C}=\{(z,J)\in\mathbb{C}\times\mathbb{C}^N : J=z\psi(J)\}$ for some non-negative integer $N$, and some polynomial map $\psi:\mathbb{C}^N\to\mathbb{C}^N$, that contains the origin $(0,0_N)\in\mathbb{C}\times\mathbb{C}^N$ and is a smooth complex curve in a neighbourhood of the origin. Such algebraic curve always comes with a projection $\lambda:(z,J)\mapsto z$ on the first coordinates, that is actually a ramified covering $\mathcal{C}\to\mathbb{C}$ (See \cite{Farkas-Kra_1992} Chapter I). 

 Besides the above curve $\mathcal{C}$ will satisfy that for any pair $(x,y)$ of vertices of the tree, the Green's function $G_z(x,y)$ factors through a section $u:\mathbb{D}(0,R)\to \mathcal{C}$ of $\lambda$ (i.e $\lambda\circ u = Id_{\mathbb{D}(0,R)}$) in a neighbourhood of the origin, with a rational function $g_{x,y}\in\mathbb{C}(\mathcal{C})$ over $\mathcal{C}$, that is regular in a neighbourhood of $\overline{\{u(z):z\in\mathbb{D}(0,R)\}}$.
 
$$G_z(x,y)=g_{x,y}\circ u(z).$$

A key argument (in particular to justify the compactness of the set $\overline{\{u(z):z\in\mathbb{D}(0,R)\}}$) used in this text is the finiteness of the Green's function at the radius of convergence:

\begin{Thm}\label{Finitude_of_Green_functions}(See \cite{Woess_2000} Chapter 2.)~
			
			For any finitely supported irreducible Markov chain $(\mathcal{X}_0,p)$ on an infinite tree such that all of its vertices has at least three neighbours, and such that there exists a subgroup of automorphisms $\Gamma$ of the tree that preserves $p$ and acts cofinitely on $\mathcal{X}_0$, if $\mathcal{R}=\left(\displaystyle\limsup_{n\to\infty} p^{(n)}(x,y)^{\frac{1}{n}}\right)^{-1}$ is the radius of convergence of Green's functions $z\mapsto G_z(x,y)$, then
			$$\forall x,y\in X_0, \; G_{\mathcal{R}}(x,y)<\infty$$
\end{Thm}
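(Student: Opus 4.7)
The plan is to use the classical first-passage decomposition to reduce the problem to a geometric/spectral estimate that exploits the tree structure. For any vertex $y$, introduce the first-return generating series $U_z(y) = \sum_{n \geq 1} u^{(n)}(y)\,z^n$ with $u^{(n)}(y) = \mathbb{P}^y(Z_n = y,\ Z_j \neq y \text{ for } 1 \leq j < n)$, and for $x \neq y$ the first-hit generating series $F_z(x,y) = \sum_{n \geq 0} f^{(n)}(x,y)\,z^n$. The standard identities
$$G_z(y,y) = \frac{1}{1 - U_z(y)}, \qquad G_z(x,y) = F_z(x,y)\,G_z(y,y),$$
together with the fact that all three series have non-negative coefficients (so extend monotonically to $z = \mathcal{R}$ in $[0,+\infty]$ by Abel's theorem), reduce the finiteness of $G_\mathcal{R}(x,y)$ to two ingredients: (a) $U_\mathcal{R}(y) < 1$ strictly, and (b) $F_\mathcal{R}(x,y) < \infty$.

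For (a) I would exploit the tree's branching structure. Let $k$ be the range of $p$, and let $B = B(y,k)$. In a tree, once an excursion starting at $y$ leaves $B$ it enters a single subtree attached to $B$ and must remain there until it re-enters $B$, since by finite range no "jumping between subtrees across $B$" is possible. The $\Gamma$-invariance together with cofiniteness of the action reduce the analysis to finitely many orbit-types of such subtrees and of entry/exit configurations on $B$. The valence-$\geq 3$ hypothesis then ensures that every branching point of every subtree still has at least two outgoing directions, so the excursion has a uniform positive probability of committing to one branch and escaping to infinity at every step. Transferring this escape estimate to an upper bound on the first-return probability-generating function, uniform in $z \in [0, \mathcal{R}]$, yields $U_\mathcal{R}(y) < 1$.

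For (b), since $X$ is a tree there is a unique geodesic $x = x_0, x_1, \ldots, x_n = y$ between $x$ and $y$, and any first-passage path from $x$ to $y$ must cross each $x_i$ in order. Combined with the finite range of $p$, this allows one to decompose $F_z(x,y)$ into a product of "cross an edge" first-passage functions of finitely many $\Gamma$-orbit types; each such factor has the same shape as was handled in step (a) and is therefore finite at $z = \mathcal{R}$.

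The main obstacle is step (a): obtaining the strict inequality $U_\mathcal{R}(y) < 1$ at the boundary point $z = \mathcal{R}$, not merely at $z < \mathcal{R}$. For $z < \mathcal{R}$ the finiteness of $G_z(y,y)$ trivially gives $U_z(y) < 1$; the difficulty is to quantify the gap and preserve it in the limit $z \nearrow \mathcal{R}$. This is precisely where the combination of the two hypotheses — uniform branching (valence $\geq 3$) and $\Gamma$-cofiniteness — is essential: together they produce a uniform quantitative escape estimate that extends continuously up to $z = \mathcal{R}$, rather than degenerating there.
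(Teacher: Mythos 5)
Your reduction to the two claims (a) $U_{\mathcal{R}}(y)<1$ and (b) $F_{\mathcal{R}}(x,y)<\infty$ is fine as bookkeeping (and (b) is actually superfluous: once $G_{\mathcal{R}}(y,y)<\infty$ for every $y$, irreducibility plus Chapman--Kolmogorov gives $p^{(n_1)}(y,x)\,p^{(n)}(x,y)\le p^{(n+n_1)}(y,y)$, hence $G_{\mathcal{R}}(x,y)\le \mathcal{R}^{-n_1}p^{(n_1)}(y,x)^{-1}G_{\mathcal{R}}(y,y)<\infty$, so the geodesic-crossing decomposition is not needed). The genuine gap is exactly the step you flag at the end, and the mechanism you propose for it cannot work as stated. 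A ``uniform positive probability of committing to a branch and escaping'' is an estimate at $z=1$: it proves transience, i.e.\ $U_1(y)<1$. At $z=\mathcal{R}>1$ the weights $w_z(\gamma)=z^{l(\gamma)}\prod p(\omega_{i-1},\omega_i)$ are no longer probabilities, and positivity of an escape probability gives no control on the exponential decay rate of the first-return sequence, which is what $U_{\mathcal{R}}(y)<1$ requires. Concretely, the biased nearest-neighbour walk on $\mathbb{Z}$ with $p(x,x+1)=p>1/2$ is transient and, by transitivity, has a uniform positive escape probability at every vertex and every step; nevertheless $p^{(2n)}(0,0)\sim c\,\mathcal{R}^{-2n}n^{-1/2}$, so $G_{\mathcal{R}}(0,0)=\infty$ and $U_{\mathcal{R}}(0)=1$. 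So ``uniform escape'' is not the operative hypothesis; what separates the tree with valences $\ge 3$ from this example is non-amenability, and your sketch never uses it in any quantitative form. The sentence asserting that valence $\ge 3$ together with cofiniteness ``produce a uniform quantitative escape estimate that extends continuously up to $z=\mathcal{R}$'' is precisely the statement to be proven, not an argument for it.

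For comparison, the paper does not give an independent proof either: it invokes the known theorem that a finite-range, $\Gamma$-invariant random walk (with $\Gamma$ acting cofinitely) on a non-amenable graph is $\mathcal{R}$-transient, citing Woess (Theorem II.12.5 combined with the method of Proposition II.7.4) and Lalley's Proposition 2.1, which ultimately rest on a strong isoperimetric inequality / amenability argument in the spirit of Guivarc'h, not on escape probabilities. If you want a self-contained proof along your lines, you must inject that ingredient explicitly -- e.g.\ a Kesten-type bound coming from the strong isoperimetric inequality of a tree with all valences $\ge 3$, or the argument that $\mathcal{R}$-recurrence of a quasi-transitive chain forces amenability -- and once you do, it is that input, rather than steps (a) and (b), that carries the theorem.
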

\begin{Rem}
This theorem is actually true for any finite range random walk on infinite non-amenable graph. And the same applies to first-passage generating functions (Definition \ref{Def_Green_function_first_pass}) $F_{R}(x,y)$ and to restricted Green's functions (Definition \ref{Def_Green_Restricted}) $G_R(x,y;\Omega)$, for $\Omega$ a subset of $X_0$.

This result is a slight generalisation of Theorem II.12.5 in \cite{Woess_2000} combined with the method described in Proposition II.7.4 in \cite{Woess_2000}. See also Proposition 2.1 in \cite{Lalley_1993}).
\end{Rem}

Studying the ramification properties of $\mathcal{C}$ above $\mathbb{C}$, with respect to the map $\lambda:(z,J)\mapsto z$, we can give a precise description of the singularities of its sections.
In our case we prove that $\mathcal{C}$ ramifies at the $d$ points $u(R)$, $u(R e^{2i\pi/d})$,..., $u(R e^{(d-1)2i\pi/d})$ in $\mathcal{C}$ with ramifications of degree $2$. Proving this is a central part to generalise the work of Steven Lalley and it is carefully done in section \ref{LCurve-Section Structure of C} in \cite{LCurve}.
Then applying Tauberian theorems from \cite{RTaub}, we obtain the desired asymptotics, the details are given in section \ref{Section_Asymptotics}.

\section{Admissible Path and Green's functions}
	
In what follow we consider a family of restricted Green's functions. At this point we invite the reader to read subsection \ref{Appendix:Greens_functions} in the Appendix, giving general results about Green's functions. The presentation that follows is borrowed from \cite{Gouezel-Lalley_2013}.

\begin{Def}\textit{Admissible path} \index{Admissible path}\index{$\mathcal{P}h(x,y)$}

	Let $X_0$ be a set, and let $(X_0,p)$ be a discrete Markov chain. A $p$-\textit{admissible path} in $X_0$ is a tuple $\gamma=(\omega_0,...\omega_n)$ of elements in $X_0$, such that for any $i\in\{1,...,n\}$, $p(\omega_{i-1},\omega_i)>0$. The integer $n$ is called the \textit{length} of the $p$-admissible path $\gamma$, and is denoted $l(\gamma)$. The element $\omega_0$ will be called \textit{the source} of the path, and $\omega_n$ \textit{the aim} of the path.

	Denote by $\mathcal{P}h^{(p)}(X_0)$ (or simply $\mathcal{P}h(X_0)$ if there is no ambiguity) the set of $p$-admissible paths in $X_0$ with respect to the transition kernel $p:X_0\times X_0\to[0,1]$, and by $\mathcal{P}h^{(p)}(x,y)$ (or $\mathcal{P}h(x,y)$), the set of $p$-admissible paths in $X_0$, with source $x$ and aim $y$: 
	$$\mathcal{P}h(x,y):=\Big\{(\omega_0,...\omega_n)\text{ : }n\in\mathbb{N}, \, \forall 1\leq i \leq n,\,  p(\omega_{i-1},\omega_{i})>0,\, \omega_0=x \text{ and } \omega_n=y \Big\}.$$
	
	And for $n$ a non-negative integer, we denote by $\mathcal{P}h^{(p)}_n(x,y)$ (or simply $\mathcal{P}h_n(x,y)$), the set of $p$-admissible paths in $X_0$ with source $x$, aim $y$ and with length equals to $n$.
\end{Def}

\begin{Rem}
	A $p$-admissible path of length zero identifies as an element $\omega_0$ in $X_0$.
\end{Rem}

\begin{Def}\textit{Concatenation of admissible paths}\index{Concatenation of admissible paths}

	Let $(X_0,p)$ be a Markov chain. Let $\gamma_1 =(\omega_0,... , \omega_{n_1})$ and $\gamma_2=(\omega_0',...,\omega_{n_2}')$ be two $p$-admissible paths in $X_0$. If $\omega_{n_1}=\omega_0'$ then we can define the \textit{concatenation of the $p$-admissible paths} $\gamma_1$ and $\gamma_2$, denoted $\gamma_1\ast\gamma_2$, where $\gamma_1\ast\gamma_2$ is the $p$-admissible path $(\omega_0,...,\omega_{n_1},\omega_1',...,\omega_{n_2}')$ of length $l(\gamma_1\ast\gamma_2)=l(\gamma_1)+l(\gamma_2)$.
\end{Def}
\begin{Rem} 
	Let $(X_0,p)$ be a discrete Markov chain, $(Z_n)_n$ be an associated random walk and let $\gamma=(\omega_0,...,\omega_n)\in\mathcal{P}h(x,y)$ be a $p$-admissible path. The probability that the random walk based at $\omega_0$ follows the $p$-admissible path $\gamma$ is given by: $$\mathbb{P}^{\omega_0}(Z_0=\omega_0,Z_1=\omega_1,...,Z_n=\omega_n)=p(\omega_0,\omega_1)\cdot\cdot\cdot p(\omega_{n-1},\omega_n).$$
	This follows from Markov relations (\ref{Markov_relationship}):
	\begin{align*}
		\mathbb{P}^{\omega_0}(Z_0=\omega_0,Z_1=\omega_1,..., Z_m=\omega_m)
		&=\prod_{j=0}^{n-1}\mathbb{P}^{\omega_0}(Z_{j+1}=\omega_{j+1}|Z_j=\omega_j)\\
		&=\prod_{j=0}^{n-1}p(\omega_j,\omega_{j+1}).
	\end{align*}
\end{Rem}

It justifies the following definition (introduced by S. Gouëzel and S. P. Lalley in \cite{Gouezel-Lalley_2013} ):

\begin{Def}\label{Def_function_path_weight}\textit{The path weight function $w_r$}\index{$w_r$}

	Let $(X_0,p)$ be a discrete Markov chain.
	For any real number $r\geq 0$, we define the function $w_r:\mathcal{P}h(X_0)\rightarrow [0,\infty[$ which, for any $p$-admissible path $\gamma =(\omega_1,...\omega_n)$ associates the value:
	$$w_r(\gamma):= r^n\left(\prod_{i=0}^{n-1}p(\omega_i,\omega_{i+1})\right)$$
 	This function is multiplicative with respect to the concatenation of $p$-admissible paths ($w_r(\gamma_1\ast\gamma_2)=w_r(\gamma_1)w_r(\gamma_2)$) and we call the function $r\mapsto w_r(\gamma)$, \textit{the path weight of} $\gamma$.
\end{Def}

\begin{Prop}[Gouezel-Lalley's path weight formula]\label{Formula_sum_weight}~

	Let $(X_0,p)$ be a discrete Markov chain. Green's functions verify for all real number $r\geq 0$, and for any $x,y$ in $X_0$:
	\begin{equation}\label{SumWeights}
		 G_r(x,y)=\sum_{\gamma\in\mathcal{P}h(x,y)} w_r(\gamma).
	\end{equation}
\end{Prop}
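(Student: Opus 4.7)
The statement is essentially a rearrangement of definitions, so the plan is simply to unpack them carefully and justify the reindexing.

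First, I would start from the Definition \ref{DEF:GREEN_FUNCTION} of the Green's function, writing
\begin{equation*}
G_r(x,y) = \sum_{n \in \mathbb{N}} p^{(n)}(x,y)\, r^n.
\end{equation*}
Next, I would substitute in the definition of $p^{(n)}(x,y)$ from Definition \ref{DEF:TRANSITION_KERNEL_DSCT_MKV_CHN}, namely
\begin{equation*}
p^{(n)}(x,y) = \sum_{\substack{\omega_0,\dots,\omega_n \in X_0 \\ \omega_0=x,\ \omega_n=y}} p(\omega_0,\omega_1)\cdots p(\omega_{n-1},\omega_n).
\end{equation*}

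Then the key observation is that any tuple $(\omega_0,\dots,\omega_n)$ for which at least one factor $p(\omega_{i-1},\omega_i)$ vanishes contributes $0$ to the sum; hence one can restrict the summation to those tuples where \emph{every} factor is strictly positive, that is, to the set $\mathcal{P}h_n(x,y)$ of $p$-admissible paths from $x$ to $y$ of length $n$. This step turns the sum into
\begin{equation*}
p^{(n)}(x,y) = \sum_{\gamma \in \mathcal{P}h_n(x,y)} \prod_{i=0}^{n-1} p(\omega_i,\omega_{i+1}).
\end{equation*}
Multiplying by $r^n$ and recognizing the resulting expression as exactly $w_r(\gamma)$ (Definition \ref{Def_function_path_weight}) gives
\begin{equation*}
r^n\, p^{(n)}(x,y) = \sum_{\gamma \in \mathcal{P}h_n(x,y)} w_r(\gamma).
\end{equation*}

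Finally, I would sum over $n \in \mathbb{N}$ and exchange the two summations. Since $\mathcal{P}h(x,y) = \bigsqcup_{n \geq 0} \mathcal{P}h_n(x,y)$ is a disjoint union indexed by length, and since all terms $w_r(\gamma) \geq 0$ are non-negative for $r \geq 0$, Tonelli's theorem for series with non-negative terms justifies the exchange (both sides lie in $[0,+\infty]$ and coincide whether finite or infinite). This yields
\begin{equation*}
G_r(x,y) = \sum_{n \in \mathbb{N}} \sum_{\gamma \in \mathcal{P}h_n(x,y)} w_r(\gamma) = \sum_{\gamma \in \mathcal{P}h(x,y)} w_r(\gamma),
\end{equation*}
which is the desired identity. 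There is no real obstacle here: the proof is a pure definition-chase, and the only point requiring any justification at all is the interchange of sums, which is routine thanks to non-negativity.
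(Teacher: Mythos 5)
Your proposal is correct and follows essentially the same route as the paper: expand $G_r(x,y)$ via the definition of $p^{(n)}(x,y)$, discard the vanishing terms to restrict to $p$-admissible paths of length $n$, identify the summand with $w_r(\gamma)$, and regroup using non-negativity (the paper defers this last interchange to a remark invoking Fubini--Tonelli, exactly as you do). No gaps.
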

\begin{proof}
    By definition 
        \begin{align*}
		G_r(x,y)&=\sum_{n=0}^\infty p^{(n)}(x,y)r^n\\
		&=\sum_n\left(\sum_{\substack{(\omega_0,...\omega_{n})\in (X_0)^{n+1}\\\omega_0=x,\, \omega_n=y}}p(\omega_0,\omega_1)\cdots p(\omega_{n-1},\omega_n)r^n\right)\\
		&=\sum_n\left(\sum_{\gamma\in\mathcal{P}h_n(x,y)}w_r(\gamma)\right).
	\end{align*}
Where in the last equality we used the definition of the path-weight function and the fact that for any finite sequence $(\omega_0,...,\omega_n)\in (X_0)^{n+1}$, we have $p(\omega_0,\omega_1)\cdots p(\omega_{n-1},\omega_n)$ is non-zero if, and only if, $(\omega_0,...,\omega_n)$ is a $p$-admissible path.
\end{proof}

\begin{Rem}\label{Rem_sum_weight}

	All the equalities in this proof are legitimized by the non-negativity of the coefficients of the series (and by Fubini-Tonelli's theorem, for example). Actually, for $r\geq 0$, if the sum in the equation (\ref{SumWeights}) is finite, then this equation remains valid for any complex number $z$ instead of $r$, of modulus $|z|\leq r$.
\end{Rem}

As a Corollary of Proposition \ref{Existence_of_a_period_cocycle} in the Appendix, we have:
	\begin{Cor}\label{Cor_Admissible_Path_Lenght}
		Let $(X_0,p)$ be an irreducible discrete Markov chain with period $d\geq 1$, and denote by $r:X_0\times X_0\to\mathbb{Z}/d\mathbb{Z}$, the associated periodicity cocycle (Definition \ref{DEF:PERIODICITY_COCYCLE}).
		
		For any pair $(x,y)\in X_0\times X_0$, and for any $p$-admissible path $\gamma$ in $\mathcal{P}h(x,y)$, we have
		$$l(\gamma)\equiv r(x,y) \, [d].$$	
	\end{Cor}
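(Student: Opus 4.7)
The plan is to deduce this corollary directly from Proposition~\ref{Existence_of_a_period_cocycle} in the Appendix, which (as the name suggests) should construct the periodicity cocycle $r$ precisely as the unique map $X_0\times X_0\to\mathbb{Z}/d\mathbb{Z}$ whose defining property is that every $p$-admissible path from $x$ to $y$ has length congruent to $r(x,y)$ modulo $d$. Under this reading, the corollary is essentially a restatement of the defining property; the only work is to check the well-definedness of the residue class $l(\gamma)\bmod d$ as $\gamma$ ranges over $\mathcal{P}h(x,y)$.

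To make this transparent I would first isolate the elementary observation that every element of the set $S_x := \{n\in\mathbb{N}: p^{(n)}(x,x)>0\}$ is divisible by $d := \gcd(S_x)$, which is immediate from the definition of gcd (and Proposition~\ref{period_is_well_defined} ensures $d$ does not depend on the choice of $x$). Then, given any two admissible paths $\gamma_1,\gamma_2\in\mathcal{P}h(x,y)$, I would use irreducibility to produce an admissible path $\sigma\in\mathcal{P}h(y,x)$: indeed $p^{(m)}(y,x)>0$ for some $m$, and expanding $p^{(m)}$ as a sum over length-$m$ sequences exhibits at least one such $\sigma$. The concatenations $\gamma_i\ast\sigma$ are closed admissible paths at $x$, so their lengths $l(\gamma_i)+l(\sigma)$ both lie in $S_x$ and are therefore multiples of $d$. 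Subtracting gives $l(\gamma_1)\equiv l(\gamma_2)\pmod d$, so the residue class $l(\gamma)\bmod d$ indeed depends only on the pair $(x,y)$. Calling this common value $r(x,y)$ coincides with the periodicity cocycle from the Appendix.

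There is no genuine obstacle here: the content is essentially bookkeeping combined with the observation that a gcd divides every element of its defining set. The only mildly delicate points are the existence of the return path $\sigma$ (where irreducibility enters) and the independence of $d$ from the base vertex, both of which are handled upstream.
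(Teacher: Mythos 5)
Your argument is sound but takes a genuinely different route from what the paper intends, and it has one dangling step. The paper's implied proof is a one-line telescope: Proposition~\ref{Existence_of_a_period_cocycle} supplies a map $r_0:X_0\to\mathbb{Z}/d\mathbb{Z}$ with $r_0(\omega_{i})=r_0(\omega_{i-1})+1$ whenever $p(\omega_{i-1},\omega_i)>0$, so summing along $\gamma=(\omega_0,\dots,\omega_n)\in\mathcal{P}h(x,y)$ gives $n\equiv r_0(y)-r_0(x)=r(x,y)\,[d]$. You instead bypass the one-step cocycle and show directly that $l(\gamma)\bmod d$ is independent of $\gamma\in\mathcal{P}h(x,y)$, by splicing in a return path and using that $d$ divides every $n$ with $p^{(n)}(x,x)>0$; this is essentially the argument that already lives inside the paper's proof of Proposition~\ref{Existence_of_a_period_cocycle}, re-derived inline. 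Both routes work; yours is more self-contained, the paper's is shorter once the cocycle has been built.

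Two points need tightening. First, you misread Proposition~\ref{Existence_of_a_period_cocycle}: it asserts only the one-step increment property $p(x,y)>0\Rightarrow r_0(y)=r_0(x)+1$, not, as you claim, that ``every admissible path from $x$ to $y$ has length congruent to $r(x,y)$ modulo $d$'' --- that stronger statement is exactly the Corollary you are proving, not its hypothesis. Second, your final sentence, that the common residue ``coincides with the periodicity cocycle from the Appendix,'' is asserted but never justified, and it is the one non-routine step left once well-definedness is known. To close it, unpack the construction: the paper defines $r_0(x)$ as the class of any $m$ with $p^{(m)}(x_0,x)>0$, which is precisely your common residue for $\mathcal{P}h(x_0,x)$; then for $\gamma\in\mathcal{P}h(a,b)$ of length $n$, prepending a path $x_0\to a$ of length $m_a$ produces a path $x_0\to b$ of length $m_a+n$, whence $r_0(b)\equiv r_0(a)+n$ and $n\equiv r(a,b)\,[d]$. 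Without some such line your argument yields a well-defined residue class but never ties it to the $r$ of Definition~\ref{DEF:PERIODICITY_COCYCLE}.
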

		\begin{Cor}\label{Cor_operateur_diagonal}
 		Let $(X_0,p)$ be an irreducible discrete Markov chain and let $r$ be the periodicity cocycle from definition \ref{DEF:PERIODICITY_COCYCLE}. If we denote by $\zeta_d$ the primitive square root of unity, $\zeta_d=\exp(\frac{2i\pi}{d})$, then for any vertices $a,b\in X_0$ and any $p$-admissible path $\gamma\in\mathcal{P}h(a,b)$, we have
 		\begin{equation}\label{eq-t21}
     		\forall z\in\mathbb{C},\, w_{\zeta_d z}(\gamma)=\zeta_d^{r(a,b)} w_z(\gamma),
 		\end{equation}
 		and
 		\begin{equation}\label{eq-t22}
 		\forall z \in \mathbb{D}(0,R),\, G_{\zeta_d z}(a,b)=\zeta_d^{r(a,b)} G_z(a,b).
 		\end{equation}
 	\end{Cor}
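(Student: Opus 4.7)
The plan is to derive both identities as direct consequences of Corollary \ref{Cor_Admissible_Path_Lenght} together with the definition of $w_z$ and the Gouëzel--Lalley path weight formula (Proposition \ref{Formula_sum_weight}). I do not expect any serious obstacle here; it is essentially a bookkeeping argument combining $\zeta_d^d = 1$ with the congruence $l(\gamma) \equiv r(a,b) \, [d]$.

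First I would prove \eqref{eq-t21}. Fix $a,b \in X_0$ and $\gamma = (\omega_0, \ldots, \omega_n) \in \mathcal{P}h(a,b)$, where $n = l(\gamma)$. From Definition \ref{Def_function_path_weight}, for any complex $z$,
\begin{equation*}
    w_{\zeta_d z}(\gamma) = (\zeta_d z)^n \prod_{i=0}^{n-1} p(\omega_i, \omega_{i+1}) = \zeta_d^n \, w_z(\gamma).
\end{equation*}
By Corollary \ref{Cor_Admissible_Path_Lenght} we have $n \equiv r(a,b) \, [d]$, so writing $n = qd + r(a,b)$ for some $q \in \mathbb{N}$ and using $\zeta_d^d = 1$ gives $\zeta_d^n = \zeta_d^{r(a,b)}$. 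This yields \eqref{eq-t21} for all $z \in \mathbb{C}$.

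Next I would deduce \eqref{eq-t22}. For $z \in \mathbb{D}(0,R)$, set $r_0 = |z| < R$; then $G_{r_0}(a,b) < \infty$, so by Proposition \ref{Formula_sum_weight} and Remark \ref{Rem_sum_weight} the series
\begin{equation*}
    G_z(a,b) = \sum_{\gamma \in \mathcal{P}h(a,b)} w_z(\gamma)
\end{equation*}
converges absolutely, and the same holds at $\zeta_d z$ since $|\zeta_d z| = r_0$. Applying \eqref{eq-t21} term by term and factoring out the constant $\zeta_d^{r(a,b)}$ gives
\begin{equation*}
    G_{\zeta_d z}(a,b) = \sum_{\gamma \in \mathcal{P}h(a,b)} w_{\zeta_d z}(\gamma) = \zeta_d^{r(a,b)} \sum_{\gamma \in \mathcal{P}h(a,b)} w_z(\gamma) = \zeta_d^{r(a,b)} G_z(a,b),
\end{equation*}
which is precisely \eqref{eq-t22}. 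The only subtlety worth flagging in the write-up is the justification of the rearrangement, which is legitimised by absolute convergence as noted in Remark \ref{Rem_sum_weight}.
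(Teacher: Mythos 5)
Your proof is correct and follows exactly the route taken in the paper: equation \eqref{eq-t21} from the definition of $w_z$ together with Corollary \ref{Cor_Admissible_Path_Lenght}, and then \eqref{eq-t22} from \eqref{eq-t21} and the path-weight formula \eqref{SumWeights}. You have merely spelled out the convergence bookkeeping (via Remark \ref{Rem_sum_weight}) that the paper leaves implicit.
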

 	\begin{proof}
 		Equality (\ref{eq-t21}) is an immediate consequence of the definition of the path weight function $w_z$ and of the Corollary \ref{Cor_Admissible_Path_Lenght}.
 		
 		The equality (\ref{eq-t22}) is a consequence of (\ref{eq-t21}), and of formula (\ref{SumWeights}) giving the Green's function $G_z(a,b)$ as a sum of path weight.
 	\end{proof}

\vspace{\baselineskip}

\begin{Def}\label{Restricted_admissible_paths}\textit{Restricted admissible paths}

	Let $(X_0,p)$ be a discrete Markov chain and $\Omega$ be a subset of $X_0$. We denote by $\mathcal{P}h(x,y;\Omega)$, the set of $p$-admissible paths with source $x$ and aim $y$ for which all intermediates vertices belong to $\Omega$ ( i.e $\gamma=(\omega_0,\omega_1,...\omega_n)\in\mathcal{P}h(x,y)$ such that $n\in\mathbb{N}$ and $\forall 1\leq i\leq n-1,\,\omega_i\in\Omega).$ 
\end{Def}

\begin{Prop}
	Let $(X_0,p)$ be a discrete Markov chain and $\Omega$ be a subset of $X_0$. The Green's functions restricted to $\Omega$ verify for all $r\geq 0$:
	\begin{equation}\label{SumRestrictedWeight}
		\forall x,y\in X_0,\; G_r(x,y;\Omega)=\sum_{\gamma\in\mathcal{P}h(x,y;\Omega)} w_r(\gamma)
	\end{equation}
	with $w_r$ the weight function (Definition \ref{Def_function_path_weight}).
\end{Prop}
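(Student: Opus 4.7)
The plan is to mimic the proof of Proposition \ref{Formula_sum_weight} verbatim, with the only modification being that every sum over paths is now restricted to paths whose intermediate vertices lie in $\Omega$. Concretely, the restricted Green's function $G_r(x,y;\Omega)$ is defined (Definition \ref{Def_Green_Restricted}) as the generating function
$$G_r(x,y;\Omega)=\sum_{n\geq 0}p_{\Omega}^{(n)}(x,y)\,r^n,$$
where $p_\Omega^{(n)}(x,y)$ denotes the sum of the products $p(\omega_0,\omega_1)\cdots p(\omega_{n-1},\omega_n)$ over all tuples $(\omega_0,\ldots,\omega_n)\in X_0^{n+1}$ with $\omega_0=x$, $\omega_n=y$ and $\omega_i\in\Omega$ for every $1\leq i\leq n-1$.

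First I would unfold this definition, obtaining a double sum (over $n$ and over admissible tuples of length $n$) of weighted transition products times $r^n$. The second step is to observe that a product $p(\omega_0,\omega_1)\cdots p(\omega_{n-1},\omega_n)$ is non-zero if and only if the tuple $(\omega_0,\ldots,\omega_n)$ is a $p$-admissible path, so the inner sum may be restricted to $\mathcal{P}h_n(x,y;\Omega)$ without changing its value. Third, by Definition \ref{Def_function_path_weight} the quantity $r^n p(\omega_0,\omega_1)\cdots p(\omega_{n-1},\omega_n)$ is exactly $w_r(\gamma)$ for $\gamma=(\omega_0,\ldots,\omega_n)$, so we recognise the inner sum as $\sum_{\gamma\in\mathcal{P}h_n(x,y;\Omega)}w_r(\gamma)$. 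Finally, swapping the two summations (justified by non-negativity of all terms, via Fubini-Tonelli) yields
$$G_r(x,y;\Omega)=\sum_{n\geq 0}\sum_{\gamma\in\mathcal{P}h_n(x,y;\Omega)}w_r(\gamma)=\sum_{\gamma\in\mathcal{P}h(x,y;\Omega)}w_r(\gamma),$$
which is the required identity.

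There is no real obstacle: the proof is purely book-keeping and inherits its validity from the unrestricted case. The only point worth checking carefully is the boundary convention on $\Omega$ (namely that $x$ and $y$ themselves are \emph{not} required to belong to $\Omega$, only the intermediate vertices $\omega_1,\ldots,\omega_{n-1}$), which must match the definition of $G_r(x,y;\Omega)$ in Definition \ref{Def_Green_Restricted}; once this convention is aligned on both sides, the computation above is immediate. As in Remark \ref{Rem_sum_weight}, the equality then extends to any complex $z$ with $|z|\leq r$ whenever the right-hand side converges at $r$.
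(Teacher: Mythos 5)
Your argument is exactly what the paper does: it recalls the definition of $p^{(n)}(x,y;\Omega)$ and then observes that the proof of Proposition \ref{Formula_sum_weight} carries over mutatis mutandis, with sums over $\mathcal{P}h_n(x,y)$ replaced by sums over $\mathcal{P}h_n(x,y;\Omega)$. Your extra remarks on the boundary convention (only intermediate vertices are required to lie in $\Omega$) and the Fubini--Tonelli justification are sound and match the paper's intent.
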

\begin{proof}
 Recall that for any $x,y\in X_0$, we have
	    $$ p^{(n)}(x,y;\Omega):=\sum_{\omega_0,...,\omega_n} p(\omega_0,\omega_1)\cdots p(\omega_{n-1},\omega_n),$$
	where the sum is taken over all $(n+1)$-tuple $(\omega_0,...,\omega_n)$ of elements in $X_0$ such that $\omega_0=x$, $\omega_n=y$ and for any $i\in\{1,...,n-1\}$, $\omega_i$ belongs to $\Omega$.
	
		The proof is the same mutatis-mutandis as the proof of the Proposition \ref{Formula_sum_weight}.
\end{proof}
\begin{Rem}
	The same remark as in \ref{Rem_sum_weight} applies here, namely: for $r\geq 0$, if the sum in equality (\ref{SumRestrictedWeight}) is finite then for any complex number $z$ of modulus $|z|\leq r$, we have
	$$G_z(x,y;\Omega)=\sum_{\gamma\in\mathcal{P}h(x,y;\Omega)} w_z(\gamma).$$
\end{Rem}
\vspace{\baselineskip}
\section{Decomposition of \texorpdfstring{$p$}{p}-admissible path in a Tree}\label{Section_Decomposition_of_admissible_paths}
In the case of a nearest-neighbour random walk, a random walker starting from a vertex $x$ and reaching a vertex $y$ at time $n$ must have visited all vertices of the geodesic segment $[x,y]$, namely $x=x_0, x_1, ..., x_{m-1}, x_m=y$ in this exact order, before arriving at $y$. However, in the case of a finite range random walk, this may not always happen (see Figure \ref{Figure_Ex_admi_path_avoid_geos_seg}). But instead it passes successively and in this order through the balls centered at each vertices of the geodesic segment $[x,y]$ (See section \ref{Section_Decomposition_of_admissible_paths}).
This idea was exploited by Steven P.Lalley in \cite{Lalley_1993} to establish the first-order asymptotics in the case of aperiodic random walks, under the assumption that there exists a group of automorphisms $\Gamma$ preserving the transition kernel and acting transitively on the set of vertices of the tree.

Building on this approach, we define an algebraic curve $\mathcal{C}$, which we call Lalley's Curve in honour of its introduction by Lalley (See also \cite{Kazuhiko_1984} for a different presentation). We then apply the same method used for nearest-neighbour random walks, to obtain the asymptotics (\ref{Intro_eq_asymptotic}).

\begin{figure}[ht]
\begin{center}
		\includegraphics[scale=0.35]{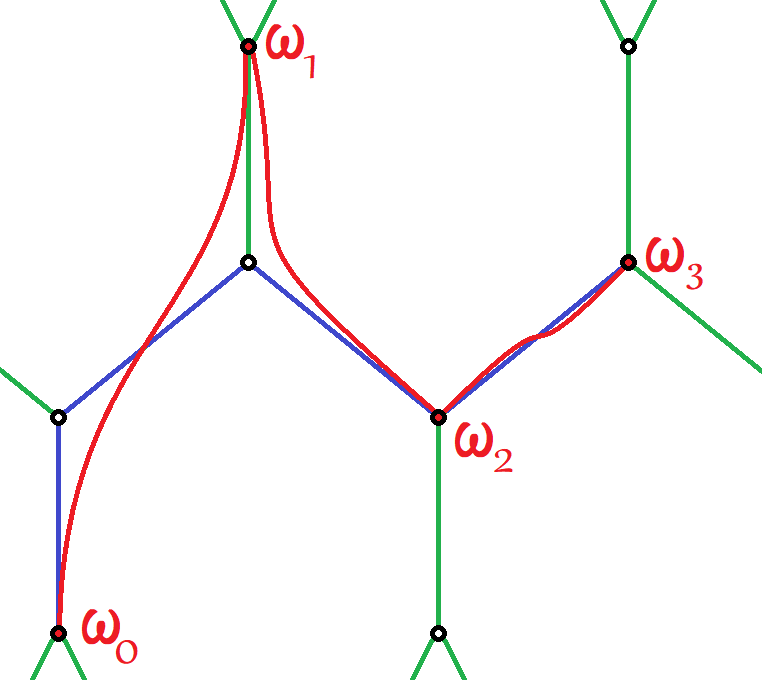}
		\caption{Example of a finite range random walk in a tree, with range at least $3$, avoiding some vertices of the geodesic paths linking the starting point of the random walk, to its position at time $3$.}\label{Figure_Ex_admi_path_avoid_geos_seg}
	\end{center}
	\end{figure}
	
Let $X=(X_0,X_1)$ be a tree of bounded valence, with set of vertices $X_0$, and set of edges $X_1$. Let $p:X_0\times X_0\to [0,1]$ be a transition kernel with finite range over $X_0$, and let $k$ be a non-negative integer such that for any vertices $x,y$ of $X$, we have
$$ d(x,y)>k \Rightarrow p(x,y)=0.$$

In this section we explain how to decompose $p$-admissible paths $\gamma=(\omega_0,...,\omega_n)\in\mathcal{P}h(X_0)$ along a geodesic segment $[x,y]$, with respect to $\mathcal{B}_k$ (See Definition \ref{DEF:BALLS} below), with $x,y$ being two distinct vertices of the tree such that, $\omega_0\in\mathcal{B}_k(x)$ and $\omega_n\in\mathcal{B}_k(y)$. 
We start with two definitions:

	\begin{Def}\textit{The ball $\mathcal{B}_k$}\label{DEF:BALLS}\index{$\mathcal{B}_k$}
	
		Let $X=(X_0,X_1)$ be a tree with set of vertices $X_0$, and set of edges $X_1$. And let $d$ be the combinatorial distance on $X$. For any vertex $y$ of $X$, and any non-negative integer $k$, we denote by $\mathcal{B}_k(y)$ the ball of radius $k$ centered at $y$ in $X$, i.e
		\begin{equation}\label{Def_of_the_ball}
		     \mathcal{B}_k(y):=\Big\{w\in X_0 : d(y,w)\leq k \Big\}.
		\end{equation}
		
		We also denote by $\partial \mathcal{B}_k(y)$ its outer boundary, that is: 
		\begin{equation}\label{Def_of_the_outer_boundary}
			\partial \mathcal{B}_k(y)=\Big\{w'\in \mathcal{B}_k(y): \exists w\in \mathcal{B}_k(y) \text{ such that } (w,w')\in X_1 \Big\}.
		\end{equation}
		
		We easily check that $\partial \mathcal{B}_k(y)$ is the sphere $\Big\{w'\in X_0 : d(y,w')=k+1\Big\}$.
	\end{Def}
	
	\begin{Def}\textit{Shadow of a vertex under an other vertex}\index{Shadow}
	
		Let $X=(X_0,X_1)$ be a tree with set of vertices $X_0$, and set of edges $X_1$. Let $x,y\in X_0$ be two distinct vertices of $X$.
		We denote by $\Omega_x^y$ the subset of vertices
\begin{equation}\label{Def_ombre}
	\Omega_x^y:=\Big\{w\in X_0: x\in [y,w]\Big\}
\end{equation}
called \textit{shadow of $x$ under $y$}\footnote{If we place an omnidirectional light source at $y$, and a blackout panel at $x$, blocking the passage of light rays from $y$, then $\Omega^y_x$ corresponds to the entire part of $X_0$, that is in the shadow of the panel at $x$ under the light from $y$, the vertex $x$ included. This definition differs from the initial definition of Sullivan's shadows used in \cite{Sullivan_1979}.}. 
In the case where $x$ and $y$ are \textit{neighbours}, that is $(x,y)\in X_1$ is an edge, the shadow $\Omega_x^y$ is the set of vertices $w$ that are closer to $x$ than to $y$: $$\Omega_x^y=\Big\{ w : d(x,w)<d(y,w)\Big\}.$$

\begin{figure}[h]
        \centering
		\includegraphics[scale=0.25]{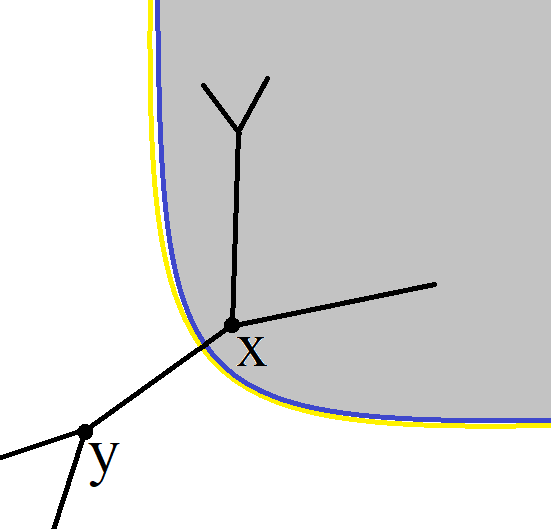}
		\caption{The shadow of $x$ under $y$, $\Omega_x^y$.}\label{Figure_of_shadows_def}
	\end{figure}
	\end{Def}
	
		\subsection{\texorpdfstring{$k$}{k}-moderated path}\index{Moderated path}
	\begin{Def}\textit{paths and $k$-moderated paths}\label{Def_k_moderated_path}
	
	Let $X=(X_0,X_1)$ be a tree with set of vertices $X_0$, and set of edges $X_1$ and let $k$ be a positive integer.
		We will say that a finite sequence of vertices of $X$, $\gamma=(\omega_0,...,\omega_n)$ is a $k$-moderated path if the distance between two consecutive vertices of $\gamma$ is less or equal to $k$, i.e for any integer $i\in\{1,...n\}$, we have $d(\omega_{i-1},\omega_i)\leq k$. 
	\end{Def}
	\begin{Ex} ~
	
	\begin{itemize}
	\item Any geodesic segment in $X$ is $1$-moderated.
	\item Let $p$ be a finite range transition kernel on $X_0$ and $k\geq 1$ be a positive integer such that for any pair of vertices $(x,y)\in X_0\times X_0$, we have 
		\begin{equation}\label{Property_over_Kernel}
			d(x,y)>k\Rightarrow p(x,y)=0,
		\end{equation}
	 Then any $p$-admissible path is $k$-moderated.
\end{itemize}			
	\end{Ex}
	\begin{Lem}\label{Lem_pre_Shadow_Lemma}
	Let $X=(X_0,X_1)$ be a tree with set of vertices $X_0$, and set of edges $X_1$. Let $z,u,v\in X_0$ be vertices of $X$, $u$ and $v$ different from $z$, and let $k$ be a non-negative integer.
	
	If $d(u,v)\leq k$ and $u$ is not in $\mathcal{B}_k(z)$, and if we denote by $z^*$ the only neighbour of $z$, in the geodesic segment $[z,u]$, then $u$ and $v$ both belong to the shadow $\Omega_{z^*}^z$ of $z^*$ under $z$ (i.e $z^\star\in [z,v]\cap [z,u]$).
	\end{Lem}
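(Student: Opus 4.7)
The plan is to verify the two memberships $u,v\in\Omega_{z^*}^z$ separately and deduce the inclusion $z^\star\in [z,u]\cap [z,v]$ from the definition of the shadow. The containment $u\in\Omega_{z^*}^z$ is essentially tautological: by hypothesis $z^*$ lies on the geodesic segment $[z,u]$, so $u$ sits in the shadow of $z^*$ under $z$ by definition (\ref{Def_ombre}). All the content of the lemma is therefore in the assertion $z^*\in[z,v]$.

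For this I would argue by contradiction. Assume $z^*\notin [z,v]$. Since $z^*$ is a neighbour of $z$, the geodesic $[z,v]$ then starts at $z$ with some neighbour different from $z^*$; in particular, the first edges of the geodesics $[z,u]$ and $[z,v]$ are different. I would use this to show that the two geodesics $[z,u]$ and $[z,v]$ meet only at the vertex $z$: if some vertex $w\neq z$ lay in both, then the initial subsegment $[z,w]$ would be a common prefix of $[z,u]$ and $[z,v]$ (by uniqueness of geodesics in a tree), forcing them to share their first edge, contradicting what we just established.

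Consequently, the concatenation of $[v,z]$ with $[z,u]$ is a sequence of pairwise distinct vertices whose consecutive terms are neighbours in $X$, i.e.\ a simple edge-path from $v$ to $u$. Since a tree contains no cycle, such a simple path is necessarily the unique geodesic segment $[v,u]$ (Definition \ref{Def_tree} combined with the definition of a geodesic segment). Hence
\[
 d(u,v)\;=\;d(u,z)+d(z,v).
\]
Now $u\notin\mathcal{B}_k(z)$ gives $d(u,z)\geq k+1$, and $v\neq z$ gives $d(z,v)\geq 1$, so $d(u,v)\geq k+2>k$, contradicting the hypothesis $d(u,v)\leq k$. Therefore $z^*\in[z,v]$, which means $v\in\Omega_{z^*}^z$, completing the proof.

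There is no real obstacle here: the entire argument rests on the fact that in a tree the unique geodesic between two vertices is the only simple path joining them, together with a careful bookkeeping of the first edge of $[z,v]$ versus $[z,u]$. The only mild subtlety is justifying that $[z,u]$ and $[z,v]$ intersect only at $z$ under the assumption $z^*\notin[z,v]$, which is handled by the prefix argument above.
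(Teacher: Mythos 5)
Your proof is correct and follows essentially the same route as the paper: argue by contradiction (the paper phrases it as contraposition), show that under the hypothesis $z^\star\notin[z,v]$ the geodesic $[u,v]$ must pass through $z$, and conclude $d(u,v)\geq d(u,z)+d(z,v)\geq k+2>k$. Your version is a bit more explicit about why $[z,u]$ and $[z,v]$ meet only at $z$, which is a reasonable addition.
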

	\begin{proof}
		By contraposition let $u$ and $v$ be two vertices, distinct from $z$, with $u\notin\mathcal{B}_k(z)$, and suppose that they belong to the shadow of two different neighbours of $z$, under $z$. That is to say, suppose there exists $z^-$ and $z^+$ two distinct neighbours of $z$ such that, $u\in\Omega_{z^-}^z$ and $v\in\Omega_{z^+}^z$. Then the concatenation of the geodesic segments $[u,z^-], [z^-,z^+], [z^+,v]$ is the geodesic segment $[u,v]$ in $X$ between $u$ and $v$. Thus necessarily $d(u,v)\geq d(u,z^-)+d(z^-,z^+)=k+1$ and the result follows (see figure \ref{Figure_of_shadows} ).
	\end{proof}
	
	\begin{figure}[h]
	\centering
		\includegraphics[scale=0.40]{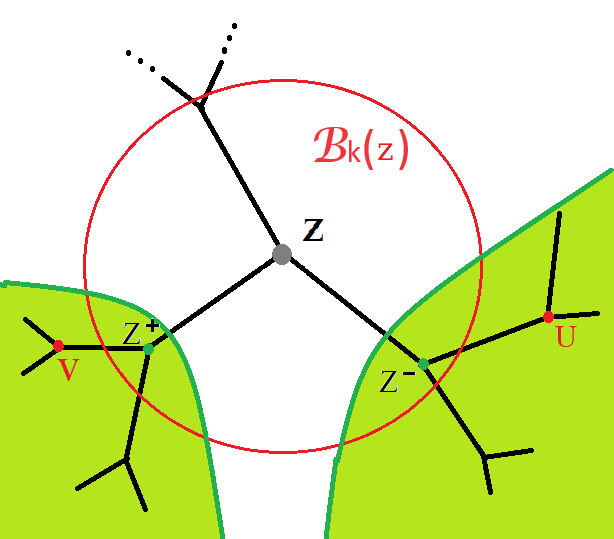}
		\caption{\og $u\in\Omega_{z^-}^z$ and $v\in\Omega_{z^+}^z$. \fg{}}\label{Figure_of_shadows}
	\end{figure}
	
	From this Lemma and the definition of a $k$-moderated path (Definition \ref{Def_k_moderated_path} we get the two following lemmas:
		
	\begin{Lem}\label{Shadow_Lemma}	~
	
	Let $X=(X_0,X_1)$ be a tree with set of vertices $X_0$, and set of edges $X_1$, $\gamma=(\omega_0,...,\omega_n)$ be a $k$-moderated path in $X$ and $z\neq \omega_0$ be a vertex of $X$. Denote by $i_z:=\min\{ i : \omega_i\in \mathcal{B}_k(z)\}$ ( taking $\min{\emptyset}=n$ ) the first index $i$ such that $\omega_i$ is in the ball $\mathcal{B}_k(z)$, and let $z^*$ be the only neighbour of $z$ such that $\omega_0$ belongs to the shadow of $z^*$ under $z$.
	
	 Then all the vertices $\omega_0,...,\omega_{i_z}$ belong to the shadow $\Omega_{z^*}^z$ of $z^*$ under $z$.
	\end{Lem}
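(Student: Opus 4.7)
The plan is to prove the statement by a finite induction on $i$ ranging over $\{0,1,\ldots,i_z\}$, with inductive hypothesis $P(i)$: $\omega_i\in\Omega_{z^*}^z$. The base case $P(0)$ is immediate from the definition of $z^*$: since $z\neq\omega_0$, the geodesic $[z,\omega_0]$ is non-trivial and its unique second vertex (a neighbour of $z$) is precisely $z^*$, so $z^*\in[z,\omega_0]$, that is $\omega_0\in\Omega_{z^*}^z$.

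For the inductive step, fix $1\leq i\leq i_z$ and assume $\omega_{i-1}\in\Omega_{z^*}^z$. I would like to apply Lemma \ref{Lem_pre_Shadow_Lemma} to the triple $(z,u,v):=(z,\omega_{i-1},\omega_i)$. The hypotheses to check are: (a) $d(\omega_{i-1},\omega_i)\leq k$, which is exactly the $k$-moderation of $\gamma$; (b) $\omega_{i-1}\notin\mathcal{B}_k(z)$, which follows from $i-1<i_z$ and the minimality in the definition of $i_z$; in particular $\omega_{i-1}\neq z$; and (c) $\omega_i\neq z$, which for $i<i_z$ follows from $\omega_i\notin\mathcal{B}_k(z)$ (the case $i=i_z$ with $\omega_{i_z}=z$ is the only genuine edge case and is addressed at the end of the proof, either by treating it directly or by observing it does not arise under the intended geometric reading). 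Lemma \ref{Lem_pre_Shadow_Lemma} then yields a neighbour $\tilde z$ of $z$, namely the unique neighbour of $z$ lying on $[z,\omega_{i-1}]$, such that both $\omega_{i-1}$ and $\omega_i$ belong to $\Omega_{\tilde z}^z$.

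It then remains only to identify $\tilde z$ with $z^*$. By the inductive hypothesis, $z^*\in[z,\omega_{i-1}]$; since the geodesic $[z,\omega_{i-1}]$ contains a unique neighbour of $z$ (namely its second vertex), and both $z^*$ and $\tilde z$ are such neighbours on this geodesic, one has $\tilde z=z^*$. Consequently $\omega_i\in\Omega_{z^*}^z$, completing the induction. An equivalent way to see this last step is to observe that the shadows $\{\Omega_{z'}^z : z'\in\mathcal{N}_z\}$ are pairwise disjoint (each $w\neq z$ has a unique neighbour of $z$ on $[z,w]$), so $\omega_{i-1}\in\Omega_{z^*}^z\cap\Omega_{\tilde z}^z$ forces $\tilde z=z^*$.

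The argument is essentially bookkeeping built on top of Lemma \ref{Lem_pre_Shadow_Lemma}; the real geometric content is already in that lemma, and here one only propagates the shadow information step by step along the $k$-moderated path until the first hit of $\mathcal{B}_k(z)$. The only subtle point I anticipate is the boundary index $i=i_z$, where one must be mindful that $\omega_{i_z}$ may lie on the sphere $\partial\mathcal{B}_k(z)$ or even coincide with $z$; apart from this, the induction runs without complication.
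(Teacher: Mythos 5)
Your proof follows the same strategy as the paper's: an iterative (here, inductive) application of Lemma \ref{Lem_pre_Shadow_Lemma} with $(u,v)=(\omega_{i-1},\omega_i)$, together with the observation that the relevant neighbour of $z$ must be $z^*$ because the shadows $\Omega_{z'}^z$ over neighbours $z'$ of $z$ are pairwise disjoint. That part is fine.

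There is, however, a genuine gap you flag but never close: the hypothesis $v\neq z$ in Lemma \ref{Lem_pre_Shadow_Lemma} must be verified at the last step $i=i_z$, i.e.\ you must rule out $\omega_{i_z}=z$. You say this will be "addressed at the end of the proof, either by treating it directly or by observing it does not arise under the intended geometric reading," but the final paragraph only restates the concern. The case \emph{is} impossible, and the argument is one line (it is precisely what the paper does): for $i_z\geq 1$, the minimality of $i_z$ gives $\omega_{i_z-1}\notin\mathcal{B}_k(z)$, hence $d(\omega_{i_z-1},z)\geq k+1$, whereas $k$-moderation gives $d(\omega_{i_z-1},\omega_{i_z})\leq k$; these two are incompatible with $\omega_{i_z}=z$. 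Your other worry, that $\omega_{i_z}$ might lie on $\partial\mathcal{B}_k(z)$, is a non-issue: $\partial\mathcal{B}_k(z)$ is disjoint from $\mathcal{B}_k(z)$, so this can only happen when the path never enters the ball (so $i_z=n$ by the $\min\emptyset=n$ convention), and then $\omega_{i_z}\notin\mathcal{B}_k(z)$ gives $\omega_{i_z}\neq z$ for free. Supplying that one line makes your proof complete and essentially identical to the paper's.
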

	\begin{proof}
		If $i_z=0$ then the result is trivial, else an iterative application of the Lemma \ref{Lem_pre_Shadow_Lemma} with $(u,v)$ successively equal to $(\omega_{i-1},\omega_{i})$, for $i=1,...,i_z$, gives that the vertices $\omega_1,...,\omega_{i_z}$ all belong to the same shadow, that is $\Omega_{z^*}^z$. Note that the Lemma \ref{Lem_pre_Shadow_Lemma} can be applied because the vertices $\omega_0,...,\omega_{i_z-1}$ are not in $\mathcal{B}_k(z)$, by definition of $i_z$ and because $\omega_{i_z}$ is not equal to $z$, since $d(\omega_{i_z-1},\omega_{i_z})\leq k < k+1= d(z,\mathcal{B}_k(y)^\complement)$.
	\end{proof}
	\begin{Rem}
		We could have defined $z^*$ as the only neighbour of $z$ such that $w_i$ belongs to the shadow of $z^*$ under $z$, where $i\leq i_z$ is a non-negative integer.
	\end{Rem}
	\begin{Lem}[Intermediate vertex lemma]\label{Peage_Lemma}
	Let $X=(X_0,X_1)$ be a tree with set of vertices $X_0$, and set of edges $X_1$, $\gamma=(\omega_0,...,\omega_n)$ be a $k$-moderated path, and let $x,y$ be two distinct vertices of $X$. 
	
	Suppose that the source $\omega_0$ of $\gamma$ is in $\mathcal{B}_k(x)$ and its aim $\omega_n$ is in $\mathcal{B}_k(y)$, then for any vertex $z$ in the geodesic segment $[x,y]$, there exists a vertex $\omega_i$ of $\gamma$, for some $i\in\{0,...,n\}$ such that $\omega_i$ belongs to the ball $\mathcal{B}_k(z)$.
	\end{Lem}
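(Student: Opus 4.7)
My plan is to prove the Intermediate vertex lemma by contradiction, leveraging the Shadow Lemma to show that if no $\omega_i$ belongs to $\mathcal{B}_k(z)$, then the whole path is trapped in a single shadow under $z$, which is incompatible with $\omega_0$ and $\omega_n$ lying near $x$ and $y$ on opposite sides of $z$.

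First I would dispatch the trivial boundary cases. If $z=x$, then $\omega_0\in\mathcal{B}_k(x)=\mathcal{B}_k(z)$ and we may take $i=0$; symmetrically, $z=y$ is handled with $i=n$. From now on I assume $z$ is strictly between $x$ and $y$ on the geodesic $[x,y]$, so that $z$ has two \emph{distinct} neighbours $z_x$ and $z_y$ lying respectively on $[z,x]$ and $[z,y]$. Suppose, aiming at a contradiction, that no vertex $\omega_i$ belongs to $\mathcal{B}_k(z)$; in particular $\omega_0 \neq z$ and $\omega_n\neq z$.

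The key intermediate step is to locate $\omega_0$ and $\omega_n$ in the correct shadows under $z$. I claim that $\omega_0\in\Omega^z_{z_x}$. Indeed, $\omega_0\in\mathcal{B}_k(x)$, so $d(\omega_0,x)\leq k$. If the geodesic segment $[x,\omega_0]$ passed through $z$, we would have $d(\omega_0,z)=d(\omega_0,x)-d(x,z)\leq k$, contradicting $\omega_0\notin\mathcal{B}_k(z)$. Therefore $z\notin[x,\omega_0]$, so in the tree the geodesics $[z,x]$ and $[z,\omega_0]$ share a non-trivial common initial segment starting at $z$ and hence share the first edge $(z,z_x)$. This gives $z_x\in[z,\omega_0]$, i.e.\ $\omega_0\in\Omega^z_{z_x}$. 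The exact same argument applied to $\omega_n$ and $y$ yields $\omega_n\in\Omega^z_{z_y}$.

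Now I invoke the Shadow Lemma (Lemma \ref{Shadow_Lemma}) with the vertex $z$ and the $k$-moderated path $\gamma$. Since by assumption no $\omega_i$ lies in $\mathcal{B}_k(z)$, the index $i_z$ takes its default value $n$, and the vertex $z^\star$ appearing in the Shadow Lemma is precisely the neighbour of $z$ whose shadow under $z$ contains $\omega_0$, which we have just identified as $z_x$. The Shadow Lemma thus forces every vertex $\omega_0,\ldots,\omega_n$ to belong to $\Omega^z_{z_x}$, and in particular $\omega_n\in\Omega^z_{z_x}$. Combined with $\omega_n\in\Omega^z_{z_y}$ and the disjointness of the shadows $\Omega^z_{z_x}$ and $\Omega^z_{z_y}$ under distinct neighbours of $z$ (any vertex $w\neq z$ belongs to exactly one such shadow, namely the one indexed by the unique neighbour of $z$ lying on $[z,w]$), this is the desired contradiction. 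The only mildly delicate point in the argument is the shadow-placement step for $\omega_0$ and $\omega_n$, where one must use the hypothesis $\omega_0,\omega_n\notin\mathcal{B}_k(z)$ (granted by the contradiction hypothesis) to ensure $z$ does not lie on the geodesics $[x,\omega_0]$ or $[y,\omega_n]$.
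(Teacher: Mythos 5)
Your proof is correct and follows essentially the same route as the paper: argue by contradiction, use Lemma \ref{Shadow_Lemma} (after locating $\omega_0$ in the shadow $\Omega_{z_x}^z$ of the neighbour of $z$ pointing toward $x$) to trap the whole path in that shadow, and then contradict the fact that $\omega_n\in\mathcal{B}_k(y)$. The only cosmetic difference is the finish: you place $\omega_n$ in the disjoint shadow $\Omega_{z_y}^z$, whereas the paper concludes with a direct distance computation showing $d(\omega_n,z)\leq k-d(z,y)$, hence $\omega_n\in\mathcal{B}_k(z)$; both are equivalent uses of the same tree geometry.
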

	\begin{proof}
		If $z=x$, then the result follows from the assumption. Else, denote by $z^-$ the neighbour of $z$ such that $z^-$ is in the geodesic segment $[x,z]$ (equivalently such that $x$ is in the shadow $\Omega_{z^-}^z$ of $z^-$ under $z$).	
		
		By contradiction, suppose that no vertex of $\gamma$ belongs to $\mathcal{B}_k(z)$. Then by Lemma \ref{Lem_pre_Shadow_Lemma} applied to $(x,w_0)$ and Lemma \ref{Shadow_Lemma} applied to $\gamma$ and $z$, all vertices of $\gamma$ belong to the shadow $\Omega_{z^-}^z$. However, since $\omega_n$ is in $\mathcal{B}_k(y)$, if $\omega_n$ also belongs to the shadow $\Omega_{z^-}^z$ then the concatenation of the geodesic segments $[\omega_n,z^-],[z^-,z]$ and $[z,y]$, is the geodesic segment $[\omega_n,y]$ and in particular $d(\omega_n,z)=d(\omega_n,y)- d(z,y)\leq k- d(z,y)$, so $\omega_n$ belongs to $\mathcal{B}_k(z)$. This is the contradiction we were looking for. The lemma follows.
	\end{proof}
	\vspace{\baselineskip}

	\subsection{Paths admitting a decomposition along a geodesic segments}

	\begin{Def}\textit{Path admitting a decomposition along a geodesic segment}\index{Path admitting a decomposition along a geodesic segment}
\label{Def_Chemin_admitting_a_decomposition_along_a_geodesic}

	 Let $X=(X_0,X_1)$ be a tree with set of vertices $X_0$, and set of edges $X_1$, $\gamma=(\omega_0,...,\omega_n)$ be a finite sequence of vertices of $X$, $k$ be a non-negative integer and $x,y\in X_0$ be two distinct vertices of $X$.
	 
	 We will say that $\gamma$ \textit{admits a decomposition along the geodesic segment $[x,y]=(x_0,...,x_m)$ with respect to $\mathcal{B}_k$}, if $\gamma$ passes successively, and in that order, through the disks $\mathcal{B}_k(x_0),...,\mathcal{B}_k(x_m)$.

		Formally, if we note, for all $s\in\{0,...,m\}$, 
		$$t_s:=\min\{ t : \omega_t\in\mathcal{B}_k(x_s)\},$$
		the index of the first vertex of $\gamma$ that is in $\mathcal{B}_k(x_s)$, with the convention that $\min(\emptyset)=\infty$; we will say that $\gamma$ admits a decomposition along the geodesic segment $[x,y]=(x_0,...,x_m)$ with respect to $\mathcal{B}_k$, if:
	\begin{enumerate}[label=\roman*)]
		\item for all $s\in\{0,...,m\}$, $t_s$ is finite,
		\item $(t_s)_s$ is a finite non-decreasing sequence: $0\leq t_0\leq t_1\leq ...\leq t_m.$
	\end{enumerate}
	\end{Def}
\begin{Ex}
	Using the notations of the definition, 
\begin{itemize}
	\item if $k=0$, then for any vertex $y\in X_0$, $\mathcal{B}_0(y)=\{y\}$. $\gamma$ will admit a decomposition along the geodesic segment $[x,y]=(x_0,...,x_m)$ with respect to $\mathcal{B}_k$, if and only if $\gamma$ passes successively, and in that order, through the vertices of the geodesic segment $[x,y]=(x_0, x_1, ..., x_m)$. This is the case if $\gamma$ is a $p$-admissible path for a nearest-neighbour random walk, i.e a random walk such that $p(x,y)>0$ if, and only if $d(x,y)=1$.
%	\item If $k=100$ and $\gamma$ takes all its vertices inside a ball $\mathcal{B}_{8}(x)\subset X_0$. Then for any geodesic segment $[x,y]$ of length less or equal to $92$, $\gamma$ admits a decomposition along the geodesic segment $[x,y]$ with respect to $\mathcal{B}_{100}$. Beside, for any $s\in\{0,..., d(x,y)\}$, we have $ t_s=0$.
	%If \mathcal{B}(y\equiv X_0$ then for any pair of distinct elements of $X_0$, $(x,y)$, any $p$-admissible path \gamma$ trivially admits a decomposition along the geodesic segment $[x,y]=(x_0,...,x_m)$ relative to \mathcal{B}$.
	\item For any $k\in\mathbb{N}$, a finite sequence of vertices $\gamma$ \textbf{does not admit} a decomposition along a geodesic segment $[x,y]=(x_0,...,x_m)$ with respect to $\mathcal{B}_k$, if, and only if, it avoids one of the balls $\mathcal{B}_k(x_s)$, for $s\in\{0,...,m\}$, in which case $t_s=\infty$, or if there exist $s_1<s_2$ such that $\gamma$ visits $\mathcal{B}_k(x_{s_2})$ before visiting $\mathcal{B}_k(x_{s_1})$, i.e. using the notations from the definition, $t_{s_1}>t_{s_2}$ (See figure \ref{Fig_decomposition_chemins_1}).
\end{itemize}
\end{Ex}

	\begin{figure}
		\includegraphics[scale=0.70]{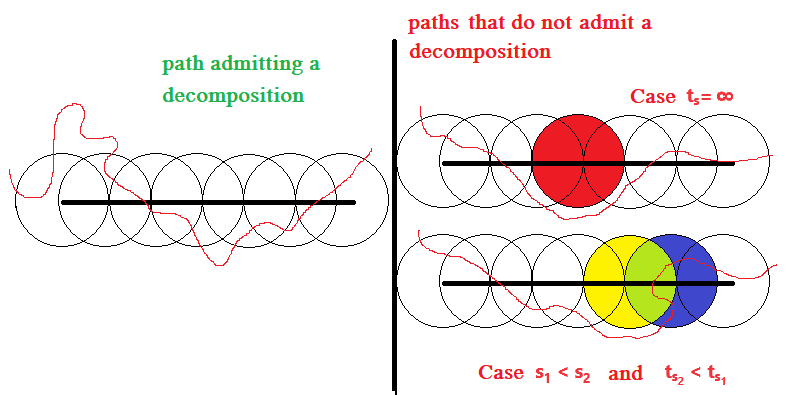}
		\caption{Examples of paths admitting or not a decomposition along a geodesic segment.}\label{Fig_decomposition_chemins_1}
	\end{figure}
	
	\begin{Prop}\label{k-moderated_implies_admits_decomposition_along_geod}
	Let $X=(X_0,X_1)$ be a tree with set of vertices $X_0$, and set of edges $X_1$, $k$ be a non-negative integer and $x,y\in X_0$ be two distinct vertices of $X$.
	
	Any $k$-moderated path $\gamma=(\omega_0,...,\omega_n)$ such that, $\omega_0$ and $\omega_n$ belong respectively to the balls $\mathcal{B}_k(x)$ and $\mathcal{B}_k(y)$, admits a decomposition along the geodesic segment $[x,y]$ with respect to $\mathcal{B}_k$.
	\end{Prop}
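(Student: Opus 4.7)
The plan is to verify the two conditions (i) and (ii) of Definition~\ref{Def_Chemin_admitting_a_decomposition_along_a_geodesic} separately, and in both cases to reduce directly to the Intermediate vertex lemma (Lemma~\ref{Peage_Lemma}).

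First, I would address finiteness of each $t_s$. Let $[x,y]=(x_0,\dots,x_m)$. Since $\gamma$ is $k$-moderated with $\omega_0\in\mathcal{B}_k(x_0)$ and $\omega_n\in\mathcal{B}_k(x_m)$, and since each $x_s$ lies on the geodesic $[x,y]$, the Intermediate vertex lemma applied to $\gamma$ at $z=x_s$ produces an index $i\in\{0,\dots,n\}$ with $\omega_i\in\mathcal{B}_k(x_s)$, so $t_s\le i<\infty$. (The endpoint cases $s=0$ and $s=m$ are even simpler: $\omega_0\in\mathcal{B}_k(x_0)$ forces $t_0=0$, and $\omega_n\in\mathcal{B}_k(x_m)$ forces $t_m\le n$.)

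The substantive step is monotonicity. I would fix $0\le s_1<s_2\le m$ and aim to show $t_{s_1}\le t_{s_2}$. The idea is to apply the Intermediate vertex lemma not to $\gamma$ itself but to its prefix $\gamma':=(\omega_0,\dots,\omega_{t_{s_2}})$. This prefix is still $k$-moderated (the moderation condition is preserved on sub-paths), and by construction its source $\omega_0$ lies in $\mathcal{B}_k(x_0)$ while its aim $\omega_{t_{s_2}}$ lies in $\mathcal{B}_k(x_{s_2})$. Because $s_2\ge 1$, the vertices $x_0$ and $x_{s_2}$ are distinct, and the geodesic segment $[x_0,x_{s_2}]$ is precisely the sub-tuple $(x_0,\dots,x_{s_2})$ of $[x,y]$, which contains $x_{s_1}$. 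The Intermediate vertex lemma applied to $\gamma'$ at $z=x_{s_1}$ then yields some $i\in\{0,\dots,t_{s_2}\}$ with $\omega_i\in\mathcal{B}_k(x_{s_1})$, and the minimality defining $t_{s_1}$ gives $t_{s_1}\le i\le t_{s_2}$.

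I do not expect a serious obstacle here: both conditions fall out of Lemma~\ref{Peage_Lemma} once one notices the right object to feed into it. The only subtle point is the monotonicity argument, where the key idea is that truncating $\gamma$ at time $t_{s_2}$ forces the intermediate hit on $\mathcal{B}_k(x_{s_1})$ guaranteed by the lemma to occur no later than $t_{s_2}$; applying the lemma to the full path $\gamma$ would not suffice, since a priori the first visit to $\mathcal{B}_k(x_{s_1})$ might happen after the first visit to $\mathcal{B}_k(x_{s_2})$.
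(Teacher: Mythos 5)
Your proposal is correct and follows essentially the same route as the paper's proof: both establish finiteness of each $t_s$ by applying Lemma~\ref{Peage_Lemma} to the full path $\gamma$, and both obtain monotonicity by applying the same lemma to a truncated prefix of $\gamma$ ending at the first arrival time in the relevant ball. The only cosmetic difference is that you prove $t_{s_1}\le t_{s_2}$ for arbitrary $s_1<s_2$, whereas the paper proves it only for consecutive indices $s-1$ and $s$, which of course suffices.
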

	\begin{proof}
	We use the notation of the statement. For $s\in\{0,...,m\}$, denote by $t_s:=\min\{ t: \omega_t\in\mathcal{B}(x_s)\}$ the first time $\gamma$ gets in $\mathcal{B}(x_s)$. From Lemma \ref{Peage_Lemma}, for any $s\in\{0,...,m\}$, the time $t_s$ is finite. We also have that for any $s\in\{1,...,m\}$, again by Lemma \ref{Peage_Lemma} applied to the $k$-moderated path $(\omega_0,...,\omega_{t_s})$ with $\omega_{t_s}\in\mathcal{B}_k(x_s)$, that $t_{s-1}\leq t_s$, hence the sequence $(t_s)_s$ is non-decreasing. It follows by definition that $\gamma$ admits a decomposition along the geodesic segment $[x,y]$.
	\end{proof}

		\subsection{Decomposition of admissible paths along a geodesic segment}
		
Any finite sequence of vertices $\gamma=(\omega_0,...,\omega_n)$ admitting a decomposition along a geodesic segment $[x,y]=(x_0,...,x_n)$, with respect to $\mathcal{B}_k$, can be decomposed in a unique way as a product (for concatenation of path) of sequence $\gamma_1\ast\gamma_2\ast...\ast\gamma_n\ast\gamma'$ such that for any $i\in\{1,...,n\}$, all vertices of $\gamma_i$ except its aim, belong to the complement $\mathcal{B}(x_i)^\complement\subset X_0$. In order to avoid overwhelming this paper with notation, we only develop this decomposition property for $p$-admissible paths, with respect to some discrete Markov chain $(X_0,p)$.

All definitions and properties in this subsection are stated under the following assumptions:
		
		Let $X=(X_0,X_1)$ be a tree of bounded valence, with set of vertices $X_0$ and set of edges $X_1$, let  $p:X_0\times X_0\to [0,1]$ be a finite range transition kernel over $X_0$ such that the Markov chain $(X_0,p)$ is irreducible. Let $k\geq 0$ be a non-negative integer such that for any vertices $x,y\in X_0$ we have
	\begin{equation}\label{above_eqts1}
	d(x,y)>k \Rightarrow p(x,y)=0.
\end{equation}	

The above property (\ref{above_eqts1}) guarantees that any $p$-admissible path is a $k$-moderated path, and as such, admits a decomposition along some geodesic segments with respect to $\mathcal{B}_k$.

\begin{Def}\textit{The set $\Xi$.}\label{Def_of_Xi}\index{$\Xi$}
	
	For $y\in X_0$ a vertex, we define the set
	\begin{equation}
		\Xi_{y}:=\Big\{(a,b)\in\partial\mathcal{B}_k(y)\times\mathcal{B}_k(y):\,\exists n\geq 0, p^{(n)}(a,b;\mathcal{B}_k(y)^\complement)>0\Big\}.
	\end{equation}
	We set:
	\begin{equation}\label{union_Xi_B}
		\Xi=\bigsqcup_{y\in X_0} \Xi_{y},
	\end{equation}
	to be the disjoint union of the sets $\Xi_y$ for $y$ in $X_0$.
	We will denote by $(a,b)_{y}$ an element of $\Xi$, meaning that it is the pair $(a,b)$ that belongs to $\Xi_{y}$.  
\end{Def}

\begin{Rem} With the notations of the definition, any action of a subgroup $\Gamma<\operatorname{Aut}(X)$ of automorphisms of $X$, preserving the transition kernel $p$ (i.e $\forall g\in\Gamma,\forall x,y\in X_0,\, p(gx,gy)=p(x,y)$), naturally induces an action of $\Gamma$ on $\Xi$:
$$ g\in\Gamma, (a,b)_{y}\in\Xi, \, g\cdot(a,b)_{y}= (ga,gb)_{gy}.$$
\end{Rem}
\begin{Rem}\label{Rem_verteces_admissible_in_shadow}
	From Lemma \ref{Shadow_Lemma}, for any $(a,b)_{y}\in\Xi$, if $x$ denotes the unique neighbour of $y$ in the geodesic segment $[a,y]$, then all the vertices of a $p$-admissible paths $\gamma\in\mathcal{P}h(a,b;\mathcal{B}(y)^\complement)$ lie in the shadow $\Omega_x^y$ of $x$ under $y$.
\end{Rem}

\begin{Notation} \label{Notation_mathcal_B}
	To simplify the notation, we will write $\mathcal{B}$ instead of $\mathcal{B}_k$. Since the non-negative integer $k$ is fixed, there will be no ambiguity.
\end{Notation}

\begin{Def}\textit{The sets $\Xi_{[x,y]}$}\label{Def_Xi_Segment}\index{$\Xi_{[x,y]}$}

	For $(x,y)\in X_0\times X_0$ a pair of vertices of $X$, $x\neq y$, denote by $[x,y]=(x_0,x_1,...,x_m)$ the geodesic segment between $x$ and $y$ in $X$. We define the set $\Xi_{[x,y]}$ as the set of all tuples $(c_0,c_1,...,c_l)\in (X_0)^{l+1}$, where $0<l\leq m$ is a positive integer, for which there exist a strictly increasing finite sequence of integers $0< i_1<...<i_l \leq m$, such that:

 		\begin{gather}
		c_0\in \mathcal{B}(x)\cap\partial\mathcal{B}(x_{i_1}) \nonumber ;\\
		c_1\in \mathcal{B}(x_{i_1})\cap \partial \mathcal{B}(x_{i_2}),\quad ...\, ,\,c_{l-1}\in \mathcal{B}(x_{i_{l-1}})\cap\partial\mathcal{B}(x_{i_l}) \label{crossing_vertindex}\\
		c_l\in\mathcal{B}(x_{i_l})\cap\mathcal{B}(y)\nonumber.
		\end{gather}

	And such that for any $j\in\{1,...,l\}$,
	$$\exists n_j\geq 0,\, p^{(n_j)}(c_{j-1},c_j; \mathcal{B}(x_{i_j})^\complement)>0.$$
	In other words $\Xi_{[x,y]}$ is the set of all tuples $(c_0,...,c_l)$ such that $l\leq m$ is a non-negative integer, $c_0\in\mathcal{B}(x)\setminus\mathcal{B}(y)$, $c_l\in\mathcal{B}(y)$ and such that there exists a strictly increasing finite sequence of $l$ integers $0<i_1<...<i_l\leq m$ verifying for all $j\in\{1,...,l\}$, $(c_{j-1},c_j)\in\Xi_{x_{i_j}}$.
\end{Def}

\begin{Lem}\label{Lem_2.6}
Using the notations from the previous definition, the $l$-tuple of integers $(i_1,...,i_l)$ is uniquely determined by the $(l+1)$-tuple $(c_0,...,c_l)$ and the geodesic segment $[x,y]$: 

For a fixed $s\in\{1,...,l\}$, we have
\begin{equation}\label{eqa1}
	i_s=1+\max\{ i : c_{s-1}\in\mathcal{B}(x_i)\}.
\end{equation}
	We will note $\iota_{[x,y]}(c_0,..,c_l)$ this $l$-tuple and name it the \textit{crossing indices} associated with $(c_0,...,c_l)$ along $[x,y]$.
\end{Lem}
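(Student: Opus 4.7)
The plan is to establish, for each $s \in \{1, \ldots, l\}$, that the integer set $I_{s-1} := \{i \in \{0, \ldots, m\} : c_{s-1} \in \mathcal{B}(x_i)\}$ is a non-empty interval of consecutive integers whose maximum equals $i_s - 1$. Formula (\ref{eqa1}) and the uniqueness of the crossing indices then follow immediately, since the integers $i_1, \ldots, i_l$ can be read off one at a time from the tuple $(c_0, \ldots, c_l)$ and the geodesic segment $[x, y]$.

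First I would invoke the standard tree-metric fact that, along the geodesic $[x, y] = (x_0, \ldots, x_m)$, the distance function $i \mapsto d(v, x_i)$ from any vertex $v$ has the V-shaped form $|i - t| + d(v, x_t)$, where $x_t$ is the unique projection of $v$ onto $[x, y]$ (this is a direct consequence of the fact that the geodesic from $v$ to any $x_i$ factors through its projection). Consequently $I_{s-1}$ is automatically an interval of consecutive integers. By the definition of $\Xi_{[x,y]}$, the condition $c_{s-1} \in \partial \mathcal{B}(x_{i_s})$ forces $d(c_{s-1}, x_{i_s}) = k + 1$, so $i_s \notin I_{s-1}$; conversely the condition $c_{s-1} \in \mathcal{B}(x_{i_{s-1}})$ (setting $i_0 := 0$ and $x_0 = x$ when $s = 1$) supplies an index strictly less than $i_s$ inside $I_{s-1}$. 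These two observations already give $\max I_{s-1} \leq i_s - 1$.

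The main technical step, and the only one that requires some care, is the reverse inequality $i_s - 1 \in I_{s-1}$. To this end I would let $x_t$ denote the projection of $c_{s-1}$ onto $[x, y]$ and combine the equality $|i_s - t| + d(c_{s-1}, x_t) = k + 1$ with the inequality $|i_{s-1} - t| + d(c_{s-1}, x_t) \leq k$. Subtracting gives $|i_s - t| - |i_{s-1} - t| \geq 1$ which, in view of $i_{s-1} < i_s$, rules out the case $t \geq i_s$ and forces $t \leq i_s - 1$. A direct substitution then yields
\begin{equation*}
d(c_{s-1}, x_{i_s - 1}) = (i_s - 1 - t) + \bigl(k + 1 - (i_s - t)\bigr) = k,
\end{equation*}
so that $i_s - 1 \in I_{s-1}$ and the equality $\max I_{s-1} = i_s - 1$ holds. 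I expect the projection comparison ruling out $t \geq i_s$ to be the only subtle point of the argument; once the V-shape of $i \mapsto d(c_{s-1}, x_i)$ is in hand, the remainder is elementary arithmetic on integer distances.
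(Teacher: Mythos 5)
Your proof is correct and follows essentially the same route as the paper: both arguments project $c_{s-1}$ onto the geodesic, use $d(c_{s-1},x_{i_s})=k+1$ together with $d(c_{s-1},x_{i_{s-1}})\leq k$ to place the projection strictly before $x_{i_s}$, and then compute $d(c_{s-1},x_{i_s-1})=k$ while excluding indices $\geq i_s$ by the monotonicity of the distance along the geodesic. Your V-shape/interval formulation is just a slightly more systematic packaging of the paper's distance considerations.
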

\begin{proof}
	Let $0<i_1<...<i_l\leq m $ be a strictly non-decreasing sequence of integer satisfying (\ref{crossing_vertindex}). 	
	Take $s\in\{1,...,l\}$ arbitrary. By definition $c_{s-1}\notin \mathcal{B}(x_{i_s})$.  Considering the distances we have the equality between sets $\mathcal{B}(x_{i_{(s-1)}})\setminus\mathcal{B}(x_{i_s})=\mathcal{B}(x_{i_{(s-1)}})\setminus\left(\bigcup_{i\geq i_s}\mathcal{B}(x_{i})\right)$.
	
	To prove (\ref{eqa1}), we thus just need to show that $c_{s-1}$ is in $\mathcal{B}(x_{(i_s) - 1})$. Consider $w\in [x_{i_{(s-1)}},x_{i_s}]$ such that the distance $d(c_{s-1},w)$ is minimal (i.e $w$ is the nearest vertex from $c_{s-1}$, in the geodesic segment $[x_{i_{(s-1)}},x_{i_s}]$). Note that $w\neq x_{i_s}$ since $d(c_{s-1},x_{i_s})=k+1>k=d(c_{s-1},x_{i_{(s-1)}})$. Then 
	$$k+1=d(c_{s-1},x_{i_s})= d(c_{s-1},w)+d(w,x_{(i_s) -1})+\underbrace{d(x_{(i_s) -1},x_{i_s})}_{=1},$$ thus $d(c_{s-1},x_{(i_s)-1})=d(c_{s-1},w)+d(w,x_{(i_s) -1})=k+1-1=k$. Hence $c_{s-1}\in\mathcal{B}(x_{(i_s) - 1})$ as expected.
\end{proof}

\begin{Rem}\label{Rem_Xi_y_as_a_union_of_Xi_x_y}
	In the previous definition \ref{Def_Xi_Segment}, if $x$ and $y$ are neighbours (i.e. $d(x,y)=1$), then $\Xi_{[x,y]}$ identifies with a subset of $\Xi_y$. More precisely, with the notation of the definition, we have $m=l=1$ and $\Xi_{[x,y]}$ consists of all pairs $(a,b)\in\Xi_y$ such that $a$ belongs to $\mathcal{B}(x)$. This is equivalent to saying that $x$ belongs to the geodesic segment $[a,y]$, or equivalently that $a\in\Omega_x^y$. In particular $x$ only depends on $a$ and $y$ (See remark \ref{Rem_verteces_admissible_in_shadow}).
	We immediately verify using the Lemma \ref{Shadow_Lemma} that $\Xi_y$ is the disjoint union:
	\begin{equation}
		\Xi_y=\bigsqcup_{x\,:\,d(x,y)=1}\Xi_{[x,y]}.
	\end{equation}
	
\end{Rem}
\begin{Rem}\label{Finiteness_of_the_Xi_segment}
	With the notations of the definition, for any $x,y\in X_0$, $x\neq y$, the set $\Xi_{[x,y]}$ is finite. Indeed, having fixed a strictly increasing sequence of integers $0<i_1<...<i_l\leq m$, by finiteness of the set $\Xi_{x_{i_1}}\times\cdot\cdot\cdot \times \Xi_{x_{i_l}}$, there exists only a finite number of $(l+1)$-tuples $(c_0,...,c_l)$ in $\Xi_{[x,y]}$, such that $\iota_{[x,y]}(c_0,...,c_l)=(i_1,...,i_l)$. Furthermore there is only a finite number of such increasing sequences $0<i_1<...<i_l\leq m$. Hence $\Xi_{[x,y]}$ must be finite.
\end{Rem}

\begin{Prop}\label{Prop_decomposition_of_paths_on_a_tree}
Let $\gamma\in\mathcal{P}h(X)$ be a $p$-admissible path. 
Denote its source by $a$, and its aim by $b$. Let $x,y\in X_0$ be two distinct vertices such that, the vertex $a$ belongs to $\mathcal{B}(x)\setminus\mathcal{B}(y)$ and the vertex $b$ belongs to $\mathcal{B}(y)$. Also denote by $(x_0,...,x_m)$ the geodesic segment $[x,y]$.

There exist an integer $0<l\leq m$, an $(l+1)$-tuple $(c_0,...,c_l)\in\Xi_{[x,y]}$ and $l+1$ $p$-admissible paths $\gamma_1,...\gamma_{l},\gamma_f$ such that:
	\begin{equation}\label{Decomposition_Formula}
		\gamma=\gamma_1\ast\cdot\cdot\cdot\ast\gamma_l\ast\gamma_f;
	\end{equation}
	and for all $s\in\{1,...,l\}$, we have $\gamma_s\in\mathcal{P}h(c_{s-1},c_s;\mathcal{B}(x_{i_s})^\complement)$, and $\gamma_f\in \mathcal{P}h(c_l,b)$, where $(i_1,...,i_l)=\iota_{[x;y]}(c_0,...,c_l)$.

	The decomposition (\ref{Decomposition_Formula}) and the $(l+1)$-tuple of vertices $(c_0, ...,c_l)$ are unique for these properties, and are called respectively \textit{the decomposition of} $\gamma$ \textit{along} $[x,y]$ with respect to $\mathcal{B}$, and the \textit{crossing vertices of} $\gamma$ \textit{along} $[x,y]$ with respect to $\mathcal{B}$.
\end{Prop}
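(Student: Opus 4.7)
The plan is to exploit Proposition~\ref{k-moderated_implies_admits_decomposition_along_geod}: since $p$ has range at most $k$, the $p$-admissible path $\gamma=(\omega_0,\dots,\omega_n)$ is $k$-moderated, hence admits a decomposition along $[x,y]$ with respect to $\mathcal{B}_k$ in the sense of Definition~\ref{Def_Chemin_admitting_a_decomposition_along_a_geodesic}. In particular the entering times $t_s:=\min\{t:\omega_t\in\mathcal{B}(x_s)\}$, $s=0,\dots,m$, form a finite non-decreasing sequence $0=t_0\le t_1\le\cdots\le t_m\le n$, and I would use them to read off the crossing indices $(i_s)$ and the crossing vertices $(c_s)$ by a greedy procedure.

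Concretely, set $i_0:=0$, $\tau_0:=0$, $c_0:=\omega_0=a$, and, iteratively for $s\ge 1$,
\[
    i_s:=\min\{i>i_{s-1}:c_{s-1}\notin\mathcal{B}(x_i)\},\qquad \tau_s:=t_{i_s},\qquad c_s:=\omega_{\tau_s},
\]
stopping at step $s=l$ as soon as $c_l\in\mathcal{B}(y)$. At each step the minimum defining $i_s$ is over a non-empty subset of $\{i_{s-1}+1,\dots,m\}$ (the index $m$ belongs to it, because $c_{s-1}\notin\mathcal{B}(y)=\mathcal{B}(x_m)$, otherwise the process would already have stopped). Since $c_{s-1}\in\mathcal{B}(x_{i_{s-1}})$ (with the convention $\mathcal{B}(x_{i_0})=\mathcal{B}(x)$) and, by definition of $i_s$, $c_{s-1}\in\mathcal{B}(x_{i_s-1})$ while $c_{s-1}\notin\mathcal{B}(x_{i_s})$, the triangle inequality in the tree forces $d(c_{s-1},x_{i_s})=k+1$, i.e.\ $c_{s-1}\in\partial\mathcal{B}(x_{i_s})$. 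Strict monotonicity of $(i_s)$ in $\{1,\dots,m\}$ forces termination in at most $m$ iterations. Each sub-path $\gamma_s:=(\omega_{\tau_{s-1}},\dots,\omega_{\tau_s})$ of $\gamma$ is $p$-admissible of positive length, and by minimality of $\tau_s=t_{i_s}$ its intermediate vertices all lie in $\mathcal{B}(x_{i_s})^\complement$; this witnesses $p^{(\tau_s-\tau_{s-1})}(c_{s-1},c_s;\mathcal{B}(x_{i_s})^\complement)>0$, so $(c_{s-1},c_s)\in\Xi_{x_{i_s}}$ and $(c_0,\dots,c_l)\in\Xi_{[x,y]}$. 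Setting $\gamma_f:=(\omega_{\tau_l},\dots,\omega_n)\in\mathcal{P}h(c_l,b)$ delivers the decomposition~\eqref{Decomposition_Formula}.

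For uniqueness I would argue inductively. Let $\gamma=\tilde\gamma_1\ast\cdots\ast\tilde\gamma_{\tilde l}\ast\tilde\gamma_f$ be any such decomposition, with tuple $(\tilde c_0,\dots,\tilde c_{\tilde l})$ and crossing indices $(\tilde\imath_1,\dots,\tilde\imath_{\tilde l})=\iota_{[x,y]}(\tilde c_0,\dots,\tilde c_{\tilde l})$. Matching sources gives $\tilde c_0=\omega_0=c_0$, and Lemma~\ref{Lem_2.6} yields $\tilde\imath_1=1+\max\{i:\tilde c_0\in\mathcal{B}(x_i)\}=i_1$. Since $\tilde\gamma_1\in\mathcal{P}h(\tilde c_0,\tilde c_1;\mathcal{B}(x_{i_1})^\complement)$ is the initial segment of $\gamma$ of some length $\mu$, its intermediate vertices avoid $\mathcal{B}(x_{i_1})$ while $\tilde c_1=\omega_\mu\in\mathcal{B}(x_{i_1})$, so $\mu=t_{i_1}=\tau_1$ and $\tilde c_1=c_1$. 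Iterating, $(\tilde\imath_s,\tilde c_s)=(i_s,c_s)$ for all $s$, and the stopping condition $c_l\in\mathcal{B}(y)$ pins down $\tilde l=l$.

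The only real subtlety is the ``last-mile'' check in the existence step: verifying that $c_{s-1}$ lies \emph{exactly} on the sphere $\partial\mathcal{B}(x_{i_s})$ rather than merely in its complement. This is precisely what the triangle-inequality computation above handles, essentially a re-run of the argument in the proof of Lemma~\ref{Lem_2.6}; everything else reduces to bookkeeping on the non-decreasing entering times $t_s$ supplied by Proposition~\ref{k-moderated_implies_admits_decomposition_along_geod}.
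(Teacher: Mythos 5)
Your proposal is correct and follows essentially the same route as the paper's proof: both rest on the non-decreasing entering times $t_j=\min\{t:\omega_t\in\mathcal{B}(x_j)\}$ supplied by Proposition~\ref{k-moderated_implies_admits_decomposition_along_geod}, and both pin down the crossing indices via the formula in Lemma~\ref{Lem_2.6}. The only cosmetic difference is in bookkeeping for the existence step: the paper first cuts $\gamma$ into $m$ possibly trivial pieces $\gamma'_1\ast\cdots\ast\gamma'_m\ast\gamma_f$ and then discards the degenerate ones by inspecting where $t_{i-1}<t_i$, whereas you produce only the non-trivial pieces directly by the greedy rule $i_s=\min\{i>i_{s-1}:c_{s-1}\notin\mathcal{B}(x_i)\}$ (equivalent, since $\{i:c_{s-1}\in\mathcal{B}(x_i)\}$ is an interval containing $i_{s-1}$ by convexity of $i\mapsto d(c_{s-1},x_i)$ along a geodesic); your inductive uniqueness argument via Lemma~\ref{Lem_2.6} is, if anything, spelled out slightly more carefully than in the paper.
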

\begin{proof}
	We use the notation of the statement, denote by $(\omega_0,...,\omega_n)$ the $p$-admissible path $\gamma$. For $j\in\{0,...,n\}$ denote by $t_j:=\min\{ t: \omega_t\in\mathcal{B}(x_j)\}$, the first time $\gamma$ enters the ball $\mathcal{B}(x_j)$. From Proposition \ref{k-moderated_implies_admits_decomposition_along_geod} we have $t_0=0\leq t_1\leq ...\leq t_m\leq n$. By definition, for $j\in\{0,...,m\}$ the vertex $\omega_{t_{j}}$ is in the ball $\mathcal{B}(x_j)$, and for all $t<t_j$, the vertex $\omega_t$ is not in the ball $\mathcal{B}(x_j)$. We then set the $p$ admissible paths:
	\begin{align*}
		\gamma_1'&:=(\omega_0,\omega_1,...,\omega_{t_1})\in\mathcal{P}h(\omega_0,\omega_{t_1};\mathcal{B}(x_1)^\complement);\\
		\gamma_2'&:=(\omega_{t_1},\omega_{t_1+1}...,\omega_{t_2})\in\mathcal{P}h(\omega_{t_1},\omega_{t_2};\mathcal{B}(x_2)^\complement);\\
		...\\
		\gamma_m'&:=(\omega_{t_{(m-1)}},\omega_{t_{(m-1)}+1},...,\omega_{t_m})\in\mathcal{P}h(\omega_{t_{(m-1)}},\omega_{t_{m}};\mathcal{B}(x_m)^\complement);\\
		\gamma_f&:=(\omega_{t_m},...,\omega_n)\in\mathcal{P}h(\omega_{t_m},\omega_n).
	\end{align*}
	This gives the decomposition:
	$$\gamma=\gamma_1'\ast\cdot\cdot\cdot\ast\gamma_m'\ast\gamma_f,$$
	where $\gamma_i'$, for $i\in\{1,...,m\}$, and $\gamma_f$ are $p$-admissible paths that could possibly have length zero. We now remove from this decomposition the $p$-admissible paths of length zero. Consider the set of integers $i\in\{1,...,m\}$ such that the $p$-admissible path $\gamma_i$ is non trivial, $\{i : 1\leq i \leq m, \, t_{i-1}<t_{i}\}$. And write its elements as $1\leq i_1<i_2<...<i_l\leq m$. We then define the $(l+1)$-tuple $(c_0,...,c_l)$ as $c_0:=\omega_0$, $c_1:=\omega_{t_{i_1}}$, ..., $c_l:=\omega_{t_{i_l}}$ and verify, by definition of $i_1,...,i_l$ and $t_1,...,t_m$ that
	\begin{align*}
		c_0&:=\omega_{0}=\omega_{t_{1}}=...=\omega_{t_{i_1-1}}\in\mathcal{B}(x_0)\cap\partial\mathcal{B}(x_{i_1})\\
		c_1&:=\omega_{t_{i_1}}=\omega_{t_{i_1+1}}=...=\omega_{t_{i_2-1}}\in\mathcal{B}(x_{i_1})\cap\partial\mathcal{B}(x_{i_2}),\\
 		&...\\
  		c_l&:=\omega_{t_{i_l}}=\omega_{t_{i_l+1}}=...=\omega_{t_{m}}\in\mathcal{B}(x_m).
	\end{align*}
	For $s\in\{1,...,l\}$, we finally set $\gamma_s:=\gamma_{i_s}'\in\mathcal{P}h(c_{s-1},c_s;\mathcal{B}(x_{i_s})^\complement)$. We thus have $(c_0,...,c_l)\in\Xi_{[x,y]}$ and as desired:
	$$\gamma=\gamma_1\ast\cdot\cdot\cdot\ast\gamma_l\ast\gamma_f.$$
	
	We now need to show that this decomposition is unique.
	Since the crossing vertices $c_0,...,c_l$ correspond to the place where the $p$-admissible path $\gamma$ enters, for the first time, in each ball $\mathcal{B}(x_s)$, given $[x,y]$, they are entirely determined by the crossing times $t_1,...,t_n$ and $\gamma$. Thus the crossing vertices are entirely determined by $\gamma$. Hence $(c_0,...,c_l)$ is unique.
	
Let $(i_1,...,i_l)$ be the non-decreasing sequence of indices $\iota_{[x,y]}(c_0,...,c_l)$ from Lemma \ref{Lem_2.6}. By definition of $\mathcal{P}h(c_0,c_1;\mathcal{B}(x_1)^\complement)$, we necessarily have, by definition $\gamma_1=(w_0,...,w_{t_{i_1}})$. Then necessarily $\gamma_2=(w_{t_{i_1}},...,w_{t_{i_2}})$, ... Iterating this process until getting $\gamma_l=(w_{t_{i_{l-1}}},...,w_{t_{i_l}})$, we finally also obtain the uniqueness of the decomposition (\ref{Decomposition_Formula}).
\end{proof}

\begin{Cor}\label{corollary_of_decomposition_of_paths_on_a_tree}
	Let $a,b,y,x$ be vertices of $X$ such that $a$ is in $\mathcal{B}(x)\setminus \mathcal{B}(y)$, $b$ is in $\mathcal{B}(y)$ and such that $\mathcal{P}h(a,b;\mathcal{B}(y)^\complement)$ is non-empty. Denote by $(x_0,...,x_m)$ the geodesic segment $[x,y]$.
	
	 Let $\gamma \in \mathcal{P}h(a,b;\mathcal{B}(y)^\complement)$ be a $p$-admissible path, then the previous Proposition \ref{Prop_decomposition_of_paths_on_a_tree} applies and, taking the same notations we necessarily have $c_l=b$ and $\gamma_f=(b)$.

	Furthermore, for all $(c_0,c_1,...,c_l)\in\Xi_{[x,y]}$ the map
	\begin{equation}\label{The_inverse_map}
		(\gamma_1,...,\gamma_l)\mapsto\gamma_1\ast\cdot\cdot\cdot\ast\gamma_l
	\end{equation}
	establishes a bijection between the set of $l$-tuples of $p$-admissible paths $(\gamma_1,...,\gamma_l)$ in the cartesian product $\prod_{s=1}^l \mathcal{P}h\left(c_{s-1},c_s;\mathcal{B}(x_{i_s})^\complement\right)$, where $(i_1,...,i_l)=\iota_{x,y}(c_0,...,c_l)$, and the set of $p$-admissible paths $\gamma$ in $\mathcal{P}h(c_0,c_l;\mathcal{B}(x_m)^\complement)$ admitting a decomposition along the geodesic segment $[x,y]$, whose crossing vertices are $(c_0,...,c_l)\in\Xi_{[x,y]}$.
\end{Cor}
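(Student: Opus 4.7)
The plan is to apply Proposition \ref{Prop_decomposition_of_paths_on_a_tree} as the main tool and to handle the restriction to $\mathcal{B}(y)^{\complement}$ via a systematic use of Shadow Lemma \ref{Shadow_Lemma}. For the first claim, I apply the proposition to $\gamma=(\omega_0,\ldots,\omega_n)$: since $\gamma\in\mathcal{P}h(a,b;\mathcal{B}(y)^{\complement})$, the only vertex of $\gamma$ lying in $\mathcal{B}(y)=\mathcal{B}(x_m)$ is its aim $b=\omega_n$, so $t_m=n$. Because the indices $i_l+1,\ldots,m$ are not crossing indices, $t_{i_l}=t_{i_l+1}=\cdots=t_m=n$, and therefore $c_l=\omega_{t_{i_l}}=b$ and $\gamma_f=(\omega_{t_m},\ldots,\omega_n)=(b)$.

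For the bijection, injectivity is immediate from the uniqueness part of Proposition \ref{Prop_decomposition_of_paths_on_a_tree} (two distinct tuples would give two distinct decompositions of their common concatenation), and surjectivity follows from the first claim by decomposing $\gamma$ as $\gamma_1\ast\cdots\ast\gamma_l\ast\gamma_f$ with $\gamma_f=(c_l)$. The main obstacle is well-definedness: starting from an arbitrary tuple $(\gamma_1,\ldots,\gamma_l)$ in the Cartesian product, one must verify that the concatenation $\gamma:=\gamma_1\ast\cdots\ast\gamma_l$ actually lies in $\mathcal{P}h(c_0,c_l;\mathcal{B}(y)^{\complement})$ \emph{and} that its crossing vertices along $[x,y]$ are exactly $(c_0,\ldots,c_l)$.

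My plan for this is a uniform shadow argument. Given $c_{s-1}\in\partial\mathcal{B}(x_{i_s})$ and $c_{s-1}\in\mathcal{B}(x_{i_{s-1}})$ (interpreting $i_0=0$, so the case $s=1$ gives $c_0\in\mathcal{B}(x_0)=\mathcal{B}(x)$), a short distance computation in the tree shows that the vertex of $[x_{i_{s-1}},x_{i_s}]$ closest to $c_{s-1}$ lies strictly before $x_{i_s}$; hence the unique neighbour of $x_{i_s}$ on the geodesic from $c_{s-1}$ to $x_{i_s}$ is $x_{i_s-1}$. Shadow Lemma \ref{Shadow_Lemma} applied to $\gamma_s$ with $z=x_{i_s}$ then places every vertex of $\gamma_s$ inside the shadow $\Omega_{x_{i_s-1}}^{x_{i_s}}$. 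Since $y=x_m$ lies on the opposite side of $x_{i_s}$ from this shadow, every such vertex $w$ satisfies $d(w,y)=d(w,x_{i_s})+(m-i_s)$; for the interior vertices of $\gamma_s$ and for the source $c_{s-1}$ we have $d(w,x_{i_s})\geq k+1$, hence $d(w,y)>k$. Applying this for every $s=1,\ldots,l$ simultaneously handles all the intermediate vertices of $\gamma$ (each one is either interior to some $\gamma_s$, or is $c_{s}$ seen as the source of $\gamma_{s+1}$).

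The same shadow estimates identify the crossing structure: for any $s'$ and $s$, every vertex of $\gamma_{s'}$ lies in $\Omega_{x_{i_{s'}-1}}^{x_{i_{s'}}}$, so the distance from such a vertex to $x_{i_s}$ decomposes through $x_{i_{s'}}$ and one checks that $\gamma_{s'}\subset\mathcal{B}(x_{i_s})^{\complement}$ when $s'\neq s$, while $\gamma_s$ enters $\mathcal{B}(x_{i_s})$ only at its aim $c_s$. Combined with the convexity of balls along geodesics in a tree (which forces $c_{s-1}\in\mathcal{B}(x_j)$ for every $i_{s-1}\leq j\leq i_s-1$), this shows that the crossing vertices of $\gamma_1\ast\cdots\ast\gamma_l$ along $[x,y]$ are exactly $(c_0,\ldots,c_l)$ with crossing indices $(i_1,\ldots,i_l)$, completing the bijection.
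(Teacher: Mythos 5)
Your proof is correct and follows the same basic idea as the paper's (uniqueness of the decomposition gives injectivity/surjectivity, and the real work is the well‑definedness of the concatenation map), but it organizes the well‑definedness step differently. The paper phrases the forward map (decomposition) as ``clearly'' establishing a bijection onto the product $\prod_{s} \mathcal{P}h\bigl(c_{s-1},c_s;\bigl(\bigcup_{i\geq i_s}\mathcal{B}(x_i)\bigr)^\complement\bigr)$ and then reduces the statement to the set identity $\mathcal{P}h\bigl(c_{s-1},c_s;\bigl(\bigcup_{i\geq i_s}\mathcal{B}(x_i)\bigr)^\complement\bigr)=\mathcal{P}h\bigl(c_{s-1},c_s;\mathcal{B}(x_{i_s})^\complement\bigr)$, proven via the Intermediate Vertex Lemma~\ref{Peage_Lemma}. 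You instead work directly with the small product and apply the Shadow Lemma~\ref{Shadow_Lemma} to each piece $\gamma_s$ to locate its vertices in $\Omega_{x_{i_s-1}}^{x_{i_s}}$, from which the distance‑to‑$x_j$ estimates ($j\geq i_s$) fall out; this is essentially a fleshed‑out version of what the paper compresses into ``clearly''. Since Lemma~\ref{Peage_Lemma} is itself derived from the Shadow Lemma, the underlying geometry is the same, but your version is more explicit about the key well‑definedness point, which I think is a genuine improvement in readability.

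One small inaccuracy: the assertion ``$\gamma_{s'}\subset\mathcal{B}(x_{i_s})^\complement$ when $s'\neq s$'' is false for $s'=s+1$, because the source of $\gamma_{s+1}$ is $c_s\in\mathcal{B}(x_{i_s})$. This does not affect the argument, since for computing $t_{i_s}$ only the vertices of $\gamma$ occurring \emph{before} the time it reaches $c_s$ (namely those belonging to $\gamma_1,\ldots,\gamma_s$) matter, and those you do handle correctly. You may also want to state explicitly that the final step uses the monotonicity of the entry times $(t_j)_j$ from Proposition~\ref{k-moderated_implies_admits_decomposition_along_geod}: together with $c_{s-1}\in\mathcal{B}(x_j)$ for $i_{s-1}\leq j\leq i_s-1$ this pins down $t_j=t_{i_{s-1}}$ for all such $j$, so that no spurious crossing index can appear between $i_{s-1}$ and $i_s$.
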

\begin{proof}
By definition, $c_l\in\mathcal{B}(y)$, but the only vertex of $\gamma$ that is in $\mathcal{B}(y)$ is $b$. Necessarily $c_l=b$ and it follows that $\gamma_f=(b)$, and therefore 
$$\gamma_1\ast\cdot\cdot\cdot\ast\gamma_l=\gamma.$$

The second part of the Corollary is a consequence of the uniqueness of the previous decomposition: The map that associates to any $p$-admissible path $\gamma$, the tuple of $p$-admissible paths $(\gamma_1,...,\gamma_l)$, from its path decomposition (\ref{Decomposition_Formula}) along $[x,y]$, clearly establishes a bijection between $l$-tuples of $p$-admissible paths $(\gamma_1,...,\gamma_l)\in\prod_{s=1}^l \mathcal{P}h\left(c_{s-1},c_s;\left(\bigcup_{i=i_s}^m\mathcal{B}(x_{i})\right)^\complement\right)$ 
	where $(i_1,...,i_l)=\iota_{[x,y]}(c_0,...,c_l)$, and the set of $p$-admissible paths $\gamma\in\mathcal{P}h(c_0,c_l;\mathcal{B}(x_m)^\complement)$ admitting a decomposition along the geodesic segment $[x,y]$, whose crossing vertices are $(c_0,...,c_l)\in\Xi_{[x,y]}$. The inverse of this map, is the map (\ref{The_inverse_map}).
	
To conclude the proof, we only need to prove that for any $s\in\{1,...,l\}$ we have	
	\begin{equation}\label{equality_on_paths}
		\mathcal{P}h\left(c_{s-1},c_s;\left(\bigcup_{i=i_s}^m\mathcal{B}(x_{i})\right)^\complement\right) = \mathcal{P}h\left(c_{s-1},c_s;\mathcal{B}(x_{i_s})^\complement\right).
	\end{equation}
	 We show this equality by double inclusion.
	The inclusion \og $\subseteq$ \fg{} is immediate, so we just need to prove the reciprocal inclusion \og$\supseteq$\fg{}. This is a consequence of the Lemma \ref{Peage_Lemma}: Let $\gamma_s=(\omega_0,...,\omega_{n_s})$ be a $p$-admissible path in $\mathcal{P}h\left(c_{s-1},c_s;\left(\bigcup_{i=i_s}^m\mathcal{B}(x_{i})\right)^\complement\right)$. By Lemma \ref{Peage_Lemma} (applied to $\gamma_s$ with \og $x=x_{i_{(s-1)}}$ \fg{} and \og $y=x_i$ \fg for some $i\geq i_s$ such that $c_s\in\mathcal{B}(x_i)$, $\gamma_s$ must possess a first vertex that belongs to $\mathcal{B}(x_{i_s})$. This vertex must be $\omega_{n_s}=c_s$, by definition of $\gamma_s$. In particular $\gamma_s$ is in $\mathcal{P}h\left(c_{s-1},c_s;\mathcal{B}(x_{i_s})^\complement\right)$ as expected.
\end{proof}
\subsection{Decomposition of restricted Green's functions}

		Let $X=(X_0,X_1)$ be a tree of bounded valence, with set of vertices $X_0$ and set of edges $X_1$, let $p:X_0\times X_0\to [0,1]$ be a finite range transition kernel over $X_0$ such that the Markov chain $(X_0,p)$ is irreducible. Let $k\geq 0$ be a non-negative integer such that for any vertices $x,y\in X_0$, we have
	\begin{equation}
	d(x,y)>k \Rightarrow p(x,y)=0.
\end{equation}	

Recall the writing of restricted Green's functions as a sum of path weights (\ref{SumRestrictedWeight}): For any pair of distinct vertices $(x,y)$ of $X$, and any pair $(a,b)\in\mathcal{B}(x)\times\mathcal{B}(y)$, with $a\notin\mathcal{B}(y)$ and for any complex number $z$ of modulus sufficiently small, we have
$$G_z(a,b;\mathcal{B}(y)^\complement)=\sum_{\gamma\in\mathcal{P}h(a,b;\mathcal{B}(y)^\complement)} w_z(\gamma).$$

Using the previous Corollary \ref{corollary_of_decomposition_of_paths_on_a_tree} to decompose $p$-admissible paths $\gamma$ in $\mathcal{P}h(a,b;\mathcal{B}(y)^\complement)$ in the above sum, we get:

\begin{Prop}\label{Green_decomposition_along_a_geodesic}~
Under the above assumptions, for any two distinct vertices $x,y\in X_0$, and any pair $(a,b)\in \mathcal{B}(x)\times\mathcal{B}(y)$, with $a\notin\mathcal{B}(y)$, for any complex number $z$ of modulus small enough\footnote{to guarantee the convergence of the power series expansion in the neighbourhood of $0\in\mathbb{C}$, of Green's functions.}, we have the equality:
	\begin{equation}\label{Sum_product_formula}
		G_z(a,b;\mathcal{B}(y)^\complement)=\sum_{\substack{(c_0,...,c_l)\in\Xi_{[x,y]}\\ c_0=a, \, c_l=b }}\left(\prod_{j=1}^l G_z(c_{j-1},c_j;\mathcal{B}(x_{i_j})^\complement)\right),
	\end{equation}
	with $(i_1,...,i_l):= \iota_{[x,y]}(c_0,...,c_l)$. Note that the above sum is finite (See remark \ref{Finiteness_of_the_Xi_segment}).
\end{Prop}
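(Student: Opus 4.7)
The plan is to start from the path-weight representation
\[G_z(a,b;\mathcal{B}(y)^\complement)=\sum_{\gamma\in\mathcal{P}h(a,b;\mathcal{B}(y)^\complement)} w_z(\gamma)\]
given by formula (\ref{SumRestrictedWeight}), and to partition the index set $\mathcal{P}h(a,b;\mathcal{B}(y)^\complement)$ according to the tuple of crossing vertices of $\gamma$ along $[x,y]$. By Proposition \ref{Prop_decomposition_of_paths_on_a_tree}, every such $\gamma$ possesses a unique such tuple $(c_0,\ldots,c_l)\in\Xi_{[x,y]}$ with $c_0=a$; Corollary \ref{corollary_of_decomposition_of_paths_on_a_tree} then forces $c_l=b$ and identifies each partition block, via concatenation, with the Cartesian product $\prod_{s=1}^l\mathcal{P}h(c_{s-1},c_s;\mathcal{B}(x_{i_s})^\complement)$, where $(i_1,\ldots,i_l)=\iota_{[x,y]}(c_0,\ldots,c_l)$.

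Once the regrouping is in place, I would invoke the multiplicativity of the weight function under concatenation, $w_z(\gamma_1\ast\cdots\ast\gamma_l)=\prod_{j=1}^l w_z(\gamma_j)$ (Definition \ref{Def_function_path_weight}), to rewrite the contribution of each block as
\[\sum_{(\gamma_1,\ldots,\gamma_l)\in\prod_{s}\mathcal{P}h(c_{s-1},c_s;\mathcal{B}(x_{i_s})^\complement)}\;\prod_{j=1}^l w_z(\gamma_j),\]
and then exchange the outer sum and product to recognise each inner sum $\sum_{\gamma_j} w_z(\gamma_j)$ as the restricted Green's function $G_z(c_{j-1},c_j;\mathcal{B}(x_{i_j})^\complement)$ via (\ref{SumRestrictedWeight}). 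The outer sum over $(c_0,\ldots,c_l)\in\Xi_{[x,y]}$ is finite by Remark \ref{Finiteness_of_the_Xi_segment}, and combining all blocks produces exactly the right-hand side of (\ref{Sum_product_formula}).

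The only delicate point is to justify the interchange of sum and product, which is not purely formal since each $\mathcal{P}h(c_{s-1},c_s;\mathcal{B}(x_{i_s})^\complement)$ may be infinite. For real $r\geq 0$, the weights $w_r(\gamma)$ are non-negative, so Fubini--Tonelli gives the identity unconditionally as an equality in $[0,\infty]$; it then suffices to choose $r>0$ small enough that every factor $G_r(c_{j-1},c_j;\mathcal{B}(x_{i_j})^\complement)$ (finitely many in number) is finite, after which absolute convergence for complex $z$ of modulus at most $r$ follows from the bound $|w_z(\gamma)|\leq w_r(\gamma)$, and the identity propagates to such $z$ by the observation in Remark \ref{Rem_sum_weight}. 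This bookkeeping is the main hurdle, but it is entirely standard once the decomposition machinery of the previous subsection is in hand.
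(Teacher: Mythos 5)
Your proposal is correct and follows essentially the same route as the paper: write the restricted Green's function as a sum of path weights, group paths by their crossing vertices along $[x,y]$ via Proposition \ref{Prop_decomposition_of_paths_on_a_tree} and the bijection of Corollary \ref{corollary_of_decomposition_of_paths_on_a_tree}, then use multiplicativity of $w_z$ to factor each block into a product of restricted Green's functions. Your extra care in justifying the sum/product interchange (Fubini--Tonelli for $r\geq 0$, then domination $|w_z(\gamma)|\leq w_r(\gamma)$ as in Remark \ref{Rem_sum_weight}) is exactly the justification the paper leaves implicit in its ``modulus small enough'' hypothesis.
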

\begin{proof}
	For $\gamma\in\mathcal{P}h(a,b;\mathcal{B}(y)^\complement)$ arbitrary, in accordance with Corollary \ref{corollary_of_decomposition_of_paths_on_a_tree}, we will denote by $\gamma_1\ast\cdot\cdot\cdot\ast\gamma_l=\gamma$ the unique decomposition of $\gamma$ along the geodesic segment $[x,y]$ with respect to $\mathcal{B}$, and by $(c_0,...,c_l)\in\Xi_{[x,y]}$ its associated crossing vertices.
	We have, by multiplicativity of the weight function $w_z$ with respect to path concatenation, for any complex number $z\in\mathbb{C}$,
	$$w_z(\gamma)=w_z(\gamma_1)\cdot\cdot\cdot w_z(\gamma_l),$$
	then for any complex number $z$ of modulus strictly less than $R(a,b;\mathcal{B}(y)^\complement)$, the radius of convergence of the restricted Green's function $z\mapsto G_z(a,b;\mathcal{B}(y)^\complement)$, we have 
	\begin{align*}
		G_z(a,b;\mathcal{B}(y)^{\complement})&=\sum_{\gamma\in\mathcal{P}h(a,b;\mathcal{B}(y)^\complement)} w_z(\gamma)\\
		&=\sum_{\gamma\in\mathcal{P}h(a,b;\mathcal{B}(y)^\complement)} w_z(\gamma_1)\cdot \cdot \cdot w_z(\gamma_l)\\
		&=\sum_{\substack{(c_0,...,c_l)\in\Xi_{[x,y]}\\c_0=a,\,c_l=b}}\sum_\gamma w_z(\gamma_1)\cdot \cdot \cdot w_z(\gamma_l),
	\end{align*}
	where in the last equality, the second sum is taken over all $p$-admissible paths $\gamma$ whose crossing vertices along $[x,y]$ are $(c_0,...,c_l)$, and where the $p$-admissible paths $\gamma_1,...,\gamma_l$ are the ones appearing in the unique decomposition (from Corollary \ref{corollary_of_decomposition_of_paths_on_a_tree}) of $\gamma$, along the geodesic segment $[x,y]$ with respect to $\mathcal{B}$.
	
	Finally, from the above and using the bijection described in  Corollary \ref{corollary_of_decomposition_of_paths_on_a_tree}, for any complex number $z\in\mathbb{C}$ of modulus sufficiently small, we get
	\begin{align*}
		G_z(a,b;\mathcal{B}(y)^\complement)&=\sum_{\substack{(c_0,...,c_l)\in\Xi_{[x,y]}\\
		c_0=a, c_l=b}}\prod_{j=1}^l \left(\sum_{\gamma_j\in\mathcal{P}h(c_{j-1},c_j;\mathcal{B}(x_{i_j})^\complement)} w_z(\gamma_j)\right)\\
		&=\sum_{\substack{(c_0,...,c_l)\in\Xi_{[x,y]}\\
		c_0=a, c_l=b}}\prod_{j=1}^l G_z\left(c_{j-1},c_j;\mathcal{B}(x_{i_j})^\complement\right).
	\end{align*}
	The result is proved.
\end{proof}

\section{The Lalley's Curve, \texorpdfstring{$\mathcal{C}$}{C}}\label{Section_Lalley_s_curve}
	~
	
	All the statements of this section are given under the following assumptions:
Let $X=(X_0,X_1)$ be a tree of bounded valence, with set of vertices $X_0$ and set of edges $X_1$. Let $\Gamma<\operatorname{Aut}(X)$ be a group of automorphisms of $X$ that acts cofinitely on $X_0$ (i.e the sets of orbits $\Gamma\backslash X_0$ of $X_0$ under the action of $\Gamma$ is finite). Let $p:X_0\times X_0 \to [0,1]$ be a transition kernel that makes the Markov chain $(X_0,p)$ irreducible. Let $k$ be a non-negative integer, such that for any two vertices $x,y\in X_0$, if $d(x,y)>k$ then $p(x,y)=0$.
Besides we assume that the transition kernel $p$ is $\Gamma$-invariant, that is to say, for any vertices $x,y$ of $X$, and any automorphism $g\in\Gamma$, we have: 
		$$ p(gx,gy)=p(x,y).$$

In this section, we construct an algebraic curve $\mathcal{C}$ that contains the graph in the neighbourhood of $0$, of the family of restricted Green functions $z\mapsto G_z(a,b;\mathcal{B}(y)^\complement)$, for $(a,b)_y\in\Xi$. At this point, we will use the co-finiteness of the action of $\Gamma$ on $X_0$, to obtain a curve $\mathcal{C}$ that "lives" in a finite dimension vector space (See Proposition \ref{Prop_Action_cofinite} below).

		\subsection{\texorpdfstring{$\Gamma$-}{Gamma-}invariant complex-valued functions over \texorpdfstring{$\Xi$}{Xi}}	 

\begin{Prop}\label{Prop_Action_cofinite}~
    Under the above assumption, since the action of $\Gamma$ on $X_0$ is co-finite (i.e. $\operatorname{Card}(\Gamma\backslash X_0 )<\infty$), the induced action of $\Gamma$ on $\Xi$ is also co-finite.
\end{Prop}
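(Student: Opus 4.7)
The plan is to exploit the natural projection $\pi \colon \Xi \to X_0$ sending $(a,b)_y \mapsto y$, which by the very definition of the $\Gamma$-action on $\Xi$ is $\Gamma$-equivariant. This map factors to a map $\bar\pi \colon \Gamma\backslash \Xi \to \Gamma\backslash X_0$, and the strategy is simply to show that $\bar\pi$ has finite target (by hypothesis) and finite fibres.

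For finiteness of the fibres, I would fix $y \in X_0$ and observe that the fibre of $\bar\pi$ over the orbit $[y]$ is a quotient of $\Xi_y$ by the stabilizer $\Gamma_y = \{g \in \Gamma : gy=y\}$: indeed, if $(a,b)_{y'}$ is any element with $y' = gy$, then $g^{-1}\cdot(a,b)_{y'} = (g^{-1}a, g^{-1}b)_y$ lies in $\Xi_y$, so every $\Gamma$-orbit in $\pi^{-1}([y])$ meets $\Xi_y$. It therefore suffices to prove that $\Xi_y$ itself is finite. By definition $\Xi_y \subset \partial\mathcal{B}_k(y) \times \mathcal{B}_k(y)$, and since $X$ has bounded valence $V = \max_x \operatorname{Card}(\mathcal{N}_x) < \infty$, the ball $\mathcal{B}_k(y)$ has at most $1 + V + V(V-1) + \cdots + V(V-1)^{k-1}$ vertices, so both factors are finite, and hence so is $\Xi_y$.

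To finish, fix a finite set $y_1, \dots, y_N$ of representatives of the orbits in $\Gamma\backslash X_0$ (finite by assumption). By the argument above, every $\Gamma$-orbit in $\Xi$ has a representative in $\bigsqcup_{i=1}^N \Xi_{y_i}$, which is a finite union of finite sets. Therefore $\operatorname{Card}(\Gamma\backslash\Xi) \leq \sum_{i=1}^N \operatorname{Card}(\Xi_{y_i}) < \infty$, which is exactly the claim.

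There is no real obstacle here: the argument is essentially bookkeeping once one notices that $\Xi$ is a $\Gamma$-equivariant fibration over $X_0$ with finite fibres, so cofiniteness propagates from the base to the total space. The only point worth spelling out carefully is the $\Gamma$-equivariance of $\pi$, which is immediate from the definition $g\cdot(a,b)_y = (ga,gb)_{gy}$; everything else follows from the bounded valence of $X$.
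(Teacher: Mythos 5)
Your proof is correct and follows essentially the same route as the paper: choose finitely many orbit representatives $y_1,\dots,y_N$ of $\Gamma\backslash X_0$, note that each $\Xi_{y_i}$ is finite, and observe that equivariance of $(a,b)_y\mapsto y$ lets you translate any element of $\Xi$ into $\bigcup_i \Xi_{y_i}$. The only difference is cosmetic: you spell out the finiteness of $\Xi_y$ via bounded valence and phrase the argument as a fibration over $X_0$, whereas the paper simply takes the finiteness of $\Xi_y$ as immediate from its definition.
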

\begin{proof}
	The point is to show that $\operatorname{Card}(\Gamma\backslash\Xi)$ is finite. 
	Let $S$ be a finite subset of $X_0$ such that $\Gamma S = X_0$. By definition, the sets \og $\Xi_y$ \fg{} are finite, the set $T$ defined below is therefore also finite.
	$$T:=\bigcup_{y\in S}\Xi_y.$$
	For any $(a,b)_y\in\Xi$, there exists $g\in\Gamma$ such that $gy\in S$, then $(ga,gb)_{gy}$ belongs to $\Xi_{gy}\subset T$. We deduce that $\Gamma T =\Xi$ and that the action of $\Gamma$ on $\Xi$ is co-finite.
\end{proof}

The action of $\Gamma$ on $\Xi$ naturally induces an action on the $\mathbb{C}$-vector space of complex-valued functions defined on $\Xi$, by right-hand composition:
$$g\in\Gamma,\, J:\Xi\to\mathbb{C},\quad g\cdot J= J\circ g^{-1}:\Xi\to\mathbb{C}.$$

We denote by $\mathcal{F}(\Xi,\mathbb{C})$, the space of functions $J:\Xi\to\mathbb{C}$. 
And we denote by $\mathcal{F}(\Xi,\mathbb{C})^\Gamma$ \index{$\mathcal{F}(\Xi,\mathbb{C})$}the subspace of $\Gamma$-invariant functions.
In the light of the previous proposition and under our assumptions, this vector space is finite-dimensional. Indeed, any $\Gamma$-invariant function from $\Xi$ to $\mathbb{C}$ is uniquely factorized by a complex-valued function, defined on the set of $\Gamma$-classes of $\Xi$. In other words, $\mathcal{F}(\Xi,\mathbb{C})^\Gamma$ identifies as $\mathcal{F}(\Gamma\backslash\Xi, \mathbb{C})$.

\begin{Ex}\label{example_gamma_inv_green}
	An essential example of function in $\mathcal{F}(\Xi,\mathbb{C})^\Gamma$ is the one of Green's functions. For a complex number $z$ of sufficiently small modulus\footnote{To guarantee the convergence of the power series associated with restricted Green's functions}, the function
	\begin{equation}\label{DEF:V_GREEN}\index{$v_z$}
		v_z:(a,b)_{y}\mapsto G_z(a,b;\mathcal{B}(y)^\complement)
	\end{equation}
	is in $\mathcal{F}(\Xi,\mathbb{C})^\Gamma.$
	Indeed, if we fix a complex number $z$ of sufficiently small modulus, let $(a,b)_{y}\in\Xi$, and let $g\in\Gamma$. Since $\Gamma$ preserves $p$, $\Gamma$ acts naturally on the set of $p$-admissible paths $\mathcal{P}h(X)$ and in particular preserves the weight function $w_z$ (Definition \ref{Def_function_path_weight}).
	The restriction at the source of the action $g:\mathcal{P}h(X)\to\mathcal{P}h(X)$ to the set $\mathcal{P}h(a,b;\mathcal{B}(y)^\complement)$ of $p$-admissible paths between $a$ and $b$ with intermediate vertices in $\mathcal{B}(y)^\complement$, induces a bijection:
	$$g:\mathcal{P}h(a,b;\mathcal{B}(y)^\complement)\to \mathcal{P}h(ga,gb;\mathcal{B}(gy)^\complement),$$
	since $g\mathcal{B}(y)=\mathcal{B}(gy)$. It follows by the formula (\ref{SumRestrictedWeight}), expressing Green's functions as a sum of path weights \og $w_z(\gamma)$\fg{}, that:
	\begin{align*}
		g^{-1}\cdot v_z((a,b)_y)&=G_z(ga,gb;\mathcal{B}(gy)^\complement)=\sum_{\gamma\in\,\mathcal{P}h(ga,gb;\mathcal{B}(gy)^\complement)}w_z(\gamma)\\
		&=\sum_{\gamma\in\,\mathcal{P}h(a,b;\mathcal{B}(y)^\complement)}w_z(g\gamma)=G_z(a,b;\mathcal{B}(y)^\complement)\\
		&=v_z((a,b)_y).
	\end{align*}

	This is the only example we will be using. However, we mention the following other examples of $\Gamma$-invariant functions in $\mathcal{F}(\Xi,\mathbb{C})$: for $z$ a complex number of modulus small enough, the following maps are in $\mathcal{F}(\Xi,\mathbb{C})^\Gamma$:
	\begin{multicols}{2}
	\begin{itemize}
		\item $(a,b)_{y}\mapsto G_z(a,b)$; 
		\item $(a,b)_{y}\mapsto F_z(a,b)$;
		\item $(a,b)_{y}\mapsto F_z(x,y)$;
		\item $(a,b)_{y}\mapsto G_z(x,y),$
	\end{itemize}
	\end{multicols}
	where $x$ is the only neighbour of $y$ in the geodesic segment $[a,y]$, and where $G_z$ and $F_z$ are the Green's function and first-passage generating function, respectively (Definitions \ref{DEF:GREEN_FUNCTION} and \ref{Def_Green_function_first_pass}).
\end{Ex}

From now on, we will denote by $E$\index{$E$} the space $\mathcal{F}(\Xi,\mathbb{C})^\Gamma$. A basis of $E$ is given by the functions $\{1_{[\theta]}\}_{[\theta]}$ where $[\theta]=\Gamma\theta$\index{$[\theta]=\Gamma\theta$} runs over the set $\Gamma\backslash\Xi$ of $\Gamma$-orbits of $\Xi$. We adopt the notation $[\theta]$ to denote the orbit $\Gamma\theta\in\Gamma\backslash\Xi$ of an element $\theta\in\Xi$.

Under our assumptions, recall that the set $\Gamma\backslash\Xi$ is finite and therefore the vector space $E$ has finite dimension (Proposition \ref{Prop_Action_cofinite}).

We then define a map $\psi:\mathcal{F}(\Xi,\mathbb{C})\to\mathcal{F}(\Xi,\mathbb{C})$ which stabilizes $E$, and whose coordinate are polynomial maps depending on a finite number of coordinates. The restriction of $\psi$ to $E$ will be used later to define an algebraic curve $\mathcal{C}$. 
		\subsection{Definition of the map \texorpdfstring{$\psi$}{psi} and the curve \texorpdfstring{$\mathcal{C}$}{C}}
		Motivated by formula (\ref{Sum_product_formula}) of the Proposition \ref{Green_decomposition_along_a_geodesic}, we introduce the following notation:
Let $x$ and $y$ be two distinct vertices of $X$, $[x,y]=(x_0,...,x_m)$ denotes the geodesic segment between $x$ and $y$, and let $J\in\mathcal{F}(\Xi,\mathbb{C})$ be a function. We note for any pair $(c,b)\in \mathcal{B}(x)\times\mathcal{B}(y)$, with $c\notin\mathcal{B}(y)$:
\begin{equation}\label{Formula_J_[z,y]}
	J_{[x,y]}(c,b):=\sum_{\substack{(c_0,...,c_l)\in\Xi_{[x,y]}\\ c_0=c, \, c_l=b}} J((c,c_1)_{x_{i_1}})\cdot J((c_1,c_2)_{x_{i_2}})\cdot \cdot \cdot J((c_{l-1},b)_{x_{i_l}}),
\end{equation}
where the sum is taken over all elements $(c_0,...,c_{l})\in\Xi_{[x,y]}$ such that, $l\in\mathbb{N}\setminus\{0\}$ and $c_0=c,\, c_l=b$. And where the $l$-tuple of indices $(i_1,...,i_l)$ corresponds to the $l$-tuple $\iota_{[x,y]}(c_0,...,c_l)$ introduced in Lemma \ref{Lem_2.6}. Note that this sum is finite (See remark \ref{Finiteness_of_the_Xi_segment}).

\begin{Notation}\label{notation_J_[z,y]}
	In order to lighten the notations, and since the indices $i_1,...,i_l$ are entirely determined by the $(l+1)$-tuple $(c_0,...,c_l)$ and the pair $(x,y)$ by the equalities
	$$ \forall s \in\{1,...,l\},\, i_s = 1 + \max\{i:c_{s-1}\in\mathcal{B}(x_i)\},$$
	we will note:
	$$J_{x,y}(c_0,...,c_l):=J((c_0,c_1)_{x_{i_1}})\cdot J((c_1,c_2)_{x_{i_2}})\cdot \cdot \cdot J((c_{l-1},c_l)_{x_{i_l}}).$$
	The formula (\ref{Formula_J_[z,y]}) is rewritten:
	\begin{equation}\label{Formula_J_[z,y]_bis}
		J_{[x,y]}(c,b):=\sum_{\substack{(c_0,...,c_l)\in\Xi_{[x,y]}\\c_0=c,\, c_l=b}} J_{x,y}(c_0,...,c_l).
	\end{equation}
\end{Notation}
Such product \og $J_{x,y}(c_0,...,c_l)$ \fg{} in the previous sum is what we call a \textit{monomial} of the polynomial function with non-negative coefficients $J\mapsto J_{[x,y]}(c,b)$.
\begin{Def}\textit{Monomial of a polynomial function with non-negative coefficients}\label{Def_monomial}

	Let $P:\mathbb{C}^n\to\mathbb{C}, (J_1,...,J_n)\to P(J_1,...,J_n)$ be a polynomial function with non-negative coefficients.
	Let $\alpha_1,...,\alpha_n$ be non-negative integers. We say that a product $J_1^{\alpha_1}\cdots J_n^{\alpha_n}$ is a \textit{monomial} of the polynomial function $J\mapsto P(J)$, if there exist a positive constant $c>0$ and a polynomial function $Q:\mathbb{C}^n\to\mathbb{C}$ with non-negative coefficients such that for any $J=(J_1,...,J_n)\in\mathbb{C}^n$, we have
	$$ P(J)=c J_1^{\alpha_1}\cdots J_n^{\alpha_n} + Q(J).$$
\end{Def}

\vspace{\baselineskip}
With this notation \ref{notation_J_[z,y]}, the formula (\ref{Sum_product_formula}) of the Proposition \ref{Green_decomposition_along_a_geodesic} is rewritten, with all variables defined:
\begin{equation}\label{Green_function_decomposition_simplified}
	G_z(a,b;\mathcal{B}(y)^\complement) = (v_z)_{[x,y]}(a,b).
\end{equation}

Note: In the rest of this paper, we will \textbf{always} take $x$ to be equal to $c$ in formulas (\ref{Formula_J_[z,y]}) / (\ref{Formula_J_[z,y]_bis}).
\vspace{\baselineskip}

\underline{Definition of $\psi$}:
We define $\psi:\mathcal{F}(\Xi,\mathbb{C})\to\mathcal{F}(\Xi,\mathbb{C})$ as the map that associates to any function  $J\in\mathcal{F}(\Xi,\mathbb{C})$, the complex valued function $\psi(J)\in\mathcal{F}(\Xi,\mathbb{C})$ defined by:
\begin{equation}\label{Def_de_psi}
	\forall (a,b)_{y}\in\Xi, \, \psi(J)((a,b)_{y})=p(a,b)+\sum_{c\in\mathcal{B}(y)^\complement}p(a,c) J_{[c,y]}(c,b).
\end{equation}

Since the transition kernel $p$ has finite range (that is to say, for any vertex $a\in X_0$, the set $\{c : p(a,c)>0\}$ is included in the ball $\mathcal{B}(a)$), the above sum is finite. Thus for any element $\theta\in\Xi$, the function $J\mapsto \psi(J)(\theta)$ depends on a finite number of coordinates, and is a polynomial function on the associated finite dimensional vector subspace.

A product $J_{x,y}(c_0,...,c_l)$, where for some vertex $c$ such that $p(a,c)>0$, and $(c_0,...,c_l)\in\Xi_{[c_0,y]}$ with $c_0=c$ and $c_l=b$, from the previous sum (\ref{Def_de_psi}) is by definition (See (\ref{Formula_J_[z,y]_bis}) and Definition \ref{Def_monomial} above) a \textit{monomial} of $J\mapsto \psi(J)((a,b)_{y})$.

\begin{Lem}
	The map $\psi$ leaves the subspace $E\subset\mathcal{F}(\Xi,\mathbb{C})$, of $\Gamma$-invariant complex valued functions over $\Xi$, stable: $$\psi(E)\subset E.$$
	
    In particular, $\psi$ is a polynomial map on $E$.
\end{Lem}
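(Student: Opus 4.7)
The plan is to verify directly from the defining formula (\ref{Def_de_psi}) that $\psi(J)$ is $\Gamma$-invariant whenever $J$ is, by checking the equality $\psi(J)(g\cdot\theta)=\psi(J)(\theta)$ for every $g\in\Gamma$ and every $\theta=(a,b)_y\in\Xi$. Writing out the right-hand side at $g\cdot(a,b)_y=(ga,gb)_{gy}$ produces
\[
\psi(J)\bigl((ga,gb)_{gy}\bigr)=p(ga,gb)+\sum_{c'\in\mathcal{B}(gy)^{\complement}}p(ga,c')\,J_{[c',gy]}(c',gb).
\]
The first term equals $p(a,b)$ by $\Gamma$-invariance of $p$. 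For the sum, I would reindex the summation by writing $c'=gc$; this is legitimate because $g$ is an automorphism sending $\mathcal{B}(y)$ onto $\mathcal{B}(gy)$, hence it induces a bijection $\mathcal{B}(y)^{\complement}\to\mathcal{B}(gy)^{\complement}$. Using $p(ga,gc)=p(a,c)$ again, the whole question reduces to proving
\[
J_{[gc,gy]}(gc,gb)=J_{[c,y]}(c,b).
\]

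The main (and only non-trivial) step is this last equality. To establish it, I would argue that $g$ carries the geodesic segment $[c,y]=(x_0,\ldots,x_m)$ to $[gc,gy]=(gx_0,\ldots,gx_m)$ because $g$ preserves the combinatorial distance, and therefore induces a bijection $\Xi_{[c,y]}\to\Xi_{[gc,gy]}$ via $(c_0,\ldots,c_l)\mapsto(gc_0,\ldots,gc_l)$. The crossing indices $\iota_{[gc,gy]}(gc_0,\ldots,gc_l)=(i_1,\ldots,i_l)$ coincide with $\iota_{[c,y]}(c_0,\ldots,c_l)$, since the formula of Lemma \ref{Lem_2.6} only involves distances between the $c_s$ and the $x_i$, and $g$ preserves these distances. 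Unfolding Notation \ref{notation_J_[z,y]} and using $(gc_{s-1},gc_s)_{gx_{i_s}}=g\cdot(c_{s-1},c_s)_{x_{i_s}}$ together with the $\Gamma$-invariance of $J$, each factor $J\bigl((gc_{s-1},gc_s)_{gx_{i_s}}\bigr)$ equals $J\bigl((c_{s-1},c_s)_{x_{i_s}}\bigr)$, and the two sums match term by term. This gives $\psi(J)\in E$.

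For the second assertion, I would observe that by Proposition \ref{Prop_Action_cofinite} the space $E\simeq\mathcal{F}(\Gamma\backslash\Xi,\mathbb{C})$ is finite-dimensional, with a canonical basis $\{\mathbf{1}_{[\theta]}\}_{[\theta]\in\Gamma\backslash\Xi}$. The coordinate of $\psi(J)$ along $\mathbf{1}_{[\theta]}$ is $\psi(J)(\theta)$ for any representative $\theta=(a,b)_y$; by the finite-range hypothesis, the sum in (\ref{Def_de_psi}) runs over the finite set $\{c:p(a,c)>0\}\subset\mathcal{B}(a)$, and each $J_{[c,y]}(c,b)$ is itself a finite sum of products of values of $J$, hence a polynomial in finitely many coordinates of $J$ in the above basis. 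Assembling these facts, $\psi\lvert_{E}:E\to E$ has polynomial coordinate functions with non-negative coefficients, as claimed.

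I do not expect any real obstacle: the identity $J_{[gc,gy]}(gc,gb)=J_{[c,y]}(c,b)$ is the only computation that requires care, and it is essentially a translation of the fact that the whole construction (geodesic segments, balls $\mathcal{B}(y)$, the sets $\Xi_{[c,y]}$, the indices $\iota_{[c,y]}$, and the evaluation of $J$ on $\Xi$) is functorial under $\Gamma$-automorphisms.
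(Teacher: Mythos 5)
Your proposal is correct and is essentially the paper's own argument: the paper simply asserts that the stability $\psi(E)\subset E$ follows by \og explicitly writing everything down \fg{} from the $\Gamma$-invariance of $p$ and of $J$, and your computation (reindexing $c'=gc$, transporting $\Xi_{[c,y]}$ and the crossing indices by $g$, and matching the sums term by term) is precisely that writing-down, with the finite-dimensionality/polynomiality remark matching the paper's discussion following the definition of $\psi$.
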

\begin{proof}
	Using the $\Gamma$-invariance of the transition kernel $p$ and of the functions in $E$, and by explicitly writing everything down, this is an immediate consequence of the definitions.
\end{proof}
%\begin{proof}
%Let $J\in E$, $g\in\Gamma$ and $(a,b)_y\in\Xi$ be arbitrary. Using the definition (\ref{Def_de_psi}) and the $\Gamma$-invariance of the transition kernel $p$, we get:
%\begin{align*}
%	\psi(J)(g(a,b)_y)&=p(ga,gb) + \sum_{c'\in\mathcal{B}(gy)^\complement}p(ga,c') J_{[c',gy]}(c',gb).
%	\\
%	&=p(a,b) + \sum_{c\in\mathcal{B}(y)^\complement}p(a,c) J_{[gc,gy]}(gc,gb),
%\end{align*}
%where we have changed the index of summation by $c=g^{-1}c'$. For any $c\in\mathcal{B}(y)$, we have
%\begin{align*}
%J_{[gc,gy]}(gc,gb)&=\sum_{\substack{(c_0',...,c_l')\in\Xi_{[gc,gy]}\\c_0'=gc,\, c_l'=gb}} J_{gc,gy}(c_0',...,c_l')\\
%	&=\sum_{\substack{(c_0,...,c_l)\in\Xi_{[c,y]}\\c_0=c,\, c_l=b}} J_{gc,gy}(gc_0,...,gc_l),
%\end{align*}
%where we performed the change of summation index $(c_0,...,c_l)=g^{-1}(c_0',...,c_l')$. Lastly for any $(c_0,...,c_l)\in\Xi_{[c,y]}$ with $c_0=c$ and $c_l=b$ we have, since $J$ is $\Gamma$-invariant:
%\begin{align*}
%	J_{gc,gy}(gc_0,...,gc_l)&=J(gc_0,gc_1)_{gx_{i_1}}\cdot\cdot\cdot J(g c_{l-1}, g c_l)_{gx_{i_l}}\\
%	&=J(c_0,c_1)_{x_{i_1}}\cdot\cdot\cdot J(c_{l-1}, c_l)_{x_{i_l}}\\
%	&=J_{c,y}(c_0,...,c_l)
%\end{align*}
%
%Putting all together we obtain $\psi(J)(g(a,b)_y)=\psi(J)((a,b)_y)$. The lemma follows by arbitrariness of $J$, $g$ and $(a,b)_y$.
%\end{proof}

We will note $\psi_E:=\psi\vert_E$, and sometimes just $\psi$, if there is no ambiguity, the restriction at the source of $\psi$ to the subspace $E\subset\mathcal{F}(\Xi,\mathbb{C})$.

Actually the map $\psi$ has been defined so that the next Proposition is satisfied:

\begin{Prop}[Proposition 2.5 \cite{Lalley_1993}]\label{Green_restricted_as_parametrisation_of_C}~

	In a neighbourhood of $z=0$, the map $z\mapsto v_z$ defined in (\ref{DEF:V_GREEN}) with values in $E$ satisfies the equation
	\begin{equation}
		z\psi(v_z)=v_z,
	\end{equation}
	for any complex number $z$ of modulus small enough.
\end{Prop}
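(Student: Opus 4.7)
The plan is to verify the identity $z\psi(v_z)((a,b)_y) = v_z((a,b)_y)$ coordinate by coordinate on $\Xi$, by applying a first-step decomposition to the sum-of-path-weights formula (\ref{SumRestrictedWeight}) and then re-recognising the resulting restricted Green's functions via formula (\ref{Green_function_decomposition_simplified}).

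Fix $(a,b)_y \in \Xi$ and a complex number $z$ of small enough modulus so that all the series below converge absolutely. Since $a \in \partial\mathcal{B}(y) \subset \mathcal{B}(y)^{\complement}$ and $b\in\mathcal{B}(y)$, we have $a \neq b$, so every $\gamma\in\mathcal{P}h(a,b;\mathcal{B}(y)^\complement)$ has length $n \geq 1$. Writing $\gamma = (a,\omega_1,\ldots,\omega_n = b)$ and exploiting multiplicativity of $w_z$ under concatenation, I split the sum according to $a' := \omega_1$. Two cases arise: either $a' = b$, which forces $n=1$ (otherwise $a'=b\in\mathcal{B}(y)$ would be an intermediate vertex, violating the restriction), contributing $zp(a,b)$; or $a'\neq b$, in which case $a'$ is intermediate and must lie in $\mathcal{B}(y)^\complement$, while the tail $(a',\omega_2,\ldots,b)$ is itself an element of $\mathcal{P}h(a',b;\mathcal{B}(y)^\complement)$. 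Summing and using Proposition \ref{Formula_sum_weight} (applied to restricted paths) yields the first-passage identity
\begin{equation}\label{Proposed_first_step}
G_z(a,b;\mathcal{B}(y)^\complement) = z\,p(a,b) \;+\; \sum_{\substack{c\in\mathcal{B}(y)^\complement\\ p(a,c)>0}} z\,p(a,c)\,G_z(c,b;\mathcal{B}(y)^\complement).
\end{equation}
The sum is finite thanks to the finite-range assumption on $p$, so the manipulation is legitimate for $|z|$ small (and in fact for $|z|<R$).

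Next, for each $c$ appearing in (\ref{Proposed_first_step}), I apply formula (\ref{Green_function_decomposition_simplified}) with $x$ replaced by $c$. The required hypotheses are met: $c \neq y$ because $c\in\mathcal{B}(y)^\complement$, $c \in \mathcal{B}(c)\setminus\mathcal{B}(y)$ trivially, and $b \in \mathcal{B}(y)$. Hence
\begin{equation*}
G_z(c,b;\mathcal{B}(y)^\complement) \;=\; (v_z)_{[c,y]}(c,b).
\end{equation*}
Substituting back into (\ref{Proposed_first_step}) and comparing with the definition (\ref{Def_de_psi}) of $\psi$, the right-hand side becomes exactly $z\,\psi(v_z)\bigl((a,b)_y\bigr)$, while the left-hand side is $v_z\bigl((a,b)_y\bigr)$ by definition (\ref{DEF:V_GREEN}). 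Since $(a,b)_y\in\Xi$ was arbitrary, this gives the functional equation $z\psi(v_z) = v_z$ in $E$.

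There is no genuinely hard step here: Theorem \ref{Finitude_of_Green_functions} already guarantees that the restricted Green's functions have radius of convergence at least $R>1$, so absolute convergence of the first-step decomposition for small $|z|$ is automatic, and the finite-range hypothesis ensures the sum over $c$ in $\psi$ is a finite sum. The one point requiring care is the case analysis on $a'$: because $b$ lies in $\mathcal{B}(y)$ whereas the remaining summation variable $c$ in $\psi$ ranges over $\mathcal{B}(y)^\complement$, the length-one path $(a,b)$ must be singled out and matched with the isolated term $p(a,b)$ in (\ref{Def_de_psi}), which is precisely why $\psi$ was defined with that distinguished first summand.
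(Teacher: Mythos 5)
Your proof is correct and follows essentially the same route as the paper's: both rest on the first-step decomposition of paths in $\mathcal{P}h(a,b;\mathcal{B}(y)^\complement)$ (length-one path $(a,b)$ versus concatenations $(a,c)\ast\gamma$ with $c\in\mathcal{B}(y)^\complement$) combined with formula (\ref{Green_function_decomposition_simplified}) identifying $G_z(c,b;\mathcal{B}(y)^\complement)$ with $(v_z)_{[c,y]}(c,b)$. The only difference is direction — you start from $v_z((a,b)_y)$ and rebuild $z\psi(v_z)((a,b)_y)$, while the paper expands $z\psi(v_z)((a,b)_y)$ and recovers $G_z(a,b;\mathcal{B}(y)^\complement)$ — which is immaterial.
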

\begin{proof}
	The point is to show that for any complex number $z$ in a neighbourhood of $z=0$, for any $(a,b)_y\in\Xi$ we have:
	$$ z\psi(v_z)((a,b)_y) = v_z((a,b)_y).$$
	Let $(a,b)_y\in\Xi$. We compute the left handside of the above using the definition of $\psi$ (\ref{Def_de_psi}), the formula (\ref{Green_function_decomposition_simplified}) from Proposition \ref{Green_decomposition_along_a_geodesic} and the writing of restricted Green's functions as a sum of $p$-admissible path weight (\ref{SumRestrictedWeight}):

	\begin{align*}
		z\psi(v_z)((a,b)_y)&=zp(a,b) + \sum_{c\in\mathcal{B}(y)^\complement} z p(a,c) (v_z)_{[c,y]}(c,b)\\
		&=zp(a,b) + \sum_{c\in\mathcal{B}(y)^\complement} z p(a,c) G_z(c,b;\mathcal{B}(y)^\complement)\\
		&=zp(a,b) + \sum_{c\in\mathcal{B}(y)^\complement} \sum_{\gamma\in\,\mathcal{P}h(c,b;\mathcal{B}(y)^\complement)}z p(a,c)\, w_z(\gamma)\\
		&=zp(a,b) + \sum_{c\in\mathcal{B}(y)^\complement} \sum_{\gamma\in\,\mathcal{P}h(c,b;\mathcal{B}(y)^\complement)} w_z((a,c)\ast\gamma),
	\end{align*}
	where the last equality comes from multiplicativity of the weight function $w_z$ with respect to path concatenation (See Definition  \ref{Def_function_path_weight}).
	But any path from $\mathcal{P}h(a,b;\mathcal{B}(y)^\complement)$ of length at least $2$ is uniquely written as a concatenation of the form $(a, c)\ast\gamma$, where $c\in\mathcal{B}(y)^\complement$ and $\gamma\in\mathcal{P}h(c,b;\mathcal{B}(y)^\complement)$ therefore, using again formula (\ref{SumRestrictedWeight}), we get
	\begin{align*}
		z\psi(v_z)((a,b)_y)&=\underbrace{zp(a,b)}_{\text{case }l(\gamma)=1} + \sum_{\substack{\gamma\in\,\mathcal{P}h(a,b;\mathcal{B}(y)^\complement)\\ l(\gamma)\geq 2} }w_z(\gamma)\\
		&= G_z(a,b;\mathcal{B}(y)^\complement).
	\end{align*}
\end{proof}

Consequently, the restricted Green's functions parametrize, in a neighbourhood of $0$ the algebraic curve defined below (See Lemma \ref{Lem_Green_parametrization_of_C}).
\vspace{\baselineskip}~

\underline{Definition of $\mathcal{C}$}:

Let $W$ be the following map:
\begin{equation}\label{Def_de_W}
	\begin{split}
 		W:\mathbb{C}\times E &\to E\\
 		(z,J)&\mapsto J-z\cdot\psi(J)
	\end{split}
\end{equation}
The map $W$ is a polynomial map on $\mathbb{C}\times E$. Finally, we define $\mathcal{C}$ to be the affine algebraic set
\begin{equation}\label{Def_de_C}
	\mathcal{C}:= W^{-1}(\lbrace 0_E \rbrace )=\{(z,J)\in \mathbb{C}\times E :\, J=z\psi(J)\}.
\end{equation}

At $(z,J)=(0,0_E)$, we have $W(0,0_E)=0_E$ and the partial derivative of $W$ with respect to the variable $J$ satisfies, $\dfrac{\partial W}{\partial J}(0,0_E)=Id_E$, in particular, by the Implicit Functions Theorem (See \cite{Cartan_1961}, p.138), the origin $(0,0_E)$ is a regular point of $W$ and therefore, $\mathcal{C}$ is a smooth analytic curve in the neighbourhood of $(0,0_E)$. More precisely, the complex Implicit Functions Theorem allows us to assert the existence of a germ of holomorphic map in the neighbourhood of $z=0$ in $\mathbb{C}$, with values in $E$, which we will denote $v:z\mapsto v_z$ such that for any complex number $z$ of modulus small enough and any vector $w$ in a neighbourhood of $0_E$ in $E$,
\begin{equation}\label{CIFT_3.11}
	(z,w)\in\mathcal{C}\Leftrightarrow w=v_z.
\end{equation}
The lemma bellow tells us that the notation \og $v_z$ \fg{} is consistent with the previously chosen one (See example \ref{example_gamma_inv_green}). 
\begin{Rem}
   At this point we only know that $\mathcal{C}$ is an affine algebraic set. Lalley proved in \cite{Lalley_1993} (Proposition 3.1) that the irreducible components of $\mathcal{C}$ containing the origin are indeed curves. We thus take the liberty of calling $\mathcal{C}$ an algebraic curve.

In \cite{LCurve}, we prove that $\mathcal{C}$ is a smooth Riemann surface in a neighbourhood of the closure of the range of the map $u:z\mapsto (z,v_z)$.
\end{Rem}

\begin{Lem}\label{Lem_Green_parametrization_of_C}
	The germ of holomorphic map in the neighbourhood of $z=0$, with values in $E$, $z\mapsto v_z$ given by the formula:
	$$\forall (a,b)_y\in\Xi, \, v_z((a,b)_y)=G_z(a,b;\mathcal{B}(y)^\complement)$$
	is the unique map $v$ introduced in (\ref{CIFT_3.11}).
\end{Lem}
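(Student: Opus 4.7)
The plan is to identify the Green-function map with the implicit-function-theorem map by verifying the hypotheses under which uniqueness in \eqref{CIFT_3.11} applies: namely, membership in $E$, holomorphy in a neighbourhood of $0$, vanishing at $z=0$, and satisfaction of the defining equation $J = z\psi(J)$ of $\mathcal{C}$.

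First I would observe that the coordinate-wise defined map $z \mapsto v_z$ with $v_z((a,b)_y) = G_z(a,b;\mathcal{B}(y)^\complement)$ takes its values in $E$: this is exactly the content of Example \ref{example_gamma_inv_green}, which uses the $\Gamma$-invariance of $p$, the identity $g\mathcal{B}(y) = \mathcal{B}(gy)$, and the formula \eqref{SumRestrictedWeight} expressing restricted Green's functions as sums of path weights. Since $E$ is finite-dimensional (Proposition \ref{Prop_Action_cofinite}) and each coordinate $z \mapsto G_z(a,b;\mathcal{B}(y)^\complement)$ is given by a power series with a positive radius of convergence, the $E$-valued map $z \mapsto v_z$ is holomorphic in a neighbourhood of $0$.

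Next I would check the initial condition $v_0 = 0_E$. For any $(a,b)_y \in \Xi$, the definition gives $a \in \partial\mathcal{B}(y)$ (so $d(y,a) = k+1$) and $b \in \mathcal{B}(y)$ (so $d(y,b) \leq k$), whence $a \neq b$. Therefore $p^{(0)}(a,b;\mathcal{B}(y)^\complement) = \delta_{a,b} = 0$, and the constant term in the power series defining $G_z(a,b;\mathcal{B}(y)^\complement)$ vanishes; this gives $v_0((a,b)_y) = 0$ for every $(a,b)_y \in \Xi$, hence $v_0 = 0_E$. Combined with Proposition \ref{Green_restricted_as_parametrisation_of_C}, which asserts that $z\psi(v_z) = v_z$ in a neighbourhood of the origin, this means that for $z$ of sufficiently small modulus the pair $(z, v_z)$ lies in $\mathcal{C} = W^{-1}(\{0_E\})$, and moreover $v_z$ stays in a neighbourhood of $0_E$ by continuity at $z = 0$.

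Finally I would invoke the uniqueness clause of the Implicit Function Theorem at the regular point $(0,0_E)$ of $W$: since $\partial_J W(0,0_E) = \operatorname{Id}_E$ is invertible, there is a unique holomorphic map $v : \mathbb{D}(0,\varepsilon) \to E$ with $v(0) = 0_E$ satisfying $(z, v(z)) \in \mathcal{C}$ for all $z$ in some small disk, and this is precisely the map whose existence is asserted in \eqref{CIFT_3.11}. The Green-function map $z \mapsto v_z$ satisfies all of these conditions, so the two maps must coincide as germs at $z = 0$. There is no substantial obstacle here; the only delicate point is making sure that the neighbourhood on which $v_z$ stays close to $0_E$ is small enough to apply the uniqueness assertion, which follows from continuity at $z = 0$.
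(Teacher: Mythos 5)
Your proof is correct and follows essentially the same route as the paper: it combines Proposition \ref{Green_restricted_as_parametrisation_of_C} (the relation $z\psi(v_z)=v_z$) with the uniqueness clause of the complex Implicit Function Theorem at the regular point $(0,0_E)$. The extra details you supply (membership in $E$, holomorphy, and the check that $v_0=0_E$ because $a\in\partial\mathcal{B}(y)$ forces $a\neq b$) are exactly the routine verifications the paper leaves implicit in calling the lemma an ``immediate consequence.''
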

\begin{proof}
	This is an immediate consequence of the uniqueness of $v$ given by the complex Implicit Functions Theorem and the previous Proposition \ref{Green_restricted_as_parametrisation_of_C}.
\end{proof}

We denote by $R_G$\index{$R(a,b;\Omega)$}\index{$R_G$} the possibly infinite radius of the largest disk on which $v:z\mapsto v_z$ admits a holomorphic extension. Actually, it is elementary to check that $R_G=\min \{ R(a,b;\mathcal{B}(y)^\complement) : (a,b)_y\in\Xi \}$, where $R(a,b;\mathcal{B}(y)^\complement)$ is the radius of convergence of the power series expansion of the restricted Green's function $G_z(a,b;\mathcal{B}(y)^\complement)$, in the neighbourhood of $z=0$ (Definition \ref{Def_Green_Restricted}). 

Let $R$ be the radius of convergence of the Green's functions \og $G_z(x,y)$ \fg{}, thereby, it verifies the Cauchy Hadamard formula: 
$$\frac{1}{R}=\limsup_{n\to\infty}\left( p^{(n)}(x,y)^{\frac{1}{n}}\right),$$
for any vertices $x$ and $y$ of $X$. Note that $R_G\geq R$, we will see in the next section that we actually have equality $R_G=R$.

In this text we are interested in the regularity of $\mathcal{C}$, in a neighbourhood of the closure noted $\overline{\{(z,v_z):z\in \mathbb{D}(0,R_G)\}}$, as well as the behaviour of $v$ on the boundary $\partial \mathbb{D}(0,R_G)$. These properties are studied in detail in subsection \ref{LCurve-subsection_regularity_of_C} in \cite{LCurve}, see more specifically Proposition \ref{LCurve-Regularity_of_C} in \cite{LCurve}.

\begin{Rem}\label{real_local_inversion_for_u}
	$\psi$ is a polynomial map leaving stable $\mathcal{F}(\Xi,\mathbb{R})^\Gamma$, the space of real-valued $\Gamma$-invariant functions on $\Xi$, so $W$ is also a polynomial map leaving stable $\mathbb{R}\times\mathcal{F}(\Xi,\mathbb{R})^\Gamma$. The real Implicit Functions Theorem therefore also applies. We find again, without using Green's function that, if $r$ is real, then $v_r$ belongs to $\mathcal{F}(\Xi,\mathbb{R})^\Gamma$.
\end{Rem}

	\subsection{Action of \texorpdfstring{$\mathbb{Z}/d\mathbb{Z}$}{Z/dZ} on \texorpdfstring{$\mathcal{C}$}{C}}\label{Subsection_ZdZ_action}
	Motivated by Corollary \ref{Cor_operateur_diagonal} 	we define the diagonal operator $A$ over $E$: 
	\begin{equation}\label{Def_of_A}
			\begin{split}
			A\colon\mathcal{F}(\Xi, \mathbb{C})  & \longrightarrow \mathcal{F}(\Xi, \mathbb{C})\\
				1_{(a,b)_{y}} &\longmapsto \zeta_d^{r(a,b)}1_{(a,b)_{y}},
			\end{split}		
	\end{equation}

		where $\zeta_d=e^{2i\pi/d}$, and $r:X_0\times X_0\to \mathbb{Z}/d\mathbb{Z}$ is the periodicity cocycle (see Definition \ref{DEF:PERIODICITY_COCYCLE}).

By elementary computation, using the cocycle property of the periodicity cocycle (See Proposition \ref{Prop_Computation_for_the_Zd_action}, for the details), we can show that 
\begin{equation}
	\zeta_d \cdot  \psi\circ A = A \circ \psi
\end{equation}
In particular for any complex number $z$ and any function $J$ in $E$, 
\begin{equation} \label{Relation_Zd}
   AJ-(\zeta_d z) \psi(AJ)=A(J-z\psi(J)).
\end{equation}
Thus if $(z,J)$ belongs to $\mathcal{C}$, then so does $(\zeta_d z, A J)$, and reciprocally. We deduce that the map $(z,J)\mapsto (\zeta_d z, AJ)$ is an automorphism of $\mathcal{C}$ that we also denote by $A$. 

Note that if $u$ denotes the germ of holomorphic map $u:z\mapsto (z,v_z)$ with values in $\mathcal{C}$, then we have for any $z\in\mathbb{D}(0,R_G)$, 
\begin{equation}\label{Formula_t1}
 u(\zeta_d z)=A u(z).
\end{equation}
Which is simply a consequence of the definitions of $A$ and $r$ (see Corollary \ref{Cor_operateur_diagonal}).
	
	Hence $A\in Aut(\mathcal{C})$ is an automorphism of order $d$ ($A^1\neq Id_\mathcal{C}$, ..., $A^{d-1}\neq Id_\mathcal{C}$ and $A^d=Id_\mathcal{C}$). It induces a $\mathbb{Z}/d\mathbb{Z}$-action on $\mathcal{C}$ such that, the formula (\ref{Formula_t1}) holds. This property will be essential for computing asymptotics for non-aperiodic random walks on trees (See Theorem \ref{Thm_Cor_Black_box_general}, or more exhaustively Subsection \ref{RTaub-Subsection_Extension_to_ZdZ_action} in \cite{RTaub}).
	
\subsection{Proof of \texorpdfstring{$\zeta_d \cdot  \psi\circ A = A \circ \psi$}{equivariance formula}}
		\label{Computation_for_the_Zd_action}
			\begin{Prop}\label{Prop_Computation_for_the_Zd_action}~
		The polynomial map $\psi$ (see definition \ref{Def_de_psi}) satisfies:
		\begin{equation}\label{Relation_psi_Z_d}
			\zeta_d \cdot  \psi\circ A = A \circ \psi,
		\end{equation}
		where $\zeta_d=e^{2i\pi/d}$ and $A$ is the diagonal operator on $E=\mathcal{F}(\Xi,\mathbb{C})^\Gamma$ defined by:
		\begin{equation*}
			\begin{matrix}{}
				A:\mathcal{F}(\Xi, \mathbb{C})&\to \mathcal{F}(\Xi, \mathbb{C})\\
				1_{(a,b)_{y}}&\mapsto \zeta_d^{r(a,b)}1_{(a,b)_{y}}.
			\end{matrix}			
		\end{equation*}
	\end{Prop}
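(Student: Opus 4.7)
I propose to verify $\zeta_d\,\psi(AJ) = A\,\psi(J)$ pointwise at each $(a,b)_y\in\Xi$ and for an arbitrary $J\in E$, using the explicit formula (\ref{Def_de_psi}) for $\psi$. The two key facts I will use are: first, whenever $p(a,b)>0$, the one-step sequence $(a,b)$ is a $p$-admissible path of length $1$, so by Corollary \ref{Cor_Admissible_Path_Lenght} we have $r(a,b)\equiv 1\pmod d$; second, concatenation of admissible paths combined with Corollary \ref{Cor_Admissible_Path_Lenght} yields the cocycle identity $r(a,c)+r(c,b) \equiv r(a,b) \pmod d$ whenever admissible paths joining these pairs exist.

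Expanding the left-hand side gives
$$\zeta_d\,\psi(AJ)((a,b)_y) \;=\; \zeta_d\,p(a,b) \;+\; \sum_{c\in\mathcal{B}(y)^\complement} \zeta_d\,p(a,c)\,(AJ)_{[c,y]}(c,b).$$
With Notation \ref{notation_J_[z,y]}, each monomial of $(AJ)_{[c,y]}(c,b)$ has the form
$$(AJ)_{c,y}(c_0,\dots,c_l) \;=\; \zeta_d^{\,r(c_0,c_1)+r(c_1,c_2)+\cdots+r(c_{l-1},c_l)}\,J_{c,y}(c_0,\dots,c_l).$$
Because $(c_{s-1},c_s)\in\Xi_{x_{i_s}}$ for each $s$ (by definition of $\Xi_{[c,y]}$), there is an admissible path from $c_{s-1}$ to $c_s$, so the cocycle identity applies repeatedly and the exponent telescopes to $r(c_0,c_l) = r(c,b)$. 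Hence $(AJ)_{[c,y]}(c,b) = \zeta_d^{\,r(c,b)}\,J_{[c,y]}(c,b)$.

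I then combine the pieces. The diagonal term satisfies $\zeta_d\,p(a,b) = \zeta_d^{\,r(a,b)}\,p(a,b)$ by the first observation (trivially when $p(a,b)=0$). For each $c\in\mathcal{B}(y)^\complement$ with $p(a,c)>0$, the same observation gives $\zeta_d = \zeta_d^{\,r(a,c)}$, so
$$\zeta_d\,p(a,c)\cdot\zeta_d^{\,r(c,b)}\,J_{[c,y]}(c,b) \;=\; \zeta_d^{\,r(a,c)+r(c,b)}\,p(a,c)\,J_{[c,y]}(c,b) \;=\; \zeta_d^{\,r(a,b)}\,p(a,c)\,J_{[c,y]}(c,b),$$
using the cocycle identity once more. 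Factoring out the common scalar $\zeta_d^{\,r(a,b)}$ from every term yields $\zeta_d^{\,r(a,b)}\,\psi(J)((a,b)_y) = A\,\psi(J)((a,b)_y)$, which is the desired equality.

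The only delicate point is the legitimacy of the cocycle identity at the intermediate crossing vertices: this rests entirely on the admissibility condition built into the definition of $\Xi_{[c,y]}$, so once the two facts above are isolated, the proof is a direct bookkeeping exercise.
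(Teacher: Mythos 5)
Your proposal is correct and follows essentially the same route as the paper: it expands $\psi(AJ)$ coordinatewise, establishes $(AJ)_{[c,y]}(c,b)=\zeta_d^{\,r(c,b)}J_{[c,y]}(c,b)$ (the paper's intermediate identity, which you justify at the level of monomials), and then uses $r(a,c)\equiv 1$ when $p(a,c)>0$ together with the cocycle relation to factor out $\zeta_d^{\,r(a,b)}$, handling the term $p(a,b)$ exactly as the paper's two-case discussion does. The only cosmetic difference is that you derive the cocycle identity from path concatenation and Corollary \ref{Cor_Admissible_Path_Lenght}, whereas it holds unconditionally from the coboundary form $r(a,b)=r_0(b)-r_0(a)$ of Definition \ref{DEF:PERIODICITY_COCYCLE}.
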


	\begin{proof}
		Let $(a,b)_{y}$ be in $\Xi$ and let $c$ be in $\mathcal{B}(y)^\complement$. Using the cocycle property of $r$, we get:
		\begin{equation}\label{eq-t12}
			\forall J\in \mathcal{F}(\Xi,\mathbb{C}),\, (AJ)_{[c,y]}(c,b)=\zeta_d^{r(c,b)}(J)_{[c,y]}(c,b)
		\end{equation}
Then we have by definition of $\psi$ that for any function $J\in \mathcal{F}(\Xi,\mathbb{C}),$
		\begin{align*}
			\psi(AJ)(a,b)_{y}&= p(a,b) + \sum_{c\in\mathcal{B}(y)^\complement} p(a,c) (AJ)_{[c,y]}(c,b)\\
			&= p(a,b) + \sum_{c\in\mathcal{B}(y)^\complement} p(a,c) \zeta_d^{r(c,b)}(J)_{[c,y]}(c,b)\\
			&=p(a,b) + \sum_{c\in\mathcal{B}(y)^\complement} \zeta_d^{r(a,c)+r(c,b)-1}p(a,c) (J)_{[c,y]}(c,b)\\
			&= p(a,b) + \zeta_d^{r(a,b)-1}\sum_{c\in\mathcal{B}(y)^\complement} p(a,c) (J)_{[c,y]}(c,b)\\
			&=\zeta_d^{-1} \left(\zeta_d\, p(a,b) + \zeta_d^{r(a,b)}  \sum_{c\in\mathcal{B}(y)^\complement}p(a,c) (J)_{[c,y]}(c,b)\right)
		\end{align*}	

		Two cases are possible:
		Either $p(a,b)>0$ in which case, $r(a,b)=1$ and then :
		\begin{align*}
			\forall J\in\mathcal{F}(\Xi,\mathbb{C}), \psi(AJ)((a,b)_{y})
			&=\zeta_d^{-1}(\zeta_d^{r(a,b)}\psi(J)((a,b)_{y})\\
			&=\zeta_d^{-1}(A\circ\psi)(J)((a,b)_{y}).
		\end{align*}
		Or $p(a,b)=0$ in which case:
		\begin{align*}
			\forall J\in\mathcal{F}(\Xi,\mathbb{C}),\, \psi(AJ)(a,b)_{y}&=\zeta_d^{-1} \left( \sum_{c\in\mathcal{B}(y)^\complement}\zeta_d^{r(a,b)} p(a,c) (J)_{[c,y]}(c,b)\right)\\
			&=\zeta_d^{-1}(A\circ\psi)(J)((a,b)_{y}).
		\end{align*}
		Since $(a,b)_{y}$ is arbitrary, we have that for any function $J\in\mathcal{F}(\Xi,\mathbb{C}),$
		$$\psi(AJ)((a,b)_{y})=\zeta_d^{-1} (A\circ\psi(J))((a,b)_{y}),$$
		as desired.
	\end{proof} 
	
%	From this proposition, we obtains several results: $\forall (z,J)\in \mathbb{C}\times E$:
%	\begin{equation*}
%		AW(z,J)=AJ - zA\psi(J)=AJ-\zeta_d \,z \psi(AJ)=W(\zeta_d z, AJ)
%	\end{equation*}
%	Thus
%	\begin{equation*}
%		W(z,J)=0\Leftrightarrow W(\zeta_d z, AJ)=0.
%	\end{equation*}
%	
%	The action of $\mathbb{Z}/d\mathbb{Z}$ on $\mathbb{C}\times E$ via $\bar{n}\mapsto\left( (z,J)\mapsto (\zeta_d^{\bar{n}} z , A^{\bar{n}} J) \right)$ preserves $W$ and in particular $\mathcal{C}$. From this we deduce the following proposition:
%	\begin{Prop}\label{Equivariance_of_u} ~
%		With the previous assumptions and notations, for any $z\in\mathbb{D}(0,R)$, we have 
%		$$u(\zeta_d z)=Au(z).$$
%	\end{Prop}

\vspace{\baselineskip}

\section{Green functions on a tree}\label{Section_Green_function_in_a_tree}

Under our assumptions, that we recall below, the Green's functions possess particular properties: they are algebraic functions\footnote{A function $f$ is said to be an \textit{algebraic function of $z$}, or simply \textit{an algebraic function} if it is algebraic over $\mathbb{C}[z]$, i.e if there exist a non-zero polynomial function $Q\in\mathbb{C}[Z,F]$ such that $Q(z,f(z))\equiv 0$ for any $z$.} of $z$.
Steven P.Lalley proved it in \cite{Lalley_1993} in the case where $X$ is a regular tree and the group $\Gamma$ acts transitively on the set of vertices. His proof totally adapts in our context.

 Following his work, we proved in the previous section that restricted Green's functions of the form \og $G_z(a,b;\mathcal{B}(y)^\complement)$ \fg{} are algebraic functions of $z$ and parametrize the algebraic curve $\mathcal{C}$, in a neighbourhood of the origin (Proposition \ref{Green_restricted_as_parametrisation_of_C}). In this section we adapt the computations of S. P. Lalley showing that Green's functions \og $G_z(x,y)$ \fg{}, and first-passage generating functions \og $F_z(x,y)$ \fg{} are rational functions of the previous restricted Green's function and of the variable $z$ (See Proposition \ref{algebraicity_of_F} and Corollary \ref{Cor_algbraicity_of_Greens_function}), and thus are also algebraic functions of $z$.

	\subsection{Algebraicity of Green functions}

	\begin{Prop}\label{algebraicity_of_F}
	Let $X=(X_0,X_1)$ be an infinite tree of bounded valence such that any vertex possesses at least three neighbours, $\Gamma<\operatorname{Aut}(X)$ be a subgroup of automorphisms of the tree such that $\Gamma\backslash X_0$ is finite, $p:X_0\times X_0\to [0,1]$ be a finite range $\Gamma$-invariant transition kernel, making $(X_0,p)$ an irreducible Markov chain and let $k\in\mathbb{N}$ be an integer such that 
		\begin{equation}\label{Property_over_Kernel_rec}
		\forall x,y\in X_0,\, d(x,y)>k\Rightarrow p(x,y)=0.
		\end{equation}
		Let $z\mapsto v_z$ be the map defined in (\ref{DEF:V_GREEN}) that parametrizes the Lalley's curve $\mathcal{C}$ (Proposition \ref{Green_restricted_as_parametrisation_of_C}).
	
	For any pair $(x,y)$ of vertices of $X$, there exists a rational function $f_{x,y}:\mathbb{C}\times E \to \mathbb{C}$ such that, $f_{x,y}$ is regular near the origin and for any complex number $z$ of modulus sufficiently small,
		\begin{equation}\label{Eq_Green_F}
			F_z(x,y)=f_{x,y}(z,v_z)
		\end{equation}
	\end{Prop}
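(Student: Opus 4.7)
The plan is to reduce the computation of $F_z(x,y)$ to a first-passage analysis in a finite-state Markov chain on the ball $\mathcal{B}_k(y)$, which is finite by the bounded-valence hypothesis. The main tool throughout will be Proposition~\ref{Green_decomposition_along_a_geodesic}, which expresses any restricted Green's function $G_z(c',b;\mathcal{B}(y)^\complement)$ (with $c' \notin \mathcal{B}(y)$, $b \in \mathcal{B}(y)$) as a polynomial in the coordinates of $v_z$.

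First, for $x \notin \mathcal{B}_k(y)$, I would decompose each first-passage path from $x$ to $y$ at its first entry time $\tau$ into $\mathcal{B}_k(y)$, at some vertex $a = \omega_\tau$. Since $d(\omega_{\tau-1}, y) > k$ forces $p(\omega_{\tau-1}, y) = 0$, the entry vertex must satisfy $a \neq y$, so $a \in \mathcal{B}_k(y) \setminus \{y\}$. This gives the bijective factorisation
\begin{equation*}
F_z(x,y) \;=\; \sum_{a \in \mathcal{B}_k(y) \setminus \{y\}} G_z\bigl(x,a;\mathcal{B}_k(y)^\complement\bigr) \cdot F_z(a,y),
\end{equation*}
whose first factor equals $(v_z)_{[x,y]}(x,a)$ by equation (\ref{Green_function_decomposition_simplified}) and is therefore a polynomial in the coordinates of $v_z$.

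Next, for $a \in \mathcal{B}_k(y) \setminus \{y\}$, I would decompose each first-passage path from $a$ to $y$ into atomic moves of two types: either a direct step $c \to b$ with both endpoints in $\mathcal{B}_k(y)$ (weight $z\,p(c,b)$), or an excursion $c \to c' \to \cdots \to b$ leaving $\mathcal{B}_k(y)$ at the first step and returning for the first time at $b \in \mathcal{B}_k(y)$ (weight $z\,p(c,c') \cdot G_z(c',b;\mathcal{B}_k(y)^\complement)$, which is polynomial in $(z, v_z)$ by Proposition~\ref{Green_decomposition_along_a_geodesic} applied along $[c',y]$). Summing over the two types produces a contracted kernel on the finite set $\mathcal{B}_k(y)$,
\begin{equation*}
\tilde{p}_z(c,b) \;:=\; z\,p(c,b) \;+\; \sum_{c' \notin \mathcal{B}_k(y)} z\,p(c,c')\, G_z\bigl(c',b;\mathcal{B}_k(y)^\complement\bigr),
\end{equation*}
whose entries are polynomial in $(z, v_z)$. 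Since excursions stay strictly outside $\mathcal{B}_k(y) \ni y$, they never visit $y$, and first-passage to $y$ in the contracted chain coincides with first-passage to $y$ in the original chain. Writing $\tilde{Q}$ for the restriction of $\tilde{p}_z$ to $(\mathcal{B}_k(y) \setminus \{y\})^2$ and $\tilde{r}_c := \tilde{p}_z(c,y)$, the classical matrix identity for first-passage on a finite state space yields
\begin{equation*}
F_z(a,y) \;=\; \bigl[(I - \tilde{Q})^{-1}\,\tilde{r}\bigr]_a,
\end{equation*}
a rational function of the entries of $\tilde{p}_z$ — hence of $(z, v_z)$ — whose denominator $\det(I - \tilde{Q})$ equals $1$ at $(z,v_z) = (0,0)$. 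The remaining case $x = y$ is dispatched by the first-step identity $F_z(y,y) = \sum_{c:\,p(y,c)>0} z\,p(y,c)\,F_z(c,y)$ (isolating the $c=y$ term if $p(y,y) > 0$), which preserves rationality and regularity.

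The main (mild) obstacle will be the careful verification of the bijectivity and weight-preservation of the atomic-move decomposition — uniquely parsing each first-passage path into direct steps and excursions, and checking that the contracted and original chains share the same first-passage generating function to $y$. Once this bookkeeping is in place, the regularity of $f_{x,y}$ at the origin is immediate from $(I - \tilde{Q})|_{(z,v_z) = (0,0)} = I$, which keeps the denominator $\det(I - \tilde{Q})$ non-zero there.
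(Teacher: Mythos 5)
Your proposal is essentially the paper's own proof: the first-entry decomposition into $\mathcal{B}_k(y)$ giving $F_z(x,y)=\sum_{a\in\mathcal{B}_k(y)\setminus\{y\}}G_z(x,a;\mathcal{B}_k(y)^\complement)F_z(a,y)$, and the contracted kernel $\tilde{p}_z$ with the inversion $F_z(\,.\,,y)=(I-\tilde{Q})^{-1}\tilde{r}$ near the origin, coincide with the paper's matrix equation $F_z(\,.\,,y)=(I-zM_{v_z})^{-1}(z\bar{p})$ built from equations (\ref{eq-t8})--(\ref{eq-t9}) and formula (\ref{Green_function_decomposition_simplified}); your global parsing into direct steps and excursions is just the renewal form of the paper's one-step recursion. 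One small correction: by Definition \ref{Def_Green_function_first_pass} the paper sets $F_z(y,y)\equiv 1$, so the case $x=y$ is dispatched by $f_{y,y}\equiv 1$; your first-step identity $F_z(y,y)=\sum_{c}z\,p(y,c)F_z(c,y)$ would instead compute the first-return generating function, which is a different object, though this does not affect the substance of the argument.
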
	
	
\begin{proof}
	We suppose $x\neq y$ otherwise $f_{x,y}\equiv 1$ convene.
	We start by constructing the desired rational function for pairs $(c_0,y)$ such that, $y$ is any vertex of $X$ and $c_0$ is a vertex different from $y$ that is in the ball $\mathcal{B}(y)$. 
		
		Using the expression of $F_z(c_0,y)=G_z(c_0,y;\{y\}^\complement)$ as a sum of $p$-admissible path weights (\ref{SumRestrictedWeight}), and decomposing $p$-admissible paths of $\mathcal{P}h(c_0,y;\{y\}^\complement)$ by their first step, we get for any complex number $z$ of modulus sufficiently small,
		\begin{equation}\label{eq-t8}
			F_z(c_0,y)= z p(c_0,y) + \sum_{c_1\neq y} zp(c_0,c_1) F_z(c_1,y).
		\end{equation}
		In the previous sum, we focus on the term \og $F_z(c_1,y)$\fg{}. By decomposing $p$-admissible paths in the set $\mathcal{P}h(c_1,y;\{y\}^\complement)$, by its first passage in $\mathcal{B}(y)$, we get for $c_1\notin\mathcal{B}(y)$:
		\begin{equation}\label{eq-t9}
			F_z(c_1,y)= G_z(c_1,y;\mathcal{B}(y)^\complement) +\sum_{\substack{d_1\in\mathcal{B}(y)\\ d_1\neq y}} G_z(c_1,d_1;\mathcal{B}(y)^\complement)F_z(d_1,y).
		\end{equation}
		In this equality, the term \og $G_z(c_1,y;\mathcal{B}(y)^\complement)$\fg{} is $0$. Indeed, recall that $\mathcal{B}(y)=\mathcal{B}_k(y)$, where $k$ is an integer such that (\ref{Property_over_Kernel_rec}) holds, therefore $d(y,\mathcal{B}(y)^\complement)=k+1$, thus for any vertex $w\in\mathcal{B}(y)^\complement$, we must have $p(w,y)=0$ and so, for any $c_1\in\mathcal{B}(y)^\complement$, we must have
		$$G_z(c_1,y;\mathcal{B}(y)^\complement)=0.$$
		
		Injecting (\ref{eq-t9}) into (\ref{eq-t8}), we obtain,
		\begin{align*}
			F_z(c_0,y)
			&=z p(c_0,y) + \sum_{\substack{c_1\in\mathcal{B}(y)\\c_1\neq y}} z p(c_0,c_1) F_z(c_1,y)\\
			&~\quad+\sum_{c_1\notin\mathcal{B}(y)}\sum_{\substack{d_1\in\mathcal{B}(y)\\d_1\neq y}} z p(c_0,c_1)G_z(c_1,d_1;\mathcal{B}(y)^\complement)F_z(d_1,y).
		\end{align*}
		Performing the index change $d_1=c_1$ in the first sum of the last equality, we obtain for all $c_0\in\mathcal{B}(y)\setminus\{y\}$,
		\begin{equation*}
			F_z(c_0,y)=z p(c_0,y) + \sum_{\substack{d_1\in\mathcal{B}(y)\\d_1\neq y }} \left(z p(c_0,d_1) + \sum_{c_1 \notin \mathcal{B}(y)} z p(c_0,c_1) G_z(c_1,d_1;\mathcal{B}(y)^\complement)\right) F_z(d_1,y).
		\end{equation*}
		
		For $c,d\in\mathcal{B}(y)\setminus\{y\}$, and for $J\in E$, we define:
		\begin{gather}
			M_J(c,d):= p(c,d) + \sum_{c_1\notin\mathcal{B}(y)} p(c,c_1)(J)_{[c_1,y]}(c_1,d)\\
			\text{ and }\\
			\bar{p}\in [0,1]^{\mathcal{B}(y)\setminus\{y\}}, \, \bar{p}(c):=p(c,y),
		\end{gather}

		so that using formula (\ref{Green_function_decomposition_simplified}) we get the matrix formula for first-passage generating functions,
		\begin{equation}
			\forall z\in \mathbb{D}(0,R),\; F_z(\,.\,,y)=z \bar{p} + z M_{v_z} F_z(\,.\, ,y)
		\end{equation}
		where $F_z(\,.\,,y)$ is the vector in $\mathbb{C}^\mathcal{B}(y)\setminus\{y\}$ with coordinates $F_z(c,y)$, for $c\in\mathcal{B}(y)\setminus\{y\}$.
Note that $M_J$ and $\bar{p}$ depend on $y$, and if we identify the space of square matrices indexed by $\mathcal{B}(y)\setminus\{y\}$ and the space of complex-valued vectors indexed by $\mathcal{B}(y)\setminus\{y\}$, to some $\mathcal{M}_n(\mathbb{C})$ and $\mathbb{C}^n$, then $M_J$ and $\bar{p}$ will actually depend on the class $[y]\in\Gamma\backslash\Xi$ of $y$ modulo $\Gamma$.

		Finally, for $z=0$, $v_0=0\in E$ and $M_0=\left( p(c,d) \right)_{c,d}$ We infer that in a neighbourhood of $z=0$, the operator $(I-zM_{v_z})$ is invertible. And in this same neighbourhood we have over $\mathcal{B}(y)\setminus \{y\}$:
		\begin{equation}\label{Egalite_Green_premier_passage}
	 			F_z(\,.\, ,y)=\left(I-z M_{v_z}\right)^{-1}(z p(\,.\, ,y))
	 	\end{equation} 
		 In particular, for any $c\in\mathcal{B}(y)\setminus\{y\}$, there exists a rational function $f_{c,y}: \mathbb{C}\times E \to\mathbb{C}$ regular near the origin such that for any complex number $z$ in a neighbourhood of $0$, we obtain as wished,
		\begin{equation}\label{equation_temp_5}
			F_z(c,y)=f_{c,y} (z,v_z).
		\end{equation}

We now prove the proposition for any pair $(x,y)$ of vertices of $X$:
\begin{Clm}\label{Claim_3}
	For any pair of vertices $x,y$ of $X$, $x\neq y$, there exists a polynomial function $Q_{x,y}: E\times\mathbb{C}^{\mathcal{B}(y)\setminus\{y\}}\to\mathbb{C}$ depending on $(x,y)$ such that for any complex number $z$ in a neighbourhood of $0$ in $\mathbb{C}$,
		\begin{equation}\label{equation_stage_3}
			F_z (x,y)=Q_{x,y}(v_z,\Big(F_z(\,.\,,y))\Big).
		\end{equation} 
\end{Clm} 
The result is an immediate consequence of this claim - that we prove below -  and of (\ref{equation_temp_5}) for pairs $(c,y)$, where $c$ belongs to $\mathcal{B}(y)\setminus \{y\}$, that we just proved.
\end{proof}
\begin{proof}[Proof of Claim \ref{Claim_3}.]
		If $x\in\mathcal{B}(y)\setminus\{y\}$, just take $Q_{x,y}:(J,(F_c)_c)\mapsto F_x$.
		Otherwise, decomposing any $p$-admissible path of $\mathcal{P}h(x,y;\{y\}^\complement)$ by its first passage through $\mathcal{B}(y)$, we obtain the following equation for Green's functions: For any complex number $z$ of modulus small enough,
		\begin{equation*}
			F_z(x,y)=\sum_{c\in\mathcal{B}(y)\setminus\{y\}} G_z(x,c;\mathcal{B}(y)^\complement) F_z(c,y)
		\end{equation*}
		In addition, we have shown (See formula (\ref{Green_function_decomposition_simplified})) that for any complex number $z\in \mathbb{D}(0,R)$, and any vertices $x\notin\mathcal{B}(y)$, $c\in\mathcal{B}(y)$,
		\begin{equation*}
			G_z(x,c;\mathcal{B}(y)^\complement)=(v_z)_{[x,y]}(x,c).
		\end{equation*}
		We then define:
		\begin{equation}\label{Def_de_f_p}
			Q_{x,y}:=(J,(F_c)_c)\mapsto\sum_{c\in\mathcal{B}(y)\setminus\{y\}} (J)_{[x,y]}(x,c) F_c,
		\end{equation}
		and we immediately check that it satisfies equality (\ref{equation_stage_3}).
\end{proof}

In view of formulas $i)$ and $iii)$ of Proposition \ref{SurMult}, we get the Corollary:

	\begin{Cor}\label{Cor_algbraicity_of_Greens_function}
		Under the assumptions of Proposition \ref{algebraicity_of_F}, for any pair $(x,y)$ of vertices of $X$, there exists a rational function $g_{x,y}:\mathbb{C}\times E \to \mathbb{C}$ such that, $g_{x,y}$ is regular near the origin and for any complex number $z$ of modulus sufficiently small, we have
		\begin{equation}\label{Eq_Green_G}
		 G_z(x,y)=g_{x,y}(z,v_z).
		\end{equation}	
	\end{Cor}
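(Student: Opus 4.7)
The plan is to reduce the corollary to Proposition \ref{algebraicity_of_F} by invoking the classical algebraic identities between Green's functions and first-passage generating functions, which are the content of formulas $i)$ and $iii)$ of Proposition \ref{SurMult}. Specifically, I would use the factorisation
$$G_z(x,y) = F_z(x,y)\, G_z(y,y),$$
which reduces the problem to writing both $F_z(x,y)$ and the diagonal Green's function $G_z(y,y)$ as rational functions of $(z, v_z)$ that are regular at the origin.

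The factor $F_z(x,y)$ is directly handled by Proposition \ref{algebraicity_of_F}, which supplies $f_{x,y}$ with $F_z(x,y) = f_{x,y}(z, v_z)$. For the diagonal factor, I would use the first-return identity $G_z(y,y) = 1/(1 - U_z(y,y))$, where $U_z(y,y)$ is the first-return generating function at $y$. Decomposing a first-return path by its first step yields the finite sum (finiteness coming from the finite-range assumption on $p$)
$$U_z(y,y) = \sum_{c \neq y} z\, p(y,c)\, F_z(c,y),$$
and substituting $F_z(c,y) = f_{c,y}(z, v_z)$ from Proposition \ref{algebraicity_of_F} expresses $U_z(y,y)$ as a rational function of $(z, v_z)$ vanishing at $z = 0$.

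Combining the two ingredients then gives
$$g_{x,y}(z, J) := \frac{f_{x,y}(z, J)}{1 - \sum_{c \neq y} z\, p(y,c)\, f_{c,y}(z, J)},$$
a rational function on $\mathbb{C} \times E$. It is regular at $(0, 0_E)$ since the denominator equals $1$ there, and it satisfies $G_z(x,y) = g_{x,y}(z, v_z)$ in a neighbourhood of $z = 0$, as required.

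I do not expect any substantial obstacle beyond checking that the denominator does not vanish near the origin, which is immediate from $U_0(y,y) = 0$ and continuity of the $f_{c,y}$. The construction mirrors Lalley's in \cite{Lalley_1993}, now applied in the broader cofinite-action setting provided by Proposition \ref{algebraicity_of_F}.
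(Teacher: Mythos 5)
Your proposal is essentially the paper's own proof: the corollary is obtained by combining formulas $i)$ and $iii)$ of Proposition \ref{SurMult} with Proposition \ref{algebraicity_of_F}, exactly as you do via $G_z(x,y)=F_z(x,y)\,G_z(y,y)$ and the first-return inversion for $G_z(y,y)$. One minor correction: in the first-return decomposition the step $c=y$ must not be dropped (it contributes $z\,p(y,y)$, consistently with the convention $F_z(y,y)\equiv 1$ used in formula $iii)$), so the denominator of your $g_{x,y}$ should read $1-z\,p(y,y)-\sum_{c\neq y} z\,p(y,c)\,f_{c,y}(z,J)$; this changes nothing else in the argument.
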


\begin{Rem}
	Since the rational functions $f_{x,y}$ and $g_{x,y}$ for $x,y$ vertices of $X$ are regular in a neighbourhood of zero, their restriction to the Lalley's curve $\mathcal{C}$ are rational functions over $\mathcal{C}$. So Green's functions $G_z(x,y)$, $F_z(x,y)$ and $G_z(a,b;\mathcal{B}(y)^\complement)$, $(a,b)_y\in\Xi$, are rational functions over $\mathcal{C}$. 
	
	Besides the rational functions $g_{x,y}$ and $f_{x,y}$ admit a power series expansion in the neighbourhood of zero with non-negative coefficients (This is proven in  Proposition \ref{LCurve-Prop_PSeries_expansion} in \cite{LCurve}).
\end{Rem}	
%	As a Corollary, the field generated by Green's functions 
%	\og $G_z(x,y), F_z(x,y)$ and $G_z(a,b;\mathcal{B}(y)^\complement)$ \fg{} is finitely generated and a set of generator is given by the family of restricted green functions $(G_r(a,b;\mathcal{B}(y)^\complement)$ and the identity function $z$. 
%	For algebrist geometers, the ring of ... is finitely generated by ... besides it is not immediate that the transcendance degree of ... is equal to $1$ (Check Lalley) Steven P.Lalley proved it...That's one of the point of the  part in the Appendix "The Lalley's curve from an algebraic geometer point of vue"

\subsection{Green function and the radius of convergence}
We work under the assumptions of Proposition \ref{algebraicity_of_F}. Here, we extend the two equalities (\ref{Eq_Green_F}) and (\ref{Eq_Green_G}) to any complex number  $z\in \mathbb{D}(0,R)$ and we discuss briefly the radius of convergence of Green's functions \og $G_z(a,b;\mathcal{B}(y)^\complement), F_z(x,y)$ and $G_z(x,y)$ \fg.

For that we will need the theorem \ref{Finitude_of_Green_functions}, recalled below, about finiteness of Green's functions at the radius of convergence and we will need for the Lalley's curve $\mathcal{C}$ to be a smooth analytic curve in a neighbourhood of $\overline{\{(z,v_z): z\in\mathbb{D}(0,R)\}}\subset\mathcal{C}$. 

It is not immediate that $\mathcal{C}$ is a smooth complex curve in a neighbourhood of $\overline{\{(z,v_z): z\in\mathbb{D}(0,R)\}}$. We prove it in section \ref{LCurve-Section Structure of C} (see Proposition \ref{LCurve-Regularity_of_C}) in \cite{LCurve} and we admit it for the rest of this section.

\begin{Thm*}\ref{Finitude_of_Green_functions}(See \cite{Woess_2000} Chapter 2.)~
			
			For any finitely supported irreducible Markov chain $(\mathcal{X}_0,p)$ on an infinite tree such that all of its vertices has at least three neighbours, if $\mathcal{R}=\left(\limsup_{n\to\infty} p^{(n)}(x,y)\right)^{-1}$ is the radius of convergence of Green's functions $z\mapsto G_z(x,y)$. Then
			$$\forall x,y\in \mathcal{X}_0, \; G_{\mathcal{R}}(x,y)<\infty$$
			
			The same applies to first-passage generating functions $F_{R}(x,y)$, and to restricted Green's functions $G_R(x,y;\Omega)$ for any subset $\Omega\subset \mathcal{X}_0$.
		\end{Thm*}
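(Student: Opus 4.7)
The key ingredient is the \emph{non-amenability} of the tree, which follows from the assumption that every vertex has valence at least three (such a tree has positive Cheeger constant). The plan is to use non-amenability in two distinct places: first to obtain $R > 1$, and then to upgrade this to the strict finiteness $G_R(x,y) < \infty$.

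First, I would establish $R > 1$ via a Kesten--Dodziuk type criterion: for a finite-range irreducible random walk on a non-amenable graph of bounded valence, the spectral radius $\rho = \limsup_n p^{(n)}(x,y)^{1/n}$ is strictly less than $1$, yielding $R = 1/\rho > 1$.

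Next, for the finiteness at $z = R$, I would use the standard reduction to first-return generating functions: by decomposing paths from $y$ to itself according to their last excursion away from $y$, one obtains
\[
G_z(y,y) = \frac{1}{1 - U_z(y,y)},
\]
where $U_z(y,y)$ is the first-return generating function at $y$. So it suffices to show $U_R(y,y) < 1$, which is the heart of the proof. I would decompose any first-return path to $y$ according to which subtree it first enters, where the subtrees are those rooted at a neighbour $x$ of $y$ and detached from $y$ by removing the edge incident to $y$. Since the original tree has valence at least $3$ at every vertex, each such subtree still has valence $\geq 2$ at every internal vertex and inherits enough branching to remain non-amenable; a comparison with a biased nearest-neighbour walk on a regular tree, or a direct Perron--Frobenius argument on the non-negative kernels of first-passage into the root of each subtree, shows that the $R^n$-weighted contribution of each subtree is strictly less than $1$, and that their sum weighted by $p(y,x)R$ stays strictly below $1$.

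The finiteness for $x \neq y$ then follows from irreducibility: picking $m$ with $p^{(m)}(y,x) > 0$, the inequality $p^{(n+m)}(y,y) \geq p^{(m)}(y,x)\, p^{(n)}(x,y)$ gives
\[
G_R(x,y) \leq \frac{G_R(y,y)}{p^{(m)}(y,x)\, R^m} < \infty.
\]
Finally, $F_R(x,y) = G_R(x,y)/G_R(y,y) < \infty$ and $G_R(x,y;\Omega) \leq G_R(x,y) < \infty$ for any $\Omega \subset X_0$ (the restricted Green's function is a subseries of the full one with non-negative terms), which handles the additional claims of the theorem. The main obstacle is the strict inequality $U_R(y,y) < 1$ \emph{at} $z = R$: for $z < R$ it is automatic, but obtaining strictness at the boundary of the disc of convergence is precisely what encodes non-amenability quantitatively, and this is the delicate step supplied by the Woess reference.
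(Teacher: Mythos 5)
Your reductions are fine as far as they go: the identity $G_z(y,y)=\bigl(1-z\sum_w p(y,w)F_z(w,y)\bigr)^{-1}$ (Proposition \ref{SurMult} $iii)$) reduces everything to the strict inequality $U_R(y,y)<1$ at the radius of convergence, and your deductions of $G_R(x,y)<\infty$ for $x\neq y$ from Chapman--Kolmogorov, of $F_R(x,y)=G_R(x,y)/G_R(y,y)<\infty$, and of $G_R(x,y;\Omega)\leq G_R(x,y)$ are correct. But be aware that the paper gives no proof of Theorem \ref{Finitude_of_Green_functions}: it is quoted from \cite{Woess_2000} (Theorem II.12.5 combined with the method of Proposition II.7.4) and \cite{Lalley_1993} (Proposition 2.1). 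Since you end by conceding that the strictness $U_R(y,y)<1$ at $z=R$ is ``the delicate step supplied by the Woess reference'', your proposal proves nothing beyond the easy reductions: the entire content of the theorem ($R$-transience) is precisely the step you defer.

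The sketch you do give of that step has two concrete defects. (a) The decomposition of first-return paths ``according to which subtree it first enters'' is a nearest-neighbour argument: for range $k\geq 2$ a first-return path to $y$ can jump from one component of $X_0\setminus\{y\}$ to another without ever visiting $y$ (two neighbours of $y$ lying in different subtrees are at distance $2\leq k$), so the path weights do not factor subtree by subtree and the claimed bound ``each subtree contributes strictly less than $1$'' is not even a well-formed quantity; the correct substitute in this setting is the decomposition along the balls $\mathcal{B}_k$ and the sets $\Xi$ of Section \ref{Section_Decomposition_of_admissible_paths}, which is substantially more work. (b) You use only non-amenability and never the cofinite $\Gamma$-action, but non-amenability alone is insufficient: on the $3$-regular tree an irreducible nearest-neighbour kernel biased towards a fixed root is positive recurrent, so $R=1$ and $G_R(x,y)=\infty$ although every vertex has three neighbours. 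The Kesten--Dodziuk type bound $\rho<1$ you invoke holds for the simple (reversible) walk or for kernels invariant under a group acting with finitely many orbits; it is exactly the quasi-transitivity hypothesis, together with finite range, that provides the uniformity making both $\rho<1$ and the strict inequality $U_R(y,y)<1$ true. So the proposal has a genuine gap at the theorem's core, and its auxiliary hypotheses are not the ones actually carrying the proof.
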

		
	\begin{Cor}\label{Continuity_extension_of_Greens_functions}
		The Green's functions can be continuously extended to the closed disk $\overline{\mathbb{D}}(0,R)$.
	\end{Cor}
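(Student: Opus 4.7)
The plan is to exploit two features of the power series $G_z(x,y)=\sum_n p^{(n)}(x,y)\,z^n$: first, all coefficients are non-negative, and second, the value at the real radius of convergence is finite, by Theorem \ref{Finitude_of_Green_functions}. These two ingredients together immediately give absolute and uniform convergence on the closed disk $\overline{\mathbb{D}}(0,R)$, hence continuity.

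More concretely, I would argue as follows. Fix vertices $x,y\in X_0$. By Theorem \ref{Finitude_of_Green_functions} applied in our setting (infinite tree of valence at least $3$, finite-range $\Gamma$-invariant transition kernel $p$ with $(X_0,p)$ irreducible and $\Gamma$ acting cofinitely), we have $G_R(x,y)=\sum_n p^{(n)}(x,y)\,R^n<\infty$. Since $p^{(n)}(x,y)\geq 0$, this is a convergent series of non-negative reals. For every $z\in\overline{\mathbb{D}}(0,R)$ and every $n\in\mathbb{N}$ one then gets the bound $|p^{(n)}(x,y)\,z^n|\leq p^{(n)}(x,y)\,R^n$, so the Weierstrass M-test yields that the partial sums $z\mapsto\sum_{n=0}^N p^{(n)}(x,y)\,z^n$ converge uniformly on $\overline{\mathbb{D}}(0,R)$. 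Each partial sum is a polynomial, hence continuous, and a uniform limit of continuous functions on a compact set is continuous; this provides the desired continuous extension of $G_z(x,y)$ to $\overline{\mathbb{D}}(0,R)$.

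The same Weierstrass argument applies verbatim to the first-passage generating function $F_z(x,y)=\sum_n f^{(n)}(x,y)\,z^n$ and to the restricted Green's functions $G_z(a,b;\Omega)=\sum_n p^{(n)}(a,b;\Omega)\,z^n$, since their defining coefficients are non-negative (they are probabilities/sums of path weights) and, by the Remark following Theorem \ref{Finitude_of_Green_functions}, $F_R(x,y)$ and $G_R(a,b;\Omega)$ are finite under the same hypotheses. So the extension holds simultaneously for all three families.

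There is really no serious obstacle here: the whole substance of the corollary is packaged inside Theorem \ref{Finitude_of_Green_functions} (whose proof is cited to \cite{Woess_2000}), and the only additional input is the elementary observation that a power series with non-negative coefficients, convergent at the positive boundary point of its disk of convergence, automatically converges absolutely and uniformly on the closed disk. The only point requiring minor care is to notice that this argument uses the non-negativity of the coefficients in an essential way — continuity to the boundary of a general holomorphic function on $\mathbb{D}(0,R)$ need not follow merely from finiteness at one boundary point.
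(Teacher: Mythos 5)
Your argument is correct and is exactly the argument the paper implicitly relies on; the paper states this corollary without proof, evidently regarding the step from Theorem \ref{Finitude_of_Green_functions} to continuity on $\overline{\mathbb{D}}(0,R)$ as immediate, and the step you supply (non-negative coefficients plus finiteness at $z=R$ give the Weierstrass bound $|p^{(n)}(x,y)z^n|\leq p^{(n)}(x,y)R^n$, hence uniform convergence and continuity of the sum) is the canonical way to fill that gap. Your closing caveat that non-negativity of the coefficients is essential, and that mere finiteness at one boundary point would not suffice for a general power series, is also well taken.
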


We now extend the equalities (\ref{Eq_Green_F}) and (\ref{Eq_Green_G}) to any complex number $z$ in the closed disk $\overline{\mathbb{D}}(0,R)$.
	
\begin{Lem}\label{Lem_sing_ratio}
		Let $f\in\mathbb{C}(\mathcal{C})$ be a rational function on a smooth algebraic curve $\mathcal{C}$ and let $p$ be an element of $\mathcal{C}$. If $f(p)$ is finite, then there exist a neighbourhood of $p$ on which $f$ is bounded.
	\end{Lem}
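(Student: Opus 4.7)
The plan is to reduce the statement to an elementary fact about holomorphic functions of one complex variable, via the local uniformization of a smooth algebraic curve.

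First I would use the smoothness hypothesis at $p$: since $\mathcal{C}$ is a smooth algebraic curve, there is an open neighbourhood $U$ of $p$ in $\mathcal{C}$ together with a biholomorphism $\varphi:U\to\mathbb{D}(0,\varepsilon)$ sending $p$ to $0$ (a local uniformizer). Pulling back $f$ via $\varphi^{-1}$ gives a meromorphic function $\tilde{f}:=f\circ\varphi^{-1}$ defined on $\mathbb{D}(0,\varepsilon)$ minus possibly a discrete set of poles, and $f$ being bounded on a neighbourhood of $p$ is equivalent to $\tilde{f}$ being bounded on a neighbourhood of $0$.

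Next I would translate the hypothesis that $f(p)$ is finite: viewing a rational function on a smooth curve as a morphism $\mathcal{C}\to\mathbb{P}^1$, ``$f(p)$ finite'' means $f(p)\in\mathbb{C}$, i.e.\ $p$ is not a pole of $f$. Equivalently, $\tilde{f}$ does not have a pole at $0$, so its Laurent expansion at $0$ contains only non-negative powers of $t$; that is, $\tilde{f}$ is holomorphic at $0$. Shrinking $\varepsilon$ if necessary, we may then assume $\tilde{f}$ is holomorphic on all of $\mathbb{D}(0,\varepsilon)$.

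Finally, since $\tilde{f}$ is holomorphic and hence continuous on $\mathbb{D}(0,\varepsilon)$, it is bounded on the compact disk $\overline{\mathbb{D}}(0,\varepsilon/2)$, which corresponds under $\varphi^{-1}$ to a compact neighbourhood of $p$ contained in $U$; on any open neighbourhood of $p$ contained in this compact, $f$ is bounded. This gives the conclusion. There is no real obstacle here beyond unwinding the definitions: the only point that requires a word of justification is that ``$f(p)$ finite'' is equivalent to $p$ not being a pole of $f$, which is a direct consequence of the standard description of rational/meromorphic functions at a smooth point of a complex algebraic curve.
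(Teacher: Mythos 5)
Your proof is correct and follows essentially the same route as the paper's: reduce to one complex variable via a local chart at the smooth point (the paper does this with a terse "without loss of generality $\mathcal{C}=\mathbb{C}$"), observe that finiteness of $f(p)$ rules out a pole there, and conclude local boundedness from continuity/holomorphy near $p$. Your version merely spells out the local uniformizer and the pole dichotomy at a smooth point more explicitly than the paper does.
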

	\begin{proof}
	Without lost of generality we can suppose that $\mathcal{C}=\mathbb{C}$. If $p$ is a pole of $f$ then $\lim_{z\to p}|f(z)|=+\infty$, thus if $f(p)$ is finite by continuity of $f$, there must be a neighbourhood of $p$ in $\mathbb{C}$ on which $f$ is bounded.
\end{proof}		
	
	\begin{Prop}\label{Extension_of_algebraicity_say}
	Let $(x,y)$ be a pair of vertices in $X$. The rational functions $f_{x,y}$ and $g_{x,y}$ from Proposition \ref{algebraicity_of_F} and Corollary \ref{Cor_algbraicity_of_Greens_function}, when restricted to $\mathcal{C}$ are rational functions on $\mathcal{C}$ that are regular in a neighbourhood of $\overline{\{(z,v_z): z\in \mathbb{D}(0,R)\}}$.
	\end{Prop}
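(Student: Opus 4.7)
The plan is to show that neither $f_{x,y}|_\mathcal{C}$ nor $g_{x,y}|_\mathcal{C}$ has a pole at any point of the compact set $K := \overline{\{(z,v_z): z \in \mathbb{D}(0,R)\}}$; since the poles of a rational function on a smooth curve form a discrete closed subset, avoiding all of $K$ will automatically give a neighbourhood of $K$ on which the functions are regular. Throughout I admit, as the statement permits, that $\mathcal{C}$ is a smooth complex curve in a neighbourhood of $K$.

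First I would extend the parametrization up to the boundary. By the remark following Theorem \ref{Finitude_of_Green_functions}, each restricted Green's function $z \mapsto G_z(a,b;\mathcal{B}(y)^\complement)$ has non-negative coefficients and finite value at $z=R$, hence extends continuously to $\overline{\mathbb{D}}(0,R)$ (Abel). Consequently $z \mapsto v_z$ and thus $u: z \mapsto (z,v_z)$ extend continuously to $\overline{\mathbb{D}}(0,R)$, and $K = u(\overline{\mathbb{D}}(0,R))$.

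Next, the restrictions $f_{x,y}|_\mathcal{C}$ and $g_{x,y}|_\mathcal{C}$ are genuine (non-identically-infinite) rational functions on the smooth piece of $\mathcal{C}$ containing $K$, since $f_{x,y}$ and $g_{x,y}$ are regular at $u(0)=(0,0_E)\in\mathcal{C}$. Their pole sets on this smooth neighbourhood of $K$ are therefore discrete. For $z_0 \in \mathbb{D}(0,R)$, the equality $F_z(x,y) = f_{x,y}(u(z))$ holds in a neighbourhood of $0$ (Proposition \ref{algebraicity_of_F}) and both sides are meromorphic in $\mathbb{D}(0,R)$; by the identity principle the equality persists away from the discrete set $u^{-1}(\{\text{poles}\})$. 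If $u(z_0)$ were a pole, then $|f_{x,y}(u(z))| \to \infty$ as $z \to z_0$, contradicting the fact that $F_z(x,y)$ is holomorphic, hence locally bounded, on $\mathbb{D}(0,R)$. Thus $u(\mathbb{D}(0,R))$ contains no poles of $f_{x,y}|_\mathcal{C}$.

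The boundary case is where the main work lies: fix $z_0 \in \partial \mathbb{D}(0,R)$ and suppose for contradiction that $u(z_0)$ is a pole of $f_{x,y}|_\mathcal{C}$. Then $|f_{x,y}(p)| \to \infty$ as $p \to u(z_0)$ along $\mathcal{C}$, and in particular, taking $p = u(z)$ with $z \to z_0$ from inside $\mathbb{D}(0,R)$, we get $|F_z(x,y)| \to \infty$. But Theorem \ref{Finitude_of_Green_functions} (applied to $F_z(x,y)$, which is permitted by the remark following that theorem) combined with Abel's theorem gives $F_z(x,y) \to F_{z_0}(x,y) < \infty$, a contradiction. So $u(z_0)$ is not a pole. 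Combining the interior and boundary cases, $f_{x,y}|_\mathcal{C}$ has no pole in $K$; since the pole set is discrete and $K$ is compact, Lemma \ref{Lem_sing_ratio} yields an open neighbourhood of $K$ in $\mathcal{C}$ on which $f_{x,y}|_\mathcal{C}$ is regular. The same argument applies verbatim to $g_{x,y}$, using Corollary \ref{Cor_algbraicity_of_Greens_function} and finiteness of $G_R(x,y)$. The delicate point is the boundary argument, and it rests essentially on the non-trivial analytic input that $F_R$ and $G_R$ are finite at the radius of convergence.
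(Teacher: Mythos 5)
Your proof is correct and takes essentially the same approach as the paper: both rely on smoothness of $\mathcal{C}$ near $K$, on Lemma \ref{Lem_sing_ratio} together with compactness of $K$, and on the finiteness of Green's functions at $z=R$ (Theorem \ref{Finitude_of_Green_functions} / Corollary \ref{Continuity_extension_of_Greens_functions}) to rule out poles on $K$. Your write-up merely unpacks the paper's terse phrase \og extended by analytic continuation \fg{} by making the interior identity-principle step and the boundary limiting argument explicit, which is a faithful elaboration rather than a different route.
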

	\begin{proof}
		By Proposition \ref{LCurve-Regularity_of_C} from \cite{LCurve}, the curve $\mathcal{C}$ is smooth in a neighbourhood of the closed set $\overline{\{(z,v_z): z\in\mathbb{D}(0,R)\}} \subset\mathcal{C}$. Thus from Lemma \ref{Lem_sing_ratio}, the functions $f_{x,y}$ and $g_{x,y}$ in $\mathbb{C}(\mathcal{C})$ are regular in a neighbourhood of the compact set $\overline{\{(z,v_z): z\in\mathbb{D}(0,R)\}}$ if, and only if, they are bounded on $\overline{\{(z,v_z): z\in\mathbb{D}(0,R)\}}$. 
		But this last property is verified thanks to the equalities (\ref{Eq_Green_F}) and (\ref{Eq_Green_G}), extended by analytic continuation and thanks to Corollary \ref{Continuity_extension_of_Greens_functions}.
	\end{proof}

We now discuss the radius of convergence of Green's functions and restricted Green's functions.	For $(x,y)$ a pair of vertices of $X$, let $R_F(x,y)$\index{$R_F(x,y)$} be the radius of convergence of the first-passage generating functions $F_z(x,y)$. For $(a,b)_y\in\Xi$, let $R(a,b;\mathcal{B}(y)^\complement)$ denotes the radius of convergence of the restricted Green's function $G_z(a,b;\mathcal{B}(y)^\complement)$. Denote by $R_F:=\min_{x,y}(R_F(x,y))$\index{$R_F$} the minimal radius of convergence of the first-passage generating functions $F_z(x,y)$ for $(x,y)\in X_0\times X_0$, and recall that $R_G=\min_{(a,b)_y}(R(a,b;\mathcal{B}(y)^\complement))$ is the minimal radius of convergence of the restricted Green's functions $G_z(a,b;\mathcal{B}(y)^\complement)$, for $(a,b)_y\in\Xi$.

Since $p^{(n)}(x,y)\geq p^{(n)}(x,y;\{y\}^\complement)$ and $
	p^{(n)}(a,b;\{b\}^\complement)\geq p^{(n)}(a,b; B_k(y)^\complement)$, for any positive real number $r>0$, we have $G_r(x,y)\geq F_r(x,y)$ and $F_r(a,b) \geq G_r(a,b;B(y)^\complement)$, so we necessarily have $$R\leq R_F\leq R_G.$$ 
	
	We wonder if we have equality. Recall that $R$ is the radius of convergence of Green's functions \og $G_z(x,y)$ \fg{} and does not depend on $(x,y)$ (See Proposition \ref{Prop_Markov_Ired_et_rayon_de_cv}). By the Cauchy-Hadamard formula we have 
	$$R=\frac{1}{\limsup p^{(n)}(x,y)^{1/n}},$$
whenever $(x,y)\in X_0\times X_0$.
	
	By theorem \ref{Finitude_of_Green_functions} we also have that $G_z$ is finite for any complex number $z$ of modulus $|z|=R$, so are $F_z(x,y)$ and $G_z(a,b;\mathcal{B}(y)^\complement)$.
	
	 Insofar as we got for any complex number $z\in\mathbb{D}(0,R)$, the equality $$G_z(x,y)=g_{x,y}(z,v_z),$$ from Proposition \ref{Extension_of_algebraicity_say}, if $R_G>R$, then the right handside of the above equality would admit a holomorphic extension on a disk of strictly greater radius than $R$. Which contradicts the definition of $R$. Hence $R=R_G$, and
\begin{equation}\label{equality_radius}
	R=R_F=R_G.
 \end{equation}
 
\begin{Rem}
	Note that we used the smoothness of $\mathcal{C}$ to prove that the rational functions $f_{x,y}$ and $g_{x,y}$ are regular in a neighbourhood of the closure $\overline{\{(z,v_z): z\in\mathbb{D}(0,R)\}}$ and to prove equality (\ref{equality_radius}). The regularity of $f_{x,y}$ and $g_{x,y}$ near $\overline{\{(z,v_z): z\in\mathbb{D}(0,R)\}}$ will not intervene in the proof of the smoothness of $\mathcal{C}$. Nevertheless the real number $R$ will be mentioned, but the equality (\ref{equality_radius}) will not play a role in the proof of the smoothness of $\mathcal{C}$ (Proposition \ref{LCurve-Regularity_of_C} in \cite{LCurve}) either.
\end{Rem}

We give in \cite{LCurve} (Proposition \ref{LCurve-Corollary_PSeries_A} and Proposition \ref{LCurve-Charac_radius} respectively) a characterisation of the pairs $(x,y)\in X_0\times X_0$ and triplets $(a,b)_y\in\Xi$ such that the radius of convergence $R_F(x,y)$ and $R(a,b;\mathcal{B}(y)^\complement)$ are strictly greater than $R$, in term of $p$-admissible path of $\mathcal{P}h(x,y;\{y\}^\complement)$ and $\mathcal{P}h(a,b;\mathcal{B}(y)^\complement)$. This characterisation is foreshadowed in the next section.
%To do so we need to get a better understanding of the curve $\mathcal{C}$. This is the object of the section \ref{Section Structure of C}.

\section{Asymptotic of probability}\label{Section_Asymptotics}

In this section, we will heavily rely on the Thm \ref{Thm_Cor_Black_box_general} from  \cite{RTaub}(Corollary \ref{RTaub-Cor_Black_box_general}) that we state below. This is a Tauberian theorem that focuses on power series expansion of holomorphic functions with square root singularity at their radius of convergence.

\begin{Thm}[Corollary \ref{RTaub-Cor_Black_box_general} in \cite{RTaub}]\label{Thm_Cor_Black_box_general}
Let $R>0$ be a real number, $d$ be a positive integer, $\mathcal{C}$ be a smooth complex curve (i.e. a Riemann surface or a complex manifold of dimension 1) and $A\in Aut(\mathcal{C})$ be an automorphism of $\mathcal{C}$ of order $d$ (i.e such that $A,...,A^{d-1}\neq Id_\mathcal{C}$, and $A^d=Id_\mathcal{C}$). Let $u:\mathbb{D}(0,R)\to\mathcal{C}$ and $\lambda:\mathcal{C}\to\mathbb{C}$ be two holomorphic maps, and let $g$ be a complex valued function defined and holomorphic in a neighbourhood of the closure $\overline{\{ u(z): z\in\mathbb{D}(0,R)\}}\subset\mathcal{C}$. Denote by $\sum_{n\in\mathbb{N}} a_n z^n$ the power series expansion of the complex valued function $g\circ u$ near $z=0$. Assume that
	\begin{enumerate}[label=\roman*)]
		\item $\lambda\circ u=id_{\mathbb{D}(0,R)}$;
		\item $u$ extends by continuity into a continuous map $\overline{\mathbb{D}(0,R)}\to\mathcal{C}$;
		\item The first derivative of $\lambda$ at $p=u(R)$ is zero: $D_p\lambda=0$;
		\item The second derivative of $\lambda$ at $p$ is non-zero 
			: $D^2_p \lambda\neq 0$;
		\item $u$ extends holomorphically to the neighbourhood of any point on the boundary of $\mathbb{D}(0,R)$ minus the $d$-th roots of $R^d$: $\partial\mathbb{D}(0,R)\setminus\left\lbrace R(e^{2i\pi/d})^k: k\in \{0,...,d-1\}\right\rbrace;$
		\item The derivative of $g$ at $p$ is non-zero: $D_p g\neq 0$.
		\item $Fix(A^1)=...=Fix(A^{d-1})=\{u(0)\}$ and for any $ z\in\mathbb{D}(0,R)$, $$u(e^{2i\pi/d}z)=Au(z);$$
		\item There is an integer $r\in\{0,...,d-1\}$ such that for any $q\in \overline{\{ u(z): z\in\mathbb{D}(0,R)\}}$, $$g(Aq)=e^{2i\pi r/d}g(q).$$
	\end{enumerate}
	 Then, for any non-negative integer $n$ such that $n\notequiv r \, [d]$, we have $a_n=0$ and there exists a non-zero positive constant $C$, and a unique sequence of constants $(c_l)_{l\geq 1}$, such that, as $n$ goes to $\infty$, we have the asymptotic expansion:
	\begin{equation}
	    a_{dn+r}\sim_\infty C R^{-dn} \frac{1}{n^{3/2}}\left( 1 + \sum_{l=1}^\infty \frac{c_l}{n^{l}}\right ).
	\end{equation}
\end{Thm}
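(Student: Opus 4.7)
The plan is to recognise the statement as a singularity-analysis result (in the spirit of Flajolet--Sedgewick, Chapter~VI) applied to $f := g \circ u$, whose Taylor coefficients at $z=0$ are precisely the $a_n$. The argument has three stages: (I) identify the local Puiseux structure of $f$ at $p = u(R)$; (II) transport it to the other boundary singularities $u(\zeta^k R)$, with $\zeta := e^{2i\pi/d}$, via the $A$-symmetry; (III) apply the transfer theorem and sum the $d$ contributions.

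For stage I, I would choose a holomorphic coordinate $t$ on $\mathcal{C}$ centred at $p$; hypotheses (iii)--(iv) give $\lambda(t) = R + \alpha t^2 + O(t^3)$ with $\alpha \neq 0$, so $\lambda$ is a $2$-to-$1$ branched cover at $p$. Solving $\lambda(u(z)) = z$ by square-root inversion (Weierstrass preparation) produces $u(z) - p = \sum_{k \geq 1} \beta_k (R-z)^{k/2}$ with $\beta_1 \neq 0$ in a slit neighbourhood of $R$. Composing with $g$, holomorphic at $p$ with $D_pg \neq 0$ by (vi), yields
\[
f(z) \;=\; g(p) + \sum_{k \geq 1} \gamma_k (R-z)^{k/2}, \qquad \gamma_1 \neq 0.
\]
For stage II, (vii)--(viii) together give the functional equation $f(\zeta z) = \zeta^r f(z)$, first near $z=0$ and then globally by analytic continuation along the arcs of $\partial\mathbb{D}(0,R)$ kept holomorphic by (v). Combined with stage I, this yields the analogous Puiseux expansion of $f$ at each $p_k = u(\zeta^k R)$, with singular part equal to $\zeta^{kr}$ times the expansion at $p$ composed with $z \mapsto \zeta^{-k}z$. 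Hypothesis (v) also shows these $d$ points exhaust the boundary singularities and supplies a $\Delta$-domain of analyticity around each, so singularity analysis applies.

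For stage III, the classical transfer theorem gives, for each $j \geq 1$,
\[
[z^n]\,(R - \zeta^{-k}z)^{j/2} \;\sim\; \zeta^{-kn} R^{-n}\,\frac{n^{-j/2-1}}{\Gamma(-j/2)},
\]
so summing the contributions of the $d$ singularities introduces the character sum $\sum_{k=0}^{d-1}\zeta^{k(r-n)}$, which equals $d$ when $n \equiv r\ [d]$ and $0$ otherwise. This forces $a_n = 0$ for $n \not\equiv r\ [d]$. When $n \equiv r\ [d]$, the even-$j$ Puiseux terms contribute nothing (they are locally analytic), and the odd-$j$ terms recombine into the expected scale $C R^{-dn'} n^{-3/2}\bigl(1 + \sum_{l \geq 1} c_l/n^l\bigr)$, with $n' = (n-r)/d$ and $C$ a nonzero multiple of $\gamma_1/\Gamma(-1/2)$. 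Truncating the Puiseux expansion to order $M$ yields a remainder that is $C^M$-smooth on $\overline{\mathbb{D}(0,R)}$, controlling the error term at every order of the asymptotic expansion.

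I expect the main obstacle to be stage~I: producing the Puiseux series for $u$ rigorously and verifying that its two branches match the actual analytic continuations of $u$ on either side of $R$ — equivalently, that the ramification of $\lambda : \mathcal{C} \to \mathbb{C}$ at $p$ is of order exactly $2$. This rests on the smoothness of $\mathcal{C}$ at $p$ together with the precise vanishing orders prescribed by (iii)--(iv), and must be compatible with the $A$-action so that the Puiseux branches at the $d$ points $p_k$ transform coherently under the global symmetry. Once this local-to-global compatibility is pinned down, the summation over the $d$ singularities and the application of the transfer theorem are routine bookkeeping.
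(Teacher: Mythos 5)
A preliminary remark on the comparison itself: this paper never proves Theorem \ref{Thm_Cor_Black_box_general} — it is imported as a black box from the companion paper \cite{RTaub} — so your proposal can only be measured against the intended Tauberian argument there, not against an in-text proof. That said, your route is the natural one and matches what the introduction describes as the Flajolet--Sedgewick VII.6.3 circle of ideas: order-two ramification of $\lambda$ at $p=u(R)$ from (iii)--(iv) giving a Puiseux expansion of $u$, hence of $f=g\circ u$, with nonvanishing $(R-z)^{1/2}$-coefficient by (vi); transport to the $d$ conjugate singularities $R\zeta^k$, $\zeta=e^{2i\pi/d}$, by the equivariance (vii)--(viii); then transfer/Darboux. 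You also correctly identify the technical heart: showing that $u$ near $R$ coincides with one branch of the local inverse of $\lambda$, and that after continuing $u$ across the adjacent boundary arcs via (v) it is still the same branch (use the identity theorem on $\lambda\circ u=\mathrm{id}$), so the Puiseux expansion is valid on a genuine $\Delta$-domain rather than only inside the disk.

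Two concrete corrections are needed. First, the exact vanishing $a_n=0$ for $n\not\equiv r\;[d]$ cannot be deduced, as you do, from the cancellation of the character sum $\sum_{k=0}^{d-1}\zeta^{k(r-n)}$ in the asymptotic analysis: a transfer theorem controls coefficients only up to an error term and can never force exact equalities. The vanishing is instead immediate from the functional equation you already established, $f(\zeta z)=\zeta^r f(z)$ on $\mathbb{D}(0,R)$, which gives $\zeta^n a_n=\zeta^r a_n$ for every $n$; state it that way. Second, your argument produces a leading constant that is a nonzero multiple of $\gamma_1$, i.e.\ $C\neq 0$, whereas the statement asserts $C>0$; positivity cannot follow from hypotheses (i)--(viii) alone, since replacing $g$ by $ig$ preserves every hypothesis and multiplies all $a_n$ by $i$. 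In every application in this paper the $a_n$ are probabilities, and non-negativity of the coefficients together with $C\neq0$ does force $C>0$; a complete write-up must either invoke such an extra input or settle for $C\neq 0$. Two smaller points: the $\mathcal{C}^M$-smooth remainder on $\overline{\mathbb{D}}(0,R)$ has to be formed by subtracting the truncated singular parts at all $d$ points $R\zeta^k$ simultaneously (elsewhere on the circle smoothness comes from (v)); and the fixed-point half of hypothesis (vii) plays no role in your argument — it only guarantees that the points $u(R\zeta^k)$ are distinct in $\mathcal{C}$ — which is harmless, but worth acknowledging so the reader does not look for a hidden use of it.
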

	To obtain the asymptotic expansion of the sequences of probabilities $(p^{(n)}(x,y))_n$, $(p^{(n)}(x,y;\{y\}^\complement))_n$ and $(p^{(n)}(a,b;\mathcal{B}(y)^\complement))_n$, we will apply this theorem with $\mathcal{C}$ being the Lalley's curve, $u:\mathbb{D}(0,R)\to\mathcal{C}$ the map that associates to any complex number $z\in\mathbb{D}(0,R)$, the point
\begin{equation}\label{Def_u}	
	u(z)=(z,v_z);
\end{equation} 
$\lambda:\mathcal{C}\to\mathbb{C}$, the map that sends any point $(z,J)$ of $\mathcal{C}$ to its first coordinate, 
\begin{equation}\label{Def_lambda}
	\lambda(z,J)=z;
\end{equation}
and different functions $g$ for which we will have to prove that its derivative at $u(R)$ is non-zero. We start by reviewing the assumptions of the previous theorem:

Note that it is clear from definitions that 
\begin{equation}
	\lambda\circ u = Id_{\mathbb{D}(0,R)}.
\end{equation}	
We proved that $u$ can be continuously extended to the closed disk $\overline{\mathbb{D}}(0,R)$, in Corollary \ref{Continuity_extension_of_Greens_functions}, besides, since $\mathcal{C}$ is closed, the values taken by $u$ on $\partial\mathbb{D}(0,R)$ are still in $\mathcal{C}$. We have also seen in subsection \ref{Subsection_ZdZ_action}, that there exists a diagonal operator $A\in Aut(\mathcal{C})$, of order $d$, with $d$ the period of the Markov chain $(X_0,p)$, such that
	\begin{equation}
		\forall z\in\mathbb{D}(0,R),\, u(e^{2i\pi/d} z)=Au(z).
	\end{equation}
	
In \cite{LCurve}, we prove (Proposition \ref{LCurve-Regularity_of_C}), that $\mathcal{C}$ is smooth in a neighbourhood of the compact set $\overline{\{u(z): z\in\mathbb{D}(0,R)\}}\subset \mathcal{C}$, and (Proposition \ref{LCurve-Prop_Holomorphic_Extension_of_u}) that $u$ can be holomorphically extended in a neighbourhood of any point of the boundary $\partial\mathbb{D}(0,R)$, that is not a $d$-th root of $R^d$. 
We finally show (subsection \ref{LCurve-subsection_derivative_of_lambda} of \cite{LCurve}) that the first derivative of $\lambda$ at $u(R)$ is zero (Proposition \ref{LCurve-Prop_derivative_of_lambda}), and its second derivative is non-zero (Proposition \ref{LCurve-Prop_derive_seconde_de_lambda}).

To prove the different asymptotics of probability, we have to define a function $g$ and prove that its first derivative at $u(R)\in\mathcal{C}$ is non-zero.
		\subsection{Asymptotic for restricted random walks}
	We begin by examining the behaviour of the sequence of probabilities $(p^{(n)}(a,b;\mathcal{B}(y)^\complement))_{n\in\mathbb{N}}$ for $(a,b)_y$ in the set $\Xi$. In this case, for $(a,b)_y\in\Xi$, we will consider the function $g$ to be the evaluation map, 
	\begin{equation}\label{evaluation_map_def}
	g:	(z,J)\mapsto J((a,b)_y),
	\end{equation} 
	whose first derivative is zero or non-zero in function of $(a,b)_y$. 
	Thereby, we partition the set $\Xi$ in two subsets, $\Xi_\infty$ and $\Xi_{<\infty}$. \index{$\Xi_{<\infty}$}\index{$\Xi_\infty$}
\begin{enumerate}
	\item $(a,b)_y\in\Xi$ belongs to $\Xi_{<\infty}$ if any $p$-admissible path $\gamma$ in $\mathcal{P}h(a,b;\mathcal{B}(y)^\complement)$ has its vertices at a uniformly bounded distance from $y$. That is to say, there exists a non-negative integer $M$ such that:  \begin{equation*}
	\forall \gamma=(\omega_0,...,\omega_m)\in\mathcal{P}h(a,b;\mathcal{B}(y)^\complement), \; \max_i(d(\omega_i,y))<M.
	\end{equation*}

	\item $(a,b)_y\in\Xi$ belongs to $\Xi_\infty$ if it does not have such uniform bound, i.e. such that for any positive integer $M>0$,
\begin{equation*}
\exists\gamma=(\omega_0,...,\omega_m)\in\mathcal{P}h(a,b;\mathcal{B}(y)^\complement) \text{ s.t }  \max_i(d(\omega_i,y))\geq M.
\end{equation*}
\end{enumerate}

Remark that if $(a,b)_y$ is in $\Xi_{<\infty}$ then by definition, the $p$-admissible paths of $\mathcal{P}h(a,b;\mathcal{B}(y)^\complement)$ can visit only a finite number of vertices. We thus have
\begin{Lem}\label{Rationality_of_g_xi_<_infty}
Let $X=(X_0,X_1)$ be an infinite tree of bounded valence, $\Gamma<\operatorname{Aut}(X)$ be a subgroup of automorphisms of the tree such that $\Gamma\backslash X_0$ is finite, let $p:X_0\times X_0\to [0,1]$ be a finite range $\Gamma$-invariant transition kernel, making $(X_0,p)$ an irreducible Markov chain and let $k\in\mathbb{N}$ be an integer such that 
		$$\forall x,y\in X_0,\, d(x,y)>k\Rightarrow p(x,y)=0.$$
		
If $(a,b)_y$ is an element of $\Xi_{<\infty}$, then the associated restricted Green's function $z\mapsto G_z(a,b;\mathcal{B}(y)^\complement)$ is a rational function. In particular it admits a pole at its radius of convergence $z=R(a,b;\mathcal{B}(y)^\complement)>R$.
\end{Lem}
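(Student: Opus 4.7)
The starting observation is that $(a,b)_y \in \Xi_{<\infty}$ provides a uniform bound $M$ on $\max_i d(\omega_i,y)$ across all $\gamma = (\omega_0,\dots,\omega_m) \in \mathcal{P}h(a,b;\mathcal{B}(y)^\complement)$, so every intermediate vertex of every such path lies in the set
$$W := \{v \in X_0 : k < d(v,y) < M\}.$$
Since the tree has bounded valence, the ball $\mathcal{B}_{M-1}(y)$ is finite, hence so is $W$. This is the key finiteness needed to reduce the problem to a finite-dimensional matrix computation.

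Next I would express $G_z(a,b;\mathcal{B}(y)^\complement)$ as a matrix-valued rational fraction. Let $P_W = (p(u,v))_{u,v\in W}$ be the $W \times W$ sub-stochastic matrix of transition probabilities between intermediate vertices, and set $\alpha = (p(a,u))_{u\in W}$ and $\beta = (p(v,b))_{v\in W}$. Grouping the path-weight sum from formula (\ref{SumRestrictedWeight}) by length, and noting that for $n \geq 2$ the contribution of $p$-admissible paths of length $n$ equals $\alpha^T P_W^{n-2}\beta$, one obtains for $z$ in a neighbourhood of $0$:
$$G_z(a,b;\mathcal{B}(y)^\complement) = \delta_{a,b} + z\,p(a,b) + z^2\, \alpha^T (I - zP_W)^{-1} \beta.$$
The right-hand side is manifestly rational in $z$: its only possible poles are among the reciprocals of the non-zero eigenvalues of the finite matrix $P_W$.

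For the inequality $R(a,b;\mathcal{B}(y)^\complement) > R$ and the existence of a pole at the radius of convergence, I would invoke Theorem \ref{Finitude_of_Green_functions}: since $G_R(a,b;\mathcal{B}(y)^\complement)$ is finite and $G_z(a,b;\mathcal{B}(y)^\complement)$ is a rational function---hence analytic away from its finitely many poles---the point $z = R$ cannot be a pole. The nearest pole therefore lies at distance strictly greater than $R$, which gives $R(a,b;\mathcal{B}(y)^\complement) > R$; and since the radius of convergence of a non-polynomial rational function equals the distance from $0$ to its nearest pole, the function must have a pole on its circle of convergence.

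I expect no substantial obstacle: once the finiteness of the ambient state space is extracted from the definition of $\Xi_{<\infty}$, the argument is essentially the standard finite-state reduction combined with Theorem \ref{Finitude_of_Green_functions}. The only subtlety worth flagging is the degenerate case $W = \emptyset$, where the formula collapses to $G_z = \delta_{a,b} + z\,p(a,b)$, the radius of convergence is infinite, and the \emph{pole at the radius of convergence} wording becomes vacuous---while the inequality $R(a,b;\mathcal{B}(y)^\complement) > R$ still trivially holds since $R<\infty$.
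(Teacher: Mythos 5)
Your proposal is correct and follows essentially the same route as the paper: reduce to a finite-state linear-algebra computation (you use the geometric annulus $W\subset\mathcal{B}_{M-1}(y)$ as the ambient finite set, the paper uses the set $\mathcal{E}_{(a,b)_y}$ of vertices actually visited, and you separate the boundary into the vectors $\alpha,\beta$ where the paper absorbs them into a single kernel operator $Q$, but these are equivalent presentations yielding the same resolvent formula), then invoke Theorem \ref{Finitude_of_Green_functions} to push the first pole past $R$. Your explicit flagging of the polynomial/no-pole degenerate case matches the paper's treatment via $\rho(Q)^{-1}\in\,]0,\infty]$ and Remark \ref{Rem_Green_Poly}.
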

\begin{proof}
Let $\mathcal{E}_{(a,b)_y}$ be the set of all vertices of $p$-admissible paths in $\mathcal{P}h(a,b;\mathcal{B}(y)^\complement)$: 
$$\mathcal{E}_{(a,b)_y}:=\{ w: w\in\gamma,\,\gamma\in\mathcal{P}h(a,b;\mathcal{B}(y)^\complement \}.$$

By assumption, the set $\mathcal{E}_{(a,b)_y}$ is finite. Consider the kernel operator $Q$ defined over the set of complex valued functions $\mathcal{F}(\mathcal{E}_{(a,b)_y},\mathbb{C})$  whose kernel is given by $q:\mathcal{E}_{(a,b)_y}\times \mathcal{E}_{(a,b)_y}\to [0,1]$, defined for any pair $(c,c')$ in $\mathcal{E}_{(a,b)_y}\times \mathcal{E}_{(a,b)_y}$ by
\begin{equation}
q(c,c')=\left\lbrace
\begin{matrix}
p(c,c'), \text{ if } c\notin \mathcal{B}(y)^\complement\\
0, \text{ else}.
\end{matrix}\right.
\end{equation}
More precisely $Q$ is the operator that sends any function $f$ in $\mathcal{F}(\mathcal{E}_{(a,b)_y},\mathbb{C})$ to the function denoted $Qf$: 
$$Qf:c\mapsto \sum_{c'\in\mathcal{E}_{(a,b)_y}}q(c,c')f(c').$$
It is a Perron (linear) operator (See Definition \ref{Def_Perron_operator}) on the vector space $\mathcal{F}(\mathcal{E}_{(a,b)_y},\mathbb{C})$.
%Also consider the restriction of the Markov operator $P$, defined in \ref{Def_Markov_Operator}, restricted to $\mathcal{F}(\mathcal{E}_{x,y},\mathbb{C})\subset \mathcal{F}(X_0,\mathbb{C})$ whose kernel is thus given by the restriction of the transition kernel $p$ to $\mathcal{E}_{x,y}\times \mathcal{E}_{x,y}\subset X_0\times X_0$.

By convention set $q^{(0)}(c,c'):=\delta_c(c')$. And set, for any positive integer $n\geq 1$, the function $q^{(n)}:\mathcal{E}_{(a,b)_y}\times \mathcal{E}_{(a,b)_y}\to [0,1]$ to be the kernel of the operator $Q^{\circ n}$, then $q^{(n)}$ is:
$$q^{(n)}:(c,c')\mapsto\sum_{\substack{x_0,...,x_n\in\mathcal{E}_{(a,b)_y}\\ x_0=c, \, x_n=c' }}q(x_0,x_1)\cdots q(x_{n-1},x_n).$$

An immediate induction on $n\geq 0$ shows that, for any non-negative integer $n\geq 0$, and any $c,c'\in\mathcal{E}_{(a,b)_y}$, $c\neq b$ we have:
	$$q^{(n)}(c,c')=p^{(n)}(c,c';\mathcal{B}(y)^\complement).$$
	We detail this induction below.
In particular we get that 
$$G_z(a,b;\mathcal{B}(y)^\complement)=\left(\sum_{n\geq 0} (zQ)^n\delta_{b}\right)(a)=\left((I-zQ)^{-1}\delta_b\right)(a),$$
where $\delta_b$ is the Dirac mass at $b$.
And so $z\mapsto G_z(a,b;\mathcal{B}(y)^\complement)$ is a rational functions, that admits a pole at $\rho(Q)^{-1}\in ]0,\infty]$, the inverse of the spectral radius of $Q$. And it does not possesses any pole in $\mathbb{D}(0,\rho(Q)^{-1})$.
By finiteness of $G_R(a,b;\mathcal{B}(y)^\complement)$ (Theorem \ref{Finitude_of_Green_functions}), we necessarily have $R(a,b;\mathcal{B}(y)^\complement)>R$.

We now prove the induction mentioned above, that is, for any non-negative integer $n\geq 0$, we have $$\forall c,c'\in \mathcal{E}_{(a,b)_y},\, c\neq b,\; q^{(n)}(c,c')=p^{(n)}(c,c';\mathcal{B}(y)^\complement).$$

For $n=0$, the result is immediate. Suppose that it is verified for some $n\geq 0$. Let $c,c'$ be in $\mathcal{E}_{(a,b)_y}$, with $c\neq b$, by definition and using the induction hypothesis,
\begin{align*}
q^{(n+1)}(c,c')&=\sum_{c''\in\mathcal{E}_{(a,b)_y}}q^{(n)}(c,c'')q(c'',c')\\
			   &= \sum_{c''\in\mathcal{E}_{(a,b)_y}\cap \mathcal{B}(y)^\complement} 
			   p^{(n)}(c,c'';\mathcal{B}(y)^\complement) p(c'',c') \\
			&=p^{(n+1)}(c,c';\mathcal{B}(y)^\complement),
\end{align*}
where the last equality comes from the formula $$p^{(n+1)}(c,c')=\sum_{c''\notin \mathcal{B}(y)^\complement} p^{(n)}(c,c'';\mathcal{B}(y)^\complement) p(c'',c')$$ and the definition of $\mathcal{E}_{(a,b)_y}$. 
\end{proof}
\begin{Rem}\label{Rem_Green_Poly}
    For $(a,b)_y\in\Xi_{<\infty}$, the pole of the associated restricted Green's function may not be simple (See example \ref{Example Green poly} below). The pole may also be $z=\infty$, corresponding to the case where the function is polynomial in $z$. This is, for example, the case in the situations described in the Example \ref{Example_Aquarterquinquies} below.
\end{Rem}
\begin{Ex}\label{Example_Aquarterquinquies}
    Let $\mu$ and $\nu$ be two probability distributions over $F=\left(\mathbb{Z}/2\mathbb{Z}\right)^{\star 3}$ with support $\operatorname{Supp}(\mu)=\{a, ac, ba\}$ and $\operatorname{Supp}(\nu)=\{a,ab,ac\}$, where $a,b,c$ are the three generators of each of the three copies of $\mathbb{Z}/2\mathbb{Z}$ (See Example \ref{Typical_Example}). Denote by $e$ the neutral element of the free product $F$. In these situations the Green's functions can be computed, and we have in particular:
    $$G_z^{(\mu)}(ca,e;\mathcal{B}_2(a)^\complement)=\mu(ac)z\; \text{ and }\; G_z^{(\nu)}(ba,e;\mathcal{B}_2(a)^\complement)=\nu(ab)z. $$
    These are polynomial function of $z$.
\end{Ex}
\begin{Ex}\label{Example Green poly}
Let $q\in]0,1[$ be a real number. Consider a four colored regular tree $\mathcal{T}_3$ of valence $3$, with colors $\alpha,\beta,\gamma,\delta$ such that any vertex of a given color has its neighbours colored with the three other colors. That is $\mathcal{T}_3$ is given a map $col:\mathcal{T}_3\to\{\alpha,\beta,\gamma,\delta\}$ such that if $x$ is a vertex and $x_1,x_2,x_3$ are its three neighbours in $\mathcal{T}_3$, then the set $\{col(x), col(x_1), col(x_2), col(x_3)\}$ equals $\{\alpha,\beta,\gamma,\delta\}$. We define the step distribution of the random walk (the transition kernel) $p:X_0\times X_0\to[0,1]$ as follow:
\begin{itemize}
    \item If $x$ has color $\alpha$, then for any $y\in X_0$, we set $p(x,y)=0$ if $d(x,y)>3$, and $p(x,y)=\frac{1}{22}$ else. Corresponding to the uniform distribution over $\mathcal{B}_3(x)$;
    \item If $x$ has color $\beta$, then for any $y\in X_0$, we set $p(x,y)=1$ if, and only if $x$ and $y$ are neighbours, and $col(y)=\alpha$ and $p(x,y)=0$ otherwise;
    \item If $x$ has color $ \gamma $, then  we set $p(x,x)=q$ and for any $y\in X_0\setminus\{x\}$, we set $p(x,y)=1-q$ if, and only if $x$ and $y$ are neighbours and $col(y)=\beta$, and $p(x,y)=0$ otherwise;
    \item If $x$ has color $\delta$, then  we set $p(x,x)=q$ and for any $y\in X_0\setminus\{x\}$, we set $p(x,y)=1-q$ if, and only if $d(x,y)=2$, $col(y)=\gamma$, and the color of the intermediate vertex $w\in[x,y]$ is $col(w)=\beta$, and we set $p(x,y)=0$ otherwise.
\end{itemize}
This defines an aperiodic irreducible random walk on $\mathcal{T}_3$. Any automorphism preserving the colors also preserves the transition kernel and the group of automorphism preserving the coloring acts cofinitely on the set of vertices of $\mathcal{T}_3$.

In this case let $a$ be a vertex colored $\delta$ and let $b$ be its only neighbour colored $\beta$.
Denote by $c$ the neighbour of $b$ with color $\gamma$, then let $y$ be any vertex at distance $d(a,y)=4$ from $a$ and at distance $d(b,y)=3$ from $b$ (See Figure \ref{fig:my_label}). 
Then any $p$-admissible path starting from $a$ must pass by the vertex $c$. Considering all possible admissible path in $\mathcal{P}h(a,b;\mathcal{B}(y)^\complement)$ and using the formula (\ref{SumRestrictedWeight}) we have $G_z(a,b;\mathcal{B}(y)^\complement)=z^2(1-p)^2\left(\dfrac{1}{1-p z}\right)^2$. This restricted Green's function possesses a pole of order $2$ at $z=\frac{1}{p}$.
\end{Ex}
\begin{figure}
    \centering
    \includegraphics[scale=0.25]{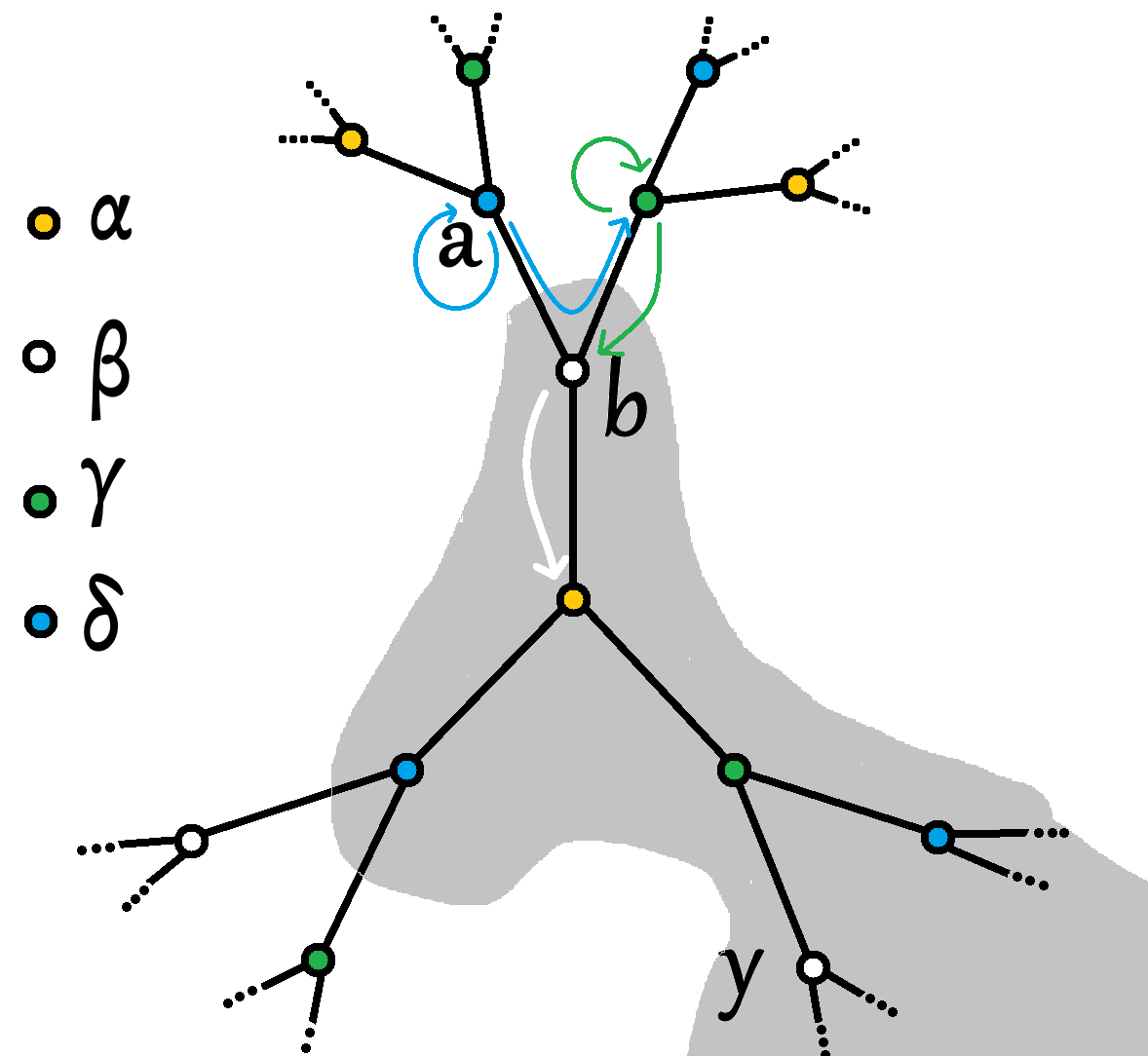}
    \caption{Illustration of Example \ref{Example Green poly}}
    \label{fig:my_label}
\end{figure}
\vspace{\baselineskip}
Depending on whether $(a,b)_y$ belongs to $\Xi_\infty$ or belongs to $\Xi_{<\infty}$, the sequence of probabilities $(p^{(n)}(a,b;\mathcal{B}(y)^\complement))_n$ will have different behaviour. We summarize this in the theorem \ref{Asymptotics_of_probability_Green_restricted_irreducible} below: 
\begin{Thm}\label{Asymptotics_of_probability_Green_restricted_irreducible}~
		Let $X=(X_0,X_1)$ be an infinite tree of bounded valence such that any vertex possesses at least three neighbours, $\Gamma<\operatorname{Aut}(X)$ be a subgroup of automorphisms of the tree such that $\Gamma\backslash X_0$ is finite, $p:X_0\times X_0\to [0,1]$ be a finite range $\Gamma$-invariant transition kernel, making $(X_0,p)$ an irreducible Markov chain and let $k\in\mathbb{N}$ be an integer such that 
		$$\forall x,y\in X_0,\, d(x,y)>k\Rightarrow p(x,y)=0.$$
		Denote by $d$ the period of $(X_0,p)$ and $r:X_0\times X_0\to \mathbb{Z}/d\mathbb{Z}$, the periodicity cocycle defined in Definition \ref{DEF:PERIODICITY_COCYCLE}.
		
		For all $(a,b)_y\in\Xi_\infty$, there exist constants $C>0,\, (c_l)_{l\geq 1}$ such that we have the asymptotic expansion
		\begin{equation*}
		p^{(dn+r(a,b))}(a,b;\mathcal{B}(y)^\complement) \sim_\infty  C R^{-dn} \frac{1}{n^{3/2}}\left( 1 + \sum_{l=1}^\infty \frac{c_l}{n^{l}}\right ).
		\end{equation*}
		and $p^{(dn+t)}(a,b;\mathcal{B}(y)^\complement)=0$ if $t\neq r(a,b) \, [d]$;
		
		For all $(a,b)_y\in\Xi_{<\infty}$, there exists a real number $R'>R$, a positive integer $M\geq 1$, a non-zero constant $D_{-M}$ and real constants $D_{-M+1},...,D_{-1}$ such that, we have as $n$ goes to $\infty$:
		\begin{equation*}
			p^{(dn+r(a,b))}(a,b;\mathcal{B}(y)^\complement)=\frac{D_{-M}}{n^{1-M}( R')^{dn}}+\frac{D_{-{M+1}}}{n^{2-M}( R')^{dn}}+...+\frac{D_{-M}}{( R')^{dn}} +O\left(\frac{1}{(R'+\varepsilon)^{n}}\right).
		\end{equation*}
		and $p^{(dn+t)}(a,b;\mathcal{B}(y)^\complement)=0$, if $t\neq r(a,b)\, [d]$.
	\end{Thm}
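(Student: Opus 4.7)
\vspace{\baselineskip}
\noindent\textbf{Proof plan.} The two vanishing statements ``$p^{(dn+t)}(a,b;\mathcal{B}(y)^\complement)=0$ for $t\not\equiv r(a,b)\,[d]$'' follow immediately in both cases from Corollary \ref{Cor_Admissible_Path_Lenght} applied to the restricted Markov chain, since any admissible path contributing to $p^{(n)}(a,b;\mathcal{B}(y)^\complement)$ is in particular a $p$-admissible path in $\mathcal{P}h(a,b)$, hence of length congruent to $r(a,b)$ modulo $d$. The substantive work lies in the two asymptotic expansions, which I treat separately according to whether $(a,b)_y\in\Xi_{<\infty}$ or $(a,b)_y\in\Xi_\infty$.

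\vspace{0.5\baselineskip}
\noindent\textbf{Case $(a,b)_y\in\Xi_{<\infty}$.} By Lemma \ref{Rationality_of_g_xi_<_infty}, the restricted Green's function $z\mapsto G_z(a,b;\mathcal{B}(y)^\complement)$ is rational, with radius of convergence $R(a,b;\mathcal{B}(y)^\complement)>R$; set $R':=R(a,b;\mathcal{B}(y)^\complement)$. The equivariance identity $G_{\zeta_d z}(a,b;\mathcal{B}(y)^\complement)=\zeta_d^{r(a,b)}G_z(a,b;\mathcal{B}(y)^\complement)$ (a straightforward adaptation of Corollary \ref{Cor_operateur_diagonal} to restricted Green's functions, using that every term in the path-weight sum (\ref{SumRestrictedWeight}) is multiplied by $\zeta_d^{r(a,b)}$) forces the singular set on $\partial\mathbb{D}(0,R')$ to be a union of $d$-th rotations of a single pole. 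A partial fraction decomposition of $G_z(a,b;\mathcal{B}(y)^\complement)$ on the annulus of outer radius slightly larger than $R'$, combined with the classical coefficient extraction $[z^n]\frac{1}{(1-z/\rho)^M}=\binom{n+M-1}{M-1}\rho^{-n}$, yields the asymptotic stated, the highest pole order $M$ determining the leading term. The ``$O((R'+\varepsilon)^{-n})$'' error comes from the next ring of singularities.

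\vspace{0.5\baselineskip}
\noindent\textbf{Case $(a,b)_y\in\Xi_\infty$.} I apply the Tauberian Theorem \ref{Thm_Cor_Black_box_general} with $\mathcal{C}$ the Lalley curve, $u(z)=(z,v_z)$ as in (\ref{Def_u}), $\lambda(z,J)=z$ as in (\ref{Def_lambda}), the automorphism $A$ from subsection \ref{Subsection_ZdZ_action}, and the evaluation function
\[
g:(z,J)\longmapsto J((a,b)_y).
\]
Then $g\circ u(z)=v_z((a,b)_y)=G_z(a,b;\mathcal{B}(y)^\complement)=\sum_n p^{(n)}(a,b;\mathcal{B}(y)^\complement)z^n$, so the theorem's sequence $(a_n)_n$ is precisely our sequence of probabilities. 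Hypotheses (i), (ii) and (vii) have been established in the paper (identity $\lambda\circ u=\mathrm{id}$, continuous extension of $u$ to $\overline{\mathbb{D}(0,R)}$ via Corollary \ref{Continuity_extension_of_Greens_functions}, and the $\mathbb{Z}/d\mathbb{Z}$-equivariance (\ref{Formula_t1})); hypothesis (viii) is again Corollary \ref{Cor_operateur_diagonal}. Hypotheses (iii), (iv), (v) and the smoothness of $\mathcal{C}$ near $\overline{u(\mathbb{D}(0,R))}$ are imported from the companion paper \cite{LCurve} (Propositions \ref{LCurve-Regularity_of_C}, \ref{LCurve-Prop_Holomorphic_Extension_of_u}, \ref{LCurve-Prop_derivative_of_lambda} and \ref{LCurve-Prop_derive_seconde_de_lambda}).

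\vspace{0.5\baselineskip}
\noindent\textbf{The main obstacle.} The only hypothesis that depends on the specific triple $(a,b)_y$ is (vi), namely $D_p g\neq 0$ at $p=u(R)$. For $g$ the evaluation map, $D_p g\neq 0$ is equivalent to saying that $\frac{d}{dz}v_z((a,b)_y)$ fails to remain bounded as $z\to R^-$, i.e.\ that the restricted Green's function $G_z(a,b;\mathcal{B}(y)^\complement)$ genuinely inherits the square-root singularity of the parametrization $u$ at $z=R$. This is precisely the dichotomy encoded in the definition of $\Xi_\infty$ versus $\Xi_{<\infty}$: for $(a,b)_y\in\Xi_{<\infty}$ admissible paths in $\mathcal{P}h(a,b;\mathcal{B}(y)^\complement)$ stay in a finite sub-tree, so $G_z(a,b;\mathcal{B}(y)^\complement)$ is rational with radius strictly greater than $R$ (the case already handled), whereas for $(a,b)_y\in\Xi_\infty$ the presence of arbitrarily long excursions forces the function to ``feel'' the common singular behaviour of the whole family $v_z$ at $z=R$. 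The verification of $D_p g\neq 0$ in this regime is the technical heart of the argument; it rests on a positivity/irreducibility analysis of the dependency digraph of $\psi$ and is carried out in \cite{LCurve} (the characterisation foreshadowed in the remark after (\ref{equality_radius}), via Propositions \ref{LCurve-Corollary_PSeries_A} and \ref{LCurve-Charac_radius}). Once this is in hand, Theorem \ref{Thm_Cor_Black_box_general} delivers the asymptotic expansion with exponent $3/2$ and the stated periodicity, completing the proof.
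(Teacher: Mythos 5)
Your proposal follows essentially the same route as the paper: for $(a,b)_y\in\Xi_\infty$ you apply the Tauberian Theorem~\ref{Thm_Cor_Black_box_general} to the Lalley curve with $u$, $\lambda$ as in (\ref{Def_u})--(\ref{Def_lambda}) and $g$ the evaluation map (\ref{evaluation_map_def}), the only triple-dependent hypothesis being $D_{u(R)}g\neq 0$, which is imported from \cite{LCurve}; for $(a,b)_y\in\Xi_{<\infty}$ you invoke Lemma~\ref{Rationality_of_g_xi_<_infty}, the $\zeta_d$-equivariance, and then extract coefficients from a rational function. That is precisely the paper's argument.

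The one place where you diverge slightly is the $\Xi_{<\infty}$ case: the paper cites the companion Tauberian result \ref{RTaub-Asymptotic_Expansion_for_merop_func} for meromorphic functions, whereas you rederive it by hand via partial fractions and the binomial expansion $[z^n](1-z/\rho)^{-M}=\binom{n+M-1}{M-1}\rho^{-n}$, using the $\zeta_d$-equivariance to collect the poles on $|z|=R'$ into rotation orbits; this is exactly the content of the cited result, so the two amount to the same thing, your version being more self-contained and the paper's more compact. A minor imprecision: for the nonvanishing of $D_{u(R)}g$ you point to Propositions \ref{LCurve-Corollary_PSeries_A} and \ref{LCurve-Charac_radius} from \cite{LCurve} (which characterise the radii of convergence), whereas the paper's proof cites Proposition \ref{LCurve-Prop_Spectrum_belonging_C}, which is the statement directly asserting $D_{u(R)}g\neq 0$ for $(a,b)_y\in\Xi_\infty$; the underlying positivity/dependency-digraph analysis is the same, but the precise reference is off. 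Neither of these affects correctness.
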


\begin{proof}
To prove the above theorem, consider $(a,b)_y\in\Xi$ and the evaluation map $g:\mathcal{C}\to\mathbb{C}$ defined earlier (\ref{evaluation_map_def}). We prove in \cite{LCurve} Proposition \ref{LCurve-Prop_Spectrum_belonging_C}, that the derivative $D_{u(R)}g$ of $g$ at $u(R)$ (which is a projection on the $(a,b)_y$ coordinate) is non-zero over the tangent space $T_{u(R)}\mathcal{C}$ when $(a,b)_y$ is in $\Xi_{\infty}$. 

In the first case, when $(a,b)_y$ is in $\Xi_\infty$, applying Theorem \ref{Thm_Cor_Black_box_general} with $\mathcal{C}$ being the Lalley's curve, $u$ and $\lambda$ being the function defined respectively in (\ref{Def_u}) and (\ref{Def_lambda}) and $g:\mathcal{C}\to\mathbb{C}$ the evaluation map (\ref{evaluation_map_def}), we get the theorem.

In the second case, when $(a,b)_y$ is in $\Xi_{<\infty}$, from previous Lemma \ref{Rationality_of_g_xi_<_infty} the
restricted Green's function $z\mapsto G_z(a,b;\mathcal{B}(y)^\complement)$ is a rational function that admits a pole at its radius of convergence $R(a,b;\mathcal{B}(y)^\complement)>R>0$. Besides from Corollary \ref{Cor_Admissible_Path_Lenght_without_admissible_paths} $G_{(\zeta_d z)}(a,b;\mathcal{B}(y)^\complement)=\zeta^{r(a,b)}G_z(a,b;\mathcal{B}(y)^\complement)$. Then applying theorem \ref{RTaub-Asymptotic_Expansion_for_merop_func} we obtain the desired asymptotic.
\end{proof}

	\subsection{Asymptotic for random walks}
	We continue with the sequence of first-passage probabilities $(p^{(n)}(x,y;\{y\}^\complement))_n$, for $x,y$ two vertices of the tree. This time the function $g$ considered is the function $f_{x,y}$ from Proposition \ref{algebraicity_of_F} whose first derivative at $u(R)$ is zero, or non-zero depending on the pair $(x,y)$ of vertices of $X$. Once again we separate pairs $(x,y)\in X_0\times X_0$ into two categories: on the one hand, the pairs $(x,y)$ such that any $p$-admissible path of $\mathcal{P}h(x,y;\{y\}^\complement)$ has its vertices at a uniformly bounded distance from $y$; on the other hand, those that do not have such uniform bound. Depending on the category to which the pair $(x,y)$ belongs, we will observe different behaviours, which are summarized in the theorem \ref{Asymptotic_for_first_passage_probabilities} below.

	\begin{Lem}\label{Lem:five_to_three}
	Let $X=(X_0,X_1)$ be an infinite tree of bounded valence, $\Gamma<\operatorname{Aut}(X)$ be a subgroup of automorphisms of the tree such that $\Gamma\backslash X_0$ is finite, $p:X_0\times X_0\to [0,1]$ be a finite range $\Gamma$-invariant transition kernel, making $(X_0,p)$ an irreducible Markov chain and let $k\in\mathbb{N}$ be an integer such that 
		$$\forall x,y\in X_0,\, d(x,y)>k\Rightarrow p(x,y)=0.$$
		
		Let $x,y$ be two vertices of $X$. Suppose that there exists an integer $M$ such that for any $p$-admissible path $\gamma=(\omega_0,...,\omega_n)\in\mathcal{P}h(x,y;\{y\}^\complement)$, $\max_i(d(\omega_i,y))\leq M$.
		
		Then the first-passage generating function $F_z(x,y)$ is a rational function. In particular it admits a pole at its radius of convergence $z=R_F(x,y)>R$
	\end{Lem}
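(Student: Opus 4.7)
The plan is to mimic the proof of Lemma \ref{Rationality_of_g_xi_<_infty} essentially verbatim, replacing the restricted Green's function $G_z(a,b;\mathcal{B}(y)^\complement)$ by the first-passage generating function $F_z(x,y) = G_z(x,y;\{y\}^\complement)$ and the ball $\mathcal{B}(y)$ by the singleton $\{y\}$. The hypothesis that all admissible paths in $\mathcal{P}h(x,y;\{y\}^\complement)$ stay within the ball of radius $M$ around $y$, combined with the bounded valence assumption on $X$, ensures that the set
\[
\mathcal{E}_{x,y} := \{w \in X_0 : w \in \gamma \text{ for some } \gamma \in \mathcal{P}h(x,y;\{y\}^\complement)\}
\]
is finite, which is the key input that makes the rationality argument go through.

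Concretely, I would first introduce the sub-stochastic kernel $q:\mathcal{E}_{x,y} \times \mathcal{E}_{x,y} \to [0,1]$ defined by $q(c,c') = p(c,c')$ if $c \neq y$, and $q(c,c') = 0$ if $c = y$. This is exactly the transition kernel of the Markov chain on the finite state space $\mathcal{E}_{x,y}$ that is killed upon reaching $y$. Let $Q$ denote the associated linear operator on $\mathcal{F}(\mathcal{E}_{x,y},\mathbb{C})$. An immediate induction on $n$, mirroring the one written out at the end of the proof of Lemma \ref{Rationality_of_g_xi_<_infty}, yields
\[
\forall c,c' \in \mathcal{E}_{x,y},\ c \neq y,\qquad q^{(n)}(c,c') = p^{(n)}(c,c';\{y\}^\complement).
\]
Summing the identity $F_z(x,y) = \sum_{n \geq 0} p^{(n)}(x,y;\{y\}^\complement) z^n$ and repackaging via the resolvent, one obtains
\[
F_z(x,y) = \Bigl(\sum_{n \geq 0}(zQ)^n \delta_y\Bigr)(x) = \bigl((I - zQ)^{-1}\delta_y\bigr)(x),
\]
valid for $|z|$ smaller than $\rho(Q)^{-1}$, where $\rho(Q)$ is the spectral radius of $Q$ on the finite-dimensional space $\mathcal{F}(\mathcal{E}_{x,y},\mathbb{C})$. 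Since $(I - zQ)^{-1}$ is a rational $\operatorname{End}(\mathcal{F}(\mathcal{E}_{x,y},\mathbb{C}))$-valued function of $z$ (its entries are ratios of polynomials in $z$ with denominator $\det(I - zQ)$), $F_z(x,y)$ is a rational function of $z$. Its only possible singularities are the reciprocals of the eigenvalues of $Q$, and by standard Perron-type arguments the smallest-modulus one is $\rho(Q)^{-1}$, which is necessarily a pole of $z \mapsto F_z(x,y)$.

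Finally, to conclude $R_F(x,y) > R$, I invoke Theorem \ref{Finitude_of_Green_functions}, which asserts that $F_R(x,y) < \infty$. Since $F_z(x,y)$ is rational and its radius of convergence $R_F(x,y)$ coincides with the modulus of its smallest-modulus pole, the finiteness $F_R(x,y) < \infty$ rules out $R_F(x,y) = R$; so $R_F(x,y) > R$ as claimed. There is no real obstacle here: the proof is entirely parallel to Lemma \ref{Rationality_of_g_xi_<_infty}, the only subtlety being to verify carefully that the finiteness of $\mathcal{E}_{x,y}$ follows from the uniform distance bound $M$ together with the bounded valence of $X$ (the ball of radius $M$ around $y$ has at most $1 + \sum_{i=1}^{M}V(V-1)^{i-1}$ vertices, where $V$ is the maximal valence).
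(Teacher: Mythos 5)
Your proposal is correct and follows the same route as the paper: the paper's proof of this lemma also explicitly reduces to the method of Lemma \ref{Rationality_of_g_xi_<_infty}, using the same finite set $\mathcal{E}_{x,y}$ and the same sub-stochastic kernel $q(a,b)=p(a,b)$ for $a\neq y$ and $q(a,b)=0$ for $a=y$, then concludes $R_F(x,y)>R$ from the finiteness of $F_R(x,y)$ (Theorem \ref{Finitude_of_Green_functions}). The only cosmetic difference is that you spell out the resolvent identity and the induction that the paper leaves implicit; also note that, as in Lemma \ref{Rationality_of_g_xi_<_infty}, the pole $\rho(Q)^{-1}$ should be allowed to lie in $]0,\infty]$ since $F_z(x,y)$ may be a polynomial, but this does not affect the conclusion because the strict inequality $R_F(x,y)>R$ really comes from $F_R(x,y)<\infty$ together with rationality, not from identifying the first pole with $\rho(Q)^{-1}$.
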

\begin{proof}
The proof follows the same method as the one used in Lemma \ref{Rationality_of_g_xi_<_infty}, with the finite set $\mathcal{E}_{x,y}$ of all vertices of $p$-admissible paths in $\mathcal{P}h(x,y;\{y\}^\complement)$: 
$$\mathcal{E}_{x,y}:=\{ w: w\in\gamma,\,\gamma\in\mathcal{P}h(x,y;\{y\}^\complement \}.$$
And with the kernel operator $Q$ over the vector space of complex-valued functions $\mathcal{F}(\mathcal{E}_{x,y},\mathbb{C})$  whose kernel is given by $q:\mathcal{E}_{x,y}\times \mathcal{E}_{x,y}\to [0,1]$ defined for any pair $(a,b)$ in $\mathcal{E}_{x,y}\times \mathcal{E}_{x,y}$ by
\begin{equation}
q(a,b)=\left\lbrace
\begin{matrix}
p(a,b), \text{ if } a\neq y;\\
0, \text{ else}.
\end{matrix}\right.
\end{equation}
%More precisely $Q$ is the operator that sends any $f$ in $\mathcal{F}(\mathcal{E}_{x,y},\mathbb{C})$ to the function $Qf:a\mapsto \sum_{b\in\mathcal{E}_{x,y}}q(a,b)f(b)$. It is a Perron (linear) operator on the vector space $\mathcal{F}(\mathcal{E}_{x,y},\mathbb{C})$.
%Also consider the restriction of the Markov operator $P$, defined in \ref{Def_Markov_Operator}, restricted to $\mathcal{F}(\mathcal{E}_{x,y},\mathbb{C})\subset \mathcal{F}(X_0,\mathbb{C})$ whose kernel is thus given by the restriction of the transition kernel $p$ to $\mathcal{E}_{x,y}\times \mathcal{E}_{x,y}\subset X_0\times X_0$.

%By convention let $q^{(0)}(a,b):=\delta_a(b)$. Let $n\geq 1$ $q^{(n)}:\mathcal{E}_{x,y}\times \mathcal{E}_{x,y}\to [0,1]$ be the kernel of $Q^{\circ n}$, then $q^{(n)}$ is:
%$$q^{(n)}:\sum_{\substack{x_0,...,x_n\in\mathcal{E}_{x,y}\\ x_0=a, \, x_n=b }}q(x_0,x_1)\cdots q(x_{n-1},x_n),$$

%An immediate induction on $n\geq 0$ shows that for any non-negative integer $n\geq 0$, and any $a,b\in\mathcal{E}_{x,y}$ we have:
%	$$q^{(n)}(a,b)=p^{(n)}(a,b;\{y\}^\complement).$$
%In particular we get that 
%$$F_z(x,y)=\sum_n (zQ)^n\mathds{1_y}(x)=(I-zQ)^{-1}\mathds{1_y}(x).$$
%And so $z\mapsto F_z(x,y)$ is a rational functions, that admits a pole at $\rho(Q)^{-1}\in ]0,\infty]$, and does not possesses any pole in $\mathbb{D}(0,\rho(Q)^{-1})$.
Then by finiteness of $F_R(x,y)$ (Theorem \ref{Finitude_of_Green_functions}), we necessarily have the inequality $R_F(x,y)>R$.
\end{proof}
\begin{Rem}
The pole $z=R_F(x,y)>R$ need not be simple and the first-passage generating function $z\mapsto F_z(x,y)$ may be a polynomial function. See Remark \ref{Rem_Green_Poly}
\end{Rem}

\begin{Thm}\label{Asymptotic_for_first_passage_probabilities}
	Let $X=(X_0,X_1)$ be an infinite tree of bounded valence such that any vertex possesses at least three neighbours, $\Gamma<\operatorname{Aut}(X)$ be a subgroup of automorphisms of the tree such that $\Gamma\backslash X_0$ is finite, $p:X_0\times X_0\to [0,1]$ be a finite range $\Gamma$-invariant transition kernel, making $(X_0,p)$ an irreducible Markov chain and let $k\in\mathbb{N}$ be an integer such that 
		$$\forall x,y\in X_0,\, d(x,y)>k\Rightarrow p(x,y)=0.$$
		Denote by $d\in\mathbb{N}$ the period of $(X_0,p)$ and $r:X_0\times X_0\to \mathbb{Z}/d\mathbb{Z}$, the periodicity cocycle defined in Definition \ref{DEF:PERIODICITY_COCYCLE}.
		
		For all $x,y$, with $x\neq y $:

	-If $\forall M>0,\exists\gamma\in\mathcal{P}h(x,y;\{y\}^\complement):\max_{\omega\in\gamma}( d(\omega,y)) >M$ then there exist constants $C>0,\, (c_l)_{l\geq 1}$ such that we have the asymptotic expansion:
	$$p^{(dn+r(x,y))}(x,y;\{y\}^\complement) \sim_\infty C R^{-dn}\frac{1}{n^{3/2}}\left( 1 +\sum_{l=1}^\infty \frac{c_l}{n^l}\right).$$
	and $p^{(dn+t)}(x,y;\{y\}^\complement)$, if $t\neq r(x,y)\,[d]$.

	-Otherwise, there exists a real number $R'>R$, a positive integer $l\geq 1$, a non-zero real constant $D_{-l}$ and real $D_{-l},...,D_{-1}$ such that, we have as $n$ goes to $\infty$,
		$$p^{(dn+r(x,y))}(x,y;\{y\}^\complement)=\frac{D_{-l}}{n^{1-l} (R')^{dn}}+\frac{D_{-{l+1}}}{n^{2-l} (R')^{dn}}+...+\frac{D_{-1}}{( R')^{dn}} +O\left(\frac{1}{(R'+\varepsilon)^{n}}\right),$$
		where $\varepsilon >0$ is small enough,	and $p^{(dn+t)}(x,y;\{y\}^\complement)=0$, if $t\neq r(x,y)\,[d]$.
	\end{Thm}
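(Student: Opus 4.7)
The plan is to mirror the two-case structure used for Theorem \ref{Asymptotics_of_probability_Green_restricted_irreducible}, splitting the statement according to whether admissible excursions in $\mathcal{P}h(x,y;\{y\}^\complement)$ reach arbitrarily far from $y$ or stay in a uniform ball, and applying in each case a Tauberian theorem of \cite{RTaub}.

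In the bounded case, Lemma \ref{Lem:five_to_three} already gives that $z\mapsto F_z(x,y)$ is a rational function whose smallest pole $R_F(x,y)$ is strictly greater than $R$. Combining formula (\ref{SumRestrictedWeight}) applied with $\Omega=\{y\}^\complement$ with Corollary \ref{Cor_Admissible_Path_Lenght} on the common length modulo $d$ of all paths in $\mathcal{P}h(x,y;\{y\}^\complement)$ gives the equivariance $F_{\zeta_d z}(x,y)=\zeta_d^{r(x,y)} F_z(x,y)$, so the full pole set is stable under multiplication by $\zeta_d$ and $p^{(n)}(x,y;\{y\}^\complement)=0$ whenever $n\not\equiv r(x,y)\,[d]$. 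The meromorphic Tauberian statement (Theorem \ref{RTaub-Asymptotic_Expansion_for_merop_func} in \cite{RTaub}) then produces the expansion in negative powers of $n$ with geometric factor $(R')^{-dn}$ and $R'=R_F(x,y)$, exactly as in the $\Xi_{<\infty}$ part of Theorem \ref{Asymptotics_of_probability_Green_restricted_irreducible}.

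In the unbounded case, the plan is to invoke Theorem \ref{Thm_Cor_Black_box_general} with $\mathcal{C}$ the Lalley curve, $u(z)=(z,v_z)$ as in (\ref{Def_u}), $\lambda$ the projection (\ref{Def_lambda}), the order-$d$ automorphism $A$ of Subsection \ref{Subsection_ZdZ_action}, and the test function $g=f_{x,y}|_{\mathcal{C}}$ supplied by Proposition \ref{algebraicity_of_F}, so that the coefficients of $g\circ u$ are exactly $p^{(n)}(x,y;\{y\}^\complement)$. Hypotheses (i), (ii), (iii), (iv), (v) and (vii) are already at hand: (i) is tautological, (ii) comes from Corollary \ref{Continuity_extension_of_Greens_functions}, (iii)--(v) are the propositions of \cite{LCurve} invoked immediately after the statement of Theorem \ref{Thm_Cor_Black_box_general}, and (vii) is formula (\ref{Formula_t1}) together with Proposition \ref{Prop_Computation_for_the_Zd_action}. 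The equivariance (viii), namely $g\circ A=\zeta_d^{r(x,y)}g$ on the range of $u$, follows from the same identity $F_{\zeta_d z}(x,y)=\zeta_d^{r(x,y)}F_z(x,y)$ used above, extended by analytic continuation using Proposition \ref{Extension_of_algebraicity_say}.

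The hard part is hypothesis (vi): proving that $D_{u(R)} f_{x,y}$ is non-zero on the tangent line $T_{u(R)}\mathcal{C}$ precisely under the unboundedness assumption. Following the pattern used for the evaluation map in Theorem \ref{Asymptotics_of_probability_Green_restricted_irreducible}, I would exploit the explicit decomposition obtained in the proof of Proposition \ref{algebraicity_of_F},
$$F_z(x,y)=\sum_{c\in\mathcal{B}(y)\setminus\{y\}} (v_z)_{[x,y]}(x,c)\, F_z(c,y),$$
together with the matrix identity (\ref{Egalite_Green_premier_passage}) writing each $F_z(c,y)$ as a rational function of the coordinates of $v_z$, to reduce the non-vanishing of $D_{u(R)} f_{x,y}$ to the non-vanishing of an evaluation coordinate on some triplet $(a,b)_y\in\Xi_\infty$, which is exactly the content of Proposition \ref{LCurve-Prop_Spectrum_belonging_C} in \cite{LCurve}. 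The existence of paths with $\max_\omega d(\omega,y)$ arbitrarily large produces, via the path decomposition along $[x,y]$ developed in Section \ref{Section_Decomposition_of_admissible_paths}, such an element of $\Xi_\infty$ contributing non-trivially to $f_{x,y}$; conversely, the bounded case forbids any such contribution, consistently with the rational behaviour already obtained from Lemma \ref{Lem:five_to_three}.
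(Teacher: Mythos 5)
Your proposal follows essentially the same route as the paper: the bounded case via Lemma \ref{Lem:five_to_three}, the $\zeta_d$-equivariance $F_{\zeta_d z}(x,y)=\zeta_d^{r(x,y)}F_z(x,y)$ and the meromorphic Tauberian theorem \ref{RTaub-Asymptotic_Expansion_for_merop_func} of \cite{RTaub}; the unbounded case via Theorem \ref{Thm_Cor_Black_box_general} applied to the Lalley curve with $u$, $\lambda$, $A$ and $g=f_{x,y}$. The only divergence is hypothesis (vi): the paper simply invokes Proposition \ref{LCurve-Corollary_PSeries_A} of \cite{LCurve} for the non-vanishing of $D_{u(R)}f_{x,y}$ in the unbounded case, whereas you sketch a reduction of that fact to the evaluation-coordinate statement (Proposition \ref{LCurve-Prop_Spectrum_belonging_C}) through the decomposition of $F_z(x,y)$ along $[x,y]$ --- a reasonable outline, but it is precisely the point the paper itself delegates to the companion article rather than proving here.
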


\begin{proof}
For pairs of vertices $(x,y)$ that admits a non-negative integer $M$ such that any $p$-admissible path $\gamma$ in $\mathcal{P}h(x,y;\{y\}^\complement)$ has all its vertices at distance at most $M$ from $y$ (i.e $\max_{\omega\in\gamma}( d(\omega,y) )\leq M$), we have proven in Lemma \ref{Lem:five_to_three} that the first-passage generating function $z\mapsto F_z(x,y)$ is a rational function that possesses a pole at its radius of convergence $R_F(x,y)>R$. From Corollary \ref{Cor_Admissible_Path_Lenght_without_admissible_paths} it satisfies for any complex number $z$ of modulus small enough $F_{(\zeta_d z)}(x,y)=\zeta_d^{r(x,y)}F_z(x,y)$.  Lastly applying Theorem \ref{RTaub-Asymptotic_Expansion_for_merop_func} to the first-passage generating function we obtain the desired asymptotic.
 
For pairs $(x,y)$ that do not admit such uniform bound, we use Theorem \ref{Thm_Cor_Black_box_general} with $\mathcal{C}$ the Lalley's curve, $u$ and $\lambda$ being the function defined in (\ref{Def_u}) and (\ref{Def_lambda}), and with $g$ the functions $f_{x,y}$ from Proposition \ref{algebraicity_of_F}. We prove in Proposition \ref{LCurve-Corollary_PSeries_A} in \cite{LCurve} that the derivative of $f_{x,y}$ at $u(R)$ is non-zero, in this case. We thus get the theorem.
\end{proof}

Finally we get to study sequence of probabilities $(p^{(n)}(x,y))_n$. This time the behaviour of the sequence is the same for any pair $(x,y)$ of vertices of $X$.

	\begin{Thm}\label{Asymptotics_of_probability_Green_General}
		~
		Let $X=(X_0,X_1)$ be an infinite tree of bounded valence such that any vertex possesses at least three neighbours, $\Gamma<\operatorname{Aut}(X)$ be a subgroup of automorphisms of the tree such that $\Gamma\backslash X_0$ is finite, $p:X_0\times X_0\to [0,1]$ be a finite range $\Gamma$-invariant transition kernel, making $(X_0,p)$ an irreducible Markov chain and let $k\in\mathbb{N}$ be an integer such that 
		$$\forall x,y\in X_0,\, d(x,y)>k\Rightarrow p(x,y)=0.$$
		Denote by $d$ the period of $(X_0,p)$ and $r:X_0\times X_0\to \mathbb{Z}/d\mathbb{Z}$, the periodicity cocycle (Definition \ref{DEF:PERIODICITY_COCYCLE}).
		
		For all $x,y\in X_0$, there exist constants $C>0,\, (c_l)_{l\geq 1}$ such that we have the asymptotic expansion as $n$ goes to $\infty$:
		$$ p^{(dn+r(x,y))}(x,y) \sim_\infty C R^{-dn}\frac{1}{n^{3/2}}\left( 1 + \sum_{l=1}^\infty\frac{c_l}{n^{l}}\right),$$
		and $p^{(dn+t)}(x,y;\{y\}^\complement)=0$, if $t\neq r(x,y)\,[d]$.
	\end{Thm}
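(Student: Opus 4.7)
The plan is to apply the Tauberian-type Theorem \ref{Thm_Cor_Black_box_general} to the factorization $G_z(x,y) = g_{x,y}(z,v_z) = g_{x,y}\circ u(z)$ provided by Corollary \ref{Cor_algbraicity_of_Greens_function} and extended in Proposition \ref{Extension_of_algebraicity_say}, taking $\mathcal{C}$ to be Lalley's curve, $u$ and $\lambda$ the maps defined in \eqref{Def_u} and \eqref{Def_lambda}, $A$ the automorphism introduced in subsection \ref{Subsection_ZdZ_action}, and $g := g_{x,y}$. Since the Taylor coefficients of $g_{x,y}\circ u$ at the origin are precisely the probabilities $p^{(n)}(x,y)$, the announced expansion together with the vanishing for $t \not\equiv r(x,y)\,[d]$ would follow at once from the eight hypotheses of that theorem.

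The first task is to check each of these hypotheses. Hypotheses (i) and (vii) are immediate from the definitions of $u$, $\lambda$ and $A$; (ii) is given by the continuous extension of $v$ to $\overline{\mathbb{D}(0,R)}$ (Corollary \ref{Continuity_extension_of_Greens_functions}); and (viii), the $\zeta_d^{r(x,y)}$-equivariance of $g_{x,y}$ along the $A$-orbit of $\{u(z):z\in\mathbb{D}(0,R)\}$, follows from the identity $G_{\zeta_d z}(x,y)=\zeta_d^{r(x,y)}G_z(x,y)$ of Corollary \ref{Cor_operateur_diagonal} combined with $u(\zeta_d z)=Au(z)$ and the factorization $G_z(x,y)=g_{x,y}\circ u(z)$, extended by continuity from $\mathbb{D}(0,R)$ to its closure. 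Hypotheses (iii), (iv), (v), together with the smoothness of $\mathcal{C}$ near $\overline{\{u(z):z\in\mathbb{D}(0,R)\}}$ required to make sense of derivatives on $\mathcal{C}$, are all established in the companion paper \cite{LCurve} (Propositions \ref{LCurve-Regularity_of_C}, \ref{LCurve-Prop_derivative_of_lambda}, \ref{LCurve-Prop_derive_seconde_de_lambda}, and \ref{LCurve-Prop_Holomorphic_Extension_of_u}).

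The main obstacle is to verify hypothesis (vi): the non-vanishing of the derivative $D_{u(R)} g_{x,y}$ on the one-dimensional tangent space $T_{u(R)}\mathcal{C}$. The plan is to argue by contradiction using the equality $R=R_G$ established in \eqref{equality_radius}, which says that $R$ is the \emph{exact} radius of convergence of $z\mapsto G_z(x,y)=g_{x,y}\circ u(z)$. By (iii) and (iv), the projection $\lambda$ has a simple critical point at $u(R)$, so in a neighbourhood of that point, $u$ admits a Puiseux expansion of the form $u(z)=u(R)+w_1\sqrt{R-z}+w_2(R-z)+\cdots$ with $w_1 \in T_{u(R)}\mathcal{C}\setminus\{0\}$, the non-vanishing of $w_1$ being forced by $D^2_{u(R)}\lambda \neq 0$. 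If $D_{u(R)}g_{x,y}$ vanished, the $\sqrt{R-z}$ term would disappear in the Puiseux expansion of $g_{x,y}\circ u$, giving a holomorphic continuation to a neighbourhood of $z=R$; combined with the $A$-equivariance (viii) and the holomorphic extension of $u$ to the rest of $\partial\mathbb{D}(0,R)$ provided by (v), this would force $g_{x,y}\circ u$ to extend holomorphically across all of $\partial\mathbb{D}(0,R)$ and hence, by compactness, into a strictly larger disk, contradicting $R=R_G$.

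Once (vi) is established, Theorem \ref{Thm_Cor_Black_box_general} applies directly and delivers both the asymptotic expansion of $p^{(dn+r(x,y))}(x,y)$ and the vanishing $p^{(dn+t)}(x,y)=0$ when $t \not\equiv r(x,y)\,[d]$. A detail that should be verified along the way is that the rational function $g_{x,y}$, constructed in Proposition \ref{algebraicity_of_F} and Corollary \ref{Cor_algbraicity_of_Greens_function} through an inversion of $(I-zM_{v_z})$, has no pole on the $A$-orbit of $u(R)$; this is precisely what Proposition \ref{Extension_of_algebraicity_say} guarantees, so that the contradiction argument above is legitimate and the proof of Theorem \ref{Asymptotics_of_probability_Green_General} reduces to the verifications outsourced to \cite{LCurve}.
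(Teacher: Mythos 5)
Your overall framing matches the paper's proof: Theorem \ref{Asymptotics_of_probability_Green_General} is indeed obtained by applying Theorem \ref{Thm_Cor_Black_box_general} to $\mathcal{C}$, $u$, $\lambda$, $A$ and $g=g_{x,y}$, with hypotheses (i), (ii), (vii), (viii) checked exactly as you do, and (iii), (iv), (v) together with the smoothness of $\mathcal{C}$ outsourced to \cite{LCurve}. The divergence is in hypothesis (vi), and there your argument has a genuine gap. You claim that if $D_{u(R)}g_{x,y}$ vanished on $T_{u(R)}\mathcal{C}$, then ``the $\sqrt{R-z}$ term would disappear in the Puiseux expansion of $g_{x,y}\circ u$, giving a holomorphic continuation to a neighbourhood of $z=R$''. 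This is false: writing a local coordinate $t$ on $\mathcal{C}$ at $u(R)$ in which $\lambda = R - t^2$, the composition $g_{x,y}\circ u$ is a power series in $t=\sqrt{R-z}$, and the vanishing of $D_{u(R)}g_{x,y}$ only kills the coefficient of $t^1$. The coefficients of $t^3, t^5,\dots$ need not vanish, and a function of the form $\mathrm{analytic}+b_3(R-z)^{3/2}+\cdots$ is continuous (even $\mathcal{C}^1$) at $z=R$ but not holomorphic there, so its radius of convergence is still exactly $R$. Hence no contradiction with $R=R_G$ can be extracted: the hypothesis $D_{u(R)}g_{x,y}=0$ is perfectly compatible with everything you invoke (it would merely change the expected asymptotics from $n^{-3/2}$ to $n^{-5/2}$ or smaller), so your contradiction argument cannot establish (vi).

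The non-vanishing of $D_{u(R)}g_{x,y}$ is genuinely the hard point of the theorem and cannot be obtained by this kind of soft complex-analytic argument from the equality of radii; the paper cites Proposition \ref{LCurve-Prop_derivative_of_g} of \cite{LCurve}, where it is proved via the detailed structural study of the Lalley curve (positivity of the coefficients of $g_{x,y}$ and $\psi$, and the dependency digraph with its unique absorbing strong component). To repair your proof you should replace the contradiction step by an appeal to that proposition (or reproduce its positivity argument); the rest of your verification of the hypotheses of Theorem \ref{Thm_Cor_Black_box_general}, including the use of Proposition \ref{Extension_of_algebraicity_say} to guarantee regularity of $g_{x,y}$ on the $A$-orbit of $u(R)$, is in line with the paper.
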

	\begin{proof}
To prove this we apply Theorem \ref{Thm_Cor_Black_box_general} with $\mathcal{C}$ the Lalley's curve, $u$ and $\lambda$ being the function defined in (\ref{Def_u}) and (\ref{Def_lambda}), and with $g=g_{x,y}$ from Corollary \ref{Cor_algbraicity_of_Greens_function}. We prove in Proposition \ref{LCurve-Prop_derivative_of_g} from \cite{LCurve}, that the derivative of $g_{x,y}$ at $u(R)$ is non-zero.
\end{proof}

\textit{The study of the Lalley's curve $\mathcal{C}$ and of the functions \og $g$ \fg{} used in the last section is done in \cite{LCurve}. More precisely, we prove that the Lalley's curve is regular on a neighbourhood of $\{u(z):z\in\overline{\mathbb{D}(0,R)}\}$  and  we study the first derivatives at $u(R)$ of the different functions \og $g:\mathcal{C}\to\mathbb{C}$ \fg{} used in the previous section \ref{Section_Asymptotics}. To do so we introduce the \textit{Dependency Directed Graph} that encodes all the relations needed in our study.}

\appendix

\begin{center}
\huge{\textbf{Appendix}} 
\end{center}

	\section{General results on Green's Functions and Markov Chains}\label{Appendix:Greens_functions}
	We recall the definition of a Markov-Chain and a transition Kernel
	\begin{Def}\textit{Transition Kernel and Discrete Markov Chain on tree} (Recall from Definition \ref{DEF:TRANSITION_KERNEL_DSCT_MKV_CHN})

Let $X_0$ be a set.	A function $p:X_0\times X_0\to [0,1]$ is called \textit{transition kernel}\index{Transition kernel} over $X_0$ if it satisfies:
	\begin{equation}
		\forall x \in X_0,\,\sum_{y\in X} p(x,y)=1.
	\end{equation}
	
	Given a transition kernel $p$ over $X_0$, for any non-negative integer $n\geq 0$, we can define another transition kernel $p^{(n)}:X_0\times X_0\to[0,1]$, that associates to any pair of vertices $(x,y)\in X_0\times X_0$, the value:
	\begin{equation}\label{DEF:TRANS_KERNEL_n}
	    p^{(n)}(x,y):=\sum_{\substack{\omega_0,...,\omega_n\in X_0 \\ \omega_0=x, \, \omega_n=y }} p(\omega_0,\omega_1)\cdots p(\omega_{n-1},\omega_n).
	\end{equation}

The pair $(X_0,p)$ is called a \textit{discrete Markov chain}\index{Markov Chain (discrete)} with \textit{state space} $X_0$. It will be said to be \textit{irreducible}\index{Irreducible (Markov chain)} if for any pair $(x,y)\in X_0\times X_0$, there exists a non-negative integer $n$ such that $p^{(n)}(x,y)>0$.

	For an irreducible discrete Markov chain $(X_0,p)$ we define its \textit{period} $d$ as the greatest common divisor 
	\begin{equation}\label{DEF_of_the_period}\index{Period}
	d:=\gcd\left\lbrace n : p^{(n)}(x,x)>0 \right\rbrace.
	\end{equation}
	If $d=1$, we say that the Markov chain $(X_0,p)$ is \textit{aperiodic}. This definition does not depend on any choice of $x$ as proven in Proposition \ref{period_is_well_defined}.
\end{Def}
	
	From definition, an immediate property satisfied by the transition kernels $p^{(n)}$ is the following:

For any vertices $x,y\in X_0$ and any non-negative integers $n,m\in\mathbb{N}$, we have the formula
	\begin{equation}\label{Kolmo-Chap_eq}
	\index{Chapman-Kolmogorov equation}
		p^{(n+m)}(x,y)=\sum_{w\in X_0} p^{(n)}(x,w)p^{(m)}(w,y).
	\end{equation}
This equation is known as Chapman-Kolomogorov equation.

	\subsection{(Standard) Green's functions and Restricted Green's functions}\label{subsection_Rest_Green}
	
We recall the definition of Green's functions and restricted Green's functions, and give general results about Green's functions. 

\begin{Def}\textit{Green's functions} (Recall from Definition \ref{DEF:GREEN_FUNCTION})

	Let $(X_0,p)$ be a discrete Markov chain (See Definition \ref{DEF:TRANSITION_KERNEL_DSCT_MKV_CHN}).	For $(x,y)\in X_0 \times X_0$, the Green's function at $(x,y)$, associated with the Markov chain $(X_0,p)$ is the germ of holomorphic function in the neighbourhood of $z=0$ in $\mathbb{C}$, denoted by $z\mapsto G_z(x,y)$, given by the power series:
	\begin{equation}\label{Equation_Def_Green}
		G_z(x,y)=\sum_{n\in\mathbb{N}} p^{(n)}(x,y)z^n
	\end{equation}
	In other words, it is the generating function associated with the sequence $(p^{(n)}(x,y))_n$.
\end{Def}

\begin{Rem}
	Since $\sum_n p^{(n)}(x,y) z^n$ is a power series with non-negative real coefficients, for any non-negative real number $r$, the series $\sum_n p^{(n)}(x,y)r^n$ is an element of $[0,\infty]$. Actually, Green's functions are well-defined on any open disk $\mathbb{D}(0,r)$ in $\mathbb{C}$ of radius $r>0$ centred at $z=0$, such that $\sum_n p^{(n)}(x,y)r^n < \infty$.
\end{Rem}

Let us look at the radius of convergence of Green's functions. Consider a discrete Markov chain $(X_0,p)$. Let $R=R(x,y)$ be the radius of convergence of the power series expansion in the neighbourhood of $z=0$ of the  Green's function $z\mapsto G_z(x,y)$. According to the the Cauchy-Hadamard formula\footnote{See\cite{Queffelec_2017} formula (19) p.54.}, we have:
\begin{equation} \label{FormCauHad}
	\frac{1}{R}=\limsup\,(p^{(n)}(x,y))^\frac{1}{n}\leq 1.
\end{equation}
In particular, $R\geq 1$. If $(X_0,p)$ is irreducible, the value of $R$ does not depend on $(x,y)$ (see Proposition \ref{Prop_Markov_Ired_et_rayon_de_cv} below). Since we only consider irreducible Markov chains, we take the liberty of omitting the $(x,y)$ dependency of $R$.

\begin{Prop}\label{Prop_Markov_Ired_et_rayon_de_cv}
	Let $(X_0,p)$ be a discrete \underline{irreducible} Markov chain. For any $x,y$ in $X$, the series $\sum_{n\in\mathbb{N}}p^{(n)}(x,y)r^n$ are of same nature. I.e for any $r\geq 0$,
	$$\forall (x,y)\in X_0\times X_0,\,\sum_n p^{(n)}(x,y)r^n<\infty$$
	if, and only if, $$\exists (x,y)\in X_0\times X_0,\,\sum_n p^{(n)}(x,y)r^n <\infty$$
\end{Prop}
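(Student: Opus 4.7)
The plan is to compare Green's functions at two different pairs of vertices using Chapman-Kolmogorov (\ref{Kolmo-Chap_eq}) together with the irreducibility assumption, which guarantees the existence of positive-probability bridges between any two vertices.

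First, I would dispose of the trivial case $r=0$: for any $(x,y)$, $\sum_n p^{(n)}(x,y) r^n = p^{(0)}(x,y) \in \{0,1\}$, so the claim is immediate. Henceforth I assume $r > 0$.

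The main step is the following comparison: fix pairs $(x,y)$ and $(x_0,y_0)$ in $X_0\times X_0$. By irreducibility, there exist non-negative integers $k,l$ such that $p^{(k)}(x_0,x) > 0$ and $p^{(l)}(y,y_0) > 0$. Two applications of Chapman-Kolmogorov (\ref{Kolmo-Chap_eq}) give, for every $n \in \mathbb{N}$,
\begin{equation*}
    p^{(k+n+l)}(x_0,y_0) \;\geq\; p^{(k)}(x_0,x)\, p^{(n)}(x,y)\, p^{(l)}(y,y_0),
\end{equation*}
since the right-hand side is a single term of the non-negative sum defining $p^{(k+n+l)}(x_0,y_0)$. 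Setting $C := \bigl(p^{(k)}(x_0,x) \, p^{(l)}(y,y_0)\bigr)^{-1} r^{-(k+l)} > 0$, this rearranges to
\begin{equation*}
    p^{(n)}(x,y)\, r^n \;\leq\; C \cdot p^{(k+n+l)}(x_0,y_0)\, r^{k+n+l}.
\end{equation*}

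Summing over $n \geq 0$, the right-hand side is bounded by $C \sum_{m \geq 0} p^{(m)}(x_0,y_0)\, r^m$ (by a shift of summation index $m = k+n+l$ and non-negativity of the terms). Thus if the series $\sum_n p^{(n)}(x_0,y_0) r^n$ converges, so does $\sum_n p^{(n)}(x,y) r^n$. Swapping the roles of $(x,y)$ and $(x_0,y_0)$ in the above argument yields the reverse implication, establishing that the two series are of the same nature.

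There is no real obstacle here beyond bookkeeping: the proof is a routine exploitation of the semigroup identity (\ref{Kolmo-Chap_eq}) and of the definition of irreducibility. The only point requiring attention is to set up the Chapman-Kolmogorov inequality so that both the $k$ prefix and the $l$ suffix appear on the same side, and to verify that the constants do not depend on $n$.
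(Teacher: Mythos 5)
Your proof is correct and follows essentially the same strategy as the paper: fix the other pair, use irreducibility to produce bridges $p^{(k)}(x_0,x)>0$ and $p^{(l)}(y,y_0)>0$, apply Chapman--Kolmogorov to get the term-by-term inequality $p^{(k+n+l)}(x_0,y_0)\geq p^{(k)}(x_0,x)\,p^{(n)}(x,y)\,p^{(l)}(y,y_0)$, multiply by $r^n$, and sum. The paper writes the resulting bound in the form $\sum_{n\geq n_1+n_2} p^{(n)}(x,y)r^n \geq p^{(n_1)}(x,x')\, G_r(x',y')\, p^{(n_2)}(y',y)$ rather than isolating a constant $C$, and it likewise dismisses $r=0$ as trivial, so the two arguments are the same up to cosmetic rearrangement.
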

 \begin{proof}
	All we need to do is to show the reciprocal direction.

	Let $r\geq 0$, $(x,y)\in X_0\times X_0$ and $(x',y')\in X_0\times X_0$ be arbitrary. 
	By definition $G_r(x,y)=\sum_n p^{(n)}(x,y)r^n$, and by irreducibility there exist two non-negative integers $n_1,n_2\geq 0$ such that $p^{(n_1)}(x,x')>0$ and $p^{(n_2)}(y',y)>0$. From the Chapman-Kolmogorov equation (\ref{Kolmo-Chap_eq}), for any non-negative integer $n\geq 0$ we have:
	$$p^{(n_1+n+ n_2)}(x,y)\geq p^{(n_1)}(x,x')p^{(n)}(x',y')p^{(n_2)}(y',y).$$
Multiplying the above inequality by $r^n$, for $r\geq 0$ and summing it over $n\geq 0$ we get:
$$\sum_{n\geq n_1+n_2} p^{(n)}(x,y)r^n \geq  p^{(n_1)}(x,x') G_r(x',y') p^{(n_2)}(y',y).$$
	
	It follows by strict positivity of $ p^{(n_1)}(x,x')$ and $p^{(n_2)}(y',y)$ on $p$-admissible paths for $r>0$ (the case $r=0$ being trivial) that $G_r(x',y')$ is finite whenever $G_r(x,y)$ is.
\end{proof}

\begin{Def}\textit{Restricted transition kernel}

	Let $(X_0,p)$ be an irreducible discrete Markov chain and let $\Omega$ be a subset of $X_0$. For $(x,y)\in X_0\times X_0$ and $n\geq 1$ a non-negative integer, denote by $p^{(n)}(x,y;\Omega)\in [0,1]$ the value:
	$$ p^{(n)}(x,y;\Omega):=\sum_{\omega_0,...,\omega_n} p(\omega_0,\omega_1)\cdots p(\omega_{n-1},\omega_n),$$
	where the sum is taken over all $n$-tuple $(\omega_0,...,\omega_n)$ of elements in $X_0$ such that $\omega_0=x$, $\omega_n=y$ and for any $i\in\{1,...,n-1\}$, we have $\omega_i$ belongs to $\Omega$.
	By convention we set $p^{(0)}(x,y;\Omega)=1$ if $x=y$, and is zero otherwise.
\end{Def}

\begin{Def}\label{Def_Green_Restricted}\textit{Restricted Green's functions}

	Let $(X_0,p)$ be an irreducible discrete Markov chain and let $\Omega$ be a subset of $X_0$. For $(x,y)\in X_0\times X_0$, the \textit{restricted Green's function, to $\Omega$}, at $(x,y)$, associated with the Markov chain $(X,p)$ is the germ of holomorphic function in the neighbourhood of $z=0$ in $\mathbb{C}$, denoted $z\mapsto G_z(x,y;\Omega)$, given by the power series:
	\begin{equation}\label{Equation_Def_Green_restreintes}
		G_z(x,y;\Omega):=\sum_{n\in\mathbb{N}} p^{(n)}(x,y;\Omega)z^n
	\end{equation}
	
	In other words, it is the generating function associated with the sequence $(p^{(n)}(x,y;\Omega))_{n\in\mathbb{N}}$ of probabilities that a random walk based at $x$, is at $y$ after exactly $n\in\mathbb{N}$ steps with intermediate steps in $\Omega$.
\end{Def}

\begin{Rem}
	For an irreducible discrete Markov chain, we have seen that the radius of convergence $R$ of the power series' expansion of Green's function in the neighbourhood of $z=0$, does not depend on the pair $(x,y)\in  X_0\times X_0$. This is not the case for restricted Green's functions. For $(x,y)\in X_0\times X_0$, $\Omega\subset X_0$, if $R(x,y;\Omega)$ denotes the radius of convergence of the power series $\sum_n p^{(n)}(x,y;\Omega) z^n$ then we always have $R\leq R(x,y;\Omega)$, but the equality is not always satisfied (See example \ref{Example_Markov_Chain_1} below).
\end{Rem}
\begin{Ex}\label{Example_Markov_Chain_1}
	For the irreducible finite Markov chain $(X_0,p)$ with three state space $X_0:=\{x,y,w\}$, for which $p(y,y)=p(y,w)=p(w,y)=p(w,x)=1/2$ and $p(x,w)=1$ (in particular $p(x,y)=0$, see figure \ref{Fig_example_1}). If $\Omega$ is the singleton $\{y\}$ then in a neighbourhood of $z=0$, we have
	$$G_z(x,y;\Omega)=0 \text{ and } G_z(y,y;\Omega)=\frac{1}{1-z/2}.$$
	The respective radius of convergence of the above associated power series are $R(x,y;\Omega)=\infty$ and $R(y,y;\Omega)=2$. In particular, unlike Green's functions \og $G_r(x,y)$\fg{}, restricted Green's functions do not necessarily have the same radius of convergence. 
	\begin{figure}[ht]
	    \centering
		\includegraphics[scale=0.5]{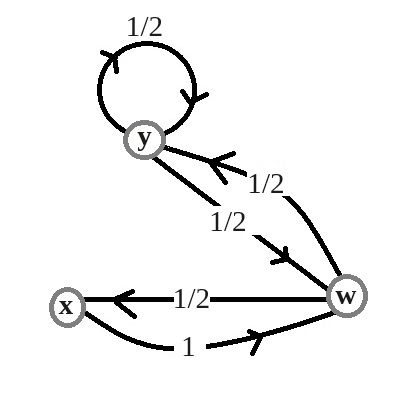}
		\caption{ Representation of the Markov chain $(X_0,p)$ from example \ref{Example_Markov_Chain_1}. }\label{Fig_example_1}
	\end{figure}
\end{Ex}
\begin{Def}:\textit{First-passage Generating function}\label{Def_Green_function_first_pass}

	Let $(X_0,p)$ be an irreducible discrete Markov chain. For $x,y\in X_0$. We call \textit{first-passage generating function} at $(x,y)$ the function
	$$z\mapsto F_z(x,y)$$
	which is the restricted Green's function $z\mapsto G_z(x,y;X_0\setminus\{y\})$, if $x\neq y$, and is constant equal to $1$ otherwise.
\end{Def}
	We separate the case $x=y$ from $x\neq y$ for we want to consider the first passage to $y$. In the case where $x=y$, then the first passage at the state $y$ is done at time $0$, thus the only path we want to consider is the path of length zero $(y)\in\mathcal{P}h(y,y)$. Hence the associated first-passage generating function must be constant equal to $1$.
\subsection{Properties of Green's functions}\label{subsection_properties_Green}
We state some classical properties verified by Green's functions that will be used in the sequel. Other similar results can be found in \cite{Woess_2000} chap.I section 1.
\begin{Prop}\label{SurMult} 
	Let $(X_0,p)$ be an irreducible Markov chain. Let $z$ be a complex number of modulus small enough or a non-negative real number:
	\begin{itemize}
		\item[i)] $\forall x,y\in X_0,\, G_z(x,y)=F_z(x,y)G_z(y,y)$
		\item[ii)] $\forall x\in X_0,\, G_z(x,y)=\delta_{x}(y)+ z \sum_{w\in X_0} p(x,w)G_z(w,y)$
	\end{itemize}
and in particular, if $| \sum_{w\in X_0} z p(x,w)F_z(w,x)|<1$ then
	\begin{itemize}
		\item[iii)] $\forall x\in X_0, G_z(x,x)=\left(1-\sum_{w\in X_0}z p(x,w)F_z(w,x) \right)^{-1}$.
	\end{itemize}
\end{Prop}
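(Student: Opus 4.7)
The plan is to derive all three identities via the path-weight formula from Proposition \ref{Formula_sum_weight}, reducing each claim to a combinatorial decomposition of admissible paths combined with the multiplicativity of $w_z$ under concatenation. For non-negative real $r$ everything is justified by Fubini--Tonelli on non-negative sums; for complex $z$ of small enough modulus, absolute convergence (coming from the same identities applied to $|z|$) transports the identities.

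For (i), the case $x=y$ is immediate from the convention $F_z(y,y)=1$. For $x\ne y$, every $\gamma=(\omega_0,\dots,\omega_n)\in\mathcal{P}h(x,y)$ has a well-defined first-hitting index $k_\gamma:=\min\{i\ge 1:\omega_i=y\}$, and the map $\gamma\mapsto\bigl((\omega_0,\dots,\omega_{k_\gamma}),(\omega_{k_\gamma},\dots,\omega_n)\bigr)$ is a bijection
$$\mathcal{P}h(x,y)\;\longleftrightarrow\;\mathcal{P}h(x,y;\{y\}^\complement)\times\mathcal{P}h(y,y).$$
Since $w_z(\gamma_1\ast\gamma_2)=w_z(\gamma_1)w_z(\gamma_2)$, summing and using Proposition \ref{Formula_sum_weight} gives $G_z(x,y)=F_z(x,y)G_z(y,y)$.

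For (ii), I would decompose any $\gamma\in\mathcal{P}h(x,y)$ according to its length: the unique path of length $0$ contributes $\delta_x(y)$, while any $\gamma$ of length $\ge 1$ is uniquely written as $(x,w)\ast\gamma'$ for some $w\in X_0$ with $p(x,w)>0$ and some $\gamma'\in\mathcal{P}h(w,y)$. Multiplicativity yields $w_z((x,w)\ast\gamma')=z\,p(x,w)\,w_z(\gamma')$, and summing produces
$$G_z(x,y)=\delta_x(y)+z\sum_{w\in X_0}p(x,w)\,G_z(w,y).$$

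Claim (iii) is then purely algebraic. Specialising (ii) at $y=x$ gives $G_z(x,x)=1+z\sum_{w}p(x,w)G_z(w,x)$; substituting $G_z(w,x)=F_z(w,x)G_z(x,x)$ from (i) yields
$$G_z(x,x)\Bigl(1-z\sum_{w\in X_0}p(x,w)F_z(w,x)\Bigr)=1,$$
and the hypothesis $\bigl|z\sum_w p(x,w)F_z(w,x)\bigr|<1$ makes the bracket non-zero, so that one can divide to obtain (iii). The only potential obstacle is justifying the exchange of summations (over $w$ and over paths) when $z$ is complex rather than real: this is handled by noting that for $|z|$ small enough the same manipulations applied to $r=|z|\ge 0$ produce a finite quantity, which bounds the double series and legitimises Fubini for the signed/complex version.
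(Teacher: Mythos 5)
Your proof is correct and follows essentially the same approach as the paper: a first-passage decomposition for (i), a first-step decomposition for (ii), and a purely algebraic combination of the two for (iii). The only cosmetic difference is that you phrase the decompositions as explicit bijections of admissible paths and invoke the path-weight formula of Proposition \ref{Formula_sum_weight}, whereas the paper performs the equivalent Cauchy-product computation directly on the sequences $\left(p^{(n)}(x,y)\right)_n$ before passing to generating functions; both yield the same identities.
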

\begin{proof}
For the first point suppose $x\neq y$, the case $x=y$ being immediate. By definition for any non-negative integer $n\geq 0$, we have 
$$p^{(n)}(x,y)=\sum_{\substack{\omega_0,...,\omega_n\\ \omega_0=x,\, \omega_n=y}} p(\omega_0,\omega_1)\cdots p(\omega_{n-1},\omega_n).$$

We partition the set of $(n+1)$-tuple $(\omega_0,...,\omega_n)$ such that $\omega_0=x$ and $\omega_n=y$, with respect to the pre-image of the function $\tau_{x,y}^{(n)}:(\omega_0,...,\omega_n)\mapsto \min\{k: \omega_k=y\}$ and decompose the above sum with respect to this partition. 
 
$$\{(\omega_0,...,\omega_n)\in (X_0)^{n+1} : \omega_0=x ,\, \omega_n=y \} = \bigsqcup_{k=0}^n (\tau_{x,y}^{(n)})^{-1}(\{k\}),$$
with, for any $k\in\{0,...,n\}$, 
$$(\tau_{x,y}^{(n)})^{-1}(\{k\})=\{(\omega_0,...,\omega_n): \omega_0= x , \, \omega_n=\omega_k=y \text{ and } \omega_0,...,\omega_{k-1}\neq y \}.$$

Thus
$$p^{(n)}(x,y)=\sum_{k=0}^n p^{(k)}(x,y;\{y\}^\complement) p^{(n-k)}(y,y).$$
Summing over all non-negative integers $n\geq 0$, we get the first point $i)$.

For $ii)$, using the Chapman-Kolmogorov equation (\ref{Kolmo-Chap_eq}) we obtain for any positive integer $n\geq 0$,
$$p^{(n+1)}(x,y)=\sum_{w\in X_0} p(x,w)\times \left(\sum_{\substack{\omega_2,...,\omega_n\\ \omega_{n+1}=y}}p(w,\omega_2)p(\omega_2,\omega_3)\cdots p(\omega_{n},\omega_{n+1})\right).$$
Multiplying the above equality by $r^{n+1}$ and summing over all non-negative integer, we get for any non-negative real number $r\geq 0$:
$$G_z(x,y)=p(x,y) + \sum_{w\in X_0} zp(x,w) G_z(w,y).$$

Lastly for $x=y$ in $ii)$, replacing $G_z(w,x)$ by formula $i)$, in the previous sum, we get $iii)$.
\end{proof}

\begin{Rem}
	Consequently, the Green's function $G_z(x,y)$ are algebraic functions of $z$ and $(F_z(a,b))_{a,b}$. 
\end{Rem}
\begin{Def}\textit{Markov Operator}

	Let $(X_0,p)$ be an irreducible Markov chain. The \textit{Markov operator} associated with the Markov chain $(X_0,p)$ is the kernel operator, with kernel the transition kernel $p$. That is the operator denoted $P$ that associates to any function $f:X_0\to \mathbb{C}$ the function $Pf:X_0\to\mathbb{C}$ defined by:
	$$Pf:x\mapsto \sum_{y\in X_0} p(x,y) f(y).$$
\end{Def}

\begin{Cor}\label{Why_the_name_Greens_function}

Let $(X_0,p)$ be an irreducible Markov Chain and let $P$ be its associated Markov operator.

For any $y\in X_0$ and any complex number $z\in\mathbb{D}(0,R)$, the function $G_z^y:x\mapsto G_z(x,y)$ satisfies,
$$(I-zP)G_z^y = \delta_y.$$
i.e The Green's function is the impulse response\footnote{Actually, the general definition of a Green's function is to be the impulse response of some linear operator, it just happens here that the generating function is such impulse response.} of the linear operator $(I-zP)$.
\end{Cor}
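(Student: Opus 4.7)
The plan is to recognize that this corollary is essentially a restatement of part $ii)$ of Proposition \ref{SurMult}, rewritten in operator-theoretic language. First, I would recall that by definition the Markov operator $P$ acts on a function $f:X_0\to\mathbb{C}$ via $(Pf)(x)=\sum_{w\in X_0}p(x,w)f(w)$, so that for the function $G_z^y:w\mapsto G_z(w,y)$ we have $(PG_z^y)(x)=\sum_{w\in X_0}p(x,w)G_z(w,y)$.

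Next, I would apply Proposition \ref{SurMult} $ii)$, which gives for every $x\in X_0$ and every $z$ in the disk of convergence,
\begin{equation*}
G_z(x,y)=\delta_x(y)+z\sum_{w\in X_0}p(x,w)G_z(w,y).
\end{equation*}
Rewriting the right-hand sum as $z(PG_z^y)(x)$ and using the symmetry $\delta_x(y)=\delta_y(x)$, this becomes
\begin{equation*}
G_z^y(x)-z(PG_z^y)(x)=\delta_y(x),
\end{equation*}
that is, $((I-zP)G_z^y)(x)=\delta_y(x)$ for every $x\in X_0$, which is the desired operator identity $(I-zP)G_z^y=\delta_y$.

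The only point that requires a word of justification is that Proposition \ref{SurMult} $ii)$ is stated \emph{for complex numbers $z$ of modulus small enough or non-negative reals}, whereas the corollary asserts the identity on the full disk $\mathbb{D}(0,R)$. For this I would invoke analytic continuation: both sides of the equation $G_z(x,y)-z(PG_z^y)(x)=\delta_y(x)$ are, for each fixed $x,y$, holomorphic functions of $z$ on $\mathbb{D}(0,R)$ (the sum $\sum_{w}p(x,w)G_z(w,y)$ is actually a finite sum when $p$ has finite range, and in general converges absolutely and locally uniformly on $\mathbb{D}(0,R)$ by non-negativity of the coefficients and Proposition \ref{Prop_Markov_Ired_et_rayon_de_cv}). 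Since the identity holds in a neighbourhood of $0$, it holds throughout $\mathbb{D}(0,R)$. There is no real obstacle here; the corollary is a direct consequence of the forward Chapman--Kolmogorov decomposition already encoded in Proposition \ref{SurMult} $ii)$.
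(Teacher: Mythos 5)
Your proof is correct and follows the same route as the paper, which simply declares the corollary an immediate consequence of point $ii)$ of Proposition \ref{SurMult}; your rewriting in terms of the Markov operator $P$ and the symmetry $\delta_x(y)=\delta_y(x)$ is exactly what ``immediate'' means here. The extra remark on extending from a neighbourhood of $0$ to all of $\mathbb{D}(0,R)$ by non-negativity of coefficients (or analytic continuation) is a welcome bit of care that the paper leaves implicit, but it does not change the argument.
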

\begin{proof}
	It is an immediate consequence of the point $ii)$ of the above Proposition \ref{SurMult}.
\end{proof}
	
\begin{Prop}\label{PROP_EQT}
Let $(X_0,p)$ be an irreducible Markov chain. Let $g:X_0\to X_0$ be a bijective map preserving the transition kernel $p:X_0\times X_0\to[0,1]$, i.e for any pair $(x,y)$ in $X_0\times X_0$, we have 
\begin{equation}\label{EQTT}
p(gx,gy)=p(x,y).
\end{equation}

Then for any non-negative integer $n\geq 0$, and any pair $(x,y)\in X_0\times X_0$, we have
\begin{equation}\label{EQTTT}
 p^{(n)}(gx,gy)=p^{(n)}(x,y).
\end{equation}

And in particular we have for any pair $(x,y)\in X_0\times X_0$, and any complex number $z$ in $\mathbb{D}(0,R)$, with $R$ the radius of convergence of the Green's function, 
\begin{equation}\label{EQTTTT}
G_z(gx,gy)=G_z(x,y).
\end{equation}
\end{Prop}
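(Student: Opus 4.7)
The plan is to prove the first equality~(\ref{EQTTT}) by a direct unpacking of the definition~(\ref{DEF:TRANS_KERNEL_n}) of $p^{(n)}$, combined with a change of variables induced by the bijection~$g$. The second equality~(\ref{EQTTTT}) will then be an immediate consequence obtained by summing the power series~(\ref{Equation_Def_Green}).

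First I would fix $(x,y)\in X_0\times X_0$ and a non-negative integer $n\geq 0$. The case $n=0$ is trivial since $p^{(0)}(gx,gy)=\delta_{gx}(gy)=\delta_x(y)=p^{(0)}(x,y)$ (using that $g$ is a bijection). For $n\geq 1$, I would start from
\begin{equation*}
p^{(n)}(gx,gy)=\sum_{\substack{\omega_0,\dots,\omega_n\in X_0\\ \omega_0=gx,\,\omega_n=gy}} p(\omega_0,\omega_1)\cdots p(\omega_{n-1},\omega_n),
\end{equation*}
and perform the change of indices $\omega_i=g\omega_i'$. Since $g:X_0\to X_0$ is a bijection, as $\omega_i$ ranges over $X_0$ so does $\omega_i'$, and the constraints $\omega_0=gx,\,\omega_n=gy$ become $\omega_0'=x,\,\omega_n'=y$. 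Using the $g$-invariance hypothesis~(\ref{EQTT}), we get $p(\omega_{i-1},\omega_i)=p(g\omega_{i-1}',g\omega_i')=p(\omega_{i-1}',\omega_i')$ for every $i\in\{1,\dots,n\}$. Substituting these equalities term by term into the sum yields exactly $p^{(n)}(x,y)$, establishing~(\ref{EQTTT}).

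For~(\ref{EQTTTT}), I would simply invoke the definition of the Green's function: for any $z\in\mathbb{D}(0,R)$,
\begin{equation*}
G_z(gx,gy)=\sum_{n\in\mathbb{N}} p^{(n)}(gx,gy)\,z^n=\sum_{n\in\mathbb{N}} p^{(n)}(x,y)\,z^n=G_z(x,y),
\end{equation*}
where the middle equality uses the coefficient-wise identity just established.

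There is no real obstacle here; the only mild care needed is checking that the re-indexation is well-posed (the bijectivity of $g$ is essential, otherwise the sum over all $n$-tuples $(\omega_0',\dots,\omega_n')$ would over- or under-count). This proposition is used implicitly throughout the paper to deduce the $\Gamma$-invariance of the family $(G_z(x,y))_{x,y}$ when each $g\in\Gamma$ acts as a $p$-preserving bijection, so it is worth stating explicitly even though the proof is entirely mechanical.
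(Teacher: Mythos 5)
Your proof is correct and follows essentially the same route as the paper: a re-indexation of the defining sum for $p^{(n)}(gx,gy)$ via the bijection $g$, application of the invariance hypothesis~(\ref{EQTT}) factor by factor, and then summation of the power series to get~(\ref{EQTTTT}). Nothing is missing; your explicit treatment of the $n=0$ case and the remark on why bijectivity is needed are harmless additions.
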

\begin{proof}
	Since $g$ is bijective, by definition of $p^{(n)}$ (Definition \ref{DEF:TRANS_KERNEL_n} for any non-negative integer $n\geq 0$, and any pair $(x,y)\in X_0\times X_0$, we have
	$$p^{(n)}(gx,gy)=\sum_{\substack{\omega_0,...,\omega_n\in g^{-1} X_0\\ \omega_0=x,\, \omega_n=y }}p(g\omega_0,g\omega_1)\cdots p(g\omega_{n-1},g\omega_n).$$
	Then using (\ref{EQTT}), we get (\ref{EQTTT}). 
	
	The formula (\ref{EQTTTT}) immediately follows.
\end{proof}
\subsection{Periodicity Cocycle}

We now define the periodicity cocycle associated with some irreducible Markov chain $(X_0,p)$.
	
	\begin{Prop}\label{period_is_well_defined}~
	Let $(X_0,p)$  be an irreducible discrete Markov chain. For any $x,y\in X$, $$\operatorname{gcd}\{ n : p^{(n)}(x,x)>0\}=\operatorname{gcd}\{ n : p^{(n)}(y,y)>0\}.$$
\end{Prop}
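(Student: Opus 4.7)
The plan is to prove the two gcd's divide one another. Set $d_x := \gcd\{n : p^{(n)}(x,x) > 0\}$ and $d_y := \gcd\{n : p^{(n)}(y,y) > 0\}$, and aim to show $d_x \mid d_y$; the reverse divisibility follows by symmetry, which yields $d_x = d_y$.

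The key ingredient is the Chapman--Kolmogorov equation (\ref{Kolmo-Chap_eq}) combined with irreducibility. First, I would use irreducibility of $(X_0, p)$ to pick non-negative integers $m, k$ such that $p^{(m)}(x, y) > 0$ and $p^{(k)}(y, x) > 0$. A single application of Chapman--Kolmogorov gives
$$p^{(m+k)}(x,x) \;\geq\; p^{(m)}(x,y)\, p^{(k)}(y,x) \;>\; 0,$$
so by definition $d_x \mid (m+k)$.

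Next, let $n$ be any non-negative integer with $p^{(n)}(y,y) > 0$. Applying Chapman--Kolmogorov twice gives
$$p^{(m+n+k)}(x,x) \;\geq\; p^{(m)}(x,y)\, p^{(n)}(y,y)\, p^{(k)}(y,x) \;>\; 0,$$
so $d_x \mid (m + n + k)$. Combining with $d_x \mid (m+k)$, we deduce $d_x \mid n$. Hence $d_x$ divides every element of $\{n : p^{(n)}(y,y) > 0\}$, and therefore $d_x \mid d_y$. Swapping the roles of $x$ and $y$ yields $d_y \mid d_x$, and the equality follows.

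There is no real obstacle here: the argument is entirely elementary, and the only substantive ingredient beyond the definition is the existence of the transit times $m$ and $k$ provided by irreducibility. The whole proof fits in a few lines and requires no machinery beyond (\ref{Kolmo-Chap_eq}).
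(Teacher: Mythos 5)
Your proof is correct. It shares the paper's basic ingredients (irreducibility providing transit times $m,k$ with $p^{(m)}(x,y)>0$ and $p^{(k)}(y,x)>0$, plus Chapman--Kolmogorov to glue paths), but the way you extract divisibility is genuinely more direct than the paper's. The paper first observes that the return set $\{n: p^{(n)}(y,y)>0\}$ is closed under addition, invokes the numerical-semigroup fact that such a set eventually contains \emph{all} sufficiently large multiples of its gcd $d_y$, and only then concludes $d_x \mid d_y$ by letting $d_x$ divide $n_1+n_2+d_y n'$ for all large $n'$ and taking differences. You bypass the additivity and ``eventually all multiples'' step entirely: the single observation that the round trip $m+k$ is itself a return time at $x$, combined with $d_x \mid (m+n+k)$, gives $d_x \mid n$ for \emph{every} individual return time $n$ at $y$, hence $d_x \mid d_y$ immediately, and symmetry finishes it. This is a cleaner argument and in fact yields the slightly stronger statement that the period at $x$ divides each return time at $y$, not merely their gcd; the paper's semigroup detour buys nothing extra for this particular proposition (it is the kind of fact one needs elsewhere, e.g.\ for aperiodicity arguments, but it is not needed here).
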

\begin{proof}
	Firstly note that for all $x\in X$, the set $\{ n : p^{(n)}(x,x)>0\}$ is non-trivial: Let $y$ be in $X_0$, $y\neq x$. By irreducibility of $(X_0,p)$, there exists $n_1,n_2$ two non-negative integers such that $p^{(n_1)}(x,y)>0$ and $p^{(n_2)}(y,x)>0$. Thus for any non-negative integer $n\geq 0$, using Kolmogorov-Chapman equation \ref{Kolmo-Chap_eq} we have that 
\begin{equation}\label{application_path_temp}
 p^{(n)}(y,y)>0 \Rightarrow p^{(n_1+n+n_2)}(x,x)>0 .
 \end{equation}
 
	Next, note that the set $\{ n: p^{(n)}(y,y)>0\}$ is additive. Indeed let $n,n'$ be integers such that $p^{(n)}(y,y)>0$ and $p^{(n')}(y,y)>0$, then $p^{(n+n')}(y,y)\geq p^{(n)}(y,y) p^{(n')}(y,y)>0$.

	Therefore if $d_y$ denotes the greatest common divisor of the set $\{ n : p^{(n)}(y,y)>0\}$, then there exists a non-negative integer $n_y\in\mathbb{N}$ such that for all $n'\geq n_y$,
	\begin{equation}\label{appartenance_temp}
		d_y n'\in\{ n : p^{(n)}(y,y)>0\}.
	\end{equation}

	Finally, let $x,y$ be arbitrary two vertices in $X_0$, and as before, let $n_1,n_2$ be two non-negative integers such that $p^{(n_1)}(x,y)>0$ and $p^{(n_2)}(y,x)>0$. We write $d_x:=\operatorname{gcd}\{ n : p^{(n)}(x,x)>0\}$ and $ d_y:=\operatorname{gcd}\{ n : p^{(n)}(y,y)>0\}$. By symmetry of roles, to obtain the proposition, we only need to show that $$d_x\leq d_y.$$
	Now, via (\ref{application_path_temp}), for any non-negative integer $n$ such that $p^{(n)}(y,y)>0$, we have $p^{(n_1+n+n_2)}(x,x)>0$. 
	By (\ref{appartenance_temp}), it follows that for all $n'\geq n_y$, $$n_1+n_2+d_y n'\in\{ n : p^{(n)}(x,x)>0\}\subset d_x\mathbb{N}.$$
	We deduce that $d_x$ divides $d_y$, and therefore $d_x\leq d_y$. This is what we wanted.
\end{proof}		

\begin{Prop}\label{Existence_of_a_period_cocycle}
	Let $(X_0,p)$ be an irreducible Markov chain and let $d$ be its period. There exists a map $r_0:X_0\to \mathbb{Z}/d\mathbb{Z}$, such that for any $x,y$ in $X_0$ 
	\begin{equation}\label{eq-t20}
		p(x,y)>0 \implies r_0(y)=r_0(x)+1.
	\end{equation}
\end{Prop}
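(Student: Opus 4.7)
The plan is to construct $r_0$ by fixing a basepoint $x_0 \in X_0$ and defining $r_0(x)$ to be the length, modulo $d$, of any admissible transition sequence from $x_0$ to $x$. That is, using irreducibility, for each $x \in X_0$ pick some $n$ with $p^{(n)}(x_0, x) > 0$ and set $r_0(x) := n \pmod d$. The key content lies in showing this prescription is independent of the choice of $n$, after which the cocycle identity (\ref{eq-t20}) follows essentially for free from Chapman--Kolmogorov.

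For well-definedness, suppose $p^{(n)}(x_0, x) > 0$ and $p^{(m)}(x_0, x) > 0$. By irreducibility there exists $k \geq 0$ with $p^{(k)}(x, x_0) > 0$. Applying the Chapman--Kolmogorov equation (\ref{Kolmo-Chap_eq}) twice,
\begin{equation*}
p^{(n+k)}(x_0, x_0) \geq p^{(n)}(x_0, x) p^{(k)}(x, x_0) > 0, \quad p^{(m+k)}(x_0, x_0) \geq p^{(m)}(x_0, x) p^{(k)}(x, x_0) > 0.
\end{equation*}
Hence $d$ divides both $n+k$ and $m+k$, so $d \mid (n - m)$, proving $n \equiv m \pmod d$. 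Thus $r_0 : X_0 \to \mathbb{Z}/d\mathbb{Z}$ is a well-defined map.

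For the cocycle identity, let $x, y \in X_0$ with $p(x,y) > 0$. Pick $n$ with $p^{(n)}(x_0, x) > 0$, so $r_0(x) \equiv n \pmod d$. One further application of Chapman--Kolmogorov gives
\begin{equation*}
p^{(n+1)}(x_0, y) \geq p^{(n)}(x_0, x) \, p(x, y) > 0,
\end{equation*}
so that $n+1$ is an admissible length from $x_0$ to $y$, whence $r_0(y) \equiv n + 1 \equiv r_0(x) + 1 \pmod d$, as required.

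The only point that requires care is the well-definedness step, which is the whole substance of the argument; everything else is a direct unravelling of definitions. Note that this construction actually shows that the map $r : X_0 \times X_0 \to \mathbb{Z}/d\mathbb{Z}$ defined by $r(x,y) := r_0(y) - r_0(x)$ depends only on the pair $(x,y)$ and not on the base point $x_0$ (changing $x_0$ shifts $r_0$ by a global constant), and satisfies $l(\gamma) \equiv r(x,y) \pmod d$ for every $p$-admissible path $\gamma \in \mathcal{P}h(x,y)$ — which is exactly the periodicity cocycle alluded to in Definition \ref{DEF:PERIODICITY_COCYCLE} and invoked throughout the paper.
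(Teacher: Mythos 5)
Your proof is correct and follows essentially the same approach as the paper's: fixing a basepoint $x_0$, defining $r_0(x)$ as the length modulo $d$ of any admissible path from $x_0$ to $x$, establishing well-definedness by composing with a return path to $x_0$ and using that $\{n : p^{(n)}(x_0,x_0)>0\}\subset d\mathbb{N}$, and deriving the cocycle identity via Chapman--Kolmogorov. The added remark on basepoint-independence and the link to Corollary \ref{Cor_Admissible_Path_Lenght} is accurate and a nice touch, though not part of what the proposition asks for.
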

\begin{proof}
	We fix $x_0\in X_0$ arbitrary and set $r_0(x_0)=0 \, [d]$. Then for $x\in X_0$, we set $r_0(x)=\bar{m}\in\mathbb{Z}/d\mathbb{Z}$ where $m$ belongs to the set $\{n: p^{(n)}(x_0,x)>0\}$ is arbitrary. We now prove that $r$ is well-defined: Let $m,m'$ be such that $p^{(m)}(x_0,x)>0$ and $p^{(m')}(x_0,x)>0$, and let $m''$ be such that  $p^{(m'')}(x,x_0)>0$, then we have that $m+m''$ and $m'+m''$ are in the set $\{n ; p^{(n)}(x_0,x_0)>0 \}$ which is a subset of $d\mathbb{N}$.
	
Thus:
	\begin{equation*}
		m+m''=m'+m'' \,[d].
	\end{equation*}
	which implies that $$m =  m' \,[d].$$
	Hence $r_0$ is well-defined. It also satisfies (\ref{eq-t20}) since, if $x,y\in X$ verify $p(x,y)>0$ then if $p^{(n)}(x_0,x)>0$, from the Chapman-Kolmogorov equation (\ref{Kolmo-Chap_eq}) with $m=1$, we obtain $$p^{(n+1)}(x_0,y)\geq p^{(n)}(x_0,x)p(x,y)>0.$$ Thus $r_0(y)=n+1\, [d]$ i.e $r_0(y)=r_0(x)+1\,[d]$ as desired. 
\end{proof}

\begin{Def}\textit{The periodicity cocycle}\label{DEF:PERIODICITY_COCYCLE}\index{Periodicity cocycle}

	With the notation of the above Proposition \ref{Existence_of_a_period_cocycle}, the $(\mathbb{Z}/d\mathbb{Z})$-valued map,  $$r:(a,b)\mapsto r_0(b)-r_0(a),$$ is an additive cocycle, that does not depend on the choice of $x_0$ taken in the proof of Proposition \ref{Existence_of_a_period_cocycle}. This cocycle is called the \textit{Periodicity Cocycle} associated with the irreducible Markov chain $(X_0,p)$.
\end{Def}
		\begin{Cor}\label{Cor_Admissible_Path_Lenght_without_admissible_paths}
		Let $(X_0,p)$ be an irreducible discrete Markov chain with period $d\geq 1$. Denote by $r$ the periodicity cocycle of $(X_0,p)$. For any pair $(a,b)\in X_0\times X_0$, and any subset $\Omega$ of $X_0$ (In particular for $\Omega=X_0$) we have:
		$$ \forall n \in\mathbb{N}, \, \left( p^{(n)}(a,b;\Omega)>0 \Rightarrow n= r(a,b)\, [d]\right).$$
		
		In particular for any complex number $z$ of modulus small enough, we have 
		$$G_{(e^{2i\pi/d} z)}(a,b;\Omega)=(e^{2i\pi/d})^{r(a,b)}G_z(a,b;\Omega).$$
	\end{Cor}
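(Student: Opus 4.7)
The plan is to prove the congruence on admissible lengths by telescoping the local property of the auxiliary map $r_0$ from Proposition \ref{Existence_of_a_period_cocycle}, then deduce the functional equation on Green's functions by term-by-term substitution in the power series.

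First, I would unpack the hypothesis $p^{(n)}(a,b;\Omega) > 0$. By the definition of the restricted transition kernel, this forces the existence of a finite sequence $\omega_0, \omega_1, \ldots, \omega_n$ with $\omega_0 = a$, $\omega_n = b$, with $\omega_i \in \Omega$ for $1 \leq i \leq n-1$, and such that $p(\omega_{i-1},\omega_i) > 0$ for every $i \in \{1,\ldots,n\}$. The role of $\Omega$ is now irrelevant for the congruence: only the existence of the transitions matters.

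Next, I would invoke the key property (\ref{eq-t20}) of the map $r_0 : X_0 \to \mathbb{Z}/d\mathbb{Z}$: each strictly positive transition increases $r_0$ by exactly one modulo $d$. Applying this to every consecutive pair $(\omega_{i-1},\omega_i)$ and iterating gives
\[
r_0(b) = r_0(\omega_n) = r_0(\omega_0) + n = r_0(a) + n \pmod{d}.
\]
By Definition \ref{DEF:PERIODICITY_COCYCLE}, $r(a,b) = r_0(b) - r_0(a)$, so $n \equiv r(a,b) \pmod d$, which is the first claim.

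For the functional equation, I would start from the power-series expansion (\ref{Equation_Def_Green_restreintes}), valid for $|z|$ small enough,
\[
G_z(a,b;\Omega) = \sum_{n \in \mathbb{N}} p^{(n)}(a,b;\Omega) z^n.
\]
By the first part, every nonzero term has $n \equiv r(a,b) \pmod d$, so for such $n$ one has $(e^{2i\pi/d})^n = (e^{2i\pi/d})^{r(a,b)}$. Substituting $z \mapsto e^{2i\pi/d} z$ and factoring out this constant term-by-term yields
\[
G_{(e^{2i\pi/d} z)}(a,b;\Omega) = \sum_{n} p^{(n)}(a,b;\Omega)(e^{2i\pi/d})^n z^n = (e^{2i\pi/d})^{r(a,b)} G_z(a,b;\Omega).
\]
There is no real obstacle here; the only minor point to be careful about is that $r_0$ is only defined modulo $d$, but that is exactly the equivalence in which the conclusion is stated, so no sharpening is needed.
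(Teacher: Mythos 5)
Your proof is correct and follows exactly the route the paper intends: the paper states this Corollary as an immediate consequence of Proposition \ref{Existence_of_a_period_cocycle} and Definition \ref{DEF:PERIODICITY_COCYCLE}, namely telescoping the property $p(x,y)>0 \Rightarrow r_0(y)=r_0(x)+1\,[d]$ along any positively weighted sequence witnessing $p^{(n)}(a,b;\Omega)>0$, and then substituting term-by-term in the power series (\ref{Equation_Def_Green_restreintes}). No gaps; the handling of the $n=0$ convention and of the factor $(e^{2i\pi/d})^{n}=(e^{2i\pi/d})^{r(a,b)}$ for $n\equiv r(a,b)\,[d]$ is as the paper implicitly uses it.
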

\section{Irreducible Perron Operators}\label{Irreducible_Perron_Operators}
		\begin{Def}\label{Def_Perron_operator}\textit{Perron Operator / Irreducible Perron Operator}

			Let $E$ be a finite-dimensional real vector space and let $T : E\to E$ be a linear operator on $E$.
			We say that $T$ is a \textit{Perron operator} if there exists a base $\{e_1,...,e_n\}$ of $E$ such that the matrix representation of $T$, with respect to $\{e_1,...,e_n\}$, is a real matrix with non-negative coefficients.

			We will say that $T$ is an \textit{irreducible Perron operator} if there exists a base $\{e_1,...,e_n\}$ of $E$ and an integer $n\in\mathbb{N}$ such that the matrix representation, with respect to the base $\{e_1,...,e_n\}$, of the linear operator: 
			$$\sum_{k=0}^n T^{\circ k} $$
			has strictly positive coefficients.
		\end{Def}
		\begin{Rem}
			If we consider a $\mathbb{C}$-vector space $E$ and a linear operator $T : E\to E$. we will say that the operator $T$ is a Perron operator (or irreducible Perron operator, if it is appropriate), if there exists a basis $\mathcal{B}$ of $E$, whose generated $\mathbb{R}$-vector space is stable by $T$ :
			$$T\left( \operatorname{Vect}_\mathbb{R}(\mathcal{B})\right)\subset \operatorname{Vect}_\mathbb{R}(\mathcal{B}),$$
			and if $T$ in restriction to this vector space is Perron (respectively Perron irreducible).
		\end{Rem}

		\begin{Lem}\textit{de Perron (1907)}\label{Lem_de_Perron} \cite{Perron_1907}
		
			Let $E$ be a finite-dimensional vector space and $T : E\to E$ be a Perron operator. Let $\{e_1,...,e_n\}$ be a basis of $E$ in which the matrix representation of $T$ has non-negative coefficients. Then the spectral radius $\rho(T)$ of $T$ is an eigenvalue of $T$ and there exists an associated eigenvector in $E$ whose representation in the base $\{e_1,...,e_n\}$ of $E$ has non-negative coefficients.
		\end{Lem}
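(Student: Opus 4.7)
My plan is to reduce the statement to the matrix version of the Perron-Frobenius theorem: fix the basis $\{e_1,\dots,e_n\}$ and let $M=(m_{ij})\in M_n(\mathbb{R}_{\geq 0})$ be the matrix of $T$ in that basis. It suffices to produce a vector $v\in\mathbb{R}_{\geq 0}^n\setminus\{0\}$ with $Mv=\rho(M)v$.

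I would first treat the \emph{strictly positive} case, where all entries $m_{ij}>0$. Let $\Delta=\{x\in\mathbb{R}_{\geq 0}^n:\sum x_i=1\}$ be the standard simplex, which is compact and convex. For $x\in\Delta$, one has $Mx>0$ componentwise, so $\|Mx\|_1>0$ and the map
\begin{equation*}
    \phi\colon \Delta\to\Delta,\qquad \phi(x)=\frac{Mx}{\|Mx\|_1}
\end{equation*}
is continuous. Brouwer's fixed point theorem yields $v\in\Delta$ with $\phi(v)=v$, i.e.\ $Mv=\lambda v$ with $\lambda=\|Mv\|_1>0$. To identify $\lambda$ with $\rho(M)$, I would use a Collatz–Wielandt style comparison: for any (complex) eigenvalue $\mu$ of $M$ with eigenvector $w$, the triangle inequality gives $|\mu|\cdot|w_i|\leq\sum_j m_{ij}|w_j|=(M|w|)_i$ componentwise. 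Applying this to the positive eigenvector $v$ on the dual side (or iterating and comparing to $M^k|w|$ which grows like $\lambda^k$) shows $|\mu|\leq\lambda$, hence $\lambda=\rho(M)$.

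For the general non-negative case, I would use a perturbation argument. Let $J$ be the all-ones matrix and for $\varepsilon>0$ set $M_\varepsilon:=M+\varepsilon J$, which has strictly positive entries. By the previous step there exist $v_\varepsilon\in\Delta$ and $\lambda_\varepsilon=\rho(M_\varepsilon)>0$ with $M_\varepsilon v_\varepsilon=\lambda_\varepsilon v_\varepsilon$. Since $\Delta$ is compact, one can extract a sequence $\varepsilon_k\to 0$ with $v_{\varepsilon_k}\to v_\infty\in\Delta$. The spectral radius of a matrix is a continuous function of the matrix entries (being the maximum of the moduli of roots of the characteristic polynomial, whose coefficients depend polynomially on the entries), so $\lambda_{\varepsilon_k}\to\rho(M)$. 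Passing to the limit in the eigenvalue equation yields $Mv_\infty=\rho(M)v_\infty$, with $v_\infty\in\Delta$ in particular non-negative and non-zero.

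The main subtlety I anticipate is the continuity-of-spectral-radius step at the very end: for a non-negative matrix the eigenvalue $\lambda_\varepsilon$ stays separated from the rest of the spectrum only by a limiting argument, and one needs to be sure that the limit $\lim_{\varepsilon\to 0}\rho(M_\varepsilon)$ equals $\rho(M)$ rather than a strictly larger real number. Since the characteristic polynomial of $M_\varepsilon$ depends polynomially (hence continuously) on $\varepsilon$, and its roots vary continuously as a multiset, the maximum modulus of the roots does depend continuously on $\varepsilon$; this is the clean way to close the argument. Once this is established, the non-negativity and non-triviality of $v_\infty$ follow for free from membership in $\Delta$, and the proof is complete.
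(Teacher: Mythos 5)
Your argument is correct. Note, however, that the paper does not actually prove this lemma: it is quoted as a classical result with a citation to \cite{Perron_1907} (just as Theorem \ref{Thm_Perron_irred} is quoted from \cite{Seneta_2006}), so there is no in-text proof to compare with; your proposal supplies a standard self-contained proof. Your route — Brouwer's fixed point theorem on the simplex for strictly positive matrices, then a perturbation $M_\varepsilon=M+\varepsilon J$ and a compactness/limit argument for the general non-negative case — is one of the classical proofs of Perron's theorem and all the steps close. Two minor points worth making explicit if you write it up: in the positive case, the Collatz--Wielandt (or iteration) step identifying $\lambda$ with $\rho(M)$ uses that the Brouwer fixed point $v$ is \emph{strictly} positive (which holds because $v=\lambda^{-1}Mv$ and $M$ has strictly positive entries), since the bound $|w|\leq c\,v$ needed to compare $|\mu|^k|w|\leq M^k|w|\leq c\,\lambda^k v$ requires $v>0$ componentwise; and at the end you could bypass the continuity-of-roots argument entirely by using monotonicity of the spectral radius on non-negative matrices ($M\leq M_\varepsilon$ entrywise gives $\rho(M)\leq\lambda_\varepsilon$, while the limit relation $Mv_\infty=\lambda_0 v_\infty$ forces $\lambda_0\leq\rho(M)$), which is slightly more elementary than invoking continuous dependence of the root multiset on the coefficients — though your continuity argument is also valid.
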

        
		\begin{Thm}\label{Thm_Perron_irred}\textit{Perron irreducible theorem}(1912) [See \cite{Seneta_2006} theorem 1.1]
		
			Let $E$ be a finite-dimensional vector space and $T : E\to E$ an irreducible Perron operator. Let $\{e_1,...,e_n\}$ be a basis of $E$ appearing in the definition \ref{Def_Perron_operator} of an irreducible Perron operator, associated with $T$.
			The spectral radius $\rho(T)$ of $T$ is non-zero and is a simple eigenvalue of $T$ for a certain vector $\nu$ in $E$, whose representation in the base $\{e_1,...,e_n\}$ has strictly positive coefficients.
			
			Moreover, its associated generalized eigenspace is exactly $\mathbb{R}\cdot \nu$ and any other eigenvalue of $T$ is of strictly lower modulus than $\rho(T)$. 
			And finally if $C$ denotes the convex cone $\operatorname{Vect}_{\mathbb{R}+}(e_1,...,e_n)$ then
			\begin{equation}\label{eq_cone_perron}
			C\cap(T-\rho(T)Id)(E)=\{0\}.
			\end{equation}
		\end{Thm}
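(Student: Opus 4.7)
The plan is to pass to coordinates in the basis $\{e_1,\dots,e_n\}$ so that $T$ becomes a matrix with non-negative entries, and to introduce the operator $S:=\sum_{k=0}^{N}T^{\circ k}$ (where $N$ is the integer from the definition of ``irreducible Perron''), which in this basis has \emph{strictly} positive coefficients. The entire argument will be driven by the following simple positivity-amplification principle: for any $x\in C\setminus\{0\}$, the vector $Sx$ has strictly positive coordinates. Throughout, I will also make essential use of the transpose $T^{\top}$, which is likewise an irreducible Perron operator with the same spectral radius.

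First, I apply Perron's lemma (Lemma \ref{Lem_de_Perron}) to produce $\nu\in C\setminus\{0\}$ with $T\nu=\rho(T)\nu$. Then $S\nu=\bigl(\sum_{k=0}^{N}\rho(T)^{k}\bigr)\nu$, which has strictly positive coordinates by the positivity-amplification principle; since the scalar factor in front of $\nu$ is at least $1$, this forces $\nu$ itself to have strictly positive coordinates. For $\rho(T)>0$, suppose otherwise: then $T\nu=0$ while $\nu>0$, so each row of $T$ vanishes, so each row of $T^{k}$ vanishes for $k\geq 1$; then the off-diagonal entries of $S$ would come only from $T^{0}=I$, contradicting their strict positivity. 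Applying the same argument to $T^{\top}$ yields a strictly positive left eigenvector $\mu$ with $\mu^{\top}T=\rho(T)\mu^{\top}$.

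Next I address simplicity. Geometric multiplicity: for any real eigenvector $\nu'$ of $T$ for $\rho(T)$, set $t^{*}:=\sup\{t\in\mathbb{R}:\nu-t\nu'\in C\}$. By maximality the vector $w:=\nu-t^{*}\nu'$ belongs to $C$ and has at least one zero coordinate. If $w\neq 0$ then $Sw=\bigl(\sum\rho(T)^{k}\bigr)w$ would have strictly positive coordinates, forcing $w$ itself to be strictly positive, a contradiction. Hence $\nu'\in\mathbb{R}\nu$. Algebraic multiplicity: if there were $w\in E$ with $(T-\rho(T)I)w=\nu$, pairing with $\mu$ gives the contradiction $0=\mu^{\top}(T-\rho(T)I)w=\mu^{\top}\nu>0$. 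Therefore the generalized eigenspace equals $\mathbb{R}\nu$. The cone relation (\ref{eq_cone_perron}) is established by the same pairing trick: if $c\in C$ equals $(T-\rho(T)I)w$ for some $w$, then $\mu^{\top}c=0$, and since $\mu$ has strictly positive entries and $c\in C$ this forces $c=0$.

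Finally, the strict dominance of $\rho(T)$ over the rest of the spectrum. Given any eigenvalue $\lambda$ with eigenvector $v\in\mathbb{C}^{n}\setminus\{0\}$, the triangle inequality applied coordinatewise gives $|\lambda|\,|v|_{i}\leq (T|v|)_{i}$. If $|\lambda|=\rho(T)$, pairing the non-negative vector $T|v|-\rho(T)|v|$ with $\mu>0$ yields zero, hence $T|v|=\rho(T)|v|$; by Step 3 above, $|v|$ is a strictly positive scalar multiple of $\nu$. The equality case of the triangle inequality then forces the complex arguments $\theta_{j}$ of $v_{j}=|v_{j}|e^{i\theta_{j}}$ to satisfy $\theta_{j}-\theta_{i}\equiv\arg\lambda\pmod{2\pi}$ whenever $T_{ij}>0$; iterating along paths and using that the strict positivity of the diagonal of $S$ provides, for each index $i$, an integer $k$ with $(T^{k})_{ii}>0$, one obtains $k\,\arg\lambda\in 2\pi\mathbb{Z}$ and ultimately $\lambda=\rho(T)$. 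I expect this last step to be the main obstacle: the preceding items follow rather mechanically from the two tools (positivity amplification by $S$ and pairing with the strictly positive left eigenvector $\mu$), but ruling out other eigenvalues on the spectral circle requires extracting the sharpest consequence of the equality case in the triangle inequality and must be combined carefully with the path-connectivity encoded in the positivity of $S$.
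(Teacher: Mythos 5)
The paper offers no proof of this theorem (it is quoted from \cite{Seneta_2006}), so your argument must stand on its own. Everything up to and including the cone identity (\ref{eq_cone_perron}) is essentially sound: positivity amplification by $S$, the extraction of strictly positive right and left Perron eigenvectors, the sliding argument for geometric simplicity, and the pairing with the left eigenvector $\mu$ for algebraic simplicity and for (\ref{eq_cone_perron}). One small repair is needed in the sliding argument: if the real eigenvector $\nu'$ has all coordinates $\leq 0$, then $t^{*}=\sup\{t:\nu-t\nu'\in C\}=+\infty$ and your maximality step says nothing; replace $\nu'$ by $-\nu'$ in that case (and note the dimension-one case $T=0$ separately for the claim $\rho(T)>0$).

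The last step, however, contains a genuine gap, exactly where you predicted. Your source of closed paths is vacuous: the diagonal of $S=\sum_{k=0}^{N}T^{k}$ is strictly positive merely because of the $k=0$ summand $I$, so the existence of $k$ with $(T^{k})_{ii}>0$ that you extract from it is only $k=0$, and the congruence $k\arg\lambda\in 2\pi\mathbb{Z}$ is empty. Irreducibility does supply closed paths of positive length, but then your equality-case analysis only yields $\arg\lambda\in\frac{2\pi}{d}\mathbb{Z}$, where $d$ is the gcd of the lengths of closed paths through a given index, and this cannot be improved: under the paper's Definition \ref{Def_Perron_operator} the strict-dominance claim is false. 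For $T=\begin{pmatrix}0&1\\1&0\end{pmatrix}$ the sum $I+T+T^{2}$ has strictly positive entries, yet $-1$ is an eigenvalue of modulus $\rho(T)=1$. So no completion of your computation can give $\lambda=\rho(T)$ from irreducibility alone; what it actually proves is that peripheral eigenvalues are of the form $\rho(T)e^{2\pi i j/d}$. To conclude $\lambda=\rho(T)$ one needs primitivity (some power $T^{j}$ entrywise positive, which is the hypothesis of Theorem 1.1 in \cite{Seneta_2006}, the result the statement cites, and of Theorem \ref{Seneta_thm1.2}); in that case $(T^{k})_{ii}>0$ for two consecutive exponents $k$ and your congruences do force $\arg\lambda=0$. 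In short, the defect lies in the mismatch between the irreducibility hypothesis and the dominance claim, and your proof inherits it; the portions of the conclusion that the surrounding text actually leans on (simplicity of $\rho(T)$, the strictly positive eigenvector, and (\ref{eq_cone_perron})) are precisely the ones you did establish.
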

		\begin{Rem}
		    The above equality (\ref{eq_cone_perron}) is equivalent as stating that the range of the operator $(I-\frac{1}{\rho(T)}T)$ does not contain non-zero vector in $C$.
		\end{Rem}
		\begin{Thm}[ \cite{Seneta_2006} Theorem 1.2]\label{Seneta_thm1.2}
Let $T\in\mathcal{M}_n(\mathbb{C})$ be a \textit{primitive} matrix (that is a matrix such that for some integer $j$, the matrix $T^j$ has strictly positive coefficients). Denote by $\pi:\mathbb{C}^n\to E_T(\rho(T))$, a projection onto the characteristic subspace $E_T(\rho(T))$ associated with the simple eigenvalue $\rho(T)$ of $T$. And denote by $\lambda_2$ the eigenvalue of $T$ such that its modulus satisfies $|\lambda_2|=\max\{|l|: l\in Spec(T), l\neq \rho(T) \}$. Then :
\begin{enumerate}[label=\roman*)]
 	\item If $\lambda_2\neq 0$, then as $k\to\infty$, we have
	$$T^k=c \rho(T)^k\pi + O\left(k^{m_2-1}|\lambda_2|^k\right),$$
	where $m_2$ is the multiplicity of the eigenvalue $\lambda_2$, and $c$ is some positive constant.
	\item If $\lambda_2=0$, then for any $k\geq n-1$, we have
	$$T^k=c\rho(T)^k\pi,$$
    with $c$ a positive constant.
\end{enumerate}
\end{Thm}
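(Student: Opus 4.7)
The strategy is to combine the Perron irreducible theorem (Theorem \ref{Thm_Perron_irred}) with the Jordan normal form of $T$ in order to isolate the contribution of the dominant eigenvalue $\rho := \rho(T)$ from all the others. Since $T$ is primitive, it is in particular an irreducible Perron operator (some power has strictly positive entries, which yields the condition in Definition \ref{Def_Perron_operator}), so Theorem \ref{Thm_Perron_irred} applies: $\rho$ is a simple eigenvalue of $T$, its generalized eigenspace $E_T(\rho)$ is one-dimensional, and every other eigenvalue has modulus strictly less than $\rho$. In particular $|\lambda_2| < \rho$.

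The plan is then to use the canonical spectral decomposition
$$\mathbb{C}^n = E_T(\rho) \oplus V, \qquad V := \bigoplus_{\lambda \in \operatorname{Spec}(T),\, \lambda \neq \rho} E_T(\lambda),$$
and to let $\pi_0$ be the spectral projection onto $E_T(\rho)$ along $V$. Both subspaces in this decomposition are $T$-stable, so $\pi_0$ commutes with $T$; moreover $T|_{E_T(\rho)} = \rho\cdot\operatorname{id}$. Setting $S := T(I-\pi_0)$, one checks $S\pi_0 = \pi_0 S = 0$, and $T = \rho\pi_0 + S$, whence by induction
$$T^k = \rho^k \pi_0 + S^k \qquad (k \geq 1).$$
The task reduces to controlling $S^k$; the positive constant $c$ in the statement reflects the normalization used to compare a general projection $\pi$ onto the one-dimensional $E_T(\rho)$ with the spectral projection $\pi_0$ (when $\pi = \pi_0$, one has $c = 1$).

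For case (i), with $\lambda_2 \neq 0$, I would pass $S|_V$ to Jordan normal form. Each Jordan block of size $s$ with eigenvalue $\lambda$ has the form $J = \lambda I + N$ with $N^s = 0$, and the binomial expansion gives
$$J^k = \sum_{j=0}^{s-1} \binom{k}{j}\lambda^{k-j} N^j, \qquad \|J^k\| = O\bigl(k^{s-1}|\lambda|^k\bigr)$$
since $\binom{k}{j} = O(k^j)$. The dominant contribution to $\|S^k\|$ comes from the eigenvalues of modulus $|\lambda_2|$, and among those from the largest Jordan block; since the size of any Jordan block at $\lambda_2$ is bounded by the algebraic multiplicity $m_2$, we obtain $\|S^k\| = O(k^{m_2-1}|\lambda_2|^k)$, which is exactly the claimed remainder.

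For case (ii), with $\lambda_2 = 0$, every eigenvalue of $S|_V$ is zero, so $S|_V$ is nilpotent; its nilpotency index is bounded by $\dim V = n - 1$ (since $\dim E_T(\rho) = 1$), hence $S^{n-1} = 0$ and $T^k = \rho^k \pi_0$ for every $k \geq n-1$. The main technical step is the Jordan-block bound in the previous paragraph: one must keep track of the polynomial-in-$k$ factor coming from the largest Jordan block at $\lambda_2$ to get the sharp exponent $m_2 - 1$. Everything else in the proof is a direct consequence of the Perron irreducible theorem and the existence of the Jordan decomposition.
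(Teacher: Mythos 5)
The paper itself gives no proof of this statement: it is imported as a black box from Seneta (Theorem 1.2 of \cite{Seneta_2006}), so there is no internal argument to compare with. Your route --- split off the one-dimensional generalized eigenspace of $\rho(T)$ using the spectral projection $\pi_0$, note $T=\rho\pi_0+S$ with $\pi_0 S=S\pi_0=0$, hence $T^k=\rho^k\pi_0+S^k$, and bound $S^k$ via the Jordan form of $S\vert_V$ --- is the standard proof of this result and is sound in its core steps, including the nilpotency argument that gives case (ii) with index at most $n-1$.

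Two points need tightening. First, your treatment of the constant $c$ does not work as stated: a projection $\pi$ onto the one-dimensional space $E_T(\rho)$ is a scalar multiple of the spectral projection $\pi_0$ only if it \emph{is} $\pi_0$ (if $\pi_0=c\pi$, evaluating on a vector spanning $E_T(\rho)$ forces $c=1$, hence $\pi=\pi_0$). For a projection with a different kernel, $T^k-c\rho^k\pi$ contains the term $\rho^k(\pi_0-c\pi)\neq 0$, which is of order $\rho^k$ and ruins the claimed remainder, since $|\lambda_2|<\rho$. So your argument proves the statement only when $\pi$ is (a positive multiple of) the spectral projection; that is the intended reading of Seneta's formulation ($\pi=wv^{\top}$ with $Tw=\rho w$, $v^{\top}T=\rho v^{\top}$, $v^{\top}w=1$), and you should say this explicitly rather than claim $c$ absorbs an arbitrary choice of $\pi$. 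Second, the exponent $m_2-1$ must control the Jordan blocks of \emph{every} eigenvalue of modulus $|\lambda_2|$, not just $\lambda_2$ itself; if another eigenvalue on that circle carried a larger block, the bound would fail. This is covered by the convention, implicit in the statement, that $\lambda_2$ is chosen among the eigenvalues of second-largest modulus with maximal multiplicity; with that convention your Jordan-block estimate $\|J^k\|=O\bigl(k^{s-1}|\lambda|^k\bigr)$ and the resulting bound on $\|S^k\|$ are correct.
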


\bibliographystyle{alpha}
\bibliography{sample}

Institut Mathématiques de Bordeaux/ Institut Montpellierain Alexander Grothendieck\\
\indent E-MAIL: gchevalier@protonmail.com
    
\end{document}